\numberwithin{equation}{section}
\newtheorem*{rep@theorem}{\rep@title}
\newcommand{\newreptheorem}[2]{%
\newenvironment{rep#1}[1]{%
 \def\rep@title{#2 \ref{##1}}%
 \begin{rep@theorem}}%
 {\end{rep@theorem}}}
\newtheorem{theorem}{Theorem}[section]
\newtheorem{lemma}[theorem]{Lemma}
\newtheorem{proposition}[theorem]{Proposition}
\newtheorem{corollary}[theorem]{Corollary}
\theoremstyle{definition}
\theoremstyle{remark}
\newcounter{thmenumerate}
\newcounter{xenumerate}
\newcommand\E{\operatorname{\mathbb E{}}}
\renewcommand\Pr{\operatorname{\mathbb P{}}}
\newcommand\eps{\varepsilon}
\renewcommand\phi{\varphi}
\newcommand\la{\lambda}
\newcommand\cA{\mathcal A}
\newcommand\cB{\mathcal B}
\newcommand\cC{\mathcal C}
\newcommand\cD{\mathcal D}
\newcommand\cE{\mathcal E}
\newcommand\cF{\mathcal F}
\newcommand\cG{\mathcal G}
\newcommand\cH{\mathcal H}
\newcommand\cI{\mathcal I}
\newcommand\cK{\mathcal K}
\newcommand\cL{\mathcal L}
\newcommand\cM{\mathcal M}
\newcommand\cN{\mathcal N}
\newcommand\cP{\mathcal P}
\newcommand\cR{\mathcal R}
\newcommand\cS{\mathcal S}
\newcommand\cX{\mathcal X}
\newcommand\N{{\mathbb N}}
\newcommand\Z{{\mathbb Z}}
\newcommand\I{{\mathbb I}}
\newcommand\R{{\mathbb R}}
\newcommand\1{{\mathbbm 1}}
\begin{document}
\title{The Supermarket Model with Arrival Rate Tending to One}


\author{Graham Brightwell}
\address{Department of Mathematics, London School of Economics,
Houghton Street, London WC2A 2AE, United Kingdom}
\email{g.r.brightwell@lse.ac.uk}
\urladdr{http://www.maths.lse.ac.uk/Personal/graham/}

\author{Malwina J. Luczak} \thanks{The research of Malwina Luczak is supported by an EPSRC Leadership Fellowship, grant reference EP/J004022/1.}
\address{School of Mathematics and Statistics, University of Sheffield}
\email{m.luczak@sheffield.ac.uk}

\keywords{supermarket model; Markov chains; rapid mixing; concentration of measure}
\subjclass[2000]{}

\begin{abstract}
In the supermarket model, there are $n$ queues, each with a single server.  Customers arrive in a Poisson process
with arrival rate $\la n$, where $\la = \la(n) \in (0,1)$.  Upon arrival, a customer selects $d=d(n)$ servers
uniformly at random, and joins the queue of a least-loaded server amongst those chosen.  Service times are independent
exponentially distributed random variables with mean~1.
In this paper, we analyse the behaviour of the supermarket model in a regime where $\lambda(n)$
tends to~1, and $d(n)$ tends to infinity, as $n \to \infty$.  For suitable triples $(n,d,\la)$, we identify a
subset $\cN$ of the state space where the process remains for a long time in equilibrium.  We further show that the
process is rapidly mixing when started in $\cN$, and give bounds on the speed of mixing for more general initial
conditions.
\end{abstract}

\maketitle

\section{Introduction}\label{Sintro}

The supermarket model is a Markov chain model for a dynamic load-balancing process.  There are $n$ servers, and customers arrive according to a Poisson process
with rate $\la = \la (n)< 1$.  On arrival, a customer inspects $d = d(n)$ queues, chosen uniformly at random with replacement, and joins
a shortest queue among those inspected (in case of a tie, the first shortest queue in the list is joined).  Each server serves one customer at a time,
and service times are iid random variables, with an exponential distribution of mean~1.

A number of authors~\cite{mitz96a,mitz96b,vdk96,gr00,gr04,ln05,lmcd06,lmcd07,luc08,f11} have studied the supermarket model, as well as various
extensions, e.g., to the setting of a Jackson network~\cite{ms99} and to a version with one queue saved in memory~\cite{mps02, ln12}.

Previous work has concentrated on the case where $\la$ and $d$ are held fixed as $n$ tends to infinity.  As with other related models, there is a
dramatic change when $d$ is increased from~1 to~2: if $d=1$, the maximum queue length in equilibrium is of order $\log n$, while if
$d$ is a constant at least~2, then the maximum queue length in equilibrium is of order $\log \log n / \log d$.

Luczak and McDiarmid~\cite{lmcd06} prove that, for fixed $\la$ and $d$, the sequence of Markov chains indexed by $n$ is rapidly mixing: as $n \to \infty$,
the time for the system to converge to equilibrium is of order $\log n$, provided the initial state has not too many customers and no very long queue.
Also, they show that, for $d \ge 2$, with probability tending to~1 as $n \to \infty$, in the equilibrium distribution the
maximum queue length takes one of at most~2 values, and that these values are $\log \log n / \log d + O(1)$.

Consider the infinite system of differential equations
\begin{equation}
\label{eq.diff-eq}
\frac{dv_k(t)}{dt} = \la (v_{k-1}(t)^d - v_k(t)^d) - (v_k(t) - v_{k+1}(t)), \quad \quad \quad k\ge 1,
\end{equation}
where $v_0(t) = 1$ for all $t$. For an initial condition $v(0)$ such that $1 \ge v_1(0) \ge v_2 (0) \ge \ldots \ge 0$ and $v_k(0) \to 0$ as $k \to \infty$,
there is a unique solution $v(t)$ ($t \ge 0$), with $v(t) = (v_k(t))_{k\ge 1}$, which is such that $1 \ge v_1(t) \ge v_2 (t) \ge \ldots \ge 0$ and
$v_k(t) \to 0$ as $k \to \infty$, for each $t \ge 0$.
It follows from earlier work~\cite{vdk96,gr00,gr04,ln05,lmcd07} that, with high probability, for each $k$, the proportion of queues of length at least
$k$ at time $t$ stays ``close to'' $v_k(t)$ over a bounded time interval (or an interval whose length tends to infinity at most polynomially with $n$),
assuming this is the case at time 0.

The system~(\ref{eq.diff-eq}) has a unique, attractive, fixed point $\pi = (\pi (k))_{k \ge 1}$, such that $\pi (k) \to 0$ as $k \to \infty$, given by
\begin{equation} \label{eq:fixed-point}
\pi (k) = \la^{1 + \cdots + d^{k-1}}, \quad \quad \quad k \ge 1.
\end{equation}
In equilibrium, with high probability, the proportion of queues of length at least $k$ is close to $\pi (k)$
for each $k \ge 1$, over time intervals of length polynomial in $n$; see~\cite{gr00,gr04,lmcd06,lmcd07}.


In this paper, we extend the above results about equilibrium behaviour and rapid mixing to some regimes where $\la(n) \to 1$ and $d(n) \to \infty$ as
$n \to \infty$.  For $\lambda$ and $d$ functions of $n$, there is no single limiting differential equation (\ref{eq.diff-eq}), but rather
a sequence of approximating differential equations, each with their own solutions and fixed points.  In this paper, we do not address the question of
whether such approximations to the evolution of the process are valid in generality, focussing solely on equilibrium behaviour and the time to reach
equilibrium.  We show that, for a wide range of triples $(n,d,\la)$, the maximum queue length in equilibrium is equal to
$$
k = k^{\la,d} := \left \lceil \frac{\log(1-\la)^{-1}}{\log d} \right\rceil
$$
over long stretches of time, with high probability.  Moreover, in equilibrium, with high probability, most queues have length exactly $k$, and we are
able to estimate precisely the numbers of queues of each smaller length.  In other words, this is a regime where we have ``nearly exact'' load balancing
between the servers.  As we shall discuss later, this is still consistent with the principle that the proportion of queues of length at least $k$
is close to $\pi(k)$ for each~$k$.  We further prove that the mixing time from a ``good'' state is at most of order $k d^{k-1} \log n$, and we show that
this is roughly best possible.  We also prove general bounds on the mixing time, in terms of the initial number of customers and the initial maximum
queue length, and show that these bounds are also roughly best possible.


We will shortly state our main results precisely, but first we describe the supermarket model more carefully.  In fact, we describe a natural
discrete-time version of the process, which we shall work with throughout; as is standard, one may convert results about the discrete time version to the
continuous model, with the understanding that one unit of time in the continuous model corresponds to about $(1+\la) n$ steps of the
discrete model.

A {\em queue-lengths vector} is an $n$-tuple $(x(1), \dots, x(n))$ whose entries are non-negative integers.  If $x(j)=i$, we say that queue~$j$ has
{\em length}~$i$, or that there are $i$ {\em customers} in queue $j$; we think of these customers as in {\em positions} $1, \dots, i$ in the queue.
We use similar terminology throughout; for instance, to say that a customer
{\em arrives} and {\em joins queue $j$} means that $x(j)$ increases by~1, and to say that a customer in queue~$j$ {\em departs} or {\em is served}
means that $x(j)$ decreases by~1.  Given a queue-lengths vector $x$, we write
$\|x\|_1 = \sum_{j=1}^n x(j)$ to denote the total number of customers in state $x$, and $\|x\|_\infty = \max x(j)$ to denote the maximum queue length
in state $x$.

For each $i \ge 0$, and each $x \in \Z_+^n$, we define $u_i(x)$ to be the proportion of queues in $x$ with length at least $i$.
So $u_0(x) = 1$ for all $x$, and, for each fixed $x$, the $u_i(x)$ form a non-increasing sequence of multiples of $1/n$, such that $u_i(x) = 0$
eventually.  The sequence $(u_i(x))_{i\ge 0}$ captures the ``profile'' of a queue-lengths vector $x$, and we shall describe various sets of
queue-lengths vectors, and functions of the queue-lengths vector, in terms of the $u_i(x)$.

For positive integers $n$ and $d$, and $\la \in (0,1)$, we now define the {\em $(n,d,\la)$-supermarket process}.
This process is a discrete-time Markov chain $(X_t)$, whose state space is the set $\Z_+^n$ of queue-lengths vectors, and where
transitions occur at non-negative integer times.  Each transition is either a customer {\em arrival}, with probability $\lambda/(1 + \lambda)$,
or a {\em potential departure}, with probability $1/(1+ \lambda)$.  If there is a potential departure, then a queue $K$ is selected uniformly at
random from $\{1,\dots, n\}$: if there is a customer in queue $K$, then they are served and depart the system.  If there is an arrival, then
$d$ queues are selected uniformly at random, with replacement, from $\{1,\dots,n\}$, and the arriving customer joins a shortest queue among those
selected.  To be precise, a $d$-tuple $(K_1, \dots, K_d)$ is selected, and the customer joins queue $k =K_j$, where $j$ is the least index such
that $x(K_j)$ is minimal among $\{ x(K_1), \dots, x(K_d)\}$.

For $x \in \Z_+^n$, $(X_t^x)$ denotes a copy of the $(n,d,\la)$-supermarket process $(X_t)$ where $X_0 = x$ a.s., although when it is clear from the
context we shall prefer to use the simpler notation $(X_t)$.  Throughout, we let $(Y_t)$ denote a copy of the process in equilibrium.  Of course, the
processes depend on the parameters $(n,d,\la)$, but we suppress this dependence in the notation.  Throughout the paper, we use $(\cF_t)$ to denote the
natural filtration of the process $(X_t)$.  We use the notation $\Pr(\cdot)$ freely to denote probability in whatever space we are working in.
As before, for $\la \in (0,1)$ and $d \in \N$, we set $k^{\la,d} = \left \lceil \log(1-\la)^{-1}/\log d \right \rceil$.

We now state our main results.  First, we describe sets of queue-lengths vectors $\cN^\eps(n,d,\la,k)$: our aim is to prove that, for
suitable values of $n$, $d$ and $\la$, $k = k^{\la,d}$, and appropriately small $\eps$, an equilibrium copy of the $(n,d,\la)$-supermarket process
spends almost all of its time in the set $\cN^\eps(n,d,\la,k)$.

For $\eps \in (0,1/10]$, $\la \in (0,1)$, and positive integers $n$, $d$ and $k$, let $\cN^\eps = \cN^\eps(n,d,\la,k)$ be the set of all queue-lengths
vectors $x$ such that: $u_{k+1}(x) = 0$ and, for $1\le j \le k$,
$$
(1-5\eps) (1-\la) (\la d)^{j-1} \le 1- u_j(x) \le (1+5\eps) (1-\la) (\la d)^{j-1}.
$$
So, for $x \in \cN^\eps$, we have the following.
\begin{itemize}
\item[(a)] There are no queues of length $k+1$ or greater.
\item[(b)] For $1 \le j \le k$, the number of queues of length less than $j$ is $n(1-u_j(x))$, which lies between
$(1-5\eps) n (1-\la) (\la d)^{j-1}$ and $(1+5\eps) n (1-\la) (\la d)^{j-1}$.
\item[(c)] In particular, the number of queues of length less than $k$ is at most $(1+5\eps) n (1-\la) (\la d)^{k-1}$.  We shall work under
assumptions guaranteeing that $d^{k-1}(1-\la) \to 0$ as $n \to \infty$, so that the number of queues of length less than $k$ is $o(n)$, and
so the proportion of queues of length exactly~$k$ tends to~1 as $n \to \infty$.
\item[(d)] For $1\le j\le k-1$, the number of queues of length exactly $j$ is $n(u_j(x) - u_{j+1}(x))$.  Provided $\eps \la d \ge 2$, this
quantity lies between $(1-6\eps) n (1-\la) (\la d)^j$ and $(1+6\eps) n (1-\la) (\la d)^j$.
\end{itemize}

\begin{theorem} \label{thm.main}
For each $\eps > 0$, there exists $C$ such that the following holds for all sufficiently large $n$, and all $\la$ with
$$
\frac{C \log^2 n}{\sqrt n} \le 1-\la \le \frac{1}{C\log^2 n}.
$$
For each positive integer $d$ with $(1-\la)^{-1} > d \ge 4C k_{\la,d} \log^2 n$ and
\begin{equation} \label{eq:d}
\left( 2 (1-\la)^{-1} \log^2 n \right)^{1/k_{\la,d}} \le d \le \left( \frac{ (1-\la)^{-1} }{Ck_{\la,d}} \right)^{1/(k_{\la,d}-1)},
\end{equation}
a copy $(Y_t)$ of the $(n,d,\la)$-supermarket process in equilibrium satisfies
$$
\Pr\left( \exists t \in [0,e^{\frac14 \log^2 n}], Y_t \notin \cN^\eps(n,d,\la,k_{\la,d}) \right) \le e^{-\frac14 \log^2 n}.
$$
\end{theorem}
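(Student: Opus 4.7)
The plan is to combine a one-step drift analysis for the coordinates $u_j(X_t)$ with a stationarity-based union bound. Write $k := k_{\la,d}$. From the transition rules of the supermarket chain,
$$
\mu_j(x) := \E[u_j(X_1) - u_j(X_0) \mid X_0 = x] = \frac{1}{(1+\la)n}\Bigl[\la\bigl(u_{j-1}(x)^d - u_j(x)^d\bigr) - \bigl(u_j(x) - u_{j+1}(x)\bigr)\Bigr],
$$
which is the discrete analogue of the right-hand side of~(\ref{eq.diff-eq}). The first task is to verify, using the standing assumptions on $(n,d,\la)$ --- in particular $\la d \gg 1$ and $d^{k-1}(1-\la) < 1$ --- that inside $\cN^\eps$ the drift is mean-reverting toward the target values $u_j \approx 1 - (1-\la)(\la d)^{j-1}$, with restoring force at least of order $(1-\la)^2(\la d)^{j-1}/n$ near the boundary. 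A secondary check is that the centre of $\cN^\eps$ agrees, up to negligible error, with the exact discrete fixed point of $\mu_j$.

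With the drift in hand, I would apply an Azuma--Hoeffding bound to the martingale
$$
M^{(j)}_t := u_j(X_t) - u_j(X_0) - \sum_{s=0}^{t-1}\mu_j(X_s),
$$
whose increments are $O(1/n)$. Starting from any $x\in\cN^{\eps/2}$, each coordinate $u_j$ must move at least $\tfrac{5}{2}\eps(1-\la)(\la d)^{j-1}$ to leave $\cN^\eps$, whereas the typical martingale deviation over $t$ steps is $O(\sqrt{t}\,\log n/n)$ except on an event of probability $e^{-\log^2 n}$. Combined with the mean reversion, this shows that the chain started in $\cN^{\eps/2}$ stays inside $\cN^\eps$ throughout some polynomial window $[0,T_0]$ and returns to $\cN^{\eps/2}$ by time $T_0$. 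Iterating this renewal-style step $e^{\frac14 \log^2 n}/T_0$ times upgrades polynomial-time confinement to exponential-time confinement, with failure probability $e^{-\frac12 \log^2 n}$.

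To conclude, I would invoke stationarity. Since $(Y_t)$ is in equilibrium, $\Pr(Y_t\notin \cN^{\eps/2}) = \Pr(Y_0\notin \cN^{\eps/2})$ for each integer $t$. A separate argument --- to be supplied earlier in the paper, using the mixing-time results alluded to in the introduction together with the observation that any state with $\|X_0\|_\infty$ of order $k$ hits $\cN^{\eps/2}$ within polynomially many steps --- yields $\Pr(Y_0 \notin \cN^{\eps/2}) \le e^{-\log^2 n}$. Combining this with the confinement estimate above and a union bound over the $e^{\frac14 \log^2 n}$ time steps in $[0, e^{\frac14 \log^2 n}]$ produces the stated bound.

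The principal obstacle is extending the concentration estimates from polynomial to exponential time horizons. A single application of Azuma--Hoeffding only handles $t = O(n^2 \eps^2 (1-\la)^2 (\la d)^{2(j-1)})$, which is polynomial under the hypothesis $1-\la \ge C\log^2 n/\sqrt{n}$. Reaching $e^{\frac14 \log^2 n}$ requires iterating the concentration estimate using the mean-reverting drift, and controlling all $k$ coordinates $u_1,\dots,u_k$ simultaneously --- with $k$ itself growing with $n$, and the relevant boundary widths $(1-\la)(\la d)^{j-1}$ varying across levels by a factor of $d^{k-1}$. Balancing these competing scales, so that neither the lowest coordinate $u_1$ (small width, weak drift) nor the highest coordinate $u_k$ (constrained by $u_{k+1}=0$) escapes over the exponential horizon, is the delicate quantitative core of the proof.
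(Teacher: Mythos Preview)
Your proposal does not match what the paper does for this statement. In the paper, Theorem~\ref{thm.main} is deduced from Theorem~\ref{thm.technical} by a half-page verification that the hypotheses of Theorem~\ref{thm.main} (with $C \ge 75^2\eps^{-2}$) imply the numerical conditions $d^k(1-\la) \ge 2\log^2 n$ and $\eps \ge \max\{150k/\sqrt d,\; 100k(1-\la)d^{k-1},\; 10\sqrt6\, k\log n\,(1-\la)^{-1}/\sqrt{nd}\}$ required by Theorem~\ref{thm.technical}. What you have sketched is instead an attempt at the substantive result itself, so let me compare your outline to the paper's actual argument for Theorem~\ref{thm.technical}.

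The central gap is the coupled drift. The drift $\mu_j(x)$ of $u_j$ depends on $u_{j-1}$ and $u_{j+1}$ as well as on $u_j$, so ``mean-reversion toward the target'' for a single coordinate holds only if its neighbours are already near \emph{their} targets --- which you have not established. You flag this in your final paragraph as ``the delicate quantitative core,'' but you give no mechanism for handling it. The paper's key idea (Sections~\ref{sec.heuristics} and~\ref{sec.drifts}) is precisely this decoupling: one introduces linear combinations $Q_j(x) = n\sum_{i=1}^j \gamma_{j,i}(1-u_i(x))$ with coefficients chosen as Perron--Frobenius eigenvectors of the linearised drift matrix, so that $(1+\la)\Delta Q_j(x) \approx -\la d\, Q_j(x)/n + Q_{j+1}(x)/n$ for $j<k$, and an analogous relation for $Q_k$ involving $u_{k+1}$. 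This turns the system into a triangular cascade that can be analysed one level at a time from $j=k$ down to $j=1$ (Lemmas~\ref{lem.cB}--\ref{lem.cH}). A coordinate-wise Azuma argument on the raw $u_j$ cannot replicate this without some comparable change of variables.

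There is also a circularity problem. You propose to obtain $\Pr(Y_0 \notin \cN^{\eps/2}) \le e^{-\log^2 n}$ from ``the mixing-time results alluded to in the introduction.'' In the paper the logic runs the other way: the equilibrium concentration (Corollary~\ref{cor.eqm}) is established first, via the $Q_j$ machinery together with a crude comparison to the $d=1$ process (Lemma~\ref{lem.compd1}), and only then are the mixing-time Theorems~\ref{thm.rapid-mixing} and~\ref{thm.mixing2} derived from it. Invoking mixing to bootstrap the equilibrium bound would be circular unless you supply an independent route to $\Pr(Y_0 \notin \cN^{\eps/2})$, and the ``separate argument'' you defer is exactly the hard part.
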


Observe that, from the definition of $k_{\la,d}$, we have $d^{k_{\la, d}-1} < (1-\la)^{-1} \le d^{k_{\la,d}}$.  We interpret
(\ref{eq:d}) as saying that these inequalities are true with something to spare.  In other words, (\ref{eq:d}) will hold whenever
$\log(1-\la)^{-1}/\log d$ is not too close to an integer.  If that is the case,
then the conclusion of the theorem implies that, for the $(n,d,\la)$-supermarket process in equilibrium, with high probability the
maximum queue length is $k_{\la,d}$ and most queues have exactly this length.


In fact, we shall prove the following result, which -- as we shall show -- implies Theorem~\ref{thm.main}.

\begin{theorem} \label{thm.technical}
Suppose the natural numbers $n$, $d$ and $k$, and the real numbers $\la$ and $\eps$ in $(0,1)$ satisfy: $k \ge 2$, $d^k(1-\la) \ge 2 \log^2 n$,
$$
\frac{1}{10} \ge \eps \ge \max \left\{ \frac{150 k}{\sqrt d}, 100 k (1-\la) d^{k-1}, \frac{10\sqrt 6 k \log n (1-\la)^{-1}}{\sqrt{n d}} \right\}.
$$
Then a copy $(Y_t)$ of the $(n,d,\la)$-supermarket process in equilibrium satisfies
$$
\Pr\left( \exists t \in [0,e^{\frac14 \log^2 n}], Y_t \notin \cN^\eps(n,d,\la,k) \right) \le e^{-\frac14 \log^2 n}.
$$
\end{theorem}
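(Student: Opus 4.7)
The set $\cN^\eps$ specifies two-sided bounds on the counts $N_j(x):=n(1-u_j(x))$ around the targets $m_j:=n(1-\la)(\la d)^{j-1}$ for each $1\le j\le k$, together with $u_{k+1}(x)=0$.  The plan is to decompose the claim into two assertions: (A) starting from any $x\in\cN^\eps$, the chain stays in $\cN^\eps$ throughout $[0,T]$ with probability $\ge 1-\tfrac12 e^{-\frac14\log^2 n}$, where $T=e^{\frac14\log^2 n}$; and (B) $\Pr(Y_0\in\cN^\eps)\ge 1-\tfrac12 e^{-\frac14\log^2 n}$.  The theorem then follows by conditioning on $Y_0$.

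Most of the work lies in (A).  Per step, $N_j$ changes by $+1$ if a departure empties a queue of length exactly $j$ (conditional probability $(u_j-u_{j+1})/(1+\la)$), and by $-1$ if an arrival joins a queue of length exactly $j-1$ (conditional probability $\la(u_{j-1}^d-u_j^d)/(1+\la)$).  Substituting $u_i=1-N_i/n$ and expanding the $d$-th powers, the two rates nearly cancel at $N_j=m_j$, yielding a drift of Ornstein--Uhlenbeck form
\[
\E[N_j(X_{t+1})-N_j(x)\mid X_t=x]=-\frac{\alpha}{n}\bigl(N_j(x)-m_j\bigr)+R_j(x),
\]
with reversion rate $\alpha\sim d$ and a remainder $R_j$ that absorbs three kinds of error: higher-order Taylor terms of order $d^2(N_{j-1}/n)^2$, controlled by the hypothesis $\eps\ge 150k/\sqrt d$; cross-terms involving $N_{j\pm 1}-m_{j\pm 1}$, controlled by $\eps\ge 100k(1-\la)d^{k-1}$; and the boundary effect at $j=k$ from the substitution $u_{k+1}=0$, also controlled by the same second bound.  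Applying an exponential martingale inequality of Freedman type to the compensated process $N_j(X_t)-m_j$, stopped at the first exit from $\cN^\eps$, one obtains $|N_j(X_t)-m_j|=O(\sqrt{n/d}\,\log n)$ for all $t\le T$ except with probability $e^{-\Omega(\log^2 n)}$; the third hypothesis $\eps\ge 10\sqrt 6\,k\log n\,(1-\la)^{-1}/\sqrt{nd}$ places this tolerance below $5\eps m_j$.  Separately, the creation of a length-$(k+1)$ queue requires an arrival whose $d$ samples have minimum length $k$, an event of probability at most $u_k^d\le e^{-(1+O(\eps))(1-\la)d^k}\le e^{-\frac32\log^2 n}$ per step on $\cN^\eps$, so with probability $\ge 1-e^{-\Omega(\log^2 n)}$ no such queue is ever created in $[0,T]$.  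A union bound over $j\in\{1,\ldots,k\}$ and over time completes (A).

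For (B), one starts a second copy of the chain from the all-empty state: a first-passage version of the drift computation above (where $N_j=0\ll m_j$, so the drift is strongly positive) shows this copy enters $\cN^\eps$ within polynomially many steps with overwhelming probability, and by (A) remains there for time $T$, which exceeds the rapid mixing time $O(kd^{k-1}\log n)$ established elsewhere in the paper.  Stationarity of $(Y_t)$ then transfers the ``mostly in $\cN^\eps$'' property to $Y_0$.  The main obstacle is Step~(A): because the counts $(N_1,\ldots,N_k)$ evolve as a coupled system (the drift of $N_j$ sees $N_{j-1}$ and $N_{j+1}$), the martingale concentration must be carried out uniformly in $j$, with cumulative errors from $k$ levels and $T$ time steps still exponentially small.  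The three lower bounds on $\eps$ in the hypothesis correspond precisely to the three sources of error---higher-order Taylor truncation, top-layer truncation, and martingale fluctuation---that must be separately dominated.
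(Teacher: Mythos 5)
Your plan to prove (A) treats each count $N_j(x)=n(1-u_j(x))$ as an Ornstein--Uhlenbeck type coordinate with reversion rate $\alpha\sim d$ and with the dependence on the neighbouring layers absorbed into a small error $R_j$. This is where the argument breaks down, and it is precisely the obstruction the paper describes in Section~\ref{sec.heuristics} (``analysing the evolution of the $u_i(X_t)$ directly appears to be challenging''). Write $D_j = N_j - m_j$ with $m_j = n(1-\la)(\la d)^{j-1}$. Linearising the one-step drift gives
\[
(1+\la)\,\E\bigl[D_j(X_{t+1})-D_j(X_t)\mid X_t=x\bigr]
\;\approx\; \tfrac{1}{n}\bigl[\la d\,D_{j-1}(x) - (\la d+1)\,D_j(x) + D_{j+1}(x)\bigr].
\]
The coefficients here are the rows of the matrix $M_k$. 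Now suppose $x$ sits at the ``bad'' corner of $\cN^\eps$ where $D_j = 5\eps\,m_j$ and $D_{j+1}=5\eps\,m_{j+1}=5\eps\la d\,m_j$. Then the restoring contribution $-(\la d+1)D_j/n \approx -5\eps\la d\,m_j/n$ is exactly cancelled, to leading order, by the cross-coupling $+D_{j+1}/n = 5\eps\la d\,m_j/n$. So the drift of $N_j$ is not negative at the upper edge of $\cN^\eps$; the term involving $N_{j+1}-m_{j+1}$ is not an error controlled by $\eps\ge 100k(1-\la)d^{k-1}$ but a term of the same order as the restoring force. A martingale inequality applied layer by layer, stopped at the first exit from $\cN^\eps$, therefore has no guaranteed supermartingale property and the union bound over $j$ does not close. (A related symptom: the slowest relaxation rate of $M_k/n$ is about $1/((\la d)^{k-1}n)$, not $d/n$, so your fluctuation bound of order $\sqrt{n/d}\log n$ is also too optimistic for the slow mode.)

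The paper avoids this by the change of variables $Q_j(x)=n\sum_{i\le j}\gamma_{j,i}(1-u_i(x))$ (and $Q_k$ with coefficients $\beta_i$), where the coefficient vectors are chosen as Perron--Frobenius eigenvectors of the leading principal minors of $M_k$. This makes the drift relations \emph{triangular}: $\Delta Q_j$ depends (up to genuinely small errors) only on $Q_j$ and $Q_{j+1}$, not on $Q_{j-1}$, and $\Delta Q_k$ depends only on $Q_k$ and $u_{k+1}$. One then controls $Q_k$ first, then $Q_{k-1}$, and so on down to $Q_1$, via the drift lemma (Lemma~\ref{lem.drifts-down2}), each step feeding an already-established bound on $Q_{j+1}$ into the analysis of $Q_j$. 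Your per-layer OU picture has no analogous recursion because the $N_j$'s are coupled in both directions with comparable strengths. If you want to recover your approach, you would need to first diagonalise (or at least triangularise) the drift matrix, which is exactly what the $Q_j$'s do.

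A smaller point: in step (B) you appeal to ``the rapid mixing time $O(kd^{k-1}\log n)$ established elsewhere in the paper,'' but that rapid mixing result (Theorem~\ref{thm.rapid-mixing}) is itself proved using the equilibrium concentration you are trying to establish, so as stated this is circular. The paper instead first pins the equilibrium copy into the (much larger) set $\cA_0^*$ by the crude $d=1$ coupling (Lemma~\ref{lem.compd1}), then shows a copy started in $\cA_0^*$ reaches $\cH\subseteq\cN^\eps$ and stays there, and transfers this to equilibrium by stationarity (Corollary~\ref{cor.eqm}) -- no mixing bound is needed at that stage.
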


On the surface, this theorem may appear to apply for every value of $n$; however, the conditions above can only be satisfied if $n$ is at least
$10^{15}$.  This, as well as other consequences of the assumptions above, is shown in Lemma~\ref{lem.inequalities}.

\begin{proof} [Proof of Theorem~\ref{thm.main}.]
In order to prove Theorem~\ref{thm.main}, we may assume that $\eps \le 1/10$.
We shall show that, given $\eps \in (0,1/10]$, a suitable $C$ can be found so that, whenever $n$, $d$ and $\la$ satisfy the assumptions of
Theorem~\ref{thm.main}, then $(n,d,k_{\la,d},\la,\eps)$ satisfies all the conditions of Theorem~\ref{thm.technical}.

The conditions $d^{k_{\la,d}}(1-\la) \ge 2 \log^2 n$ and $k_{\la,d} \ge 2$ (which is equivalent to $(1-\la)^{-1} > d$) follow automatically from the
assumptions of Theorem~\ref{thm.main}.  We also have, directly from the second inequality in (\ref{eq:d}), that
$\eps \ge 100 k_{\la,d} (1-\la) d^{k_{\la,d}-1}$, provided we choose $C \ge 100 \eps^{-1}$.

To see the remaining conditions, we first note that $k_{\la,d} \le \log (1-\la)^{-1} \le \log n$, for $n$ sufficiently large: the first
inequality is from the definition of $k_{\la,d}$ and the second is from the lower bound on $1-\la$, for $n$ sufficiently large.
Now $\eps \sqrt d \ge 2\eps \sqrt{C k_{\la,d}} \log n \ge 2 \eps \sqrt C k_{\la,d}^{3/2}$, using the lower bound on $d$.  We now
deduce that $\eps \sqrt d \ge 150 k$, provided $\sqrt C \ge 75\eps^{-1}$.

Finally $\frac{(1-\la)^{-1}}{\sqrt n} \le \frac{1}{C \log^2 n}$, and $k_{\la,d} \le \log n$, so
$$
\frac{10\sqrt 6 k_{\la,d} \log n (1-\la)^{-1}}{\sqrt{n d}} \le \frac{1}{C \log^2 n} \log n \frac{10 \sqrt 6\log n}{\sqrt d}
= \frac{10\sqrt 6}{C \sqrt d} \le \eps,
$$
provided $C \ge 10\sqrt 6 \eps^{-1}$.

In summary, provided we choose $C \ge 75^2 \eps^{-2}$, all the conditions of Theorem~\ref{thm.technical} are satisfied.
\end{proof}


The conditions $d^k(1-\la) \ge 2\log^2 n$ and $100k (1-\la) d^{k-1} \le \eps$ imply that $d\ge 200 \eps^{-1} k \log^2 n$.
Provided $(1-\la)^{-1}$ is large compared with $\log^2 n$, ``most'' values of $d$ above this minimum fall into one of the ranges between
$\displaystyle \left( 2 (1-\la)^{-1} \log^2 n \right)^{1/k}$ and $\displaystyle \left( \frac{ (1-\la)^{-1} }{Ck} \right)^{1/(k-1)}$,
with only small transitional ranges around $(1-\la)^{-1/k}$, for $k$ an integer, left uncovered by the result.

For values of $d$ in these transitional ranges, our results say nothing.  However, in these ranges, we can make use of a coupling result in~\cite{tur98}
(see also~\cite{gr00}).  For $d < d'$, there is a coupling of the $(n,d',\la)$-supermarket process and the $(n,d,\la)$-supermarket process such that, for all
times $t \ge 0$, and for each $j$, the number of customers in position at least $j$ in their queue at time $t$ in the $(n,d',\la)$-supermarket
process is at most the corresponding number in the $(n,d,\la)$-supermarket process, provided this is true at time~0.  This implies that, in
equilibrium, the number of customers in position at least $j$ in the $(n,d',\la)$-supermarket
process in equilibrium is stochastically at most the corresponding number in the $(n,d,\la)$-supermarket process.  For instance, if in the equilibrium
$(n,d,\la)$-supermarket process, the maximum queue length is at most $k$ with high probability, then the same is true for the equilibrium
$(n,d',\la)$-supermarket process.

Also, if $\la < \la'$, then there is a coupling of the $(n,d,\la)$- and $(n,d,\la')$-supermarket processes, so that at each time, each queue in
the $(n,d,\la)$-supermarket process is no longer than in the $(n,d,\la')$-supermarket process, provided this is true at time~0.  So, for instance,
if at a given time there are at least $m$ queues with length $k$ in the $(n,d,\la)$-supermarket process, then there are also at least $m$ queues with
length at least $k$ in the $(n,d,\la')$-supermarket process.


To illustrate our results in some special cases, first suppose $\la = \la(n) = 1 - n^{-\alpha}$, and $d = n^\beta$, for some real numbers
$\alpha, \beta \in (0,1)$; then $k_{\la,d} = \lceil \alpha/\beta \rceil$.  It is easy to check that the conditions of Theorem~\ref{thm.technical} are
satisfied with $k = k_{\la,d}$ and $\eps = n^{-\delta}$, provided: $\alpha/\beta$ is not an integer, $\alpha > \beta$ (so that $k\ge 2$),
$2\alpha < 1 + \beta$ (so that $(1-\la)^{-1}/\sqrt{nd} = n^{\alpha -(1+\beta)/2}$ tends to zero), and
$$
\delta < \min \left( \frac{\beta}{2}, \left\lfloor \frac{\alpha}{\beta} \right\rfloor \beta - \alpha, \alpha - \frac{1+\beta}{2} \right),
$$
provided $n$ is sufficiently large.
The conclusions of Theorem~\ref{thm.technical} then hold, so in equilibrium the process spends almost all of the time in
$\cN^\eps = \cN^\eps(n,d,\la,k)$.  For $x \in \cN^\eps$, the maximum queue-length is $k$, and the number of queues of
length less than $k$ is given by $n (1-u_k(x))$, which lies between
$$
(1-5n^{-\delta}) n^{1-\alpha + \lfloor \alpha/\beta \rfloor \beta} \quad \mbox{ and } \quad (1+5n^{-\delta}) n^{1-\alpha + \lfloor \alpha/\beta \rfloor \beta}
$$
If $\alpha/\beta$ is equal to an integer $k \ge 2$, then we cannot expect as strong a conclusion to hold.  However, by comparing with the process for
slightly lower, and slightly higher, values of $\la$, we see that the maximum queue length in equilibrium is a.s.\ either $k$ or $k+1$, and that
most queues have length either $k$ or $k+1$.


If we take $\la = 1 - n^{-\alpha}$ for some constant $\alpha \in (0,1/2)$, and $d$ tending to infinity more slowly than a power of $n$,
but with $d \ge C \log^3 n /\log\log n$ for a suitably large constant $C$, then again Theorem~\ref{thm.technical} applies, with
$k = k_{\la,d} = \lceil \log(1-\la)^{-1} / \log d \rceil = \lceil \alpha \log n / \log d \rceil$, provided that
$\alpha \log n/ \log d$ is not too close to an integer.  In this case,
$k_{\la,d}$ tends to infinity with $n$, and can be as large as
$\left(\alpha/3 - \delta\right) \log n /\log \log n$, for $\delta$ any positive constant and $n$ sufficiently large.
Specifically, suppose that $k = \gamma \log n / \log \log n$, with $\gamma < \alpha/3$.  Then it is straightforward to check that the conditions of
Theorem~\ref{thm.technical} are satisfied if
$d = (\log n)^{(\alpha + \theta \log \log n / \log n)/\gamma}$, which is equivalent to $d^k = n^\alpha (\log n)^\theta$,
for $2 < \theta < \alpha/\gamma - 1$.  Here we can take $\eps$ to be $(\log n)^{-\delta}$ for a small enough $\delta$.  In this range, even
though $k$ is fairly large, the conclusion is that, with high probability in equilibrium, almost all queues have length exactly $k$, and there are
no longer queues.


Next, suppose $\la = 1 - (\log n)^{-\alpha}$, for some fixed $\alpha > 2$.  For such a value of $\la$, Theorem~\ref{thm.technical} requires that
$d \ge 200 \eps^{-1} k \log^2 n \ge \log^2 n$, and that $d^{k-1} \le \eps (1-\la)^{-1} / 100 k \le (1-\la)^{-1} = (\log n)^\alpha$, which implies
that $\alpha / (k-1) < 2$, or $k < \frac12 \alpha + 1$.

In other words, for such values of $\la$, we only have results giving conditions under which the maximum queue length
$k = k_{\la,d}$ is a constant, with $2 \le k < \frac12 \alpha +1$.  If $d = (\log n)^\beta$, where
$(\alpha + 2)/k < \beta < \alpha/(k-1)$, then the conditions of Theorem~\ref{thm.technical} do apply for sufficiently
large $n$, where we may take $\eps = (\log n)^{-\delta}$ for a suitably small $\delta$.

If $\eps \le 1/10$ and $\la = 1 - \frac{1}{C \log^2 n}$, for $C > 160000 \eps^{-2}$, and $d = c \log^2 n$, for $\sqrt {2C} \le c \le \eps C / 200$,
then the conditions of Theorem~\ref{thm.technical} apply for $k=2$ and this value of $\eps$, for sufficiently large $n$.  This is the range of applicability
of our results with the slowest rate of convergence of $\la$ to 1.

As mentioned earlier, and explained in more detail in Section~\ref{sec.heuristics}, our results are in line with a more general hypothesis:
for a very wide range of parameter values, the maximum queue length of the $(n,d,\la)$-supermarket model in equilibrium is within~1 of the largest $k$
such that
$$
\la^{1 + d + \cdots + d^{k-1}} > \frac{1}{n}.
$$
This general hypothesis holds when $\la$ and $d$ are constants: see~\cite{lmcd06}.  It is also valid for the range where $\la$ is fixed and
$d \to \infty$: see~\cite{f11}.


Another range not covered by Theorem~\ref{thm.main} is that where $d \ge (1-\la)^{-1}$, where one should expect the maximum queue length $k$ to be equal to~1.
For this range, our techniques can be used, but there are several places where we would need to pick out $k=1$ as a special case and treat it
separately.  Rather than do this, we refer the interested reader to the PhD thesis~\cite{f11} of Marianne Fairthorne, which contains a detailed
treatment of this case.  The authors, with Fairthorne, intend to write this result up for publication elsewhere.


We also prove various rapid mixing results.
For $x \in \Z_+^n$, let $\cL(X^x_t)$ denote the law at time $t$ of the $(n,d,\la)$-supermarket process $(X_t^x)$ started in state $x$.  Also let $\Pi$
denote the stationary distribution of the $(n,d,\la)$-supermarket process.

\begin{theorem} \label{thm.rapid-mixing}
Suppose that $n$, $d$, $k$, $\la$ and $\eps = \frac{1}{60}$ satisfy the conditions of Theorem~\ref{thm.technical}.
Let $x$ be a queue-lengths vector in $\cN^\eps(n,d,\la,k)$.  Then, for $t \ge 0$,
$$
d_{TV}(\cL(X^x_t),\Pi) \le n \left( 2 e^{-\frac14 \log^2 n} + 4\exp \left( - \frac{t}{1600 k d^{k-1} n}\right) \right).
$$
\end{theorem}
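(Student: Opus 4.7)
My plan is to couple $(X_t^x)$ with a stationary copy $(Y_t)$ and bound the expected Wasserstein distance between their sorted profiles. Write $T_0 = e^{\frac14 \log^2 n}$. Theorem~\ref{thm.technical} applied to $(Y_t)$ gives $\Pr(\exists s \le T_0 : Y_s \notin \cN^\eps) \le e^{-\frac14 \log^2 n}$, and I expect that a parallel argument --- running the same drift and martingale bounds used to prove Theorem~\ref{thm.technical}, but starting inside $\cN^\eps$ rather than in equilibrium --- yields the analogous bound $\Pr(\exists s \le T_0 : X_s^x \notin \cN^\eps) \le e^{-\frac14 \log^2 n}$. Summing over the two processes will account for the $2n e^{-\frac14 \log^2 n}$ contribution once the coupling inequality is invoked.

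For the coupling itself I will use the rank coupling familiar from the supermarket literature. At each step, the two processes share the arrival/departure decision; on a departure a single rank $I \in \{1,\dots,n\}$ is drawn uniformly, and in each process the $I$-th shortest queue (if nonempty) is served; on an arrival, $d$ ranks are drawn uniformly with replacement, and in each process the arrival joins the queue whose rank in sorted order equals the smallest of the chosen $d$. Under this coupling the sorted vectors evolve in tandem, and the $L^1$-Wasserstein distance
\[
\Phi_t \;=\; \sum_{i=1}^n |a_i(t) - b_i(t)|,
\]
where $a(t)$ and $b(t)$ are the sorted coordinates of $X_t^x$ and $Y_t$, is a non-negative supermartingale.

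The core of the proof is the contraction inequality
\[
\E\bigl[\Phi_{t+1} \bigm| \cF_t\bigr] \;\le\; \Bigl(1 - \frac{c}{k d^{k-1} n}\Bigr)\,\Phi_t
\]
on the event that both $X_t^x$ and $Y_t$ lie in $\cN^\eps$, for some absolute constant $c>0$. The reason this should hold is that, inside $\cN^\eps$, the top $(1-o(1))n$ sorted ranks are pinned at height $k$ in both copies, so all discrepancies live in the bottom $O((1-\la) d^{k-1} n)$ ranks. A uniform departure hits a discrepant rank with probability of order $\Phi_t/n$, each such hit reducing $\Phi_t$ by at least one; arrivals can in principle increase $\Phi_t$ only through re-sorting at the boundary between the short- and tall-queue regions, and the flatness of the tall region tightly bounds such events. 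The main technical obstacle will lie here, in ruling out pathological arrivals whose insertion rank differs between the two copies after re-sorting; a careful case analysis over the possible orderings of discrepant coordinates should do the job, and is the step most likely to force the factor of $k d^{k-1}$ rather than a cleaner $1/n$ contraction rate.

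Finally, iterating the contraction over $t$ steps, on the event that both processes remain in $\cN^\eps$ throughout $[0,t]$, together with the trivial bound $\Phi_0 \le 2nk$ valid for any $x \in \cN^\eps$, yields $\E \Phi_t \le 2nk \exp(-ct/(k d^{k-1} n))$. Markov's inequality bounds the probability that the two copies differ at time $t$ on this event by the same quantity, and the coupling inequality $d_{TV}(\cL(X_t^x), \Pi) \le \Pr(X_t^x \ne Y_t)$, combined with the confinement estimates from the first paragraph, yields the stated bound, with the constants $1600$ and $4$ absorbed in this last step.
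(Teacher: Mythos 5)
Your overall strategy---confine both processes to $\cN^\eps$ with high probability, then contract a coupling distance inside $\cN^\eps$---is the same two-phase shape as the paper's proof of Theorem~\ref{thm.rapid-mixing}. However, the key quantitative step you propose is genuinely different from what the paper does, and I believe it contains a gap that cannot be repaired without essentially reverting to the paper's mechanism.

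The contraction inequality $\E[\Phi_{t+1}\mid \cF_t]\le \bigl(1-\tfrac{c}{kd^{k-1}n}\bigr)\Phi_t$ is the heart of your argument, and it is false as stated. The justification you give---``a uniform departure hits a discrepant rank with probability of order $\Phi_t/n$, each such hit reducing $\Phi_t$ by at least one''---does not hold: a departure at a rank where $a_i>b_i\ge 1$ decrements both coordinates by one and leaves $|a_i-b_i|$ unchanged (the discrepancy merely migrates downward after re-sorting). A departure reduces $\Phi_t$ only when it hits a rank where exactly one of $a_i,b_i$ equals zero. For a sorted pair whose single unit discrepancy sits at heights $(j,j-1)$ with $j\ge 2$, no one-step transition decreases $\Phi$, so the conditional drift is (at best) zero, and no inequality of the form $\E[\Phi_{t+1}\mid\cF_t]\le(1-\delta)\Phi_t$ with $\delta>0$ can hold uniformly over $\cN^\eps\times\cN^\eps$. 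What actually happens is that the discrepancy performs a random walk in rank with an \emph{upward} bias below height $k$ (an arrival to a queue of length $j<k$ has conditional rate roughly $d\la/n$ versus departure rate $1/n$), gets stuck near height $k$, and only escapes to $0$ with an exponential-tail return time. This is precisely why the paper does not attempt a one-step Lyapunov contraction. Instead it reduces to \emph{adjacent} pairs via a path-connectivity lemma for $\cN^\eps$ (Lemma~\ref{lem.path-connected}), tracks the height $W_t$ of the single unbalanced queue, and invokes the return-time Lemma~\ref{lem.return-time} with $k_0=k$ and $\delta=1/(\la d+1)$; the factor $d^{k-1}$ is exactly $\delta^{-(k-1)}$, the cost of crossing the upward-drift region $\{1,\dots,k-1\}$. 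Your $\exp(-ct/(kd^{k-1}n))$ decay of $\E\Phi_t$ is correct as a \emph{conclusion}, but it cannot be obtained by iterating a per-step multiplicative contraction.

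A second, independent issue is that the rank coupling you describe couples the \emph{sorted} profiles, whereas the theorem bounds $d_{TV}(\cL(X^x_t),\Pi)$ on the labelled state space $\Z_+^n$. Since $\cL(X^x_t)$ need not be exchangeable for a fixed $x$, coincidence of the sorted distributions does not control the total variation distance of the labelled laws, and Markov's inequality on $\Phi_t$ would only give $\Pr(\mathrm{sort}(X^x_t)\ne\mathrm{sort}(Y_t))$. The paper avoids this by working throughout with the labelled coupling of Lemma~\ref{lem.coupling-distance}, under which $\|X^x_t-X^y_t\|_1$ is almost surely nonincreasing and coalescence is coalescence of the labelled vectors. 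If you insist on the rank coupling you would need an extra step showing the labelled processes actually merge, and this step is not sketched. Finally, a minor point: your trivial bound $\Phi_0\le 2nk$ would give a prefactor $2nk$ rather than the stated $n$; the paper gets the sharper prefactor by constructing, via Lemma~\ref{lem.path-connected}, a path between any two states of $\cN^\eps$ of length at most $4n(1-\la)(\la d)^{k-1}\le n$ staying inside $\cN^\eps$.
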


In other words, for a copy of the process started in a state in $\cN^\eps(n,d,\la,k)$, with $\eps = \frac{1}{60}$, the mixing time is of order
$k d^{k-1} n \log n \le (1-\la)^{-1} n \log n \le n^2$ (see (\ref{ineq:7}) and (\ref{ineq:115})).  Formally, rapid mixing is often defined to be
mixing in $O(n\log n)$ steps, and this does not meet that criterion, but in fact this result is nearly best possible: we show that mixing,
starting from states in $\cN^\eps(n,d,\la,k)$, requires $\Omega(d^{k-1} n)$ steps.

From states not in $\cN^\eps$, we cannot expect to have rapid mixing in general.  For instance, suppose we start from a state $x$ with number
of customers $\|x\|_1 =g n$, where $g$ is much larger than $k_{\la,d}$.
The expected decrease in the number of customers at each step of the chain is at most $\frac{1-\la}{1+\la}$, so mixing takes
$\Omega( (1-\la)^{-1} g n)$ steps.  Similarly, if we start with one long queue, of length $\|x\|_\infty = \ell$ much greater than $k_{\la,d}$, then mixing
takes $\Omega(\ell n)$ steps, to allow time for the long queue to empty out.  We prove the following result, giving a nearly best-possible mixing time
for $(X_t^x)$ in terms of $\|x\|_1$ and $\|x\|_\infty$.

\begin{theorem} \label{thm.mixing2}
Suppose that $(n,d,\la,k,\frac{1}{60})$ satisfies the hypotheses of Theorem~\ref{thm.technical}, and let $x$ be any queue-lengths vector.
Let
$$
q = (6000 k n + 4320 \|x\|_1) (1-\la)^{-1} + 8 n \| x\|_\infty
$$
and suppose that $q \le \frac12 e^{\frac13 \log^2 n}$.  Then, for $t \ge 2q$, we have
$$
d_{TV}(\cL(X^x_t),\Pi) \le 2 \|x\|_1 \left( 2 e^{-\frac14 \log^2 n} + 4\exp \left( - \frac{t}{3200 k d^{k-1} n}\right) \right).
$$
\end{theorem}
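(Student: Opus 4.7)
The plan is to reduce Theorem~\ref{thm.mixing2} to Theorem~\ref{thm.rapid-mixing} via a burn-in argument: I would show that, starting from any queue-lengths vector~$x$, the chain $(X^x_t)$ enters the set $\cN^\eps(n,d,\la,k)$ within $q$ steps with high probability, and then apply Theorem~\ref{thm.rapid-mixing} from the hitting state. Letting $\tau$ denote the first time the chain enters $\cN^\eps$, the Markov property gives
$d_{TV}(\cL(X^x_t),\Pi) \le \Pr(\tau > q) + \sup_{z \in \cN^\eps,\, s \ge t/2} d_{TV}(\cL(X^z_s),\Pi)$
for $t \ge 2q$, and the second supremum is controlled directly by Theorem~\ref{thm.rapid-mixing}. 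This doubling of the time horizon is precisely why the exponential constant $1600$ of Theorem~\ref{thm.rapid-mixing} is replaced by~$3200$ in the present statement.

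The three summands in $q$ correspond to three successive phases of the burn-in. In the first, of length $q_1 = 8n\|x\|_\infty$, the aim is to show $\|X^x_{q_1}\|_\infty \le k$: each initially long queue is selected for potential service at rate $1/((1+\la)n)$ per step, so by a Chernoff bound on the service opportunities at each queue, combined with the fact that arrivals concentrate on short queues, this phase succeeds with failure probability at most $\|x\|_\infty \cdot e^{-\frac14 \log^2 n}$. In the second phase, of length $q_2 = 4320\|x\|_1(1-\la)^{-1}$, the aim is to show $\|X^x_{q_1+q_2}\|_1 = O(kn)$: once $u_1(X_t) \ge (1+\la)/2$, the drift of $\|X_t\|_1$ is at most $-(1-\la)/(2(1+\la))$, and an Azuma-type martingale concentration yields the claimed reduction. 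In the third phase, of length $q_3 = 6000kn(1-\la)^{-1}$, I would couple $(X^x_t)$ with a stationary copy $(Y_t)$ via Turner's monotone coupling: by Theorem~\ref{thm.technical}, $Y_t \in \cN^\eps$ throughout $[0, e^{\frac14 \log^2 n}]$ with high probability, and once the totals $\|X_t\|_1$ and $\|Y_t\|_1$ are matched, the two profiles concentrate together within $O(kn(1-\la)^{-1})$ steps, forcing $X^x_q$ into $\cN^\eps$.

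The hardest step is the third phase: the bound $\|X_t\|_1 \le 2kn$ from Phase~2 does not prevent $X_t$ from having, say, too few queues at some intermediate length~$j$, which would still put it outside~$\cN^\eps$. Turner's coupling preserves $u_j(X_t) \ge u_j(Y_t)$ once it holds and so controls the upper tail of the profile, but matching lower bounds demand a symmetric comparison with a lower-dominating process, and the estimates must hold uniformly over $1 \le j \le k$ in order to land inside $\cN^\eps$. The factor $2\|x\|_1$ in the final bound (replacing the $n$ of Theorem~\ref{thm.rapid-mixing}) reflects the accumulated burn-in failure probability, which grows linearly with the initial number of customers that must be processed away before the profile can equilibrate.
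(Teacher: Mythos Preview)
Your high-level strategy---burn in to $\cN^\eps$, then invoke Theorem~\ref{thm.rapid-mixing}---is a plausible route, but it is not what the paper does, and several details of your sketch do not hold up.

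The paper's argument is a direct path-coupling bound (Corollary~\ref{cor.tv}), not a restart from $\cN^\eps$. For any two states $x,y\in\cA_0(\ell,g)$ one joins them by a path of adjacent states going from $x$ down to the empty vector and back up to $y$; this path has length $\|x\|_1+\|y\|_1\le 2gn$. Lemma~\ref{lem.coalesce-adj} bounds $\Pr(X^{z}_t\neq X^{z'}_t)$ for each adjacent pair $(z,z')$ on the path (it is \emph{inside} this lemma, via Theorem~\ref{thm.stay-in-H}, that both coupled copies are run until they reach $\cH$ by time $q(\ell,g)$, after which the single disagreeing queue is driven to zero). Summing along the path gives Theorem~\ref{thm.coalesce-general}, and taking $y=Y_0$ yields the TV bound. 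The prefactor $2\|x\|_1$ is therefore the \emph{path length}, not an accumulated burn-in failure probability as you suggest; your approach, applying Theorem~\ref{thm.rapid-mixing} after burn-in, would inherit the factor $n$ from that theorem rather than $2\|x\|_1$.

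Your burn-in sketch also has gaps. Phase~1 cannot be done first: to ensure no new queues of length $>k$ are created while the long queues drain, you need $u_k(X_t)^d$ to be small, which requires prior control on the profile---in the paper the reduction of $\|X_t\|_\infty$ to $k$ is the \emph{last} step (Lemma~\ref{lem.cH}, accounting for the $8\ell n$ term in $m_\cH$), after the $Q_j$ have all been pinned down. In Phase~3, ``Turner's monotone coupling'' is the coupling between supermarket processes with different values of $d$, not different initial states; the relevant coupling here is that of Lemma~\ref{lem.coupling-distance}. And ``once the totals are matched the profiles concentrate together'' is precisely the hard part---the paper spends Sections~\ref{sec.drifts}--\ref{sec.GH} establishing this via the $Q_j$ functions, and a one-line appeal to monotonicity does not substitute for it.
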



The supermarket model is an instance of a model whose behaviour has been fully analysed even though there are an unbounded
number of variables that need to be tracked -- namely, the proportions $u_i(X_t)$.  While what we achieve in this paper is
similar to what is achieved by Luczak and McDiarmid in~\cite{lmcd06} for the case where $\la$ and $d$ are fixed as $n \to \infty$,
only some of the techniques of that paper can be used here, as we now explain.

The proofs in~\cite{lmcd06} rely on a coupling of copies of the supermarket process where the distance between coupled copies does not increase in time.
This coupling is, in particular, used to establish concentration of measure, over a long time period, for Lipschitz functions of the queue-lengths
vector; this result is valid for any values of $(n,d,\la)$, and in particular in our setting.  Fast coalescence of coupled copies, and hence rapid mixing,
is shown by comparing the behaviour of the $(n,d,\la)$-process with the $(n,1,\la)$-process, which is easy to analyse.
This then also implies concentration of measure for Lipschitz functions in equilibrium, and that the profile of the equilibrium
process is well concentrated around the fixed point $\pi$ of the equations (\ref{eq.diff-eq}).

The coupling from~\cite{lmcd06} also underlies the proofs in the present paper. However,
in our regime, comparisons with the $(n,1,\la)$-process are too crude. Thus we cannot show that the coupled copies
coalesce quickly enough, until we know something about the profiles of the copies, in particular that their maximum queue lengths are small.
Our approach is to investigate the equilibrium distribution first, as well as the time for a copy of the process from a fairly general starting state to
reach a ``good'' set of states in which the equilibrium copy spends most of its time.  Having done this, we then prove rapid mixing in a very similar
way to the proof in~\cite{lmcd06}.

To show anything about the equilibrium distribution, we would like to examine the trajectory of the vector $u(X_t)$, whose components are the
$u_i(X_t)$ for $i\ge 1$.  This seems difficult to do directly, but we perform a change of variables and analyse instead a collection of just
$k= k_{\la,d}$ functions $Q_1(X_t), \dots, Q_k(X_t)$.  These are linear functions of $u_1(X_t), \dots, u_k(X_t)$, with the property that the
drift of each $Q_j(X_t)$ can be written, approximately, in terms of $Q_j(X_t)$ and $Q_{j+1}(X_t)$ only.  Exceptionally, the drift of $Q_k(X_t)$
is written in terms of $Q_k(X_t)$ and $u_{k+1}(X_t)$.  The particular forms of the $Q_j$ are chosen by considering the Perron-Frobenius eigenvalues
of certain matrices $M_k$ derived from the drifts of the $u_j(x)$.  Making this change of variables allows us to consider one function $Q_j(X_t)$ at
a time, and show that each in turn drifts towards its equilibrium mean (which is derived from the fixed point $\pi$ of (\ref{eq.diff-eq})), and we are
thus able to prove enough about the trajectory of the $Q_j(X_t)$ to show that, starting from any reasonable state, with high probability the chain
soon enters a good set of states where, in particular, $u_{k+1}(X_t) = 0$, and so the maximum queue length is at most $k$.  We also show that, with
high probability, the chain remains in this good set of states for a long time, which implies that the equilibrium copy spends the vast majority of its
time in this set.  The argument from~\cite{lmcd06} about coalescence of coupled copies can be used to show rapid mixing from this good set of states.
The drift of the function $Q_k$ to its equilibrium is slower than that of any other $Q_j$, and its drift rate, $1/(\la d)^{k-1}n$, is approximately the
spectral gap of the Markov chain $(X_t)$, and hence determines the speed of mixing.

The structure of the paper is as follows.  In Section~\ref{sec.heuristics}, we expand on the discussion above, and motivate the definitions of
the functions $Q_j: \Z_+^n \to \R$, which are fundamental to the proof.
In Section~\ref{sec.prelim}, we give a number of results about the long-term behaviour of random walks with drifts, including
several variants on results from~\cite{lmcd06}.
In Section~\ref{sec.coupling}, we describe the key coupling from~\cite{lmcd06}, and use it to prove some results about the maximum queue length
and number of customers.
In Section~\ref{sec.drifts}, we discuss in detail the drifts of the functions $Q_j$.
The proof of Theorem~\ref{thm.technical} starts in Section~\ref{sec.hit-and-exit}, where we show how to
derive a closely related result from a sequence of lemmas.  These lemmas are proved in Sections~\ref{sec.BCD}--\ref{sec.GH}.  In Section~\ref{sec.H-I-N}, we
complete the proof of Theorem~\ref{thm.technical}.  We prove our results on mixing times in Section~\ref{sec.mixing}.

\section{Heuristics} \label{sec.heuristics}

In this section, we set out the intuition behind our results and proofs.  As before, let $(Y_t)$ be an equilibrium copy of the $(n,d,\la)$-supermarket
process.  Guided by the results in~\cite{f11,lmcd06}, we start by supposing that, for each $i\ge 1$, $u_i(Y_t)$ is well-concentrated around its
expectation $u_i$, and seeing what that implies about the $u_i$.  We have
\begin{eqnarray} \label{eq.drift-u}
\Delta u_i(Y_t) &=& \E[u_i(Y_{t+1}) - u_i(Y_t) \mid Y_t] \\
&=& \frac{1}{n(1+\la)} [\la u_{i-1} (Y_t)^d - \la u_i (Y_t)^d - u_i (Y_t) + u_{i+1}(Y_t)] .
\end{eqnarray}
To see this, observe that, for $i\ge 1$, conditioned on $Y_t$, the probability that the event at time $t+1$ is an arrival to a queue
of length exactly $i-1$, increasing $u_i$ by $1/n$, is $\frac{\la}{1+\la} \left( u_{i-1}(Y_t)^d - u_i(Y_t)^d \right)$, while
the probability that the event is a departure from a queue of length exactly $i$, decreasing $u_i$ by $1/n$, is
$\frac{1}{1+\la} \left( u_i(Y_t) - u_{i+1}(Y_t) \right)$.  Note that $u_0$ is identically equal to~1.

Taking expectations on both sides, and setting them to 0, we see that, since $(Y_t)$ is in equilibrium,
\begin{eqnarray} \label{eq.equilibrium}
0 &=& \E[u_i(Y_{t+1}) - u_i(Y_t)] \\
&=& \frac{1}{n(1+\la)} \E [\la u_{i-1} (Y_t)^d - \la u_i (Y_t)^d - u_i (Y_t) + u_{i+1}(Y_t)] \\
&\simeq& \frac{1}{n(1+\la)} [\la u_{i-1}^d - \la u_i^d - u_i + u_{i+1}],
\end{eqnarray}
where the approximations $\E u_i(Y_t)^d \simeq u_i^d$ and $\E u_{i-1}(Y_t)^d \simeq u_{i-1}^d$ are justified because of our assumption that
$u_i(Y_t)$ and $u_{i-1}(Y_t)$ are  well-concentrated around their respective means $u_i$ and $u_{i-1}$.

The system of equations
\begin{eqnarray}
0 &=& \la \hat{u}_{i-1}^d - \la \hat{u}_i^d - \hat{u}_i + \hat{u}_{i+1} \quad (i=1,2,\dots) \label{eq:hat}\\
1 &=& \hat{u}_0 \notag
\end{eqnarray}
has a unique solution with $\hat{u}_i \to 0$ as $i \to \infty$, namely:
$$
\hat{u}_i = \la^{1+ \cdots + d^{i-1}} \quad (i=0,1,\dots).
$$
See~\cite{lmcd06} and the references therein for details.

By analogy with~\cite{lmcd06}, and motivated by (\ref{eq.equilibrium}), if the $u_i(Y_t)$ are well concentrated, we expect that
$u_i \approx \hat{u}_i$, for each $i$, and moreover that the values of $u_i(Y_t)$ remain close to the corresponding $\hat{u}_i$ for long periods of time.
In the regime of Theorem~\ref{thm.technical},
\begin{eqnarray*}
\log \hat{u}_i &=& \log (1-(1-\la)) (1+ \cdots + d^{i-1}) \\
&=& - (1-\la)d^{i-1} (1 + O(1-\la)) (1+ O(1/d)) \\
&=&  - (1-\la)d^{i-1} ( 1+ O(1/\log^2 n)),
\end{eqnarray*}
for each $i \ge 1$.  In particular, $\hat{u}_{k+1}$ is much smaller than $1/n$ -- recall that $d^k(1-\la) \ge 2 \log^2 n$.  One part of our goal is to
show that indeed, in equilibrium, for a long period of time there is no queue of length greater than~$k$.

On the other hand, for $i\le k$, our assumptions on $\lambda$ and $d$ imply that $\hat{u}_i$ is close to~1 -- recall that $d^{k-1}(1-\la) \le \eps/100k$.
This suggests that, in equilibrium, most queues have length exactly~$k$.  Moreover, $\hat{u}_i^d = 1-o(1)$ for $i<k$, so that
$1-\hat{u}_i^d \approx d (1-\hat{u}_i)$, whereas $\hat{u}_k^d = o(1)$.  We then obtain the following linear approximation to the equations (\ref{eq:hat}),
written in terms of variables $1-\tilde{u}_1, \dots, 1-\tilde{u}_k$:
\begin{eqnarray*}
0 &=& \la d (1-\tilde{u}_1) + (1-\tilde{u}_1) - (1-\tilde{u}_2), \\
0 &=& -\la d (1-\tilde{u}_{i-1}) + \la d (1-\tilde{u}_i) + (1-\tilde{u}_i) - (1-\tilde{u}_{i+1})\\
&&\mbox{} \hspace{3.2truein} (2\le i \le k-1),\\
0 &=& -\la d (1-\tilde{u}_{k-1}) + (1-\tilde{u}_k) - (1 -\la) .
\end{eqnarray*}
These linear equations have solution $\tilde u$ given by
$$
1-\tilde{u}_i = (1-\la) ( 1+ (\la d) + \cdots + (\la d)^{i-1} ),
$$
for $i=1,\dots, k$.  We then have the further approximation
$$
1-\tilde{u}_i \approx (1-\la) (\la d)^{i-1},
$$
for $i=1, \dots, k$.

Ideally, we would seek a single function of the $u_i(x)$, which is small when $u_i(x) \approx \tilde{u}_i$ for each $i$, and larger
otherwise, and which has a downward drift outside of a small neighbourhood of $\tilde u$: we could then analyse the trajectory of this
function to show that $(u_1(x), \dots, u_k(x))$ stays close to $\tilde u$ for a long period.  We have been unable to find such a function,
and indeed analysing the evolution of the $u_i(X_t)$ directly appears to be challenging.
Instead, we work with a sequence of functions $Q_j(x)$, $j=1, \dots, k$, each of the form $Q_j(x) = n \sum_{i=1}^j \gamma_{j,i} (1-u_i(x))$,
where the $\gamma_{j,i}$ are positive real coefficients.  This sequence of functions has the property that the drift of
each $Q_j(x)$ can be written (approximately) in terms of $Q_j(x)$ itself and $Q_{j+1}(x)$.

Let us see how these coefficients should be chosen, starting with the special case $j=k$, where we write $\beta_i$ for $\gamma_{k,i}$.
Consider a function of the form $Q_k(x) = n \sum_{i=1}^k \beta_i (1-u_i(x))$.  As in the argument leading to (\ref{eq.drift-u}), we have that the
drift of this function satisfies
\begin{eqnarray*}
\lefteqn{(1+\la) \Delta Q_k(x)} \\
&=&  - (1+\la) n \sum_{i=1}^k \beta_i \Delta u_i(x) \\
&=& - \sum_{i=1}^k \beta_i [\la u_{i-1}(x)^d - \la u_i(x)^d - u_i(x) + u_{i+1}(x)] \\
&=& \sum_{i=1}^k \beta_i [\la (1-u_{i-1}(x)^d) - \la (1-u_i(x)^d) - (1-u_i(x)) + (1-u_{i+1}(x))].
\end{eqnarray*}
Making the approximations $u_{k+1}(x) \simeq 0$, $u_k(x)^d \simeq 0$, and $1 - u_i(x)^d \simeq d(1-u_i(x))$ for $i=1,\dots, k-1$, and
rearranging, we arrive at
\begin{eqnarray*}
(1+\la) \Delta Q_k(x) &\simeq& \beta_k (1-\la) + (\beta_{k-1} - \beta_k) (1-u_k(x)) \\
&&\mbox{} + \sum_{i=1}^{k-1} [ \la d (\beta_{i+1} - \beta_i) - \beta_i + \beta_{i-1}] (1-u_i(x)).
\end{eqnarray*}
We set $\beta_0 =0$ for convenience of writing the above expression.  This calculation is done carefully, with precise inequalities,
in Lemma~\ref{lem.qk-drift} below.  We would like to choose the $\beta_i$ so that the vector
$$
\big( \la d (\beta_2 - \beta_1) - \beta_1 + \beta_0, \dots, \la d (\beta_k - \beta_{k-1}) - \beta_{k-1} + \beta_{k-2},
\beta_{k-1} - \beta_k \big)
$$
is a (negative) multiple of $\big( \beta_1, \dots, \beta_{k-1}, \beta_k \big)$.  This would entail
$$
(1+\la) \Delta Q_k(x) \simeq \beta_k (1-\la) - \mu Q_k(x),
$$
for some (positive) $\mu$, which in turn would mean that $Q_k$ drifts towards a value of $\beta_k (1-\la) / \mu$, which should be
very close to $n (1-\la) (\la d)^{k-1}$ -- if $Q_k$ is above this value then it drifts down, whereas if $Q_k$ is below then it drifts up.
The point is that these drifts can be bounded below in magnitude, regardless of the precise values of the $u_i$ that go towards making up $Q_k$.
What we need is for $\big( \beta_1, \dots, \beta_{k-1}, \beta_k \big)$ to be a left eigenvector of the $k \times k$ matrix
$$
M_k = \begin{pmatrix}
-\la d - 1 & 1 & 0 & \cdots & 0 & 0 & 0 \\
\la d & -\la d-1 & 1 & \cdots & 0 & 0 & 0 \\
0 & \la d & -\la d-1 & \cdots & 0 & 0 & 0 \\
\vdots & \vdots & \vdots &\ddots & \vdots &\vdots & \vdots \\
0 & 0 & 0 & \cdots & -\la d -1 & 1 & 0 \\
0 & 0 & 0 & \cdots & \la d & -\la d -1 & 1 \\
0 & 0 & 0 & \cdots & 0 & \la d & -1
\end{pmatrix},
$$
or, equivalently, of the matrix
$$
M'_k = M_k +(\la d +1)I_k =
\begin{pmatrix}
0 & 1 & 0 & \cdots & 0 & 0 & 0 \\
\la d & 0 & 1 & \cdots & 0 & 0 & 0 \\
0 & \la d & 0 & \cdots & 0 & 0 & 0 \\
\vdots & \vdots & \vdots &\ddots & \vdots &\vdots & \vdots \\
0 & 0 & 0 & \cdots & 0 & 1 & 0 \\
0 & 0 & 0 & \cdots & \la d & 0 & 1 \\
0 & 0 & 0 & \cdots & 0 & \la d & \la d
\end{pmatrix}.
$$
The non-negative matrix $M'_k$ has a unique largest eigenvalue, with a positive left eigenvector.  By inspection, we see that this eigenvector is
close to the all-1 vector, with an eigenvalue close to $\la d + 1$, so that $M_k$ has largest eigenvalue very close to 0.
Recursion shows that a better approximation to the Perron-Frobenius eigenvector of $M'_k$ is
$\big( \beta_1, \dots, \beta_{k-1}, \beta_k \big)$, where
$$
\beta_i = 1 - \frac{1}{(\la d)^i} - \frac{(i-1)}{(\la d)^k},
$$
for $i=1, \dots, k$, and the largest eigenvalue of $M_k$ is very close to $-1/(\la d)^{k-1}$.  We shall see in Lemma~\ref{lem.qk-drift}
that this approximation is close enough for our purposes.

For $1\le j<k$, a similar analysis reveals that, if $Q_j(x) = n \sum_{i=1}^j \gamma_{j,i} (1-u_i)$, then
$$
(1+\la) \Delta Q_j(x) \simeq \sum_{i=1}^j (1-u_i(x)) \left[ \gamma_{j,i-1} + \la d \gamma_{j,i+1} - (\la d +1) \gamma_{j,i} \right] + (1-u_{j+1}(x)).
$$
(See the proof of Lemma~\ref{lem.qj-drift}.)
We think of $1-u_{j+1}(x)$ as an ``external'' term (which in practice will be very close to $Q_{j+1}(x)/n$), which will determine the
value towards which $Q_j$ drifts.  We would like the rest of the expression to be a negative multiple of $Q_j(x)$.  For this we need
$\big(\gamma_{j,1}, \dots, \gamma_{j,j}\big)$ to be a left eigenvector of the $j \times j$ matrix
$$
M_j = \begin{pmatrix}
-\la d-1 & 1 & 0 & \cdots & 0 & 0 & 0 \\
\la d & -\la d-1 & 1 & \cdots & 0 & 0 & 0 \\
0 & \la d & -\la d-1 & \cdots & 0 & 0 & 0 \\
\vdots & \vdots & \vdots &\ddots & \vdots &\vdots & \vdots \\
0 & 0 & 0 & \cdots & -\la d -1 & 1 & 0 \\
0 & 0 & 0 & \cdots & \la d & -\la d -1 & 1 \\
0 & 0 & 0 & \cdots & 0 & \la d & -\la d -1
\end{pmatrix},
$$
or, equivalently, of the matrix
$$
M'_j = M_j +(\la d+1)I_j =
\begin{pmatrix}
0 & \la d & 0 & \cdots & 0 & 0 & 0 \\
1 & 0 & \la d & \cdots & 0 & 0 & 0 \\
0 & 1 & 0 & \cdots & 0 & 0 & 0 \\
\vdots & \vdots & \vdots &\ddots & \vdots &\vdots & \vdots \\
0 & 0 & 0 & \cdots & 0 & \la d & 0 \\
0 & 0 & 0 & \cdots & 1 & 0 & \la d \\
0 & 0 & 0 & \cdots & 0 & 1 & 0
\end{pmatrix}.
$$
These matrices are tridiagonal Toeplitz matrices, and there is an exact formula for the eigenvalues and eigenvectors.  (See, for instance,
Example~7.2.5 in~\cite{meyer}.)  The Perron-Frobenius eigenvalue of $M'_j$ is $2 \sqrt{\la d} \cos \Big ( \frac{\pi}{j+1} \Big )$, with the left
eigenvector $\big( \gamma_{j,1}, \dots, \gamma_{j,j} \big)$ given by
$$
\gamma_{j,i}= (\la d)^{(j-i)/2} \frac{\sin \Big (\frac{i\pi}{j+1} \Big )}{\sin \Big ( \frac{j \pi}{j+1} \Big ) }.
$$
This means that the largest eigenvalue of $M_j$ is $-\la d +O(\sqrt{\la d})$, so that we obtain
$$
(1+\la) \Delta Q_j(x) \simeq -\la d \frac{Q_j(x)}{n} + \frac{Q_{j+1}(x)}{n} \quad (1\le j <k),
$$
meaning that $Q_j(x)$ will drift to a value close to $Q_{j+1}(x)/\la d$.  The choices of coefficients ensure that, if the $u_j(x)$ are
all near to $\tilde{u}_j$, then
$$
Q_j(x) \simeq n (1-\la) \sum_{i=1}^j \frac{\sin \Big (\frac{i\pi}{j+1} \Big )}{\sin \Big ( \frac{j \pi}{j+1} \Big ) } (\la d)^{i-1 + (j-i)/2},
$$
and the top term $i=j$ dominates the rest of the sum, provided $\la d$ is large, so $Q_j(x) \simeq (1-u_j(x))$: this is also true for $j=k$.
Thus the relationship $Q_j \simeq Q_{j+1}/\la d$ is as we would expect.

This means that, if $Q_{j+1}(X_t)$ remains in an interval around $\tilde {Q}_{j+1} := n (1-\la) (\la d)^j$ for a long time, then $Q_j(X_t)$ will
enter some interval around $\tilde{Q}_j$ within a short time, and stay there for a long time.  We can then conduct the analysis for each $Q_j$
in turn, starting with $j=k$, to show that indeed all the $Q_j(X_t)$ quickly become close to $\tilde {Q}_j$, and stay close for a long time.
This will then imply that the $u_j(X_t)$ all become and remain close to $\tilde{u}_j$.

A subsidiary application of this same technique forms another important step in the proofs (see the proof of Lemma~\ref{lem.cE}(1)).
If we do not assume that $u_{k+1}(x)$ is zero,
but instead build this term into our calculations, we obtain the approximation
$$
(1+\la) \Delta Q_k(x) \simeq (1-\la - u_{k+1}(x)) - \frac{Q_k(x)}{(\la d)^{k-1} n}.
$$
If $u_{k+1}(X_t)$ remains above $\eps (1-\la)$, for some $\eps > 0$, for a long time, this drift equation tells us that $Q_k$ drifts down into an interval
whose upper end is below the value $\tilde{Q}_k$, and then each of the $Q_j$ in turn drift down into intervals whose upper ends are below the
corresponding $\tilde{Q}_j$, and remain there.  For $j=1$, this means that the number of empty queues is at most $(1-\delta) (1-\la)n$,
for some positive $\delta$, for a long period of time; this results in a persistent drift down in the total number of customers (since the
departure rate is bounded below by $n - (1-\delta)(1-\la) n = \la n + \delta (1-\la) n$ while the arrival rate is $\la n$), and this is not
possible.

\section{Random Walks with Drifts} \label{sec.prelim}

In this section, we prove some general results about the long-term behaviour of real-valued functions of a Markov chain with bounds on the drift.

We start with two lemmas concerning random walks with a drift, both adapted from lemmas introduced in Luczak and McDiarmid~\cite{lmcd06}.
In each case, we assume that we have a sequence $(R_t)$ of
real-valued random variables on some probability space.  On some ``good'' event, the jumps $Z_t = R_t - R_{t-1}$ have magnitude at most~1,
and expectation at most $-v < 0$.  The first lemma shows that, on the good event, with high probability, such a random walk, started
at some value $r_0$, hits a lower value $r_1$ after not too many more than $(r_0-r_1)/v$ steps.

\begin{lemma}
\label{lem.hitting-time}
Let $\phi_0 \subseteq \phi_1 \subseteq \ldots \subseteq \phi_m$ be a filtration, and let $Z_1, \ldots, Z_m$ be random variables taking values in
$[-1,1]$ such that each $Z_i$ is $\phi_i$-measurable.  Let $E_0, E_1, \ldots, E_{m-1}$ be events where $E_i \in \phi_i$ for each $i$, and let
$E = \bigcap_{i=0}^{m-1} E_i$.  Fix $v \in (0,1)$, and let $r_0, r_1 \in \R$ be such that $r_0 > r_1$ and $vm \ge 2(r_0-r_1)$.  Set $R_0 = r_0$ and,
for each integer $t > 0$, let $R_t = R_0 + \sum_{i=1}^t Z_i$.

Suppose that, for each $i=1, \ldots, m$,
$$
\E (Z_i \mid \phi_{i-1}) \le -v \mbox{ on } E_{i-1} \cap \{R_{i-1} > r_1\}.
$$
Then
$$
\Pr (E \cap \{R_t > r_1 \quad \forall t \in \{1, \ldots, m\}\}) \le \exp \Big ( -\frac{v^2m}{8} \Big ).
$$
\end{lemma}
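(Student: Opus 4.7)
The plan is the standard coupling-with-a-drifting-walk trick. First I would define a modified sequence of increments $\tilde Z_i$ that has an unconditional drift of at most $-v$, but which agrees with $Z_i$ on the event we care about. Specifically, let $A_i = E_{i-1} \cap \{R_{i-1} > r_1\}$, which is $\phi_{i-1}$-measurable, and set
\[
\tilde Z_i = Z_i \1_{A_i} + (-v) \1_{A_i^c}.
\]
Since $v \in (0,1)$, each $\tilde Z_i$ still takes values in $[-1,1]$. On $A_i$ we have $\E[Z_i \mid \phi_{i-1}] \le -v$ by hypothesis, and on $A_i^c$ the value is deterministically $-v$, so $\E[\tilde Z_i \mid \phi_{i-1}] \le -v$ holds always.

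Next I would observe that on the event $E \cap \{R_t > r_1 \ \forall t \in \{1,\ldots,m\}\}$, we have $E_{i-1}$ and $R_{i-1} > r_1$ for every $i = 1, \ldots, m$ (the $i=1$ case uses $R_0 = r_0 > r_1$). Thus $A_i$ holds for all $i$, so $\tilde Z_i = Z_i$ for all $i$, and
\[
\sum_{i=1}^{m} \tilde Z_i = R_m - r_0 > r_1 - r_0.
\]
It follows that the probability we want to bound is at most $\Pr\bigl(\sum_{i=1}^m \tilde Z_i > r_1 - r_0\bigr)$.

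To bound this, introduce the martingale $M_t = \sum_{i=1}^{t} \bigl(\tilde Z_i - \E[\tilde Z_i \mid \phi_{i-1}]\bigr)$. Since $\tilde Z_i \in [-1,1]$, the $i$-th martingale increment lies in a $\phi_{i-1}$-measurable interval of length at most $2$, so by the Hoeffding--Azuma inequality,
\[
\Pr(M_m \ge a) \le \exp\!\Bigl(-\frac{2 a^2}{\sum_{i=1}^m 2^2}\Bigr) = \exp\!\Bigl(-\frac{a^2}{2m}\Bigr).
\]
Using $\E[\tilde Z_i \mid \phi_{i-1}] \le -v$, the event $\sum \tilde Z_i > r_1 - r_0$ forces $M_m > (r_1 - r_0) + vm \ge vm/2$, where the last inequality uses the hypothesis $vm \ge 2(r_0 - r_1)$. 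Plugging $a = vm/2$ into the Azuma bound yields $\exp(-v^2 m/8)$, exactly as required.

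There is no serious obstacle here; the only subtlety is choosing the replacement value on $A_i^c$ so that $\tilde Z_i$ both retains the desired drift and stays in $[-1,1]$ (taking the constant $-v$ accomplishes both), and matching the constant $1/8$ requires the sharp form of Azuma-Hoeffding in which the increments occupy an interval of length $2$ (rather than the loose bound $|\text{increment}| \le 2$, which would give $1/32$).
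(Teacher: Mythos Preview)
Your proof is correct and follows essentially the same approach as the paper: replace the increments on the ``bad'' set so as to obtain an unconditional drift of at most $-v$, then apply the Hoeffding--Azuma inequality. The paper carries this out in two stages (first introducing auxiliary randomness to build $V_i \ge Z_i$ with conditional mean exactly $-v$, then separately handling the events $E_i$ by replacing $Z_i$ with $-1$ on the complement), whereas you combine both steps by simply replacing with the constant $-v$ on $A_i^c$; your version is slightly more streamlined but the content is the same.
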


\begin{proof}
We first prove the lemma assuming the inequalities $\E (Z_i \mid \phi_{i-1}) \le - v$ hold almost surely, that is ignoring
the events $E_{i-1} \cap \{R_{i-1} > r_1\}$.  We shall then see how to incorporate these events.

We can couple the $Z_i$ with random variables $V_i$ taking values in $[-1,1]$ such that $\E [V_i \mid \phi_{i-1}] = -v$ for each $i$ and
$\Pr (Z_i \le V_i) = 1$ for each $i$: to do this, we define $V_i$ as follows.  Take a $U[0,1]$ random variable $W_i$, independent of the
$Z_i$ and the other $W_j$, and set $p_i= (v+1)/(1-\E [Z_i \mid \phi_{i-1}]) \in [0,1]$.  Now set
$V_i = Z_i \1_{\{W_i \le p_i \}} + \1_{\{W_i > p_i\}}$.
Note that indeed $|V_i| \le 1$ for each $i$ and $\Pr(V_i \ge Z_i)=1$.  Furthermore,
$$
\E [V_i \mid \phi_{i-1}] = p_i \E [Z_i \mid \phi_{i-1}] + (1-p_i) = -p_i (1- \E [Z_i \mid \phi_{i-1}]) +1 = -v.
$$

For each $t \ge 0$, let $S_t = \sum_{i=1}^t V_i$, so $S_t \ge \sum_{i=1}^t Z_i = R_t - R_0$, set $\mu_t = -vt = \E S_t$, and note that
$(S_t - \mu_t)$ is a martingale.  By the Hoeffding-Azuma inequality,
$\Pr (S_t \ge \mu_t + y) \le \exp (-y^2/2t)$.
Thus, if $a=r_0-r_1 \le \frac12 vm$,
\begin{eqnarray*}
\Pr (R_t > r_1 \quad \forall t \in \{1, \ldots, m\} \mid R_0=r_0) & \le & \Pr ( R_m - R_0 > -a ) \\
& \le & \Pr (S_m > -a)\\
& \le & \exp \Big (  - \frac{(vm-a)^2}{2m}  \Big )\\
& \le & \exp \Big ( -\frac{v^2 m}{8}\Big ).
\end{eqnarray*}

Now let us return to the full lemma as stated, with the events $E_i$. For each $i=0,1, \ldots, m-1$, let $F_i = E_i \cap \{R_i > r_1 \}$, and
for each $i=1, \ldots, m$, let $\tilde{Z}_i = Z_i \1_{F_{i-1}} - \1_{\overline{F}_i}$.  Let $\tilde{R}_0 = R_0$ and, for $t=1, \ldots, m$, let
$\tilde{R}_t = \tilde{R}_0 + \sum_{i=1}^t \tilde{Z}_i$. Then
$\E (\tilde{Z}_i \mid \phi_{i-1} ) \le - v$.
Hence, by what we have just proved applied to the $\tilde{Z}_i$,
\begin{eqnarray*}
\lefteqn{\Pr (E \cap \{R_t > r_1 \quad \forall t \in \{1, \ldots, m\}\}\mid R_0 = r_0)} \\
&=& \Pr (E \cap \{\tilde{R}_t > r_1 \quad \forall t \in \{1, \ldots, m\}\} \mid \tilde{R}_0 = r_0)\\
&\le& \Pr (\tilde{R}_t > r_1 \quad \forall t \in \{1, \ldots, m\})\\
&\le& \exp \Big ( -\frac{v^2 m}{8}\Big ),
\end{eqnarray*}
as required.
\end{proof}

The next lemma states that, if a discrete-time 1-dimensional random walk $(H_t)$,
starting at $h_0$ and making jumps of size at most~1, has negative drift whenever it lies in the interval $[h_0-b,h_0+a)$, then it is unlikely to
``cross against the drift'' and make its first exit from the interval at the upper end.

\begin{lemma}
\label{lem.crossing-drift}
Let $h_0$, $a$ and $b$ be positive real numbers. Let $v \in (0,1]$. Let $\phi_0 \subseteq \phi_1 \subseteq \ldots$ be a filtration, and let
$Z_1, Z_2, \ldots$ be random variables taking values in $[-1,1]$ such that each $Z_i$ is $\phi_i$-measurable.
Let $E_0, E_1, \ldots$ be events where $E_i \in \phi_i$ for each $i$.  Let $H_0$ be $\phi_0$-measurable, and, for each integer $t > 0$, let
$H_t = H_0 + \sum_{i=1}^t Z_i$. Assume for each $i=1, \ldots$,
$$
\E (Z_i \mid \phi_{i-1}) \le - v \mbox{ on } E_{i-1} \cap \{h_0 - b \le H_{i-1} < h_0 + a\}.
$$
Let
$$
T = \inf \{t \ge 1: H_t \in (-\infty,h_0-b) \cup [h_0+a,\infty)\}, \mbox{ and } E = \bigcap_{i=0}^{T-1} E_i.
$$
Then, on the event that $H_0 = h_0$,
$$
\Pr (E \cap \{H_T \ge h_0 + a\} \mid \phi_0 ) \le e^{-2va}.
$$
\end{lemma}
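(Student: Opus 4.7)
The plan is to construct an exponential supermartingale tailored to the drift bound. For any random variable $X\in[-1,1]$ with $\E[X]\le -v$, Hoeffding's lemma gives
\[
\E[e^{2vX}]\le \exp\!\bigl(2v\,\E[X]+(2v)^2\cdot\tfrac{4}{8}\bigr)\le \exp(-2v^2+2v^2)=1.
\]
Thus the exponent $\alpha=2v$ is exactly the right choice: it is large enough to turn $e^{\alpha(H_t-h_0)}$ into a supermartingale, and it matches the target bound $e^{-2va}$ at $H_T=h_0+a$.

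To accommodate the fact that the drift hypothesis only holds on the events $F_{i-1}:=E_{i-1}\cap\{h_0-b\le H_{i-1}<h_0+a\}$, I replace the original increments by
\[
\tilde Z_i := Z_i\1_{F_{i-1}} + (-v)\1_{\overline{F}_{i-1}},
\]
which still take values in $[-1,1]$ (here we use $v\le 1$), and I set $\tilde H_t := h_0+\sum_{i=1}^t\tilde Z_i$. The hypothesis gives $\E[\tilde Z_i\mid\phi_{i-1}]\le -v$ almost surely, and the Hoeffding estimate above yields $\E[e^{2v\tilde Z_i}\mid\phi_{i-1}]\le 1$ almost surely. Hence $M_t:=e^{2v(\tilde H_t-h_0)}$ is a nonnegative $(\phi_t)$-supermartingale with $M_0=1$ on $\{H_0=h_0\}$.

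Next I apply optional stopping at the bounded stopping time $T\wedge m$ (the random time $T$ is a $(\phi_t)$-stopping time because the exit set is deterministic), obtaining $\E[M_{T\wedge m}\mid\phi_0]\le 1$ on $\{H_0=h_0\}$. On the event $E\cap\{H_T\ge h_0+a\}\cap\{T\le m\}$, the definitions of $E$ and $T$ force $F_{i-1}$ to hold for every $1\le i\le T$, so $\tilde Z_i=Z_i$ there and consequently $\tilde H_T=H_T\ge h_0+a$, giving $M_{T\wedge m}\ge e^{2va}$. Markov's inequality then produces
\[
\Pr\bigl(E\cap\{H_T\ge h_0+a\}\cap\{T\le m\}\,\big|\,\phi_0\bigr)\le e^{-2va},
\]
and letting $m\to\infty$ (noting that $\{H_T\ge h_0+a\}$ implicitly includes $\{T<\infty\}$) yields the claim. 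There is no real obstacle here: the modification $\tilde Z_i$ is the standard device for promoting a conditional drift bound to an almost-sure one, and the constants in Hoeffding's lemma are calibrated to give the target exponent exactly; the only small point of care is verifying that the modified walk and the original walk agree up to time $T$ on the event of interest, which is immediate from the definitions.
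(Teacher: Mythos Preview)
Your proof is correct and follows essentially the same route as the paper's: both construct an exponential supermartingale, modify the increments off the good events $F_{i-1}$ to make the drift bound hold almost surely, and apply optional stopping. The only cosmetic differences are that the paper uses the base $(1+v)/(1-v)$ (obtained via the chord bound for convex functions, and then reduced to $e^{2v}$ at the end via $(1+v)/(1-v)\ge e^{2v}$) rather than directly taking base $e^{2v}$ through Hoeffding's lemma, and replaces by $-1$ rather than $-v$ on $\overline{F}_{i-1}$; your truncation $T\wedge m$ is also slightly more careful than the paper's direct appeal to optional stopping at $T$.
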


\begin{proof}
Let us first ignore the events $E_i$.

Note that, for any function $f$ convex on $[-1,1]$, we have
$$f(z) \le z \frac{f(1)-f(-1)}{2} + \frac{f(1) + f(-1)}{2}, \quad z \in [-1,1],$$
so, for each $i$,
\begin{eqnarray*}
\E [f(Z_i) \mid \phi_{i-1}] &\le& \E \Big [Z_i \frac{f(1)-f(-1)}{2} + \frac{f(1) + f(-1)}{2} \mid \phi_{i-1}\Big ]\\
&=& \frac{f(1)-f(-1)}{2} \E \big[ Z_i \mid \phi_{i-1} \big] + \frac{f(1) + f(-1)}{2}.
\end{eqnarray*}

Let $M_t = \big ( \frac{1+v}{1-v} \big )^{H_t}$, for each $t \in \Z_+$,
and note that $f(z) = \big (\frac{1+v}{1-v} \big )^z$ is convex on $[-1,1]$.
Then, for each $t \in \N$,
\begin{eqnarray*}
\lefteqn{\E [M_t \mid\ \phi_{t-1} ] } \\
& = & M_{t-1} \E \Big [ \Big (\frac{1+v}{1-v}\Big)^{Z_t} \mid  \phi_{t-1}\Big ]\\
& \le & M_{t-1} \Big( \Big(\frac{1+v}{2(1-v)} - \frac{1-v}{2(1+v)} \Big) \E [Z_i \mid \phi_{i-1} ]+
\frac{1+v}{2(1-v)} + \frac{1-v}{2(1+v)} \Big )\\
& \le & M_{t-1} \Big( -v \Big(\frac{1+v}{2(1-v)} - \frac{1-v}{2(1+v)} \Big) + \frac{1+v}{2(1-v)} + \frac{1-v}{2(1+v) }\Big )\\
& = & M_{t-1} \Big ( \frac{1-v^2}{2(1-v)} + \frac{1-v^2}{2(1+v)} \Big )\\
& = & M_{t-1},
\end{eqnarray*}
so $(M_t)$ is a supermartingale.  We deduce that, on the event that $H_0=h_0$,
$$
\E [M_t \mid \phi_0] \le \E [M_0 \mid \phi_0] = \Big (\frac{1+v}{1-v}\Big )^{h_0}.
$$
Thus, by the optional stopping theorem, on the event that $H_0 = h_0$,
\begin{eqnarray*}
\lefteqn{ \Big ( \frac{1+v}{1-v} \Big)^{h_0} \,\ge\, \E [M_T \mid \phi_0] }\\
&\ge& \Pr (H_T \ge h_0+a \mid \phi_0) \Big ( \frac{1+v}{1-v} \Big )^{h_0+a} + \E \Big[\Big( \frac{1+v}{1-v}\Big )^{H_T} \1_{\{H_T < h_0-b\}} \mid \phi_0 \Big].
\end{eqnarray*}
Using the elementary inequality $(1+v)/(1-v) \ge e^{2v}$ for $0\le v < 1$, we deduce that, on the event that $H_0 = h_0$,
$$
\Pr (H_T \ge h_0+a \mid \phi_0) \le \Big ( \frac{1+v}{1-v} \Big )^{-a} \le e^{-2va},
$$
which yields the result in the case without the events $E_i$.

Now let us incorporate the events $E_i$, and consider the full lemma as stated.  For each $i=0,1, \ldots$, let
$F_i = E_i \cap \{h_0-b \le H_i < h_0+a \} \in \phi_i$ and
$\tilde{Z}_i =Z_i \1_{F_{i-1}} - \1_{\overline{F_{i-1}}}$.
Let $\tilde{H}_t$ and $\tilde{T}$ be defined in the obvious way.  Then $\tilde{Z}_i$ is $\phi_i$-measurable, $\tilde{Z}_i \in [-1,1]$ and
$\E[\tilde{Z}_i \mid \phi_{i-1} ] \le -v$.

On the event $\bigcap_{i=0}^{T-1} F_i = \bigcap_{i=0}^{T-1} E_i = E$,
we have $H_T = \tilde{H}_{\tilde T}$, and so, applying the first part of the proof to the $\tilde{Z}_i$,
$$
\Pr (E \cap \{H_T \ge h_0+a\} \mid \phi_0) \le \Pr (\tilde{H}_{\tilde{T}} \ge h_0+a \mid \phi_0) \le e^{-2va},
$$
on the event that $H_0 = h_0$, as required.
\end{proof}

We now use the two lemmas above to prove a result about real-valued functions of a Markov chain, that we shall use repeatedly in our proofs.

For a real-valued function $F$ defined on the set $\Z_+^n$ of queue-lengths vectors, a copy $(X_t)$ of the $(n,d,\la)$-supermarket process,
and $x \in \Z_+^n$, we define
$$
\Delta F(x) := \E[F(X_{t+1}) - F(X_t) \mid X_t = x],
$$
and call this the {\em drift} of $F$ (at $x$).  Similarly, we shall also use the notation $\Delta F(X_t)$ to denote the random variable
$\E[F(X_{t+1}) - F(X_t) \mid X_t]$.

\begin{lemma} \label{lem.drifts-down2}
Let $h$, $v$, $c$, $\rho \ge2 $, $m$ and $s$ be positive real numbers with $v m \ge 2 (c - h)$.
Let $(X_t)_{t\ge 0}$ be a discrete-time Markov process with state-space $\cX$, adapted to the filtration $(\phi_t)_{t\ge 0}$.
Let $\cS$ be a subset of $\cX$, and let $F$ be a real-valued function on $\cX$ such that, for all $x \in \cS$ with
$F(x) \ge h$,
$$
\Delta F(x) \le - v,
$$
and for all $t \ge 0$, $|F(X_{t+1}) - F(X_t)| \le 1$ a.s.
Let $T^*$ be any stopping time, and suppose that $F(X_{T^*}) \le c$ a.s.

Let
\begin{eqnarray*}
T_0 &=& \inf \{ t \ge T^*: X_t \notin \cS \}, \\
T_1 &=& \inf \{ t \ge T^*: F(X_t) \le h \}, \\
T_2 &=& \inf \{ t > T_1: F(X_t) \ge h + \rho \}.
\end{eqnarray*}

Then
\begin{itemize}
\item [(i)] $\displaystyle \Pr (T_1 \land T_0 > T^* + m) \le \exp( - v^2 m /8)$;
\item [(ii)] $\displaystyle \Pr (T_2 \le s \land T_0) \le s \exp( - \rho v)$.
\end{itemize}
\end{lemma}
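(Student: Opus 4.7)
My plan is to apply Lemma~\ref{lem.hitting-time} for part~(i) and Lemma~\ref{lem.crossing-drift} for part~(ii), in each case to a time-shifted version of the process via the strong Markov property at $T^*$.

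For part~(i), I would set $R_t := F(X_{T^*+t})$, $Z_i := R_i - R_{i-1}$, filtration $\phi_i := \cF_{T^*+i}$, and good events $E_i := \{X_{T^*+j} \in \cS \text{ for all } 0 \le j \le i\}$. The bound $|Z_i| \le 1$ is given, and the drift hypothesis immediately yields $\E[Z_i \mid \phi_{i-1}] \le -v$ on $E_{i-1} \cap \{R_{i-1} > h\}$. The starting value $r_0 := F(X_{T^*})$ satisfies $r_0 \le c$, so $r_0 - h \le c - h \le vm/2$, verifying the hypothesis of Lemma~\ref{lem.hitting-time} with $r_1 = h$. Its conclusion bounds $\Pr(E \cap \{R_t > h \text{ for all } t \in \{1,\dots,m\}\})$ by $e^{-v^2 m/8}$, and this event coincides with $\{T_0 \wedge T_1 > T^*+m\}$, giving~(i).

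For part~(ii), I would take a union bound over the possible starting times of an upcrossing. On the event $\{T_2 \le s \wedge T_0\}$, let $\tau$ denote the last time in $[T_1, T_2-1]$ at which $F(X_\tau) \le h$; then $F(X_{\tau+1}) > h$, the trajectory stays strictly above $h$ on $(\tau, T_2]$, and $X_t \in \cS$ throughout. For each candidate $\tau \in \{T_1, \dots, \lceil s \rceil - 1\}$, I would apply Lemma~\ref{lem.crossing-drift} conditionally on $\cF_{\tau+1}$ to the process starting at time $\tau+1$, taking $h_0 := F(X_{\tau+1}) \in (h, h+1]$, $a := h + \rho - h_0 \ge \rho - 1$, $b := h_0 - h > 0$, and good events $\{X_{\tau+1+i} \in \cS\}$. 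The interval $[h_0-b, h_0+a) = [h, h+\rho)$ lies inside $\{F \ge h\}$, so the hypothesis gives the required drift $\le -v$ on the conditioning event of Lemma~\ref{lem.crossing-drift}. The lemma then bounds the probability that this excursion exits above by $e^{-2va} \le e^{-2v(\rho-1)} = e^{2v}e^{-2v\rho}$. Summing via union bound over the at most $s$ candidate starting times, and using $\rho \ge 2$ (equivalently $v\rho \ge 2v$) to absorb the factor $e^{2v}$, we obtain the desired bound $s\, e^{-v\rho}$.

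The main technical obstacle lies in part~(ii): Lemma~\ref{lem.crossing-drift} requires drift throughout the whole interval $[h_0-b, h_0+a)$ around the starting value, but our hypothesis only gives control when $F \ge h$. Starting an excursion at time $\tau$ itself would force $h_0 \le h$ and demand drift at values where the hypothesis fails. Shifting the application to $\tau+1$, where $F > h$ automatically, puts us back in the setting of the lemma; the cost is that $a$ can be as small as $\rho-1$ rather than $\rho$, and the hypothesis $\rho \ge 2$ is precisely what is needed to absorb this loss.
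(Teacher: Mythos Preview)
Your proposal is correct and follows essentially the same route as the paper. The paper first proves the case $T^*=0$ and then reduces the general case to it via the strong Markov property, whereas you build in the shift by $T^*$ from the outset; for part~(ii) the paper sums over fixed ``departure points'' $r$ with $F(X_r)\in[h,h+1)$ and applies Lemma~\ref{lem.crossing-drift} there, obtaining $a\ge\rho-1\ge\rho/2$ and hence $e^{-2va}\le e^{-\rho v}$, which is exactly your computation rewritten. One small point of hygiene: your union bound should be indexed by fixed integers $r\in\{0,\dots,\lfloor s\rfloor-1\}$ (with the event $\{\tau=r\}$), rather than over the random range $\{T_1,\dots,\lceil s\rceil-1\}$, since $\tau$ is not a stopping time and $T_1$ is random; the paper does exactly this, and your argument goes through verbatim once phrased that way.
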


When we use the lemma, $m$ will be much smaller than $s$, and moreover with high probability $T^*$ will be much smaller than $s$,
and also $\Pr(T_0 \le s)$ will be small.  In these circumstances, the lemma
allows us to conclude that, with high probability, $F(X_t)$ decreases from its value at $T^*$ (at most $c$) to below $h$ in at most
a further $m$ steps, and does not increase back above $h+\rho$ before time~$s$.  We shall sometimes use the conclusion of (ii) in the weaker
form $\Pr (T_2 \le s < T_0) \le s \exp( - \rho v)$.

\begin{proof}
We start by proving the lemma in the special case where the stopping time $T^*$ is equal to~0.

For~(i), we apply Lemma~\ref{lem.hitting-time}.  The filtration $\phi_0 \subseteq \phi_1 \subseteq \cdots \subseteq \phi_m$
will be the initial segment of the filtration $(\phi_t)_{t \ge 0}$.
For $t\ge 1$, we set $Z_t = F(X_t) - F(X_{t-1})$, so that $R_t := R_0 + \sum_{i=1}^t Z_i = F(X_t)$.  For $t \ge 0$, we set $E_t$ to be the
event that $T_0 > t$ (i.e., $X_i \in \cS$ for all $i$ with $0\le i \le t$), so
$E= \bigcap_{i=0}^{m-1} E_i$ is the event that $T_0 \ge m$.  We set $r_0 = F(X_0) \le c$, and $r_1 = h$.
We may assume that $r_0 > r_1$; otherwise $T_1 =0$ and there is nothing to prove.

On the event $E_{i-1} \cap \{ R_{i-1} > r_1\}$, we have $X_{i-1} \in \cS$ and $F(X_{i-1}) > r_1 = h$,
so $\E (Z_i \mid \phi_{i-1}) = \Delta F(X_{i-1}) \le - v$.
Thus, noting that $vm \ge 2(r_0-r_1)$ by our assumption on $m$, we see that the conditions of Lemma~\ref{lem.hitting-time} are satisfied.
The event that $R_t > r_1$ for all $t=1, \dots, m$ is the event that $T_1 > m$, so
$$
\Pr (T_1 \land T_0 > m) \le  \Pr (\{T_1 > m\} \cap \{ T_0 \ge m\}) \le e^{-v^2m/8},
$$
as required for~(i).


We move on to (ii).
For each time $r \in \{ 0,\dots, s-1\}$, set
$$
T(r) = \min \{ t \ge 0 : F(X_{r+t}) \notin [h, h+\rho) \}.
$$
We say that $r$ is a {\em departure point} if: $T_1 \le r$, $F(X_r) \in [h,h+1)$, $F(X_{r+T(r)}) \ge h+\rho$,
and $r + T(r) \le s \land T_0$.  To say that $T_2 \le s \land T_0$ means that $F(X_t)$ crosses from its value, at most $h$, at time $T_1$, up to
a value at least $h+\rho$, taking steps of size at most~1, by time $s \land T_0$.  This is equivalent to saying that there is at least
one departure point $r \in [0,s)$.  Therefore
\begin{eqnarray*}
\lefteqn{\Pr (T_2 \le s \land T_0)}\\
&\le& \sum_{r=0}^{s-1} \Pr \Big( \{ T_1 \le r \} \cap \{F(X_r) \in [h, h+1) \} \\
&& \qquad \qquad \cap \{F(X_{r+T(r)}) \ge h+\rho\} \cap \{r+T(r) \le s \land T_0 \} \Big) \\
&=& \sum_{r=0}^{s-1} \E \Big[ \1_{\{T_1\le r\}} \1_{\{F(X_r) \in [h, h+1)\}}
\E \big[ \1_{\{F(X_{r+T(r)}) \ge h+\rho\}} \1_{\{r+T(r) \le s \land T_0\}} \mid \phi_r \big] \Big].
\end{eqnarray*}

Fix any $r \in [0,s)$.  We claim that, for any $h_0 \in [h,h+1)$, on the $\phi_r$-measurable event that $F(X_r) = h_0$,
the conditional expectation
$$
\E \big[ \1_{\{F(X_{r+T(r)}) \ge h+\rho\}} \1_{\{r+T(r) \le s \land T_0\}} \mid \phi_r \big]
$$
is at most
$e^{-\rho v}$.  This will imply that each term of the sum above is at most $e^{-\rho v}$, and so that
$\Pr (T_2 \le s \land T_0) \le s \exp( - \rho v)$, as required.


To prove the claim, we use Lemma~\ref{lem.crossing-drift}.
We consider the re-indexed process $(X'_t) = (X_{r+t})$; by the Markov property,
this is a Markov chain with the same transition probabilities as $(X_t)$, and initial state $X'_0 = X_r$ with $F(X'_0) = h_0$.
We set $\varphi'_i = \phi_{r+i}$ for each $i$, so that $(X'_i)$ is adapted to the filtration $(\varphi'_i)$.  Let $Z_i = F(X'_i)-F(X'_{i-1})$,
so that $|Z_i| \le 1$, and $Z_i$ is $\varphi'_i$-measurable, for each $i$.  Set $H_0 = h_0 = F(X'_0)$, so that
$H_t = H_0 + \sum_{i=1}^t Z_i = F(X'_t)$.  We set $a = h +\rho - h_0 \ge \rho -1 \ge \rho/2$, and
$b = h_0 - h$.
Thus the event $\{h_0 - b \le H_{i-1} < h_0 + a\}$ translates to $\{ h \le F(X'_{i-1}) < h+\rho \}$, and the event $\{ H_{T(r)} \ge h_0 + a\}$
translates to $F(X_{r+T(r)}) \ge h + \rho$.

For $i=0, \dots, s$, set $E_i = \{ r + i < s \land T_0 \}$,
noting that this event is in $\varphi'_i$, and that
$E := \bigcap_{i=0}^{T(r)-1} E_i = \{ r + T(r) \le s \land T_0 \}$.
On the event $E_{i-1} \cap \{h_0 - b \le H_{i-1} < h_0 + a\}$, we have $F(X'_{i-1}) \ge h$, and
$X'_{i-1} = X_{r+i-1} \in \cS$, and therefore
$\E(Z_i \mid \varphi'_{i-1}) = \Delta F(X'_{i-1}) \le - v$.
From Lemma~\ref{lem.crossing-drift}, we now conclude that, on the event $F(X_r)=h_0$,
$$
\Pr \Big( \{F(X_{r+T(r)}) \ge h+\rho\} \cap \{r+T(r) \le s \le T_0 \} \,\Big|\, \phi_r \Big)
$$
\begin{eqnarray*}
&\le& \Pr \Big( E \cap \{ H_{T(r)} \ge h_0 + a \} \Big) \\
&\le& e^{-2va} \\
&\le& e^{-\rho v},
\end{eqnarray*}
as required.
This completes the proof in the special case where $T^* = 0$.


We now proceed to the general case.  Suppose then that the hypotheses of the lemma are satisfied, with stopping time $T^*$.  We apply the result we
have just proved to the process $(X'_t) = (X_{T^*+t})$.  By the strong Markov property, $(X'_t)$ is
also a Markov process, adapted to the filtration $(\phi'_t)_{t\ge 0} = (\phi_{T^*+t})_{t\ge 0}$.  The condition that $F(X_{T^*}) \le c$ is
equivalent to $F(X'_0) \le c$.  Set:
\begin{eqnarray*}
T'_0 &=& \inf \{ t \ge 0 : X'_t \notin \cS \} = \inf \{ t \ge 0 : X_{T^*+t} \notin \cS \} = T_0 - T^*  \\
T'_1 &=& \inf \{ t \ge 0 : F(X'_t) \le h \} = \inf \{ t \ge 0 : F(X_{T^*+t}) \le h \} = T_1 - T^*  \\
T'_2 &=& \inf \{ t > T'_1 : F(X'_t) \ge h + \rho \} = \inf \{ t > T'_1 : F(X_{T^*+t}) \ge h + \rho \} = T_2 - T^*,
\end{eqnarray*}
and note that these are all stopping times with respect to the filtration $(\phi'_t)$.
The special case of the result (with $T^*=0$) now tells us that:
\begin{eqnarray*}
{\rm (i)} \quad \Pr (T_1 \land T_0 > T^* + m) &=& \Pr ((T^* + T'_1) \land (T^* + T'_0) > T^* + m) \\
&=& \Pr (T'_1 \land T'_0 > m) \\
&\le& \exp(-v^2 m /8); \\
{\rm (ii)} \quad \Pr (T_2 \le s \land T_0) &=& \Pr (T^* + T'_2 \le s \land (T^* + T'_0)) \\
&\le& \Pr (T^* + T'_2 \le (T^* + s) \land (T^* + T'_0)) \\
&=& \Pr (T'_2 \le s \land T'_0) \\
&\le& \exp(-\rho v).
\end{eqnarray*}
In both cases, these are the desired results.
\end{proof}

We shall also make use of a ``reversed'' version of Lemma~\ref{lem.drifts-down2} where $\Delta F(x) \ge v$ for all $x$ in some ``good'' set
$\cS$ with $F(x) \le h$.  The result and proof are practically identical to Lemma~\ref{lem.drifts-down2}, changing the directions of
inequalities where necessary, and using ``reversed'' versions of Lemmas~\ref{lem.hitting-time} and~\ref{lem.crossing-drift}.


The next lemma is a more precise version of Lemma~2.2 in~\cite{lmcd06}.  We omit the proof, which is exactly as in~\cite{lmcd06},
except that we track more carefully the values of the various constants appearing in that proof, and separate out the effects of the two occurrences
of $\delta$ in that theorem.

\begin{lemma}
\label{lem.return-time}
Let $(\phi_t)_{t\ge 0}$ be a filtration. Let $Z_1, Z_2, \ldots$ be $\{0,\pm 1\}$-valued random variables, where each $Z_i$ is $\phi_i$-measurable.
Let $S_0 \ge 0$ a.s., and for each positive integer $j$ let $S_j = S_0+ \sum_{i=1}^j Z_i$. Let $A_0, A_1, \ldots$ be events, where each $A_i$ is $\phi_i$-measurable.

Suppose that there is a positive integer $k_0$ and a constant $\delta$ with $0 < \delta < 1/2$ such that
$$
\Pr (Z_i = -1 \mid \phi_{i-1}) \ge \delta \quad \mbox{ on } A_{i-1} \cap \{S_{i-1} \in \{1, \ldots, k_0-1\}\}
$$
and
$$
\Pr (Z_i = -1 \mid \phi_{i-1}) \ge 3/4 \quad \mbox{ on } A_{i-1} \cap \{S_{i-1} \ge k_0\}.
$$
Then, for each positive integer $m$
$$
\Pr \Big( \bigcap_{i=1}^m \{S_i \not = 0\} \cap \bigcap_{i=0}^{m-1} A_i \Big )
\le \Pr (S_0 > \lfloor m/16 \rfloor) + 3\exp \left( - \frac{\delta^{k_0-1}}{200k_0} m \right).
$$
\end{lemma}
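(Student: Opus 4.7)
The plan is to follow the proof of Lemma~2.2 in~\cite{lmcd06}, tracking the constants with extra care as the authors indicate. Write $E = \bigcap_{i=0}^{m-1} A_i$ and $B = \bigcap_{i=1}^m \{S_i \ne 0\}$; I want to bound $\Pr(B \cap E)$. I may assume $S_0 \le \lfloor m/16 \rfloor$, since the complementary event is absorbed into the first term of the stated bound.

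The key probabilistic ingredient is a gambler's-ruin-type estimate for the ``slow'' regime: from any state $s \in \{1,\dots,k_0-1\}$, the lower bound $\Pr(Z_i = -1 \mid \phi_{i-1}) \ge \delta$ gives that the conditional probability of $s$ consecutive $-1$ steps is at least $\delta^s \ge \delta^{k_0-1}$, in which case the walk hits $0$. This must be paired with control of the ``fast'' regime $\{S \ge k_0\}$, where $\Pr(Z_i = -1 \mid \phi_{i-1}) \ge 3/4$ yields conditional drift at most $-\tfrac12$. A Hoeffding-Azuma estimate (in the spirit of Lemma~\ref{lem.hitting-time}) shows that, except on an event $G^c$ of probability at most $\exp(-cm)$, the trajectory is at some state in $\{0,1,\dots,k_0-1\}$ for at least $m/2$ of the $m$ steps, and every excursion above $k_0$ has length $O(k_0)$.

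With these ingredients in hand, I would partition a constant fraction of $[0,m]$ into at least $m/(Ck_0)$ disjoint ``attempt windows'' of length $k_0-1$, each starting at a stopping time $\tau$ at which $S_\tau \le k_0-1$. By the gambler's-ruin bound and the strong Markov property applied at each $\tau$, the failure probability within each window is at most $1-\delta^{k_0-1}$, and independence across windows (enforced through the stopping-time construction) lets these be multiplied out, yielding
\[
\Pr(B \cap E) \le \Pr(S_0 > \lfloor m/16 \rfloor) + \Pr(G^c) + (1-\delta^{k_0-1})^{m/(Ck_0)}.
\]
Applying $1-x \le e^{-x}$ to $x = \delta^{k_0-1}$ and choosing $C$ carefully gives the stated constant $1/(200\,k_0)$, with the three terms corresponding to the three in the conclusion (two of them collected into the factor $3$).

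The main technical obstacle is handling the events $A_i$ inside the conditional estimates, since the lower bounds on $\Pr(Z_i = -1 \mid \phi_{i-1})$ are only assumed on $A_{i-1}$. As in Lemmas~\ref{lem.hitting-time} and~\ref{lem.crossing-drift}, one replaces each $Z_i$ by a truncated version $\tilde Z_i$ that equals $Z_i$ on $A_{i-1}$ and $-1$ otherwise, so that the conditional lower bounds on $\Pr(\tilde Z_i = -1 \mid \phi_{i-1})$ hold unconditionally. The truncated walk $\tilde S_j$ agrees with $S_j$ on $E$, so bounding $\Pr\bigl(\bigcap_i \{\tilde S_i \ne 0\}\bigr)$ by the random-walk analysis sketched above controls $\Pr(B\cap E)$ as required.
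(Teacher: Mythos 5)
Your sketch follows the same blueprint as the paper's proof, which reproduces the argument of Lemma~2.2 of~\cite{lmcd06} with the constants tracked explicitly: a gambler's-ruin bound from states below $k_0$ gives success probability at least $\delta^{k_0-1}$ per attempt window; the negative drift above $k_0$ produces $\Omega(m/k_0)$ well-separated attempt windows indexed by stopping times; the strong Markov property lets the failure probabilities be multiplied out; and the events $A_i$ are absorbed by a truncation of the increments that makes the drift bounds hold unconditionally. The core mechanism matches the paper's.

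One assertion in the sketch is wrong as stated: that, off an event of probability $\exp(-cm)$, \emph{every} excursion above $k_0$ has length $O(k_0)$. With only a $3/4$ lower bound on the probability of a downward step, an excursion of length $L$ can have probability as large as $(1/4)^L$, so excursions of length $\Theta(\log m)$ occur with probability polynomial in $1/m$, not exponentially small in $m$. Fortunately that claim is not load-bearing: the window-extraction step needs only the assertion that the walk spends a constant fraction of $[0,m]$ at states in $\{0,\dots,k_0-1\}$, which is correct (a careful accounting puts the fraction closer to $1/4$ than $1/2$, but any positive constant will do). The paper sidesteps any statement about individual excursion lengths by bounding the return times to $\{0,\dots,k_0-1\}$ directly: it shows each gap $I_{j+1}-I_j$ is stochastically dominated by a random variable with geometric tail, hence $I_{jk_0}-I_0 \le 96jk_0$ fails with probability at most $e^{-j}$, and combined with a binomial-tail bound on $I_0$ this gives $\hat I_j \le m$ with high probability for $j$ up to $\lceil m/200k_0 \rceil$, where the $\hat I_j$ are the stopping times starting the windows, spaced at least $k_0$ apart by construction. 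Drop the excursion-length claim and keep the constant-fraction estimate (or swap in this return-time argument), and your proof goes through with the stated exponent $\delta^{k_0-1}/(200k_0)$.
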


Several times we shall use the fact that, if $Z$ is a binomial or Poisson random variable with mean $\mu$, then for each $0 \le \epsilon \le 1$ we have
\begin{equation}
\label{eq.bin-lower}
\Pr (Z - \mu \le -\epsilon \mu) \le e^{-(1/2) \epsilon^2 \mu}.
\end{equation}


\section{Coupling} \label{sec.coupling}

We now introduce a natural coupling of copies of the $(n,d,\la)$-supermarket process $(X_t^x)$ with different initial states
$x$.  The coupling is a natural adaptation to discrete time of that in~\cite{lmcd06}.

We describe the coupling in terms of three sequences of random variables.
There is an iid sequence ${\bf V} = (V_1, V_2, \ldots)$ of 0--1 random variables where each $V_i$ takes value 1 with probability
$\lambda/(1+\lambda)$; $V_i=1$ if and only if time $i$ is an arrival.  Corresponding to every time $i$ there is also an ordered list $D_i$ of
$d$ queue indices, each chosen uniformly at random with replacement.  Let ${\bf D} = (D_1, D_2, \ldots )$. Furthermore, corresponding to every
time $i$ there is a uniformly chosen queue index $\tilde{D}_i$.  Let ${\bf \tilde{D}} = (\tilde{D}_1, \tilde{D}_2, \ldots )$.  At time $i$,
$D_i$ will be used if $Z_i = 1$, and there will be an arrival to the first shortest queue in $D_i$; otherwise, there will be a departure
from the queue with index $\tilde{D}_i$, if that queue is currently non-empty.

Suppose that we are given a realisation $({\bf v},{\bf d},{\bf \tilde{d}})$ of $({\bf V},{\bf D},{\bf \tilde{D}})$.  For each possible initial
queue-lengths vector $x \in \Z_+^n$, this realisation yields a deterministic process $(x_t)$ with $x_0=x$: let us write
$x_t = s_t(x;{\bf v},{\bf d},{\bf \tilde{d}})$.  Then, for each $x \in \Z_+^n$, the process $s_t (x; {\bf V}, {\bf D}, {\bf \tilde{D}})$
has the distribution of the $(n,d,\la)$-supermarket process $X_t^x$ with initial state $x$.
In this way, we construct copies $(X_t^x)$ of the $(n,d,\la)$-supermarket process for each possible starting state $x$ on a single
probability space.  When we treat more than one such copy at the same time, we always work in this probability space, and we
let $\Pr(\cdot)$ denote the corresponding coupling measure.


We shall use the following lemma, which is a discrete-time analogue of Lemma 2.3 in~\cite{lmcd06} and is proved in exactly the same way.

\begin{lemma}
\label{lem.coupling-distance}
Fix any triple ${\bf z}, {\bf d}, {\bf \tilde{d}}$ as above, and for each queue-lengths vector $x$ write $s_t (x)$ for
$s_t(x; {\bf z}, {\bf d}, {\bf \tilde{d}})$.  Then, for each $x,y \in \Z_+^n$, both $\|s_t (x) -s_t(y)\|_1$ and
$\|s_t (x) - s_t(y)\|_{\infty}$ are nonincreasing; and further, if $0 \le t < t'$ and $s_t(x) \le s_t (y)$, then $s_{t'}(x) \le s_{t'}(y)$.
\end{lemma}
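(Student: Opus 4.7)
The plan is to proceed by induction on $t$, so that it suffices to show that a single step of the coupled processes preserves all three assertions; that is, with $x' = s_1(x;\mathbf v,\mathbf d,\tilde{\mathbf d})$ and $y' = s_1(y;\mathbf v,\mathbf d,\tilde{\mathbf d})$, we have $\|x'-y'\|_1 \le \|x-y\|_1$, $\|x'-y'\|_\infty \le \|x-y\|_\infty$, and if $x \le y$ componentwise then $x' \le y'$. Depending on the realisation, the step is either a shared arrival using the ordered list $(k_1,\ldots,k_d)$ or a shared potential departure at queue $\tilde k$.

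Departures are straightforward: the only coordinate affected is $\tilde k$, and either both $x(\tilde k), y(\tilde k)$ are positive (both decrease by $1$, leaving the difference at $\tilde k$ unchanged), or exactly one is positive (only that one decreases by $1$, weakly bringing the two values closer), or neither is positive. Each of the three assertions follows immediately.

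For arrivals, let $j_x$ and $j_y$ be the first shortest queues in the list for $x$ and $y$ respectively; if $j_x = j_y$ the two vectors increment at the same coordinate and there is nothing to prove, so I focus on $j_x \ne j_y$. Only the coordinates $j_x$ and $j_y$ change, and throughout I would rely on the two structural inequalities $x(j_x) \le x(j_y)$ and $y(j_y) \le y(j_x)$ together with the tiebreaking rule that $j_x$ is the \emph{first} index in the list attaining the $x$-minimum (and analogously for $j_y$). For the $\ell_1$ claim, the signed change is
\[
\bigl(|x(j_x)+1-y(j_x)| - |x(j_x)-y(j_x)|\bigr) + \bigl(|x(j_y)-y(j_y)-1| - |x(j_y)-y(j_y)|\bigr),
\]
which fails to be nonpositive only when $x(j_x) \ge y(j_x)$ and $x(j_y) \le y(j_y)$ simultaneously; combined with the two structural inequalities this forces $x(j_x)=x(j_y)=y(j_x)=y(j_y)$, so $j_y$ is also an $x$-minimum and $j_x$ is also a $y$-minimum, and the ``first'' tiebreaking then demands that $j_x$ precede $j_y$ in the list (from $x$) while $j_y$ precedes $j_x$ (from $y$), a contradiction. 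Monotonicity follows by the same style of argument: when $x \le y$, an equality $y(j_x)=x(j_x)$ would again make $j_x$ a first $y$-minimum, so the inequality is strict and $x'(j_x)=x(j_x)+1 \le y(j_x)=y'(j_x)$, while $x'(j_y)=x(j_y)\le y(j_y) \le y(j_y)+1=y'(j_y)$.

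The main obstacle is the $\ell_\infty$ claim, since $\ell_\infty$ is not additive across coordinates and cancellation between $j_x$ and $j_y$ is not available. Instead one must verify that \emph{each} of $|x'(j_x)-y'(j_x)|$ and $|x'(j_y)-y'(j_y)|$ is at most $\|x-y\|_\infty$. A would-be increase at $j_x$ requires $x(j_x)-y(j_x)=\|x-y\|_\infty$ with nonnegative sign; propagating through the structural inequalities forces the analogous equality $x(j_y)-y(j_y)=\|x-y\|_\infty$ together with $x(j_x)=x(j_y)$ and $y(j_x)=y(j_y)$, at which point the same tiebreaking contradiction closes the case. A symmetric argument rules out an increase at $j_y$. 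This systematic use of the tiebreaking rule to rule out boundary configurations is the one subtle point of the proof.
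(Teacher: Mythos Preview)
The paper does not actually give its own proof of this lemma: it merely records that it is a discrete-time analogue of Lemma~2.3 in~\cite{lmcd06} and ``is proved in exactly the same way.'' Your single-step inductive argument---splitting into the shared departure case and the shared arrival case, and in the latter using the two structural inequalities together with the ``first in the list'' tiebreaking to rule out the boundary configurations---is correct and is exactly the standard argument one expects for such a coupling; it is almost certainly the argument in the cited reference. One tiny clarification worth making explicit (since the list is sampled with replacement) is that ``$j_x$ precedes $j_y$'' should be read at the level of \emph{positions} in the ordered $d$-tuple rather than queue labels; your contradiction then goes through verbatim.
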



For a queue-lengths vector $x$, let $\| x \|_\infty = \max x(i)$ denote the maximum length of a queue in $x$, and $\| x \|_1 = \sum_{i=1}^n x(i)$
denote the total number of customers.  Given positive real numbers $\ell$ and $g$, we set
\begin{eqnarray*}
\cA_0(\ell,g) &=& \{ x : \| x \|_\infty \le \ell \mbox{ and } \| x \|_1 \le gn \}; \\
\cA_1(\ell,g) &=& \{ x : \| x \|_\infty \le 3\ell \mbox{ and } \| x \|_1 \le 3gn \}.
\end{eqnarray*}
We also set
$$
\ell^* = \log^2 n (1-\la)^{-1}, \, g^* =\ 2 (1-\la)^{-1}, \,
\cA_0^* = \cA_0(\ell^*, g^*), \,
\cA_1^* = \cA_1(\ell^*, g^*).
$$

The next result tells us that the $(n,d,\la)$-supermarket process $(Y_t)$ in equilibrium is very unlikely to
be outside the set $\cA_0^*$, for any $d$.  This is accomplished by proving the result for $d=1$, when
the process is easy to analyse explicitly, and then using coupling in $d$ to deduce the result for all $d$.
Of course, the result is actually extremely weak for all $d>1$, and later we shall show a much stronger
result whenever the various parameters of the model satisfy the conditions of Theorem~\ref{thm.technical};
the importance of the lemma below is that it gets us started and enables us to say {\em something}
about where the equilibrium of the process lives.

\begin{lemma}
\label{lem.compd1}
Let $(Y_t)$ be a copy of the $(n,d,\la)$-supermarket process in equilibrium.  Then
$\Pr(Y_t \notin \cA_0^*) \le 2 n e^{-\log^2 n}$.
\end{lemma}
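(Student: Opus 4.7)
The strategy is to reduce to the case $d=1$ via the monotonicity-in-$d$ coupling of Turner~\cite{tur98} and Graham~\cite{gr00} referenced in the introduction. Applied with $d=1$ on the dominating side, that coupling tells us that, in equilibrium, for each $j$ the number of queues of length at least $j$ in the $(n,d,\la)$-process is stochastically dominated by the corresponding quantity in the $(n,1,\la)$-process. Since $\|Y_t\|_\infty = \max\{j : n u_j(Y_t) > 0\}$ and $\|Y_t\|_1 = \sum_{j \ge 1} n u_j(Y_t)$, this domination passes to both statistics, so it suffices to prove the analogous bounds for a stationary copy $(Y^{(1)}_t)$ of the $(n,1,\la)$-process.

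For $d=1$ the system is transparent: the continuous-time version is simply $n$ independent $M/M/1$ queues, each with arrival rate $\la$ and service rate~$1$, so in equilibrium the queue-lengths vector has product form with each coordinate geometric, $\Pr(L_j = k) = (1-\la)\la^k$. The discrete-time chain of Section~\ref{Sintro} is just the uniformization at rate $(1+\la)n$, hence has the same stationary distribution. So under $\Pi^{(1)}$, the coordinates $L_1, \dots, L_n$ are i.i.d.\ geometrics with parameter $1-\la$.

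It remains to bound two tails. For the maximum, a union bound with the geometric tail gives
\[
\Pr\bigl(\|Y^{(1)}_t\|_\infty \ge \ell^*\bigr) \;\le\; n \la^{\ell^*} \;\le\; n\, e^{-(1-\la)\ell^*} \;=\; n\, e^{-\log^2 n},
\]
using $\la \le e^{-(1-\la)}$ and $\ell^* = \log^2 n\,(1-\la)^{-1}$. For the total, $S := \sum_{j=1}^n L_j$ has mean $n\la/(1-\la) < \tfrac12 g^* n$; a standard Chernoff estimate with $t = (1-\la)/2$ exploits
\[
\E\bigl[e^{t L_j}\bigr] \;=\; \frac{1-\la}{1-\la e^t} \;\le\; 2 \quad \text{and} \quad e^{-t g^* n} \;=\; e^{-n},
\]
yielding $\Pr(S \ge g^* n) \le (2/e)^n = e^{-\Omega(n)}$, which for $n$ large is negligible next to $n e^{-\log^2 n}$. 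Summing the two bounds and using the stochastic domination gives $\Pr(Y_t \notin \cA_0^*) \le 2n\, e^{-\log^2 n}$.

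The only conceptual step that needs care is the transfer from the continuous-time $M/M/1$ independence to the discrete-time equilibrium and the correct invocation of the coupling (pathwise domination of the profile $(n u_j)$ rather than just of $\|\cdot\|_\infty$ or $\|\cdot\|_1$ separately); the remaining calculations are routine tail bounds.
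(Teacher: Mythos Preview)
Your proof is correct and follows essentially the same route as the paper: reduce to $d=1$ via the Turner--Graham monotonicity coupling, use the product-geometric stationary law of the $n$ independent $M/M/1$ queues, bound the maximum by a union bound and the total by an exponential tail bound. The paper simply asserts $\Pr(\|\tilde Y_t\|_1 \ge 2n(1-\la)^{-1}) \le e^{-n/4}$ where you carry out an explicit Chernoff computation, and you are more careful than the paper about justifying that the discrete-time chain shares the continuous-time stationary law; these are cosmetic differences, not a different approach.
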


\begin{proof}
Let $\tilde{Y}$ denote a stationary copy of the $(n,1,\la)$-supermarket process, in which each arriving customer joins a uniform random queue.
Then the queue lengths ${\tilde Y}_t(j)$ are independent geometric random variables with mean $\la/(1-\la)$, where
$\Pr ({\tilde Y}_t(j) = k) = (1- \la) \la^k$ for $k=0,1,2, \ldots$. Therefore,
$\Pr (\| \tilde{Y}_t \|_{\infty} \ge k )  \le  n \la^k$,
and also it can easily be checked that
$\Pr \left( \|\tilde{Y}_t \|_1 \ge 2 n (1-\la)^{-1} \right) \le  e^{-n/4}$.

As mentioned in Section~\ref{Sintro}, there is a coupling between supermarket processes with different values of $d$, which can be used to show that
the equilibrium copy $(Y_t)$ of the $(n,d,\la)$-supermarket process, for any $d$, also satisfies
$\Pr \left( \| Y_t \|_1 \ge 2 n (1-\la)^{-1} \right) \le  e^{-n/4}$ and
$\Pr (\|Y_t \|_{\infty} \ge \log^2 n (1-\la)^{-1} ) \le n \la^{\log^2 n (1-\la)^{-1}} \le n e^{-\log^2 n}$,
as required.
\end{proof}

Next we prove a very crude concentration of measure result: if the process $(Y_t)$ in equilibrium is concentrated inside
some set $\cA_0(\ell,g)$, and we start a copy $(X_t^x)$ of the process at a state $x \in \cA_0(\ell,g)$, then the process $(X_t^x$) is
unlikely to leave the larger set $\cA_1(\ell,g)$ over a long period of time.

\begin{lemma}
\label{lem.l1-inf}
Let $\ell$ and $g$ be natural numbers and $x$ a queue-lengths vector in $\cA_0(\ell,g)$.
Then for any natural number $s$,
$$
\Pr ( \exists t \in [0,s],\, X_t^x \notin \cA_1(\ell,g) ) \le \Pr ( \exists t \in [0,s],\, Y_t \notin \cA_0(\ell,g) ).
$$
\end{lemma}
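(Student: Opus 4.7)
The plan is to use the coupling construction and the distance-monotonicity Lemma~\ref{lem.coupling-distance} to transfer control on the equilibrium trajectory to the trajectory started from $x$.

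Concretely, I would build both processes on the same probability space using the random triple $(\mathbf V,\mathbf D,\widetilde{\mathbf D})$: let $(Y_t)$ be the $(n,d,\la)$-supermarket process in equilibrium, and for the given $x\in\cA_0(\ell,g)$ let $X_t^x = s_t(x;\mathbf V,\mathbf D,\widetilde{\mathbf D})$ be the coupled copy started at $x$. By Lemma~\ref{lem.coupling-distance}, the quantities $\|X_t^x - Y_t\|_1$ and $\|X_t^x - Y_t\|_\infty$ are non-increasing in $t$, so for every $t\ge 0$,
$$
\|X_t^x - Y_t\|_1 \le \|x - Y_0\|_1 \le \|x\|_1 + \|Y_0\|_1, \qquad
\|X_t^x - Y_t\|_\infty \le \|x\|_\infty + \|Y_0\|_\infty.
$$

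The second step is to argue that on the event $\cE := \{Y_t\in\cA_0(\ell,g) \text{ for all } t\in[0,s]\}$ we have $X_t^x\in\cA_1(\ell,g)$ for all $t\in[0,s]$. Indeed, on $\cE$ we have in particular $Y_0\in\cA_0(\ell,g)$, so $\|Y_0\|_\infty\le\ell$ and $\|Y_0\|_1\le gn$; combined with $x\in\cA_0(\ell,g)$ this gives $\|x-Y_0\|_\infty\le 2\ell$ and $\|x-Y_0\|_1\le 2gn$. Then, by the triangle inequality and the coupling bounds above,
$$
\|X_t^x\|_\infty \le \|X_t^x-Y_t\|_\infty + \|Y_t\|_\infty \le 2\ell + \ell = 3\ell,
$$
and similarly $\|X_t^x\|_1\le 2gn + gn = 3gn$, so $X_t^x\in\cA_1(\ell,g)$.

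Thus $\cE\subseteq\{X_t^x\in\cA_1(\ell,g)\ \forall t\in[0,s]\}$; taking complements and then probabilities yields the claimed inequality. There is no real obstacle here: the whole argument is a one-line application of the triangle inequality once the coupling and Lemma~\ref{lem.coupling-distance} are in hand, and the factor of~$3$ in the definition of $\cA_1$ is exactly what is needed to absorb the initial distance $\|x-Y_0\|$ on top of $\|Y_t\|$.
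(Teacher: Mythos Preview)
Your proof is correct and follows essentially the same approach as the paper: couple $(X_t^x)$ and $(Y_t)$ via the common driving sequence, use Lemma~\ref{lem.coupling-distance} to bound $\|X_t^x-Y_t\|_1$ and $\|X_t^x-Y_t\|_\infty$ by $\|x-Y_0\|_1$ and $\|x-Y_0\|_\infty$, and then apply the triangle inequality. The only cosmetic difference is that the paper writes the unconditional bound $\|X_t^x\|_1\le gn+\|Y_0\|_1+\|Y_t\|_1$ and deduces the event inclusion $\{X_t^x\notin\cA_1\}\subseteq\{Y_0\notin\cA_0\}\cup\{Y_t\notin\cA_0\}$, whereas you condition on the good event $\cE$ first; the content is identical.
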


\begin{proof}
By Lemma~\ref{lem.coupling-distance}, we can couple $(X^x_t)$ and $(Y_t)$ in such a way that
$\|X^x_t - Y_t\|_1$ and $\|X^x_t - Y_t\|_{\infty}$ are both non-increasing, and hence that, for each $t \ge 0$,
\begin{eqnarray*}
\|X^x_t\|_1 & \le & \| X^x_t - Y_t\|_1 + \| Y_t \|_1 \\
&\le& \|x-Y_0\|_1 + \|Y_t\|_1 \\
&\le& \|x\|_1 + \| Y_0\|_1 + \|Y_t\|_1 \\
&\le& gn + \| Y_0\|_1 + \|Y_t\|_1,
\end{eqnarray*}
and similarly
$$
\|X^x_t\|_{\infty} \le \ell + \|Y_0\|_{\infty} + \|Y_t\|_{\infty}.
$$

We deduce that, for each $t \ge 0$,
\begin{eqnarray*}
\{ X_t^x \notin \cA_1(\ell,g) \} &=& \{ \| X_t^x \|_1 > 3gn \} \cup \{ \| X_t^x \|_\infty > \ell \} \\
&\subseteq& \{ \| Y_0 \|_1 > gn \} \cup \{ \| Y_t\|_1 > gn \} \\
&&\mbox{} \cup \{ \|Y_0\|_\infty > \ell \} \cup \{ \|Y_t\|_\infty > \ell \} \\
&=& \{ Y_0 \notin \cA_0(\ell,g) \} \cup \{ Y_t \notin \cA_0(\ell,g) \}.
\end{eqnarray*}
The result now follows immediately.
\end{proof}

We shall use Lemma~\ref{lem.l1-inf} later for general values of $\ell$ and $g$, but for now we note
the following immediate consequence of the previous two lemmas.

\begin{lemma} \label{lem.cA}
Let $x$ be any queue-lengths vector in $\cA_0^*$, and let $T_\cA^\dagger = \inf \{ t : X_t^x \notin \cA_1^* \}$.
Then, for $n \ge 2000$,
$$
\Pr (T_\cA^\dagger \le e^{\frac13 \log^2 n}) \le e^{-\frac12 \log^2 n}.
$$
\end{lemma}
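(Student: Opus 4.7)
The plan is to combine the two preceding lemmas in the obvious way: Lemma~\ref{lem.l1-inf} lets us transfer the question about $(X_t^x)$ leaving $\cA_1^*$ to a question about the equilibrium copy $(Y_t)$ leaving the smaller set $\cA_0^*$, and then Lemma~\ref{lem.compd1} bounds the probability of $(Y_t)$ being outside $\cA_0^*$ at any fixed time, so a union bound over the time interval finishes the job.

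More precisely, set $s = \lceil e^{\frac13 \log^2 n}\rceil$. Since $x \in \cA_0^* = \cA_0(\ell^*, g^*)$, Lemma~\ref{lem.l1-inf} applied with $\ell = \ell^*$, $g = g^*$ gives
\[
\Pr(T_\cA^\dagger \le s) = \Pr(\exists t \in [0,s],\, X_t^x \notin \cA_1^*) \le \Pr(\exists t \in [0,s],\, Y_t \notin \cA_0^*).
\]
A union bound over the (at most $s+1$) integer times in $[0,s]$, combined with Lemma~\ref{lem.compd1}, yields
\[
\Pr(\exists t \in [0,s],\, Y_t \notin \cA_0^*) \le (s+1)\cdot 2n e^{-\log^2 n} \le 4n \exp\bigl(-\tfrac{2}{3}\log^2 n\bigr).
\]

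It then suffices to check that $4n \exp(-\tfrac{2}{3}\log^2 n) \le \exp(-\tfrac{1}{2}\log^2 n)$, i.e., that $\log(4n) \le \tfrac{1}{6}\log^2 n$, for all $n \ge 2000$. At $n = 2000$ we have $\log n \approx 7.60$, so $\tfrac{1}{6}\log^2 n \approx 9.64$ while $\log(4n) = \log 8000 \approx 8.99$; and the left side grows like $\log n$ whereas the right side grows like $\log^2 n / 6$, so the inequality persists for all larger $n$. This gives the bound $\Pr(T_\cA^\dagger \le s) \le e^{-\frac12\log^2 n}$, as required.

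There is essentially no obstacle here, since both ingredients are already in hand; the only minor point is to verify the numerical inequality at the stated threshold $n \ge 2000$, which is a direct computation.
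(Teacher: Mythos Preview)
Your proof is correct and follows essentially the same approach as the paper: apply Lemma~\ref{lem.l1-inf} to reduce to the equilibrium copy, use a union bound together with Lemma~\ref{lem.compd1}, and then verify the resulting numerical inequality for $n\ge 2000$. The only cosmetic differences are that the paper writes the bound as $3n e^{-\frac23\log^2 n}$ rather than $4n e^{-\frac23\log^2 n}$, and does not bother introducing the ceiling $s=\lceil e^{\frac13\log^2 n}\rceil$ explicitly.
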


\begin{proof}
The probability in question is $\Pr (\exists t \in [0,e^{\frac13 \log^2 n}],\, X^x_t \notin \cA_1^* )$ which,
by Lemma~\ref{lem.l1-inf} and Lemma~\ref{lem.compd1}, is at most
$$
\Pr ( \exists t \in [0,e^{\frac13 \log^2 n}],\, Y_t \notin \cA_0^* ) \le
(e^{\frac 13\log^2 n} +1) \Pr(Y_t \notin \cA_0^*) \le 3n e^{-\frac23 \log^2 n},
$$
which, for $n \ge 2000$, is at most $e^{-\frac 12 \log^2 n}$, as required.
\end{proof}

\section{Functions and Drifts} \label{sec.drifts}

We now start the detailed proofs.  The results in this section will be used in the course of the proof of Theorem~\ref{thm.technical}, and we could
assume that all the conditions of Theorem~\ref{thm.technical} hold; however, for this section
all that is necessary is that $k \ge 2$ and $\la d \ge 4$.

As explained in Section~\ref{sec.heuristics}, we will consider a sequence of functions $Q_k$, $Q_{k-1}$, \dots, $Q_1$ defined on the set
$\Z_+^n$ of queue-lengths vectors.  We now give precise definitions of these functions, along with another function $P_{k-1}$, and
derive some of their properties.

As in Section~\ref{sec.heuristics}, let $Q_k$ be the function defined on the set $\Z_+^n$ of all queue-lengths vectors by
$$
Q_k (x) = n\sum_{i=1}^k \beta_i (1-u_i(x)),
$$
where, for $i=1, \ldots, k$,
$$
\beta_i = 1 - \frac{1}{(\la d)^i} - \frac{i-1}{(\la d)^k}.
$$
It is also convenient to set $\beta_0 = 0$.
Evidently $\beta_i < 1$ for each $i$, an inequality we shall use freely in future.
We also note that $\beta_i$ is increasing in~$i$, and that $\beta_k = 1 - k(\la d)^{-k}$.


Let
$$
P_{k-1} (x) = n \sum_{i=1}^{k-1} (1-u_i(x)).
$$

Also, for $j=1, \ldots, k-1$, we let
$$
Q_j(x) = n \sum_{i=1}^j \gamma_{j,i} (1-u_i(x)),
$$
where the coefficients $\gamma_{j,i}$ are given by
$$
\gamma_{j,i}= (\la d)^{(j-i)/2} \frac{\sin \Big (\frac{i\pi}{j+1} \Big )}{\sin \Big ( \frac{j \pi}{j+1} \Big ) }.
$$
Consistent with the expression above, we also define $\gamma_{j,0} = \gamma_{j,j+1} = 0$.
It can easily be checked that, for each $i=1, \ldots, j-1$, and for each $j=1, \ldots, k-1$,
$$
\la d \gamma_{j,i+1} +  \gamma_{j,i-1} = 2 \sqrt{\la d} \cos \Big ( \frac{\pi}{j+1} \Big )\gamma_{j,i}.
$$
This is equivalent to saying that the $\gamma_{j,i}$ form eigenvectors of the tridiagonal Toeplitz matrices $M_j$ given in
Section~\ref{sec.heuristics}.

We will need some bounds on the sizes of the $Q_j(x)$.  Observe that $\gamma_{j,j} =1$ for each $j$, while generally we have
$$
1 \le \frac{\sin(i\pi/(j+1))}{\sin(j\pi/(j+1))} = \frac{\sin(i\pi/(j+1))}{\sin(\pi/(j+1))} \le i,
$$
since the sine function is concave on $[0,\pi]$.  Thus
\begin{equation} \label{eq:gamma-bounds}
(\la d)^{(j-i)/2} \le \gamma_{j,i} \le i (\la d)^{(j-i)/2},
\end{equation}
and therefore
\begin{equation}
Q_j(x) \le n \sum_{i=1}^j i (\la d)^{(j-i)/2} \le 2n(\la d)^{(j-1)/2}, \label{eq:Qjbound}
\end{equation}
provided $\la d \ge 4$.  We also note at this point that changing one component $x(\ell)$  of $x$ by $\pm1$ changes $Q_j(x)$ by at most
$\gamma_{j,1} = (\la d)^{(j-1)/2}$.

It can readily be checked that, for $j \ge 1$, the function
$$
f(i) = \sin\left( \frac{i\pi}{j+2} \right) \big/ \sin\left( \frac{i\pi}{j+1} \right)
$$
is increasing over the range $[1,j]$, and so we have, for $1\le i \le j \le k-2$:
\begin{eqnarray*}
\frac{\gamma_{j+1,i}}{\gamma_{j,i}} &=& \sqrt{\la d} \frac{\sin(i\pi/(j+2)) \sin(\pi/(j+1))}{\sin(i\pi/(j+1)) \sin(\pi/(j+2))} \\
&\le& \sqrt{\la d} \frac{\sin(j\pi/(j+2)) \sin(\pi/(j+1))}{\sin(j\pi/(j+1)) \sin(\pi/(j+2))} \\
&=& \sqrt{\la d} \frac{\sin(2\pi/(j+2))}{\sin(\pi/(j+2))} \\
&\le& 2\sqrt{\la d}.
\end{eqnarray*}
A consequence is that, for $j=1, \dots, k-2$, and any $x \in \Z_+^n$,
\begin{eqnarray}
\frac{Q_{j+1}(x)}{n} &=& (1-u_{j+1}(x)) + \sum_{i=1}^j \gamma_{j+1,i} (1-u_i(x)) \notag \\
&\le& (1-u_{j+1}(x)) + \sum_{i=1}^j 2 \sqrt{\la d} \gamma_{j,i} (1-u_i(x)) \notag \\
&\le& (1-u_{j+1}(x)) + 2 \sqrt{\la d} \frac{Q_j(x)}{n}. \label{eq:Qj+1}
\end{eqnarray}
For $j=k-1$, we have the stronger inequality that, for any $x \in \Z_+^n$,
\begin{equation}
\label{eq:Qk}
\frac{Q_k(x)}{n} \le \sum_{i=1}^k (1-u_i(x)) \le (1-u_k(x)) + \frac{Q_{k-1}(x)}{n}.
\end{equation}


We now prove the following result about the drift of the function $Q_k(x)$: roughly speaking, we wish to
show that it is approximately equal to
$$
\frac{1}{1+\la} \left(1-\la -u_{k+1}(x) - \frac{1}{(\la d)^{k-1}} \frac{Q_k(x)}{n}\right).
$$

\begin{lemma} \label{lem.qk-drift}
For any state $x \in \Z_+^n$,
\begin{eqnarray*}
(1+\la) \Delta Q_k(x) &\le& \beta_k \big( (1-\la) - u_{k+1} (x) + \la \exp( - d Q_k(x)/ k n)  \big) \\
&&\mbox{} - \frac{1}{(\la d)^{k-1}} \frac{Q_k(x)}{n} \left(1-\frac{2}{\la d}\right), \\
(1+\la) \Delta Q_k(x) &\ge & \beta_k \left( (1-\la) - u_{k+1} (x) \right) - \frac{1}{(\la d)^{k-1}} \frac{Q_k(x)}{n} \\
&&\mbox{} - \left( \frac {Q_{k-1}(x)}{n} \right)^2 \frac{1}{(\la d)^{k-3}}.
\end{eqnarray*}
\end{lemma}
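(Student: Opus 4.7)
The plan is to compute $(1+\la)\Delta Q_k(x)$ exactly and then bound the terms separately. Starting from $\Delta u_i(x) = (n(1+\la))^{-1}[\la u_{i-1}^d - \la u_i^d - u_i + u_{i+1}]$ (derived in Section~2), multiplying by $-\beta_i$, summing, and rearranging via an index shift (with the convention $\beta_0 := 0$), one obtains
\begin{equation*}
(1+\la)\Delta Q_k(x) = \la \sum_{i=1}^{k-1}(\beta_{i+1}-\beta_i) W_i - \la \beta_k W_k + \beta_k w_{k+1} + \sum_{i=1}^k(\beta_{i-1}-\beta_i) w_i,
\end{equation*}
where $w_i := 1-u_i(x)$ and $W_i := 1-u_i(x)^d$. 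A direct calculation from the definition of $\beta_i$ shows that the ``eigenvector-like'' combinations $c_i := \la d(\beta_{i+1}-\beta_i) + (\beta_{i-1}-\beta_i)$ for $1\le i\le k-1$, and $c_k := \beta_{k-1}-\beta_k$, take the explicit form $c_1 = -(\la d)^{-(k-1)}$, $c_i = -(1-(\la d)^{-1})(\la d)^{-(k-1)}$ for $2\le i\le k-1$, and $c_k = -(1-2(\la d)^{-1})(\la d)^{-(k-1)}$.

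For the upper bound I would apply $W_i \le dw_i$ (Bernoulli) for $i\le k-1$, and to the isolated $-\la\beta_k W_k$ term the stronger bound $W_k \ge 1 - e^{-dw_k}$ obtained from $(1-w)^d \le e^{-dw}$, combined with the inequality $w_k \ge Q_k(x)/(kn)$ (which follows from $w_i\le w_k$ for $i\le k$ together with $\beta_i\le 1$). This gives $-\la\beta_k W_k + \beta_k w_{k+1} \le \beta_k(1-\la-u_{k+1}(x) + \la\exp(-dQ_k(x)/(kn)))$. The remaining sums collapse to $\sum_i c_i w_i$, and since every $c_i \le -(1-2(\la d)^{-1})(\la d)^{-(k-1)}$ while $\sum_i w_i \ge \sum_i \beta_i w_i = Q_k(x)/n$, the first inequality of the lemma follows.

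For the lower bound I reverse the estimates: $-\la\beta_k W_k \ge -\la\beta_k$ (using the trivial $W_k \le 1$), while for $i<k$ I would use $W_i \ge dw_i - \tfrac12 d^2 w_i^2$, which follows by composing $(1-w)^d \le e^{-dw}$ with $1-e^{-x}\ge x-x^2/2$. The linear part again yields $\sum_i c_i w_i$, and the quadratic part contributes a Taylor error $-\tfrac{\la d^2}{2}\sum_{i=1}^{k-1}(\beta_{i+1}-\beta_i) w_i^2$. Using $\beta_{i+1}-\beta_i \le (\la d)^{-i}$, the inequality $\gamma_{k-1,i}^2 \ge (\la d)^{k-1-i}$ from~(\ref{eq:gamma-bounds}), and $\sum_i \gamma_{k-1,i}^2 w_i^2 \le (\sum_i \gamma_{k-1,i}w_i)^2 = (Q_{k-1}(x)/n)^2$, this error has magnitude at most $(Q_{k-1}(x)/n)^2/(\la d)^{k-3}$ (using $\la$ not too small, which is automatic in the regime of interest).

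The main obstacle is verifying $\sum_{i=1}^k c_i w_i \ge -Q_k(x)/(n(\la d)^{k-1})$ with \emph{no} additional slack: a quick check shows $c_1 + \beta_1/(\la d)^{k-1} = -(\la d)^{-k} < 0$, so the bound cannot be obtained pointwise in $i$. Substituting the explicit formulas yields
\begin{equation*}
\sum_{i=1}^k c_i w_i + \frac{Q_k(x)}{n(\la d)^{k-1}} = \frac{1}{(\la d)^k}\Bigl[-w_1 + \sum_{i=2}^{k-1} \phi_i w_i + \bigl(2 - k(\la d)^{-(k-1)}\bigr) w_k\Bigr],
\end{equation*}
where $\phi_i := 1 - (\la d)^{-(i-1)} - (i-1)(\la d)^{-(k-1)}$. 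Under the running hypotheses $k\ge 2$ and $\la d \ge 4$, each $\phi_i$ and the coefficient of $w_k$ is bounded below by a positive constant, and the monotonicity $w_1 \le w_2 \le \dots \le w_k$ lets the positive terms absorb $-w_1$, making the bracket nonnegative. This closes the gap and completes the proof of the lower bound.
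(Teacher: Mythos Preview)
Your argument is correct and follows the same outline as the paper: compute $(1+\la)\Delta Q_k$ exactly, split off the term $\beta_k((1-\la)-u_{k+1}+\la u_k^d)$, linearise each $1-u_i^d$ for $i\le k-1$, and control the quadratic remainder via $(Q_{k-1}/n)^2$. The one place where your route genuinely diverges is the lower bound on the linear part $\sum_i c_i w_i$. The paper asserts that this sum equals $-\frac{1}{(\la d)^{k-1}}\bigl[(1-\tfrac{2}{\la d})w_k+\sum_{i=1}^{k-1}(1-\tfrac{1}{\la d})w_i\bigr]$ and then uses the pointwise inequality $1-1/(\la d)\le\beta_i$. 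In fact that displayed ``equality'' is off at $i=1$ (the convention $\beta_0=0$ does not match the generic formula, so the true coefficient is $c_1=-(\la d)^{-(k-1)}$, not $-(1-\tfrac{1}{\la d})(\la d)^{-(k-1)}$); the discrepancy is harmless for the upper bound but leaves a gap in the paper's lower-bound chain. You noticed this obstruction (``the bound cannot be obtained pointwise'') and repaired it by computing the residual explicitly and using the monotonicity $w_1\le w_k$ to absorb the $-w_1$ term --- a clean fix. For the quadratic error, your use of $\tfrac12 d^2$ in place of the paper's $\binom{d}{2}$ is slightly cruder and forces the side condition $\la\ge\tfrac12$; the paper's version needs only $\la\ge(d-1)/(2d)$, but neither follows from the section's minimal hypotheses $k\ge2$, $\la d\ge4$ alone --- both are automatic in the intended regime $\la\to1$.
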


\begin{proof}
As in (\ref{eq.drift-u}), we have that, for $i =1, \dots, k$,
$$
\Delta u_i(x) = \frac{1}{n(1+\la)} \left(\la u_{i-1} (x)^d - \la u_i (x)^d - u_i (x) + u_{i+1}(x) \right).
$$
and that $u_0$ is identically equal to~1.  We deduce that
\begin{eqnarray*}
\Delta Q_k(x) & = & - n \sum_{i=1}^k \beta_i \Delta u_i(x) \\
& = & \frac{1}{1 + \lambda } \sum_{i=1}^k \beta_i
\Big (-\lambda u_{i-1} (x)^d + \lambda u_i (x)^d + u_i (x) - u_{i+1}(x) \Big ).
\end{eqnarray*}

We rearrange the formula above as follows:
\begin{eqnarray*}
\lefteqn{(1+ \lambda ) \Delta Q_k(x)} \\
& = & \beta_k \left( (1-\lambda ) + \lambda u_k(x)^d - u_{k+1} (x) + \lambda (1-u_{k-1} (x)^d) - (1-u_k(x))\right) \\
&&\mbox{} + \sum_{i=1}^{k-1} \beta_i \Big ( \lambda (1-u_{i-1}(x)^d) - \lambda (1-u_i(x)^d) - (1 - u_i (x)) + (1-u_{i+1}(x)) \Big )\\
& = &  \beta_k \left( (1-\lambda ) + \lambda u_k(x)^d - u_{k+1} (x) \right) \\
&&\mbox{} + \lambda \sum_{i=1}^{k-1} (\beta_{i+1} - \beta_i) (1-u_i(x)^d )
- \sum_{i=1}^k (\beta_i - \beta_{i-1}) (1-u_i(x)).
\end{eqnarray*}
Here we have used the facts that $\beta_0=0$ and $1-u_0(x) = 0$.

Now, for $1\le i \le k$, we have $1-u_k(x) \ge 1-u_i(x)$ and $\beta_k \ge \beta_i$, and so
$\beta_k (1-u_k(x)) \ge Q_k(x) / kn$, and hence
$$
0 \le u_k(x)^d \le \left( 1 - \frac { Q_k(x) }{kn} \right)^d \le \exp( - d Q_k(x)/ k n).
$$

In order to estimate the terms constituting the two sums, we note the inequalities
$$
d(1-u) - \binom{d}{2} (1-u)^2 \le 1-u^d \le d(1-u).
$$
To obtain our upper bound on $\Delta Q_k(x)$, we apply the inequality $1-u_i(x)^d \le d (1-u_i(x))$ for each $i=1, \dots, k-1$.
Since also
$$
\beta_{i+1}-\beta_i = \frac{1}{(\la d)^i} - \frac{1}{(\la d)^{i+1}} - \frac{1}{(\la d)^k}> 0,
$$
for $i=0, \dots, k-1$, we have
\begin{eqnarray*}
\lefteqn{\lambda \sum_{i=1}^{k-1} (\beta_{i+1} - \beta_i) (1-u_i(x)^d )
- \sum_{i=1}^k (\beta_i - \beta_{i-1}) (1-u_i(x))}\\
&\le & \la d \sum_{i=1}^{k-1} (\beta_{i+1} - \beta_i) (1-u_i(x)) - \sum_{i=1}^k (\beta_i - \beta_{i-1}) (1-u_i(x))\\
&=& - \left[ \frac{1}{(\la d)^{k-1}} - \frac{2}{(\la d)^k} \right] (1-u_k(x)) \\
&&\mbox{} + \sum_{i=1}^{k-1} \left[ \frac{\la d}{(\la d)^i} - \frac{\la d}{(\la d)^{i+1}} - \frac{\la d}{(\la d)^k}
- \frac{1}{(\la d)^{i-1}} + \frac{1}{(\la d)^i} + \frac{1}{(\la d)^k} \right] (1-u_i(x)) \\
&=& - \frac{1}{(\la d)^{k-1}} \left[ \left(1-\frac{2}{\la d}\right) (1-u_k(x)) +
\sum_{i=1}^{k-1} \left( 1 - \frac{1}{\la d} \right) (1-u_i(x)) \right] \\
&\le& -\frac{1}{(\la d)^{k-1}} \frac{Q_k(x)}{n} \left(1-\frac{2}{\la d}\right).
\end{eqnarray*}
This establishes the required upper bound on $(1+\la) \Delta Q_k(x)$.  The calculation works because
the $\beta_i$ are the entries of a good approximation to the dominant eigenvector of the matrix $M_k$ defined in Section~\ref{sec.heuristics}.


For the lower bound, the previous calculation, and the bound $1-u_i(x)^d \ge d(1-u) -\binom{d}{2} (1-u)^2$, lead us to
\begin{eqnarray*}
\lefteqn{\lambda \sum_{i=1}^{k-1} (\beta_{i+1} - \beta_i) (1-u_i(x)^d )
- \sum_{i=1}^k (\beta_i - \beta_{i-1}) (1-u_i(x))}\\
&\ge & - \la \binom{d}{2} \sum_{i=1}^{k-1} (\beta_{i+1} - \beta_i) (1-u_i(x))^2 \\
&&\mbox{} - \frac{1}{(\la d)^{k-1}} \left[ \left(1-\frac{2}{\la d}\right) (1-u_k(x)) +
\sum_{i=1}^{k-1} \left( 1 - \frac{1}{\la d} \right) (1-u_i(x)) \right] \\
&\ge& - \la \binom{d}{2} \sum_{i=1}^{k-1} (\beta_{i+1} - \beta_i) (1-u_i(x))^2  -\frac{1}{(\la d)^{k-1}} \frac{Q_k(x)}{n}.
\end{eqnarray*}
Here we used the fact that $1-1/(\la d) \le \beta_i$ for each $i$.

It remains to show that
$$
\la \binom{d}{2} \sum_{i=1}^{k-1} (\beta_{i+1} - \beta_i) (1-u_i(x))^2 \le \left( \frac {Q_{k-1}(x)}{n} \right)^2 \frac{1}{(\la d)^{k-3}}.
$$
We observe that
\begin{eqnarray*}
\left( \frac {Q_{k-1}(x)}{n} \right)^2 &=& \left( \sum_{i=1}^{k-1} (\la d)^{(k-1-i)/2}
\frac{\sin \Big (\frac{i\pi}{k} \Big )}{\sin \Big ( \frac{(k-1) \pi}{k} \Big )} (1-u_i(x)) \right)^2 \\
&\ge& \sum_{i=1}^{k-1} (\la d)^{k-1-i} (1-u_i(x))^2 \\
&\ge& (\la d)^{k-1} \sum_{i=1}^{k-1} (\beta_{i+1} - \beta_i) (1-u_i(x))^2,
\end{eqnarray*}
which implies the required inequality.
\end{proof}


We prove a similar result for the functions $Q_j(x)$, $1\le j\le k-1$.  Ideally, the drift
bounds would be expressed in terms of $Q_j(x)$ itself and $Q_{j+1}(x)$: however, there is a complication.  In the upper bound,
there appears a term which can be bounded above by $\la \binom{d}{2} \sum_{i=1}^j \gamma_{j,i} (1-u_i(x))^2$, and we would like to
show that this is small compared with $\la d \sum_{i=1}^j \gamma_{j,i} (1-u_i(x))$.  This is true if $1-u_j(x) \ll 1/d$,
but in general we cannot assume this.  We bound this term above, very crudely, by
$$
\la \binom{d}{2} \left( \sum_{i=1}^{k-1} (1- u_i(x)) \right) \left( \sum_{i=1}^j \gamma_{j,i} (1-u_i(x)) \right)
= \la \binom{d}{2} \frac{P_{k-1}(x)Q_j(x)}{n^2};
$$
we use the function $P_{k-1}$ here because its drifts are relatively easy to handle.

\begin{lemma} \label{lem.qj-drift}
Fix $j$ with $1\le j \le k-1$. For any state $x \in \Z_+^n$, we have
\begin{eqnarray*}
(1+\la) \Delta Q_j(x) &\le&  - \la d \frac{Q_j(x)}{n} \left( 1 - \frac{2}{\sqrt{\la d}} - \frac{d P_{k-1}(x)}{n} \right) + \frac{Q_{j+1}(x)}{n}, \\
(1+\la) \Delta Q_j(x) &\ge & - \la d \frac{Q_j(x)}{n} \left( 1 + \frac{2}{\sqrt{\la d}} \right) + \frac{Q_{j+1}(x)}{n}.
\end{eqnarray*}
\end{lemma}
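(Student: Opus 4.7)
The plan is to mirror the proof of Lemma~\ref{lem.qk-drift}, with the $\beta_i$ replaced by $\gamma_{j,i}$ and with the Perron--Frobenius structure of the tridiagonal Toeplitz matrix $M_j$ doing the book-keeping.  First, apply the drift identity
$$
\Delta u_i(x) = \frac{1}{n(1+\la)}\bigl(\la u_{i-1}(x)^d - \la u_i(x)^d - u_i(x) + u_{i+1}(x)\bigr)
$$
and sum with weights $-n\gamma_{j,i}$, rearranging each bracket into
$$
\la(1-u_{i-1}^d) - \la(1-u_i^d) - (1-u_i) + (1-u_{i+1}).
$$
After re-indexing the shifted sums and using $\gamma_{j,0}=0$, $1-u_0=0$, $\gamma_{j,j+1}=0$, and $\gamma_{j,j}=1$, the coefficient of each $(1-u_i(x))$ in the linear part becomes $\la d\gamma_{j,i+1} - (\la d+1)\gamma_{j,i} + \gamma_{j,i-1}$, which by the eigenvector identity $\la d\gamma_{j,i+1} + \gamma_{j,i-1} = 2\sqrt{\la d}\cos(\pi/(j+1))\gamma_{j,i}$ collapses to $-\mu_j\gamma_{j,i}$, where $\mu_j := \la d + 1 - 2\sqrt{\la d}\cos(\pi/(j+1))$.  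The boundary produces the extra term $\gamma_{j,j}(1-u_{j+1}(x)) = (1-u_{j+1}(x))$.

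For the nonlinear parts I would use the elementary sandwich $d(1-u) - \binom{d}{2}(1-u)^2 \le 1-u^d \le d(1-u)$.  For the upper bound, using the upper inequality on $(1-u_{i-1}^d)$ and the lower inequality on $(1-u_i^d)$ (inside $-\la\sum\gamma_{j,i}(1-u_i^d)$) produces a single nonnegative error $\la\binom{d}{2}\sum_{i=1}^j \gamma_{j,i}(1-u_i(x))^2$.  Since $\max_{1\le i\le k-1}(1-u_i(x)) \le P_{k-1}(x)/n$, this is bounded by
$$
\la\binom{d}{2}\frac{P_{k-1}(x)}{n}\cdot\frac{Q_j(x)}{n} \le \la d^2\frac{P_{k-1}(x)\,Q_j(x)}{n^2}.
$$
Combined with $\mu_j \ge \la d + 1 - 2\sqrt{\la d} > \la d(1 - 2/\sqrt{\la d})$ and $(1-u_{j+1}(x)) \le Q_{j+1}(x)/n$, this delivers the claimed upper bound.

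For the lower bound I would apply the reversed inequalities on $(1-u^d)$, so that the leading term is $-\mu_j Q_j(x)/n + (1-u_{j+1}(x))$ plus a nonpositive quadratic error.  I then invoke \eqref{eq:Qj+1} in the form $(1-u_{j+1}(x)) \ge Q_{j+1}(x)/n - 2\sqrt{\la d}\,Q_j(x)/n$, converting the coefficient of $Q_j(x)/n$ into $-(\mu_j + 2\sqrt{\la d}) = -(\la d + 1 + 2\sqrt{\la d}(1-\cos(\pi/(j+1))))$.  For $j \ge 2$ the bound $\cos(\pi/(j+1)) \ge 1/2$ gives $\mu_j + 2\sqrt{\la d} \le \la d + 1 + \sqrt{\la d} \le \la d(1 + 2/\sqrt{\la d})$ using $\la d \ge 4$; for $j = 1$ the relation $(1-u_2) = Q_2(x)/n - \sqrt{\la d}\,Q_1(x)/n$ is exact, and the same bookkeeping again gives $\la d + 1 + \sqrt{\la d} \le \la d(1 + 2/\sqrt{\la d})$.

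The main obstacle is the quadratic error on the lower-bound side, $-\la\binom{d}{2}\sum_{i=2}^j \gamma_{j,i}(1-u_{i-1}(x))^2$, because the index shift $i\mapsto i-1$ does not directly match $Q_j(x)$.  To absorb it one shifts back via $\sum_{i=2}^j\gamma_{j,i}(1-u_{i-1})^2 = \sum_{i=1}^{j-1}\gamma_{j,i+1}(1-u_i)^2$, and then uses the sine-ratio estimate $\gamma_{j,i+1} \le 2\sqrt{\la d}\,\gamma_{j,i}$ together with $(1-u_i)^2 \le (P_{k-1}(x)/n)(1-u_i)$ to re-express the error as a bounded multiple of $P_{k-1}(x)Q_j(x)/n^2$; the residual slack $\sqrt{\la d}$ between the proven $\mu_j + 2\sqrt{\la d}$ and the stated $\la d(1+2/\sqrt{\la d})$ is what finally swallows this contribution under the standing hypothesis $\la d \ge 4$ (and the tacit smallness of $P_{k-1}(x)/n$ that already underlies the lemma's downstream use).
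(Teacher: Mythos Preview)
Your upper bound is fine and matches the paper's argument. The gap is in the lower bound.

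The lemma is stated for \emph{every} $x\in\Z_+^n$, and its lower bound contains no $P_{k-1}(x)$ term. Your termwise use of the sandwich inequality on the separate sums $\la\sum_i\gamma_{j,i}(1-u_{i-1}^d)$ and $-\la\sum_i\gamma_{j,i}(1-u_i^d)$ produces a quadratic error $-\la\binom{d}{2}\sum_{i\ge2}\gamma_{j,i}(1-u_{i-1})^2$ on the lower side. Your proposed absorption of this into the ``residual slack'' of order $\sqrt{\la d}\cdot Q_j(x)/n$ requires $\la\binom{d}{2}\,P_{k-1}(x)/n\lesssim\sqrt{\la d}$, i.e.\ $P_{k-1}(x)/n\lesssim d^{-3/2}$. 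That is not available for arbitrary $x$ (take $x$ with all queues empty, so $P_{k-1}(x)/n=k-1$), so the appeal to ``tacit smallness of $P_{k-1}(x)/n$'' is not legitimate here.

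The paper avoids this obstacle entirely by re-indexing \emph{before} applying any inequality. One first collects the power terms into a single sum
\[
-\la\sum_{i=1}^{j}(\gamma_{j,i}-\gamma_{j,i+1})(1-u_i(x)^d),
\]
and then observes that the $\gamma_{j,i}$ are strictly \emph{decreasing} in $i$ (from $\gamma_{j,i}=(\la d)^{(j-i)/2}\sin(i\pi/(j+1))/\sin(\pi/(j+1))$), so every coefficient $\gamma_{j,i}-\gamma_{j,i+1}$ is positive. Hence for the lower bound it suffices to use only the one-sided inequality $1-u^d\le d(1-u)$, giving
\[
-\la\sum_{i}(\gamma_{j,i}-\gamma_{j,i+1})(1-u_i^d)\;\ge\;-\la d\sum_{i}(\gamma_{j,i}-\gamma_{j,i+1})(1-u_i),
\]
with no quadratic remainder at all. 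After that, the eigenvector identity and the bound $2\sqrt{\la d}\cos(\pi/(j+1))\ge 1$ (valid for $j\ge1$ since $\la d\ge4$) yield $\mu_j\le\la d$, and then \eqref{eq:Qj+1} converts $(1-u_{j+1})$ into $Q_{j+1}/n-2\sqrt{\la d}\,Q_j/n$, exactly as you outlined. The point you were missing is simply that the monotonicity of the $\gamma_{j,i}$ makes the combined coefficient on each $(1-u_i^d)$ of a single sign, so one never needs the quadratic side of the sandwich for the lower bound.
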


\begin{proof}
We begin by calculating
\begin{eqnarray*}
(1+\la) \Delta Q_j(x) &=& \sum_{i=1}^j \gamma_{j,i} \Big( -\la u_{i-1}(x)^d + \la u_i (x)^d + u_i(x) - u_{i+1}(x) \Big)\\
& = & \sum_{i=1}^j \gamma_{j,i} \Big ( \la (1-u_{i-1}(x)^d ) - \la (1-u_i (x)^d ) \Big ) \\
&&\mbox{} + \sum_{i=1}^j \gamma_{j,i}\Big (- (1-u_i(x)) + (1-u_{i+1}(x)) \Big).
\end{eqnarray*}
Rearranging now gives
\begin{eqnarray*}
(1+\la) \Delta Q_j(x)
& = & \sum_{i=1}^j (\gamma_{j,i-1} - \gamma_{j,i}) (1-u_i(x)) \\
&&\mbox{} - \la \sum_{i=1}^j (\gamma_{j,i} - \gamma_{j,i+1} ) (1-u_i(x)^d) + \gamma_{j,j} (1-u_{j+1}(x)).
\end{eqnarray*}
Recall that $\gamma_{j,0} = \gamma_{j,j+1} = 0$, and note that $\gamma_{j,1} > \gamma_{j,2} > \cdots > \gamma_{j,j} = 1$.

As before, we proceed by approximating $1-u_i(x)^d$ by $d(1-u_i(x))$, for $i \le j$.
Using first that $1-u_i(x)^d \le d(1-u_i(x))$ for each $i$, we have
\begin{eqnarray*}
\lefteqn{(1+\la) \Delta Q_j(x)} \\
&\ge & \sum_{i=1}^j (\gamma_{j,i-1} - \gamma_{j,i}) (1-u_i(x))
- \la d \sum_{i=1}^j (\gamma_{j,i} - \gamma_{j,i+1} ) (1-u_i(x))\\
&&\mbox{} + (1-u_{j+1} (x)) \\
&=& \sum_{i=1}^j (1-u_i(x)) \left[ \gamma_{j,i-1} + \la d \gamma_{j,i+1} - (\la d +1) \gamma_{j,i} \right] + (1-u_{j+1}(x)) \\
&=& - \sum_{i=1}^j (1-u_i(x)) \gamma_{j,i} \left[ \la d + 1 - 2 \sqrt{\la d} \cos \left( \frac{\pi}{j+1} \right) \right] + (1-u_{j+1}(x)) \\
&=& - \left[ \la d + 1 - 2 \sqrt{\la d} \cos \left( \frac{\pi}{j+1} \right) \right] \frac{Q_j(x)}{n} + (1-u_{j+1}(x)) \\
&\ge& - \la d \frac{Q_j(x)}{n} + \frac{Q_{j+1}(x)}{n} - 2\sqrt{\la d} \frac{Q_j(x)}{n},
\end{eqnarray*}
as claimed.  In the last line above, we used (\ref{eq:Qj+1}), as well as the inequality $2\sqrt {\la d} \cos (\pi/(j+1)) \ge \sqrt{2\la d} \ge 1$,
valid since $\la d \ge 4$.

For the upper bound, we use the facts that $1-u_{j+1}(x) \le \frac{Q_{j+1}(x)}{n}$ and
$1-u_i(x)^d \ge d(1-u_i(x)) - \binom{d}{2}(1-u_i(x))^2$, to obtain
\begin{eqnarray*}
\lefteqn{(1+\la) \Delta Q_j(x)} \\
&\le&  - \left[ \la d + 1 - 2 \sqrt{\la d} \cos \left( \frac{\pi}{j+1} \right) \right] \frac{Q_j(x)}{n} + (1 - u_{j+1}(x))\\
&&\mbox{} + \la \binom{d}{2} \sum_{i=1}^j (\gamma_{j,i} - \gamma_{j,i+1} ) (1-u_i(x))^2 \\
&\le& - \la d \frac{Q_j(x)}{n} \left( 1 - \frac{2}{\sqrt{\la d}}\right) +
\frac{Q_{j+1}(x)}{n} + \frac{P_{k-1}(x)}{n} \la \binom{d}{2} \sum_{i=1}^j \gamma_{j,i} (1-u_i(x)) \\
&\le& - \la d \frac{Q_j(x)}{n} \left( 1 - \frac{2}{\sqrt{\la d}}\right) +
\frac{Q_{j+1}(x)}{n} + \frac{P_{k-1}(x) Q_j(x)}{n^2} \la \binom{d}{2},
\end{eqnarray*}
as claimed.
\end{proof}


Next we prove a similar result for the function $P_{k-1}$.  For this function, we need only a fairly
crude upper bound on the drift.

\begin{lemma} \label{lem.pk-1-drift}
For any state $x \in \Z_+^n$, we have
$$
(1+\la) \Delta P_{k-1}(x) \le - \la \left( 1 - \exp \left( - \frac{ d P_{k-1}(x)}{(k-1)n} \right) \right)  + \frac{Q_k(x)}{n}.
$$
\end{lemma}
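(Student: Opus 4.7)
The plan is to imitate the calculations used in Lemma~\ref{lem.qk-drift} and Lemma~\ref{lem.qj-drift}: apply the formula~\eqref{eq.drift-u} for $\Delta u_i(x)$ to each term of the sum defining $P_{k-1}$, exploit the fact that all the coefficients of $P_{k-1}$ are $1$ (so the algebra telescopes cleanly, unlike for $Q_k$ or $Q_j$), and then reduce the problem to two one-sided estimates, one for the $u_{k-1}^d$-term and one for the ``boundary'' contribution $u_1 - u_k$.

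Concretely, I would first write
$$(1+\la)\Delta P_{k-1}(x) \;=\; \sum_{i=1}^{k-1}\Bigl[-\la u_{i-1}(x)^d + \la u_i(x)^d + u_i(x) - u_{i+1}(x)\Bigr],$$
and then telescope: the $\la$-sum collapses to $\la(u_{k-1}(x)^d - u_0(x)^d)$, while the other sum collapses to $u_1(x) - u_k(x)$. Using $u_0 \equiv 1$ this yields the exact identity
$$(1+\la)\Delta P_{k-1}(x) \;=\; -\la\bigl(1 - u_{k-1}(x)^d\bigr) + \bigl(u_1(x) - u_k(x)\bigr).$$

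For the first summand I would use the monotonicity $u_{k-1}(x) \le u_{k-2}(x) \le \cdots \le u_1(x)$ to bound
$$P_{k-1}(x)/n \;=\; \sum_{i=1}^{k-1}(1-u_i(x)) \;\le\; (k-1)(1-u_{k-1}(x)),$$
so that $u_{k-1}(x) \le 1 - P_{k-1}(x)/((k-1)n)$. Applying the elementary inequality $(1-y)^d \le e^{-dy}$ for $y \in [0,1]$ then gives $u_{k-1}(x)^d \le \exp\bigl(-dP_{k-1}(x)/((k-1)n)\bigr)$, which produces the first term on the right of the stated bound.

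For the second summand I would use $u_1(x) \le 1$ to reduce to $u_1(x) - u_k(x) \le 1 - u_k(x)$, and then observe that $Q_k(x)/n \ge \beta_k(1-u_k(x))$, with $\beta_k = 1 - k/(\la d)^k$ very close to $1$ under the standing assumption $\la d \ge 4$ (and $k \ge 2$). The one delicate point, which I expect to be the main obstacle, is pinning down the bound $u_1(x) - u_k(x) \le Q_k(x)/n$ exactly (rather than with a small multiplicative factor of $1/\beta_k$): the slack has to be recovered either from the positive $\beta_1(1-u_1(x))$ contribution to $Q_k(x)/n$, or by an Abel-summation rewrite $u_1 - u_k = \sum_{j=2}^k (u_{j-1}-u_j)$ paired against the cumulative coefficients $\sum_{i=j}^k \beta_i \ge \beta_k$, which gives $u_1-u_k \le (Q_k/n)/\beta_k$. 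Combining these two estimates with the telescoping identity produces the claimed drift bound.
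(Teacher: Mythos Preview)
Your approach is exactly the paper's: telescope to the identity
\[
(1+\la)\Delta P_{k-1}(x) = -\la\bigl(1-u_{k-1}(x)^d\bigr) + \bigl(u_1(x)-u_k(x)\bigr),
\]
bound the first term via $1-u_{k-1}(x)\ge P_{k-1}(x)/((k-1)n)$ and $(1-y)^d\le e^{-dy}$, and bound the second by first using $u_1(x)\le 1$ to get $u_1(x)-u_k(x)\le 1-u_k(x)$.

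Where you hesitate, the paper does not: it simply asserts $1-u_k(x)\le Q_k(x)/n$ and moves on. Your suspicion is well founded, and in fact neither of your proposed fixes can rescue that inequality exactly. Take the state in which every queue has length exactly $k-1$: then $u_1=\cdots=u_{k-1}=1$, $u_k=0$, so $P_{k-1}(x)=0$, the exact drift is $(1+\la)\Delta P_{k-1}(x)=1$, while $Q_k(x)/n=\beta_k(1-u_k(x))=\beta_k<1$. Thus the lemma as stated is very slightly too strong; the honest bound is
\[
(1+\la)\Delta P_{k-1}(x)\;\le\; -\la\Bigl(1-\exp\Bigl(-\tfrac{dP_{k-1}(x)}{(k-1)n}\Bigr)\Bigr)+\frac{Q_k(x)}{\beta_k\,n},
\]
which is what your Abel-summation argument actually yields. (The paper itself uses the correct form $1-u_k(x)\le Q_k(x)/(n\beta_k)$ elsewhere, in the proof of Lemma~\ref{lem.H-N-H}.) The discrepancy is harmless for the only application, Lemma~\ref{lem.cC}: there one uses $Q_k(x)\le (1+2\eps)n(1-\la)(\la d)^{k-1}$ and needs only a crude constant, and $\beta_k\ge 1-\eps/2$ by \eqref{ineq:13}, so the extra factor $1/\beta_k$ is absorbed with room to spare.
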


\begin{proof}
The calculation this time is simpler: we have
\begin{eqnarray*}
\Delta P_{k-1}(x) &=& (1+ \la ) \E [P_{k-1}(X_{t+1}) - P_{k-1} (X_t) \mid X_t =x ]\\
& = & -\sum_{i=1}^{k-1}  (\la u_{i-1}(x)^d - \la u_i (x)^d - u_i(x) + u_{i+1}(x))\\
& = & \la u_{k-1}(x)^d - \la - u_k(x) + u_1(x)\\
& = & -\la (1-u_{k-1}(x)^d) + (1-u_k(x)) - (1-u_1(x)) \\
&\le & -\la (1-u_{k-1}(x)^d) + (1-u_k(x)).
\end{eqnarray*}
We have $1-u_k(x) \le \frac{Q_k(x)}{n}$ and $1-u_{k-1}(x) \ge \frac{1}{k-1} \frac{P_{k-1}(x)}{n}$, so
\begin{eqnarray*}
u_{k-1}(x)^d & \le & \left(1 -  \frac{1}{k-1} \frac{P_{k-1}(x)}{n} \right)^d \\
& \le & \exp \left( - \frac{ d P_{k-1}(x)}{(k-1)n} \right),
\end{eqnarray*}
which gives the required bound.
\end{proof}

\section{Hitting Times and Exit Times} \label{sec.hit-and-exit}

At this point, we begin the proof of Theorem~\ref{thm.technical}.  Accordingly, from now on we fix values of $n, d, k \in \N$,
$\la, \eps \in (0,1)$ such that:
\begin{eqnarray}
d^k (1-\la) &\ge& 2\log^2 n, \label{ineq:2}\\
k &\ge& 2, \label{ineq:4} \\
\eps &\le& \frac{1}{10}, \label{ineq:5} \\
\eps \sqrt d &\ge& 150 k, \label{ineq:6} \\
\eps &\ge& 100 k (1-\la) d^{k-1}, \label{ineq:7} \\
\eps^2 d n (1-\la)^2 &\ge& 600 k^2 \log^2 n. \label{ineq:8}
\end{eqnarray}

We explore some consequences of these assumptions.

\begin{lemma} \label{lem.inequalities}
For positive integers $n$, $d$, $k$, and real numbers $\la \in (0,1)$, $\eps \in (0,1)$ satisfying (\ref{ineq:2}) to (\ref{ineq:8}) above,
we also have:
\begin{eqnarray}
n &\ge& 10^{15}, \label{ineq:85} \\
\eps d &\ge& 200 k \log^2 n, \label{ineq:9} \\
\log^2 n \,(1-\la) &\le& \frac{\eps^2}{80000}, \label{ineq:95} \\
k &=& \left\lceil \frac{\log(1-\la)^{-1}}{\log d} \right\rceil, \label{ineq:10} \\
\eps^3 n (1-\la) &\ge& 60000k^3 \log^2 n \, d^{k-2}, \label{ineq:115}\\
\eps n (1-\la) &\ge & 60000, \label{ineq:1155} \\
k &\le& \log n, \label{ineq:116} \\
d^k (1-\la) &\ge& 2 k \log n, \label{ineq:103} \\
\la^k &\ge & 9/10, \label{ineq:12} \\
\beta_k \,=\, 1 - \frac{k}{(\la d)^k} &\ge& 1- \eps/2, \label{ineq:13} \\
\sum_{i=1}^\infty \frac{1}{(\la d)^i} \,\le\, \sum_{i=1}^\infty \frac{1}{(\la d)^{i/2}} &\le& \frac{\eps}{2k}. \label{ineq:14}
\end{eqnarray}
\end{lemma}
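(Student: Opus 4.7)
The lemma is a bookkeeping exercise: each of the eleven inequalities falls out of (\ref{ineq:2})--(\ref{ineq:8}) by a short algebraic manipulation, but most use earlier inequalities on the list as stepping stones. The plan is simply to order the derivations so that each step relies only on preceding ones.

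First I would obtain (\ref{ineq:9}) by multiplying (\ref{ineq:7}) by $d$ and substituting the lower bound from (\ref{ineq:2}): $\eps d \ge 100 k (1-\la) d^k \ge 200 k \log^2 n$. For (\ref{ineq:95}), I use $d^{k-1}\ge d$ (valid since $k\ge 2$) to rewrite (\ref{ineq:7}) as $(1-\la) d \le \eps/200$, then combine with the lower bound $d \ge 400\log^2 n/\eps$ coming from (\ref{ineq:9}) and $k\ge 2$. For (\ref{ineq:115}), I apply (\ref{ineq:7}) in the form $d^{k-2} \le \eps/(100 k d(1-\la))$ to its right-hand side; after clearing denominators, the claim reduces to exactly (\ref{ineq:8}). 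Then (\ref{ineq:1155}) is immediate from (\ref{ineq:115}) since $\eps \le 1/10$ and $k^3\log^2 n\, d^{k-2}\ge 1$, and (\ref{ineq:85}) follows from (\ref{ineq:1155}) and (\ref{ineq:95}): their combination forces $n \ge 4.8 \times 10^9 \log^2 n/\eps^3$, which for $\eps\le 1/10$ comfortably implies $n \ge 10^{15}$.

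The central obstacle is (\ref{ineq:116}), $k \le \log n$: it requires (\ref{ineq:7}), (\ref{ineq:8}) and (\ref{ineq:9}) acting together. My plan is first to use (\ref{ineq:7}) to deduce $d \le (1-\la)^{-1}/2000$, then substitute this into the rearranged form of (\ref{ineq:8}), $(1-\la)^{-1} \le \eps\sqrt{dn}/(\sqrt{600}\,k\log n)$, to obtain an upper bound on $(1-\la)^{-1}$ polynomial in $n$. Taking logarithms in (\ref{ineq:7}) and using $\log d \ge 2\log\log n$ from (\ref{ineq:9}) then forces $k \le 1 + \log n/(2\log\log n)$, well below $\log n$ once $n \ge 10^{15}$. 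Inequality (\ref{ineq:103}) follows immediately from (\ref{ineq:2}) and $k\le\log n$.

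The remaining items are short. (\ref{ineq:10}) reads $d^{k-1} < (1-\la)^{-1} \le d^k$ straight off (\ref{ineq:7}) and (\ref{ineq:2}). For (\ref{ineq:12}), (\ref{ineq:7}) yields $k(1-\la) \le 1/1000$, so Bernoulli's inequality gives $\la^k \ge 1 - k(1-\la) \ge 9/10$. For (\ref{ineq:13}), (\ref{ineq:12}) and (\ref{ineq:2}) imply $(\la d)^k \ge (9/5)\log^2 n/(1-\la)$, so $k/(\la d)^k \le (5/9) k(1-\la)/\log^2 n$, which (\ref{ineq:95}) together with $k\le\log n$ drives below $\eps/2$. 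Finally (\ref{ineq:14}) sums a geometric series with ratio $(\la d)^{-1/2}$: the lower bound on $\la d$ coming from (\ref{ineq:9}) and (\ref{ineq:12}) makes $(\la d)^{1/2}$ exceed $1 + 2k/\eps$, so the tail is at most $\eps/(2k)$.
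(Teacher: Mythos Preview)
Your overall plan is sound and essentially parallels the paper's own bookkeeping argument, with some minor reorderings that all work. However, your derivation of (\ref{ineq:14}) has a genuine gap.

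You claim that the lower bound on $\la d$ coming from (\ref{ineq:9}) and (\ref{ineq:12}), namely $\la d\ge 180k\log^2 n/\eps$, forces $(\la d)^{1/2}\ge 1+2k/\eps$. But this implication fails when $\eps$ is small compared to $1/\log n$. Concretely, take $k=2$, $n=10^{55}$, $d=9\times 10^{16}$, $\eps=10^{-6}$, $\la=1-10^{-26}$; one checks that all of (\ref{ineq:2})--(\ref{ineq:8}) hold, yet your lower bound gives only $(\la d)^{1/2}\ge\sqrt{180\cdot 2\cdot \log^2 n/\eps}\approx 2.4\times 10^6$, whereas $1+2k/\eps\approx 4\times 10^6$. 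The inequality you need, rewritten, is $45\eps\log^2 n\gtrsim k$, and nothing you have cited forces $\eps$ to be larger than $1/(45\log n)$.

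The fix is precisely what the paper does: use (\ref{ineq:6}) rather than (\ref{ineq:9}). From $\eps\sqrt d\ge 150k$ and $\la\ge 9/10$ one gets $\sqrt{\la d}\ge\sqrt{9/10}\cdot 150k/\eps\ge 142k/\eps$, which comfortably exceeds $1+2k/\eps$ since $k\ge 2$ and $\eps\le 1/10$. Note that (\ref{ineq:6}) is the only hypothesis among (\ref{ineq:2})--(\ref{ineq:8}) that you never invoke, which is a useful sanity check: in a lemma of this kind every hypothesis should earn its keep somewhere.

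Apart from this, your route to (\ref{ineq:116}) and (\ref{ineq:13}) is more laborious than necessary. The paper gets (\ref{ineq:116}) in two lines by first establishing (\ref{ineq:10}), which gives $(k-1)\log d<\log(1-\la)^{-1}$, and then using (\ref{ineq:1155}) to bound $(1-\la)^{-1}<n$; since $\log d$ is large this gives $k\le\log(1-\la)^{-1}<\log n$ directly. Similarly (\ref{ineq:13}) reduces to $2k\le\eps(\la d)^k$, which follows at once from (\ref{ineq:9}) and (\ref{ineq:12}) since $(\la d)^k\ge(\la d)\ge 180k\log^2 n/\eps$. Your arguments for these two items are correct, just longer.
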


\begin{proof}
Squaring (\ref{ineq:7}) and multiplying by (\ref{ineq:8}) gives
$$
\eps^4 n \ge 6 \times 10^6 k^4 \log^2 n \, d^{2k-3}.
$$
Using (\ref{ineq:5}) and (\ref{ineq:4}) now yields
$n/ \log^2 n \ge 9.6 \times 10^{11}$,
which implies (\ref{ineq:85}).

(\ref{ineq:9}) follows from multiplying (\ref{ineq:2}) and (\ref{ineq:7}).  Multiplying by (\ref{ineq:7}) again gives
$\eps^2 \ge 20000 k^2 (1-\la) \log^2 n \, d^{k-2}$, which implies (\ref{ineq:95}) via (\ref{ineq:4}).

(\ref{ineq:10}) is equivalent to $d^k(1-\la) \ge 1 > d^{k-1}(1-\la)$, and these inequalities are clear from (\ref{ineq:2}) and (\ref{ineq:7}).

Multiplying (\ref{ineq:7}) and (\ref{ineq:8}) gives (\ref{ineq:115}), and the weaker version (\ref{ineq:1155}).

It follows from (\ref{ineq:9}) and (\ref{ineq:10}) that $k \le \log (1-\la)^{-1}$, and from (\ref{ineq:1155}) that $(1-\la)^{-1} \le n$.
(\ref{ineq:116}) follows, and also (\ref{ineq:103}) now follows from (\ref{ineq:2}).

(\ref{ineq:12}) follows from
$$
\la^k = (1-(1-\la))^k \ge 1 - k(1-\la) \ge 1- \frac{\eps}{100 d},
$$
where at the end we used (\ref{ineq:7}) and (\ref{ineq:4}).

(\ref{ineq:13}) is equivalent to $2 k \le \eps (\la d)^k$, which follows comfortably from (\ref{ineq:9}).

From (\ref{ineq:6}) and (\ref{ineq:12}), we have that
$$
\frac{1}{\sqrt{\la d}} \le \sqrt{\frac{10}{9}} \frac{\eps}{150k} \le \frac{\eps}{100k},
$$
which comfortably implies (\ref{ineq:14}).
\end{proof}

We shall mention explicitly each time we use one of the inequalities (\ref{ineq:2})--(\ref{ineq:14}).  Exceptionally, we will not mention
(\ref{ineq:85}); on several occasions we note that an inequality holds for large enough $n$, and $n \ge 10^{15}$ will always
suffice.


We define a sequence of pairs of subsets of $\Z_+^n$.  Each pair consists of a set $\cS_0$ in which
some inequality holds, and a set $\cS_1$ in which a looser version of the inequality holds: we also demand that $\cS_0$ and $\cS_1$ be
subsets of the previous set $\cR_1$ in the sequence.  Associated with each pair $(\cS_0, \cS_1)$ in the sequence is a {\em hitting time}
$$
T_\cS = \inf \{ t \ge T_\cR : X_t \in \cS_0 \},
$$
where $(\cR_0,\cR_1)$ is the previous pair in the sequence, and an {\em exit time}
$$
T_\cS^\dagger = \inf \{ t \ge T_\cS : X_t \notin \cS_1 \}.
$$
Our aim in each case is to prove that, with high probability, unless the previous exit time $T_\cR^\dagger$ occurs
early, $T_\cS$ is unlikely to be larger than some quantity $m_\cS$ whose order is to be thought of as polynomial in $n$.  More precisely, if we
start in a state in $\cA(\ell,g)$, then the sum of all the $m_\cS$ is of order at most the maximum of $k n (1-\la)^{-1}$, $g n (1-\la)^{-1}$
and $\ell n$; note that $k \le \log n$ and $(1-\la)^{-1} \le n$ (see (\ref{ineq:116}) and (\ref{ineq:1155})), so if $\ell$ and $g$ are bounded by
a polynomial in $n$, then so are all the $m_\cS$.

Throughout the proof, we set
$$
s_0 = e^{\frac 13 \log^2 n}.
$$
We shall also prove that, again with high probability, each exit time $T_\cS^\dagger$
is at least $s_0$, which is larger than the sum of all the terms $m_\cS$.
For convenience, we shall not be too precise about our error probabilities, and simply declare them all to be at most
$$
1/s_0 = e^{-\frac13 \log^2 n},
$$
or some small multiple of $1/s_0$.

We fix, for the moment, a pair of positive real numbers $\ell \ge k$ and $g \ge k$.
We set $q(\ell,g) = (23k + 72g) \eps^{-1} n (1-\la)^{-1} + 8 \ell n$, and we make the (mild) assumption that
$\ell$ and $g$ are chosen so that $q(\ell,g) \le s_0/2$.

The first pair of sets in our sequence will be as defined earlier:
\begin{eqnarray*}
\cA_0 = \cA_0(\ell,g) &=& \{ x : \| x \|_\infty \le \ell \mbox{ and } \| x \|_1 \le gn \}, \\
\cA_1 = \cA_1(\ell,g) &=& \{ x : \| x \|_\infty \le 3\ell \mbox{ and } \| x \|_1 \le 3gn \},
\end{eqnarray*}
and we adopt the hypothesis that $X_0 = x_0$ almost surely, where $x_0$ is a fixed state in $\cA_0=\cA_0(\ell,g)$, so
that $T_\cA := \min \{ t\ge 0 : X_t \in \cA_0 \} = 0$.

For $\ell = \ell^* = \log^2 n (1-\la)^{-1}$ and $g = g^* = 2 (1-\la)^{-1}$,
Lemma~\ref{lem.cA} tells us that indeed the exit time $T_\cA^\dagger = \inf\{ t>0 : X_t \notin \cA_1^* \}$ is unlikely to be
less than $s_0$.  For smaller values of $\ell$ and $g$, we do not know this a priori.

The sets we define are dependent on the chosen values of $n$, $d$, $k$, $\la$ and $\eps$, as well as on $\ell$ and $g$.
For the most part, we drop reference to this dependence from the notation.  However, later in the paper we shall need to
vary $\eps$ while keeping all other parameters fixed; in this case, we shall use the notation (e.g.) $\cB_0^\eps$ to
emphasise the dependence.

We define:
\begin{eqnarray*}
\cB_0 &=& \{ x: Q_k(x) \le (1+\eps) n (1-\la) (\la d)^{k-1} \} \cap \cA_1, \\
\cB_1 &=& \{ x: Q_k(x) \le (1+2\eps) n (1-\la) (\la d)^{k-1} \} \cap \cA_1, \\
\cC_0 &=& \{ x: P_{k-1}(x) \le 2k n (1-\la) (\la d)^{k-2} \} \cap \cB_1, \\
\cC_1 &=& \{ x: P_{k-1}(x) \le 3k n (1-\la) (\la d)^{k-2} \} \cap \cB_1, \\
\cD_0 &=& \{ x: Q_{k-1}(x) \le (1+4\eps) n (1-\la) (\la d)^{k-2} \} \cap \cC_1, \\
\cD_1 &=& \{ x: Q_{k-1}(x) \le (1+5\eps) n (1-\la) (\la d)^{k-2} \} \cap \cC_1, \\
\cE_0 &=& \{ x: u_{k+1}(x) \le \eps (1-\la) \mbox{ and } Q_k(x) \ge (1-3\eps) n (1-\la) (\la d)^{k-1} \} \cap \cD_1, \\
\cE_1 &=& \{ x: u_{k+1}(x) \le \eps (1-\la) \mbox{ and } Q_k(x) \ge (1-4\eps) n (1-\la) (\la d)^{k-1} \} \cap \cD_1.
\end{eqnarray*}
Next we have a sequence of pairs of sets, indexed by $j=k-1, \dots, 1$:
\begin{eqnarray*}
\cG_0^j &=& \Big\{ x: \Big[ 1- (4 + \frac{k-j-1/2}{k}) \eps \Big] n (1-\la) (\la d)^{j-1} \le Q_j(x) \\
&&\mbox{} \qquad \le \Big[1 + (4 + \frac{k-j-1/2}{k}) \eps \Big] n (1-\la) (\la d)^{j-1} \Big\} \cap \cG_1^{j+1}, \\
\cG_1^j &=& \Big\{ x: \Big[1 - (4 + \frac{k-j}{k}) \eps \Big] n (1-\la) (\la d)^{j-1} \le Q_j(x) \\
&&\mbox{} \qquad \le \Big[1 + (4 + \frac{k-j}{k}) \eps \Big] n (1-\la) (\la d)^{j-1} \Big\} \cap \cG_1^{j+1}.
\end{eqnarray*}
where we declare $\cG_1^k$ to be equal to $\cE_1$.
Finally, departing slightly from our pattern, we define
$$
\cH = \cH_0 = \cH_1 = \{ x: u_{k+1}(x) = 0 \} \cap \cG_1^1.
$$

The hitting times and exit times are all defined in accordance with the pattern given.  For instance
$T_\cB = \inf \{ t : X_t \in \cB_0 \}$, $T_\cB^\dagger = \inf \{ t > T_\cB : X_t \notin \cB_1 \}$, and
$T_\cC = \inf \{ t \ge T_\cB : X_t \in \cC_0 \}$.
We also set $T_{\cG^k} = T_\cE$ and $T_{\cG^k}^\dagger = T_\cE^\dagger$, in accordance with the notion that
the set pair $(\cG^{k-1}_0,\cG^{k-1}_1)$ follows $(\cE_0,\cE_1)$ in the sequence.

Initially, the sets above all depend on the values of $\ell$ and $g$ defining the initial pair of sets $(\cA_0, \cA_1)$,
since all the sets are intersected with $\cA_1$.  However, since states in $\cH$ have no queue of length $k+1$ or greater, we have
$\cH \subseteq \cA_0(k,k) \subseteq \cA_1(\ell,g)$ for all $\ell, g \ge k$, and so the set $\cH$ does not depend on $\ell$ and $g$, provided these
parameters are each at least $k$.


We now state a sequence of lemmas.
Throughout, we assume that $X_0 = x_0$ a.s., where $x_0$ is an arbitrary state in $\cA_0 = \cA_0(\ell,g)$.

\begin{lemma} \label{lem.cB}
Let $m_\cB = 8 k \eps^{-1} n (1-\la)^{-1}$.
\begin{enumerate}
\item [(1)] $\Pr (T_\cB \land T_\cA^\dagger \ge m_\cB) \le 1/s_0$.
\item [(2)] $\Pr (T_\cB^\dagger \le s_0 < T_\cA^\dagger) \le 1/s_0$.
\end{enumerate}
\end{lemma}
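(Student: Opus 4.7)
The plan is to apply Lemma~\ref{lem.drifts-down2} to $F = Q_k$ with good set $\cS = \cA_1$ and stopping time $T^\ast = 0 = T_\cA$ (valid because $X_0 = x_0 \in \cA_0$). I take $h = (1+\eps) n(1-\la)(\la d)^{k-1}$, $\rho = \eps n(1-\la)(\la d)^{k-1}$, and the crude a priori bound $c = kn$ (which holds universally, since $\beta_i < 1$ and $n(1-u_i(x)) \le n$ give $Q_k(x) \le kn$). Since $\cB_0 \subseteq \cA_1$ and $\cB_1 = \{Q_k \le h+\rho\} \cap \cA_1$, the times $T_\cB \land T_\cA^\dagger$ and $T_\cB^\dagger \land T_\cA^\dagger$ can be read off from the times $T_1\land T_0$ and $T_2 \land T_0$ of Lemma~\ref{lem.drifts-down2}, so part~(i) will yield (1) and part~(ii) will yield (2). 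The Lipschitz bound $|Q_k(X_{t+1}) - Q_k(X_t)| \le 1$ is immediate, since a single transition changes exactly one $n(1-u_i(x))$ by $\pm 1$.

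The core step is to show that $\Delta Q_k(x) \le -v$, with $v = \eps(1-\la)/3$, for every $x$ with $Q_k(x) \ge h$. I would apply the upper half of Lemma~\ref{lem.qk-drift}: dropping the nonpositive term $-\beta_k u_{k+1}(x)$, using $\beta_k \le 1$, and using $Q_k(x) \ge h$ to bound the linear term, this reduces to
\[
(1+\la)\,\Delta Q_k(x) \,\le\, (1-\la)\Big[-\eps + (1+\eps)\,\tfrac{2}{\la d}\Big] \,+\, \la\,\exp\!\big(-d\,Q_k(x)/(kn)\big).
\]
Inequalities (\ref{ineq:6}) and (\ref{ineq:12}) give $\la d \gg \eps^{-2}$, so $2(1+\eps)/(\la d) \le \eps/100$. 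Also $d\,Q_k(x)/(kn) \ge (1-\la)\la^{k-1}d^k/k \ge 1.8\log n$ by (\ref{ineq:103}) and (\ref{ineq:12}), so the exponential is at most $n^{-1.8}$, which is dwarfed by $\eps(1-\la) \ge 60000/n$ from (\ref{ineq:1155}). Combining these estimates yields $(1+\la)\,\Delta Q_k(x) \le -0.98\,\eps(1-\la)$, which comfortably delivers the claimed $v$.

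With the drift bound in hand, the remaining hypotheses of Lemma~\ref{lem.drifts-down2} are easily checked: $v m_\cB = (8/3)kn \ge 2kn \ge 2(c-h)$, and $\rho \ge \eps n(1-\la) \ge 60000 \ge 2$ by (\ref{ineq:1155}). Part~(i) then gives error $\exp(-v^2 m_\cB/8) = \exp(-\eps kn(1-\la)/9)$; by (\ref{ineq:115}) combined with $\eps \le 1/10$, the exponent dominates $\log^2 n/3$ with enormous margin, so the error is at most $1/s_0$. Part~(ii) gives error $s_0 \exp(-\rho v)$ with $\rho v = \eps^2 n(1-\la)^2(\la d)^{k-1}/3 \ge 180 k^2 \log^2 n$ by (\ref{ineq:8}) and (\ref{ineq:12}); this too is easily at most $1/s_0$.

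The main potential obstacle is the drift calculation itself: the useful negative term has magnitude only $\eps(1-\la)$, and one must show that both the $2/(\la d)$ correction and the exponential tail $\la\exp(-dQ_k/(kn))$ are genuinely smaller than this. However, the generous slack built into the standing hypotheses (\ref{ineq:6})--(\ref{ineq:8}) is precisely calibrated to make these corrections a factor of at least $100$ smaller than the leading term, so no delicate cancellation is needed.
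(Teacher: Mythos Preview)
Your proposal is correct and follows essentially the same approach as the paper: apply Lemma~\ref{lem.drifts-down2} with $F=Q_k$, $\cS=\cA_1$, $T^*=0$, $h=(1+\eps)n(1-\la)(\la d)^{k-1}$, $\rho=\eps n(1-\la)(\la d)^{k-1}$, $c=kn$, and derive the drift bound from the upper half of Lemma~\ref{lem.qk-drift} by discarding $-\beta_k u_{k+1}(x)$, bounding the exponential via $(1-\la)(\la d)^k/k \gtrsim 1.8\log n$, and absorbing the $2/(\la d)$ correction. The only differences are cosmetic constants: the paper takes $v=\eps(1-\la)/4$ rather than your $v=\eps(1-\la)/3$, and it packages the exponential bound as $\exp(-dQ_k/kn)\le \tfrac{\eps}{60000}(1-\la)$ directly rather than passing through $n^{-1.8}$; the identification $T_\cB\wedge T_\cA^\dagger = T_1\wedge T_0$ and the coincidence of the events $\{T_\cB^\dagger\le s_0<T_\cA^\dagger\}$ and $\{T_2\le s_0<T_0\}$ are handled exactly as you indicate.
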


\begin{lemma} \label{lem.cC}
Let $m_\cC = 12 k n (1-\la)^{-1}(\la d)^{1-k}$.
\begin{enumerate}
\item [(1)] $\Pr (T_\cC \land T_\cB^\dagger \ge T_\cB + m_\cC) \le 1/s_0$.
\item [(2)] $\Pr (T_\cC^\dagger \le s_0 < T_\cB^\dagger) \le 1/s_0$.
\end{enumerate}
\end{lemma}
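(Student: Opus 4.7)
The plan is to apply Lemma~\ref{lem.drifts-down2} to the function $F = P_{k-1}$ with $\cS = \cB_1$, stopping time $T^* = T_\cB$, threshold $h = 2kn(1-\la)(\la d)^{k-2}$, and $\rho = kn(1-\la)(\la d)^{k-2}$, so that $h+\rho$ matches exactly the upper bound on $P_{k-1}$ in the definition of $\cC_1$. Since each step of the chain alters $P_{k-1}$ by at most $1$ in absolute value (a single arrival or departure changes exactly one $u_i(x)$) and $P_{k-1}(x) \le (k-1)n$, we may take $c = kn$, and the conditions $\rho \ge 2$ and $v m_\cC \ge 2(c - h)$ will follow routinely from the standing assumptions once $v$ is chosen.

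The core of the argument is the drift bound: for every $x \in \cB_1$ with $P_{k-1}(x) \ge h$,
$$
\Delta P_{k-1}(x) \le -v, \qquad v := (1-\la)(\la d)^{k-1}/4.
$$
Setting $y = dP_{k-1}(x)/((k-1)n)$, the hypothesis $P_{k-1}(x) \ge h$ gives $\la y \ge \tfrac{2k}{k-1}(1-\la)(\la d)^{k-1} \ge 2(1-\la)(\la d)^{k-1}$ for $k \ge 2$; by (\ref{ineq:7}) and (\ref{ineq:12}), $y$ is itself of order $\eps/k$, hence small. Applying $1 - e^{-y} \ge y(1 - y/2)$ in Lemma~\ref{lem.pk-1-drift} and using the bound $Q_k(x)/n \le (1+2\eps)(1-\la)(\la d)^{k-1}$ available on $\cB_1$ yields
$$
(1+\la)\Delta P_{k-1}(x) \;\le\; -\bigl(2 - O(\eps)\bigr)(1-\la)(\la d)^{k-1} + (1+2\eps)(1-\la)(\la d)^{k-1},
$$
leaving a negative margin of order $(1-\la)(\la d)^{k-1}$; dividing by $(1+\la) \le 2$ gives the required $v$.

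With the drift bound in hand, Part~(1) is Lemma~\ref{lem.drifts-down2}(i): $v m_\cC = 3kn \ge 2(c - h)$, and $\exp(-v^2 m_\cC/8) = \exp\bigl(-\tfrac{3}{32} kn(1-\la)(\la d)^{k-1}\bigr) \le 1/s_0$ follows from $n(1-\la)(\la d)^{k-1} \ge \tfrac{9}{5} n\log^2 n/d$, a consequence of (\ref{ineq:2}) and (\ref{ineq:12}), combined with the fact that $d \ll n$ under the standing assumptions. For Part~(2), on the event $\{T_\cC^\dagger \le s_0 < T_\cB^\dagger\}$ the exit from $\cC_1$ must occur through $P_{k-1}$ crossing $h+\rho$ (as we remain in $\cB_1$), so Lemma~\ref{lem.drifts-down2}(ii) bounds the probability by $s_0\exp(-\rho v)$. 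With $\rho v = \tfrac14 kn(1-\la)^2(\la d)^{2k-3}$, inequality (\ref{ineq:8}) gives $\rho v \ge 150\,k^3 \la^{2k-3} d^{2(k-2)}\log^2 n/\eps^2$, which comfortably exceeds $\tfrac{2}{3}\log^2 n$ for $k \ge 2$. The main obstacle is the drift calculation in the second paragraph: one must track constants carefully so that after Taylor-expanding $1 - e^{-y}$ and absorbing the positive contribution from $Q_k(x)/n$ on $\cB_1$, a negative margin of the right order $(1-\la)(\la d)^{k-1}$ genuinely survives.
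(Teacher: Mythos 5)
Your strategy coincides with the paper's: Lemma~\ref{lem.drifts-down2} applied to $F = P_{k-1}$ on $\cS = \cB_1$ with $T^* = T_\cB$ and the same $h$, $\rho$, with the drift coming from Lemma~\ref{lem.pk-1-drift} together with the $\cB_1$-bound on $Q_k$. The parameter choices $c = kn$, $v = (1-\la)(\la d)^{k-1}/4$ (the paper uses $1/6$; either works) and the resulting error-probability estimates all check out against the standing inequalities.

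One step of the drift calculation is not quite right as written. You set $y = dP_{k-1}(x)/((k-1)n)$, derive a \emph{lower} bound $\la y \ge 2(1-\la)(\la d)^{k-1}$ from $P_{k-1}(x) \ge h$, and then claim that ``$y$ is itself of order $\eps/k$, hence small'', citing (\ref{ineq:7}) and (\ref{ineq:12}). Those inequalities control only the lower bound; on $\cB_1$ the actual value of $y$ can be much larger, of order $(1-\la)\la^{k-1}d^k/(k-1)$, which by (\ref{ineq:2}) and (\ref{ineq:12}) exceeds $\tfrac{9}{5}\log^2 n/(k-1)$. (Indeed $Q_k \ge \beta_1 P_{k-1}$, so the $\cB_1$-constraint forces $P_{k-1}(x) \lesssim n(1-\la)(\la d)^{k-1}$, but that is not enough to make $y$ small.) Since $y(1-y/2) < 0$ for $y > 2$, applying $1-e^{-y} \ge y(1-y/2)$ directly at the actual $y$ can give a vacuous bound. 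The missing ingredient, which the paper's proof uses implicitly, is that $1-e^{-y}$ is increasing: so for $P_{k-1}(x) \ge h$ one has $1-\exp(-dP_{k-1}(x)/((k-1)n)) \ge 1-\exp(-2d(1-\la)(\la d)^{k-2})$, and only then is the Taylor expansion applied, at $y_0 := 2d(1-\la)(\la d)^{k-2}$, which really \emph{is} of order $\eps/k$ by (\ref{ineq:7}). With that step inserted, your constant-tracking delivers the claimed $v$ and the rest of the argument goes through.
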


\begin{lemma} \label{lem.cD}
Let $m_\cD = 8 \eps^{-1} n (1-\la)^{-1} (\la d)^{-k/2}$.
\begin{enumerate}
\item [(1)] $\Pr (T_\cD \land T_\cC^\dagger \ge T_\cC + m_\cD) \le 1/s_0$.
\item [(2)] $\Pr (T_\cD^\dagger \le s_0 < T_\cC^\dagger) \le 1/s_0$.
\end{enumerate}
\end{lemma}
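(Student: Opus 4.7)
The plan is to apply Lemma~\ref{lem.drifts-down2} to a rescaled version of $Q_{k-1}$, in direct analogy with Lemmas~\ref{lem.cB} and~\ref{lem.cC}.

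First I would use Lemma~\ref{lem.qj-drift} at $j=k-1$ to bound
\[
(1+\la)\Delta Q_{k-1}(x) \le -\la d \frac{Q_{k-1}(x)}{n}\Bigl(1 - \frac{2}{\sqrt{\la d}} - \frac{d P_{k-1}(x)}{n}\Bigr) + \frac{Q_k(x)}{n}
\]
for every $x \in \cC_1$.  On $\cC_1 \subseteq \cB_1$ we have $Q_k(x)/n \le (1+2\eps)(1-\la)(\la d)^{k-1}$ and $dP_{k-1}(x)/n \le 3kd(1-\la)(\la d)^{k-2} \le 3\eps/100$ by (\ref{ineq:7}), while (\ref{ineq:6}) and (\ref{ineq:12}) give $2/\sqrt{\la d} \le \eps/100$.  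Hence the bracketed factor is at least $1 - \eps/25$, and for $x \in \cC_1$ with $Q_{k-1}(x) \ge (1+4\eps) n (1-\la)(\la d)^{k-2}$, the arithmetic gap $(1+4\eps)(1-\eps/25) - (1+2\eps) \ge 1.9\eps$ combined with $1+\la \le 2$ yields
\[
\Delta Q_{k-1}(x) \le -v_0, \qquad v_0 := \tfrac12 \eps (1-\la) (\la d)^{k-1}.
\]

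Next, since changing one component of $x$ by $\pm 1$ alters $Q_{k-1}(x)$ by at most $\gamma_{k-1,1} = (\la d)^{(k-2)/2}$, I would rescale and consider $F(x) := Q_{k-1}(x)/(\la d)^{(k-2)/2}$.  Then $F$ has jumps of magnitude at most $1$, and $\Delta F(x) \le -v$ on $\cS := \cC_1 \cap \{F \ge h\}$, where
\[
v := v_0/(\la d)^{(k-2)/2} = \tfrac12 \eps (1-\la)(\la d)^{k/2}, \qquad h := (1+4\eps) n (1-\la)(\la d)^{(k-2)/2}.
\]
Moreover, $X_{T_\cC} \in \cC_0 \subseteq \cA_1$ and the global bound (\ref{eq:Qjbound}) give $F(X_{T_\cC}) \le 2n =: c$.

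Finally, I would apply Lemma~\ref{lem.drifts-down2} with stopping time $T^* = T_\cC$, parameters $v,h,c$ above, $\rho := \eps n(1-\la)(\la d)^{(k-2)/2}$, $m := m_\cD$ and $s := s_0$.  The hypothesis $vm \ge 2(c-h)$ reduces to $vm_\cD = 4n \ge 4n - 2h$, which holds trivially.  Part~(i) of the lemma then gives (1) with error at most $\exp(-v^2 m_\cD /8)$, and combining (\ref{ineq:115}), (\ref{ineq:12}) and (\ref{ineq:9}) shows $v^2 m_\cD /8 = n \eps (1-\la)(\la d)^{k/2}/4 \gg \log^2 n$, so this error is below $1/s_0$.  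Part~(ii) gives (2) with error at most $s_0 \exp(-\rho v)$, and the estimate $\rho v = \tfrac12 \eps^2 n (1-\la)^2 (\la d)^{k-1} \ge 270 k^2 \log^2 n$, following from (\ref{ineq:12}) and (\ref{ineq:8}), shows this error is also below $1/s_0$.

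The main obstacle is that the drift bound from Lemma~\ref{lem.qj-drift} is not the idealized expression $-\la d\, Q_{k-1}/n + Q_k/n$ but carries correction terms of order $(2/\sqrt{\la d} + dP_{k-1}/n)\cdot \la d\cdot Q_{k-1}/n$; these must be negligible compared with $\eps(1-\la)(\la d)^{k-1}$, and the inequalities (\ref{ineq:6}), (\ref{ineq:7}) together with the defining bound on $P_{k-1}$ in $\cC_1$ are tuned precisely for this.  The gap between the $2\eps$-margin of $\cB_1$ and the $4\eps$-margin of $\cD_0$ is what then converts into the negative drift pushing $Q_{k-1}$ down to the $\cD_0$-threshold.
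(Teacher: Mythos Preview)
Your proposal is correct and follows essentially the same approach as the paper: rescale $Q_{k-1}$ by $(\la d)^{(k-2)/2}$ to get unit jumps, use Lemma~\ref{lem.qj-drift} with $j=k-1$ together with the $\cC_1$-bounds on $P_{k-1}$ and $Q_k$ to obtain $\Delta Q'_{k-1}(x)\le -\tfrac12\eps(1-\la)(\la d)^{k/2}$, and then feed this into Lemma~\ref{lem.drifts-down2} with $T^*=T_\cC$, $c=2n$, $h=(1+4\eps)n(1-\la)(\la d)^{(k-2)/2}$ and $\rho=\eps n(1-\la)(\la d)^{(k-2)/2}$. One small notational slip: in the application of Lemma~\ref{lem.drifts-down2} the set $\cS$ should be $\cC_1$ (so that $T_0=T_\cC^\dagger$), not $\cC_1\cap\{F\ge h\}$; the drift hypothesis there is required only on $\cS\cap\{F\ge h\}$, which is exactly what you verified.
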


\begin{lemma} \label{lem.cE}
Let $m_\cE = m_\cE(g)= (13k+72g) \eps^{-1} n (1-\la)^{-1}$.
\begin{enumerate}
\item [(1)] $\Pr (T_\cE \land T_\cD^\dagger \ge T_\cD + m_\cE) \le 1/s_0$.
\item [(2)] $\Pr (T_\cE^\dagger \le s_0 < T_\cD^\dagger) \le 1/s_0$.
\end{enumerate}
\end{lemma}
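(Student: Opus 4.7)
My plan for Lemma~\ref{lem.cE} implements the cascade argument sketched in Section~\ref{sec.heuristics}.  Throughout I assume $X_0 \in \cA_0(\ell,g)$ almost surely.

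For part~(1), I would proceed by contradiction: suppose the bad event $B = \{T_\cE \land T_\cD^\dagger \ge T_\cD + m_\cE\}$ occurs. On $B$, the process remains in $\cD_1$ throughout the window $[T_\cD,T_\cD+m_\cE]$ but never enters $\cE_0$, so at each such $t$ either $u_{k+1}(X_t) > \eps(1-\la)$ or $Q_k(X_t) < (1-3\eps)n(1-\la)(\la d)^{k-1}$. I would first show, using the upper-bound drift of Lemma~\ref{lem.qk-drift}, that whenever $u_{k+1}(X_t) > 3\eps(1-\la)$ and $Q_k(X_t) > q_k^- := (1-4\eps)n(1-\la)(\la d)^{k-1}$, $\Delta Q_k(x)$ is negative of order $\Omega(\eps(1-\la))$; applying Lemma~\ref{lem.drifts-down2} to the rescaled process $Q_k/(\la d)^{(k-1)/2}$ (whose jumps are bounded by~$1$) drives $Q_k$ below $q_k^-$ in $O(n\eps^{-1}(1-\la)^{-1})$ steps and keeps it there. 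The cascade then continues inductively: by the upper-bound in Lemma~\ref{lem.qj-drift}, while $Q_{j+1}$ stays below $q_{j+1}^-$, the drift of $Q_j$ is bounded above by $-(\la d) Q_j/n + Q_{j+1}/n + \mathrm{error}$ (the $P_{k-1}$ term being absorbed into the $\cC_1$ bound), so $Q_j$ drops below $q_j^- := (1-\delta_j)n(1-\la)(\la d)^{j-1}$ in a further $O(n\eps^{-1}(1-\la)^{-1}(\la d)^{j-k})$ steps, for a suitable $\delta_j$ slightly larger than $\delta_{j+1}$. Summed over $j=k,\dots,1$, the cascade consumes $O(kn\eps^{-1}(1-\la)^{-1})$ steps, matching the $13k$ term in $m_\cE(g)$.

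At the bottom of the cascade, $Q_1(X_t) = n(1-u_1(X_t)) < q_1^-$ gives $u_1(X_t) \ge \la + \delta_1(1-\la)$, so $\Delta \|X_t\|_1 = (\la - u_1(X_t))/(1+\la) \le -\delta_1(1-\la)/(1+\la)$. A further application of Lemma~\ref{lem.drifts-down2} to $\|X_t\|_1$, starting from $\|X_{T_\cD}\|_1 \le 3gn$, would drive $\|X_t\|_1$ negative in $O(gn/(\eps(1-\la)))$ additional steps --- contributing the $72g$ term in $m_\cE(g)$ --- which is impossible, so $\Pr(B) \le 1/s_0$ after union-bounding over the $k+1$ cascade error probabilities.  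To handle the residual regime $u_{k+1}(X_t) \in (\eps(1-\la),3\eps(1-\la)]$ (in which $Q_k$'s drift equilibrium is still above $q_k^-$ so the cascade does not engage), I would separately observe that here (\ref{eq:Qk}) and the $\cC_1$ bound give $1-u_k(X_t) \ge (1-O(\eps))(1-\la)(\la d)^{k-1}$, hence $d(1-u_k(X_t))\ge 1.5\log^2 n$ by (\ref{ineq:2}) and (\ref{ineq:12}) and so $u_k(X_t)^d \le e^{-\log^2 n}$; substituting into $n(1+\la)\Delta u_{k+1} = \la u_k^d - \la u_{k+1}^d - u_{k+1} + u_{k+2}$ yields negative drift, so $u_{k+1}(X_t)$ crosses down to $\le\eps(1-\la)$ in $O(n\eps^{-1}(1-\la)^{-1})$ steps, producing a hit of $\cE_0$.

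For part~(2), the argument is more direct.  Starting in $\cE_0$, on $\{T_\cD^\dagger > s_0\}$ I need both thresholds to survive for $s_0$ steps.  For $Q_k$: the lower-bound drift of Lemma~\ref{lem.qk-drift} with $u_{k+1}\le \eps(1-\la)$ gives $(1+\la)\Delta Q_k(x) \ge \Omega(\eps(1-\la))$ whenever $Q_k(x) < (1-3\eps)n(1-\la)(\la d)^{k-1}$; the reversed form of Lemma~\ref{lem.drifts-down2} applied to $-Q_k$ then bounds the downward crossing of the $(1-4\eps)$ threshold by $s_0 e^{-\rho v} \le 1/s_0$. For $u_{k+1}$: as above, in $\cE_1 \subseteq \cD_1$ we have $u_k(X_t)^d \le e^{-\log^2 n}$ throughout, so $\Delta u_{k+1}(x)$ is negative whenever $u_{k+1}(x)$ is close to $\eps(1-\la)$, and a further reversed Lemma~\ref{lem.drifts-down2} controls the upward crossing.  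The main obstacle throughout is the bookkeeping of the $k$-level cascade in part~(1): each level's drift conclusion must cover the entire hitting-time window of the next level, and the tolerances $\delta_j$ must be chosen so that slack accumulates without exceeding $\eps$ while the summed error probabilities stay below $1/s_0$.
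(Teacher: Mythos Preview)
Your overall cascade architecture for part~(1) matches the paper: push the deficit from $Q_k$ down through $Q_{k-1},\dots,Q_1$, then use $Q_1<\tilde Q_1$ to get a persistent downward drift in $\|X_t\|_1$ and obtain a contradiction. The $72g$ term from draining $\|X_t\|_1\le 3gn$ is also the source of that constant in the paper. However, the top level of your cascade has a genuine gap.

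The difficulty is that being outside $\cE_0$ while in $\cD_1$ gives you a \emph{disjunction}: either $u_{k+1}(X_t)>\eps(1-\la)$ or $Q_k(X_t)<(1-3\eps)n(1-\la)(\la d)^{k-1}$. Your plan is to apply Lemma~\ref{lem.drifts-down2} to $Q_k$ with the drift hypothesis ``$u_{k+1}>3\eps(1-\la)$ and $Q_k>q_k^-$ implies $\Delta Q_k<0$''. But Lemma~\ref{lem.drifts-down2} needs the drift bound to hold on $\{x\in\cS:F(x)\ge h\}$ for a \emph{fixed} good set $\cS$; if you take $\cS=\{u_{k+1}>3\eps(1-\la)\}\cap\cD_1$, then the process can exit $\cS$ simply by $u_{k+1}$ dipping below $3\eps(1-\la)$, with no conclusion about $Q_k$. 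Your ``residual regime'' patch does not close this: you claim that for $u_{k+1}\in(\eps,3\eps](1-\la)$ the drift of $u_{k+1}$ is negative, but $(1+\la)n\,\Delta u_{k+1}=\la u_k^d-\la u_{k+1}^d-u_{k+1}+u_{k+2}$ contains $+u_{k+2}$, and nothing in $\cD_1$ prevents $u_{k+2}$ from being essentially equal to $u_{k+1}$, killing the drift. And even if $u_{k+1}$ did drop to $\le\eps(1-\la)$, that alone is not ``a hit of $\cE_0$'': you also need $Q_k\ge(1-3\eps)n(1-\la)(\la d)^{k-1}$ at that moment.

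The paper's fix is a small but decisive change of target. Instead of aiming for $\cE_0$, they aim for the smaller set
\[
\cK=\{\,u_{k+1}\le\eps(1-\la)\text{ and }Q_k\ge(1-\tfrac{\eps}{3})n(1-\la)(\la d)^{k-1}\,\}\cap\cD_1\subseteq\cE_0,
\]
and work inside $\cS=\cL_1^{k+1}:=\cD_1\setminus\cK$. The point is that for $x\in\cL_1^{k+1}$ with $Q_k(x)>(1-\eps/3)n(1-\la)(\la d)^{k-1}$, the definition of $\cK$ \emph{forces} $u_{k+1}(x)>\eps(1-\la)$. Now the drift hypothesis of Lemma~\ref{lem.drifts-down2} is a condition on $Q_k$ alone over the fixed set $\cS$, and one application drives $Q_k$ below $(1-\eps/3)n(1-\la)(\la d)^{k-1}$ and keeps it there until either $T_\cD^\dagger$ or a hit of $\cK$. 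From that point the cascade $Q_{k-1},\dots,Q_1,\|\cdot\|_1$ runs exactly as you describe. No separate analysis of $u_{k+1}$ is ever needed in part~(1).

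Two smaller points. First, $Q_k$ has jumps of size at most $\max_i\beta_i\le 1$, so it needs no rescaling; the $(\la d)^{(j-1)/2}$ rescaling is only for $Q_j$ with $j\le k-1$. Second, in part~(2) your treatment of $Q_k$ via the reversed Lemma~\ref{lem.drifts-down2} is exactly what the paper does, but your proposed drift argument for $u_{k+1}$ runs into the same $u_{k+2}$ obstruction as above. The paper instead observes that while $X_t\in\cE_1$ one has $u_k(X_t)^d\le e^{-\log^2 n}$, so the probability of \emph{any} increase in $u_{k+1}$ at each step is at most $e^{-\log^2 n}$; a plain union bound over $s_0$ steps then controls $U^\dagger$ without any drift analysis.
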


\begin{lemma} \label{lem.cG}
Let $m_\cG = 32 k \eps^{-1} n (1-\la)^{-1} (\la d)^{-1}$.  For $j=k-1, \dots, 1$, we have:
\begin{enumerate}
\item [(1)] For $j= k-1, \dots, 1$, $\Pr (T_{\cG^j} \land T_{\cG^{j+1}}^\dagger \ge T_{\cG^{j+1}} + m_\cG) \le 1/s_0$;
\item [(2)] For $j= k-1, \dots, 1$, $\Pr (T_{\cG^j}^\dagger \le s_0 < T_{\cG^{j+1}}^\dagger) \le 1/s_0$.
\end{enumerate}
\end{lemma}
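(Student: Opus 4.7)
The plan is to prove the lemma by induction on $j$ from $k-1$ down to $1$, with the convention $\cG^k_1 := \cE_1$ providing the base of the induction. For each $j$ I work with the rescaled function $F_j := Q_j/\gamma_{j,1}$, whose jumps lie in $[-1,1]$ since any single arrival or departure shifts $Q_j$ by at most $\gamma_{j,1} = (\la d)^{(j-1)/2}$. Because the sets $\cG^j_0 \subseteq \cG^j_1$ are two-sided bands on $Q_j$, I split both parts of the lemma into an upper-side and a lower-side statement, apply Lemma~\ref{lem.drifts-down2} to $F_j$ for the upper side and its reverse (with $\Delta F \ge v > 0$ in the bad region) for the lower side, and take a union bound at the end.

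The heart of the argument is a two-sided drift estimate: for $x \in \cG_1^{j+1}$ with $Q_j(x) \ge (1+\alpha'_j\eps)\,n(1-\la)(\la d)^{j-1}$ (writing $\alpha'_j = 4+(k-j-1/2)/k$), I aim to show
\[
\Delta Q_j(x) \le -\frac{(1-\la)(\la d)^j\,\eps}{6k},
\]
and the symmetric inequality when $Q_j(x)$ is below $(1-\alpha'_j\eps)\,n(1-\la)(\la d)^{j-1}$. Starting from Lemma~\ref{lem.qj-drift}'s upper bound $(1+\la)\Delta Q_j(x) \le -\la d\,Q_j(x)/n \cdot (1-\eta(x)) + Q_{j+1}(x)/n$ with $\eta(x) = 2/\sqrt{\la d} + dP_{k-1}(x)/n$, the $\cG_1^{j+1}$ bounds on $Q_{j+1}$ telescope with the $Q_j$ threshold to produce a gap of $(\alpha'_j-\alpha_{j+1})\eps = \eps/(2k)$, so it suffices to have $\eta(x)(1+\alpha'_j\eps) < \eps/(2k)$. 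The term $2/\sqrt{\la d}$ is at most $\eps/(70k)$ by (\ref{ineq:6}) and (\ref{ineq:12}), but the $\cC_1$ bound only gives $dP_{k-1}/n \le 3\eps/100$, which lacks the crucial $1/k$ factor when $k$ is large. To rescue this I revisit the proof of Lemma~\ref{lem.qj-drift} and sharpen the quadratic correction to
\[
\la\binom{d}{2}\sum_{i=1}^j (\gamma_{j,i}-\gamma_{j,i+1})(1-u_i(x))^2 \le \la\binom{d}{2}(1-u_j(x))\,\frac{Q_j(x)}{n},
\]
using that $1-u_i$ is non-decreasing in $i$ and $\gamma_{j,j}=1$. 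The factor $1-u_j(x)$ is then controlled through the induction hypothesis: for $j \le k-2$ via $1-u_j \le 1-u_{j+1} \le (1+\alpha_{j+1}\eps)(1-\la)(\la d)^j$ (from $Q_{j+1}(x)/n \ge 1-u_{j+1}(x)$ in $\cG_1^{j+1}$), and for the base case $j=k-1$ via $1-u_{k-1} \le (1+5\eps)(1-\la)(\la d)^{k-2}$ (from $\cD_1$). In either case the effective correction is at most $\eps/(100k)$ by (\ref{ineq:7}), so $\eta(x) \le \eps/(20k)$ uniformly, and the displayed drift follows. The lower-bound direction is simpler, as Lemma~\ref{lem.qj-drift}'s lower bound carries no quadratic correction.

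With these drift estimates in hand the two parts follow routinely from Lemma~\ref{lem.drifts-down2}. For part (1), I take $T^* = T_{\cG^{j+1}}$, $\cS = \cG_1^{j+1}$, $c = 2n$ (from (\ref{eq:Qjbound}) after rescaling by $\gamma_{j,1}$), $h^\pm = (1\pm\alpha'_j\eps)n(1-\la)(\la d)^{(j-1)/2}$, and $v = \min\{1,(1-\la)(\la d)^{(j+1)/2}\eps/(6k)\}$; the inequality $vm_\cG \ge 4n \ge 2(c-h)$ reduces to $\la d \le (1-\la)^{-1}$, which is immediate from $k \ge 2$ together with (\ref{ineq:7}), and the Hoeffding tail $\exp(-v^2 m_\cG/8) \le 1/s_0$ follows from (\ref{ineq:8}) and (\ref{ineq:115}). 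For part (2), Lemma~\ref{lem.drifts-down2}(ii) on each side with $\rho = \eps n(1-\la)(\la d)^{(j-1)/2}/(2k)$ yields exit probability $s_0 e^{-\rho v}$, and $\rho v \ge (2/3)\log^2 n$ follows again from the same inequalities. The main obstacle throughout is the refined quadratic-correction bound outlined above; without it the drift of $Q_j$ is simply too weak once $k$ exceeds a small absolute constant. The two-sided band setup and the remaining calculations are routine applications of results already proved.
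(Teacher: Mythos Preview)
Your overall architecture is the same as the paper's: rescale $Q_j$ by $\gamma_{j,1}=(\la d)^{(j-1)/2}$, split the two-sided band into an upper and a lower half, and apply Lemma~\ref{lem.drifts-down2} (and its reverse) with $\cS=\cG_1^{j+1}$, $T^*=T_{\cG^{j+1}}$. The hitting-time and exit-time calculations you sketch match the paper's up to harmless constants.

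The one substantive difference is the drift estimate, and here you work harder than necessary. The ``main obstacle'' you describe---that the $\cC_1$ bound on $P_{k-1}$ gives only $dP_{k-1}/n\le 3\eps/100$ without the needed $1/k$---is a phantom. The paper simply invokes Lemma~\ref{lem.drifts}, which for any $x\in\cD_1$ already gives
\[
(1+\la)\Delta Q_j(x)\le -\la d\,\frac{Q_j(x)}{n}\Bigl(1-\frac{\eps}{25k}\Bigr)+\frac{Q_{j+1}(x)}{n},
\]
with the $1/k$ factor built in. The point you missed is that the proof of Lemma~\ref{lem.drifts} does \emph{not} use the $\cC_1$ bound on $P_{k-1}$; it uses the $\cD_1$ bound on $Q_{k-1}$ together with the elementary inequality $P_{k-1}(x)\le Q_{k-1}(x)$ (which holds because each $\gamma_{k-1,i}\ge 1$). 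This gives $dP_{k-1}(x)/n\le dQ_{k-1}(x)/n\le 2(1-\la)d^{k-1}\le \eps/(50k)$ directly from~(\ref{ineq:7}). Since $\cG_1^{j+1}\subseteq\cD_1$, Lemma~\ref{lem.drifts} applies without any further work, and the gap $\eps/(2k)$ between the $\cG_0^j$ threshold and the $\cG_1^{j+1}$ bound on $Q_{j+1}$ yields the drift $-\frac{1}{8k}\eps(1-\la)(\la d)^{(j+1)/2}$ immediately.

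Your alternative route---sharpening the quadratic correction in Lemma~\ref{lem.qj-drift} to $\la\binom{d}{2}(1-u_j)\,Q_j/n$ and then bounding $1-u_j$ through $Q_{j+1}$ (or $Q_{k-1}$ in the base case)---is correct and gives the same conclusion; it is just a more circuitous way of extracting the same $1/k$. So your proof is valid, but the detour is unnecessary once you recognise that Lemma~\ref{lem.drifts} is already the right tool.
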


\begin{lemma} \label{lem.cH}
Let $m_\cH = m_\cH(\ell) = n (8 \ell + 32\log^2 n)$.
\begin{enumerate}
\item [(1)] $\Pr (T_\cH \land T_{\cG^1}^\dagger \ge T_{\cG^1} + m_\cH) \le 1/s_0$.
\item [(2)] $\Pr (T_\cH^\dagger \le s_0 < T_{\cG^1}^\dagger) \le 1/s_0$.
\end{enumerate}
\end{lemma}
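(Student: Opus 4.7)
The key observation is that on $\cG_1^1$ the quantity $u_k(x)^d$---which governs the rate of arrivals to queues of length $\ge k$---is subpolynomially small in $n$. Since $\cG_1^1 \subseteq \cE_1$, we have $Q_k(x) \ge (1-4\eps)n(1-\la)(\la d)^{k-1}$, and since $\cG_1^1 \subseteq \cG_1^j$ for each $j < k$, we have $(1-u_j(x)) \le Q_j(x)/n \le 2(1-\la)(\la d)^{j-1}$ (using $\gamma_{j,j}=1$ and $(4+(k-j)/k)\eps \le 1/2$). Substituting these into $Q_k(x) = n\sum_{i=1}^k \beta_i(1-u_i(x))$ with $\beta_i \le 1$ and summing the resulting geometric tail yields, for large $\la d$,
\[
(1-u_k(x)) \ge (1-\la)(\la d)^{k-2}\bigl[(1-4\eps)\la d - 4\bigr] \ge \tfrac{1}{2}(1-\la)(\la d)^{k-1}.
\]
Combining with (\ref{ineq:2}) and (\ref{ineq:12}) gives $d(1-u_k(x)) \ge \tfrac{9}{10}\log^2 n$, so $u_k(x)^d \le e^{-(9/10)\log^2 n}$.

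For Part~(1), let $A_1$ denote the event that no arrival is directed to a queue of length $\ge k$ during $[T_{\cG^1}, T_{\cG^1}+m_\cH]$. On $\{T_{\cG^1}^\dagger > T_{\cG^1}+m_\cH\}$ we remain in $\cG_1^1$ throughout, so a union bound over the $m_\cH$ steps, combined with the displayed estimate above, yields $\Pr(A_1^c \cap \{T_{\cG^1}^\dagger > T_{\cG^1}+m_\cH\}) \le m_\cH\, e^{-(9/10)\log^2 n} \le 1/(2s_0)$, using the standing hypothesis $m_\cH \le q(\ell,g) \le s_0/2$. On $A_1$, a queue of length $\ge k+1$ never receives an arrival, so its length decreases by one each time it is selected for potential service; and a queue of length $\le k$ cannot grow beyond $k$, since arrivals at length $k$ are blocked by $A_1$. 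Consequently an initially-long queue $j$ reaches length $\le k$ as soon as it has been selected for service $X_{T_{\cG^1}}(j) - k \le 3\ell$ times. The number of such selections in $m_\cH$ steps is $\mathrm{Bin}(m_\cH, 1/((1+\la)n))$ independently of the state, with mean at least $4\ell + 16\log^2 n$; a Chernoff bound gives $\Pr(\mathrm{Bin} < 3\ell) \le e^{-\log^2 n/2}$, and a union bound over the (at most $n$) initially-long queues produces $\le 1/(2s_0)$ for $n$ large. Combining, $T_\cH \le T_{\cG^1}+m_\cH$ with probability $\ge 1 - 1/s_0$ on $\{T_{\cG^1}^\dagger > T_{\cG^1}+m_\cH\}$.

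For Part~(2), on $\{T_\cH^\dagger \le s_0 < T_{\cG^1}^\dagger\}$ we remain in $\cG_1^1$ throughout $[T_\cH, s_0]$, so leaving $\cH$ forces $u_{k+1}(X_t)>0$ at some step in that window; this requires an arrival to a queue of length exactly $k$, whose per-step probability is at most $\la u_k(X_t)^d/(1+\la) \le e^{-(9/10)\log^2 n}$. A union bound over the $s_0$ steps yields $\Pr(T_\cH^\dagger \le s_0 < T_{\cG^1}^\dagger) \le s_0\, e^{-(9/10)\log^2 n} \le 1/s_0$. The main difficulty is the superpolynomial estimate for $u_k^d$ in the first paragraph: using only $\cE_1$ via the crude inequality $Q_k \le kn(1-u_k)$ loses a factor of $k$ and yields merely $u_k^d \le n^{-\Omega(1)}$, inadequate because the error budget $1/s_0 = e^{-\log^2 n/3}$ is itself subpolynomial. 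It is the full force of $\cG_1^1 \subseteq \bigcap_{j<k} \cG_1^j$ that controls the subdominant terms $(1-u_i)$ for $i<k$ and restores a superpolynomial bound.
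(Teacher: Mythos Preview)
Your proof is correct and follows essentially the same structure as the paper's: union-bound over arrivals to queues of length $\ge k$, Chernoff-bound the potential departures at each queue, and for part~(2) union-bound over the $s_0$ steps.

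One point worth flagging: your closing paragraph overstates the difficulty. You claim that ``the full force of $\cG_1^1 \subseteq \bigcap_{j<k}\cG_1^j$'' is needed to push $u_k^d$ below the $1/s_0$ threshold, because the crude bound $Q_k \le kn(1-u_k)$ loses a factor of $k$. The paper shows this is unnecessary: already on $\cE_1$ (hence on $\cD_1$) one has, via the inequality $Q_k(x)/n \le (1-u_k(x)) + Q_{k-1}(x)/n$ and the $\cD_1$-bound $Q_{k-1}(x) \le (1+5\eps)n(1-\la)(\la d)^{k-2}$, that
\[
1-u_k(x) \;\ge\; \frac{Q_k(x)}{n} - \frac{Q_{k-1}(x)}{n} \;\ge\; \Bigl(1-\tfrac{13}{3}\eps\Bigr)(1-\la)(\la d)^{k-1} \;\ge\; \tfrac12(1-\la)d^{k-1},
\]
and hence $u_k(x)^d \le e^{-\log^2 n}$ by (\ref{ineq:2}). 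This is the content of Lemma~\ref{lem.no-new-k+1}, which the paper simply invokes. So you do not need the individual $\cG_1^j$ bounds on $(1-u_j)$ for $j<k$; the single bound on $Q_{k-1}$ from $\cD_1$ already controls all the subdominant terms at once, and yields the slightly stronger $e^{-\log^2 n}$ rather than your $e^{-(9/10)\log^2 n}$. Your route is valid, but the paper's is shorter because the relevant estimate was isolated earlier.
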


We note here that $q(\ell,g) = (23k + 72g) \eps^{-1} n (1-\la)^{-1} + 8 \ell n$ is larger than all the constants
$m_\cB$, $m_\cC$, \dots appearing in the lemmas, so these constants are all at most $s_0/2$.
Combining the lemmas gives the following result.

\begin{proposition} \label{prop.stay-in-H}
For any $x_0 \in \cA_0 = \cA_0(\ell,g)$, and a copy $(X_t)$ of the process with $X_0 = x_0$ a.s., we have
$$
\Pr (X_t \in \cH \mbox{ for all } t \in [q(\ell,g),s_0] ) \ge 1 - \frac{2k+8}{s_0} - \Pr (T_\cA^\dagger \le s_0).
$$
\end{proposition}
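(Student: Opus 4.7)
The plan is to combine Lemmas~\ref{lem.cB}--\ref{lem.cH} by a straightforward chain argument and then verify that the cumulative budget of hitting times fits inside $q(\ell,g)$.

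Let $\cS_1, \cS_2, \ldots, \cS_N$ denote the sequence of ``set pairs'' $(\cB_0,\cB_1), (\cC_0,\cC_1), (\cD_0,\cD_1), (\cE_0,\cE_1), (\cG_0^{k-1},\cG_1^{k-1}), \ldots, (\cG_0^1,\cG_1^1), (\cH_0,\cH_1)$, with $N = k+4$. For each $i$, let $B_i^{(1)}$ and $B_i^{(2)}$ denote the two bad events of the corresponding lemma, each having probability at most $1/s_0$. Set
\[
B = \{T_\cA^\dagger \le s_0\} \cup \bigcup_{i=1}^{N} \big(B_i^{(1)} \cup B_i^{(2)}\big),
\]
so that, by the union bound, $\Pr(B) \le \Pr(T_\cA^\dagger \le s_0) + \frac{2(k+4)}{s_0} = \Pr(T_\cA^\dagger \le s_0) + \frac{2k+8}{s_0}$.

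On the complement of $B$, I would argue by induction down the chain that every hitting time $T_{\cS_i}$ is at most the cumulative sum of the $m$'s, and every exit time $T_{\cS_i}^\dagger$ exceeds $s_0$. The base case is $T_\cA = 0$ and $T_\cA^\dagger > s_0$. For the inductive step, suppose $T_{\cS_{i-1}} \le M_{i-1} := \sum_{j < i} m_{\cS_j}$ and $T_{\cS_{i-1}}^\dagger > s_0$. The first failure event of the $i$-th lemma not occurring gives $T_{\cS_i} \land T_{\cS_{i-1}}^\dagger < T_{\cS_{i-1}} + m_{\cS_i}$; since $T_{\cS_{i-1}} + m_{\cS_i} \le M_i \le q(\ell,g) \le s_0/2 < T_{\cS_{i-1}}^\dagger$, this forces $T_{\cS_i} \le M_i$. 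The second failure event not occurring, together with $T_{\cS_{i-1}}^\dagger > s_0$, yields $T_{\cS_i}^\dagger > s_0$.

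It remains to check that $M_N \le q(\ell,g)$. The dominant contributions are $m_\cB + m_\cE = (21k + 72g)\eps^{-1} n (1-\la)^{-1}$ and $m_\cH = 8\ell n + 32 n \log^2 n$. The other terms $m_\cC$, $m_\cD$, and $(k-1)m_\cG$ each carry a factor $(\la d)^{-r}$ for some $r \ge 1$, so by (\ref{ineq:9}), (\ref{ineq:12}), (\ref{ineq:116}) they are negligible compared with $k \eps^{-1} n (1-\la)^{-1}$; similarly, $32 n \log^2 n \le \eps^2 n (1-\la)^{-1}/2500$ by (\ref{ineq:95}). Summing these gives $M_N \le (23k + 72g)\eps^{-1} n (1-\la)^{-1} + 8\ell n = q(\ell,g)$, a routine but slightly fiddly check that is the main (and essentially only) technical item.

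Once $T_\cH \le q(\ell,g)$ and $T_\cH^\dagger > s_0$ are established on the complement of $B$, the definition of $T_\cH^\dagger$ gives $X_t \in \cH_1 = \cH$ for every $t \in [T_\cH, s_0] \supseteq [q(\ell,g), s_0]$, which completes the proof. I do not anticipate any deep obstacles here: the only nontrivial step is the arithmetic verification that $M_N \le q(\ell,g)$, and this is purely mechanical once one invokes the auxiliary bounds collected in Lemma~\ref{lem.inequalities}.
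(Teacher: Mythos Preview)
Your proof is correct and follows essentially the same approach as the paper's own proof. The paper lists the events $E_1,\dots,E_{2k+9}$ explicitly and uses the telescoping decomposition $\Pr(\overline{E}) = \Pr(\overline{E_1}) + \sum_j \Pr(\overline{E_j}\cap\bigcap_{i<j}E_i)$, while you package the same logic as a union bound over the $2(k+4)=2k+8$ bad events plus $\{T_\cA^\dagger\le s_0\}$ followed by a clean inductive step; the arithmetic verification that the cumulative hitting-time budget is at most $q(\ell,g)$ is identical in substance, invoking the same inequalities from Lemma~\ref{lem.inequalities}.
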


\begin{proof}
The idea is that, with high probability, either the chain $(X_t)$ exits $\cA_1(\ell,g)$ before time $s_0$, or
the chain enters each of the sets $\cB_0$, \dots, $\cH_0$ in turn, within time $q(\ell,g)$, and does not exit any of the sets $\cA_1$, \dots, $\cH_1$
before time $s_0$, which is what we need.

More formally, consider the following list of events concerning the various stopping times we have defined:
$$
E_1 = \{T_\cA^\dagger > s_0\}, \quad E_2 = \{T_\cB \le m_\cB\}, \quad E_3 = \{T_\cB^\dagger > s_0\},
$$
$$
E_4 = \{T_\cC \le m_\cB + m_\cC\}, \quad E_5 = \{T_\cC^\dagger > s_0\}, \quad E_6 = \{T_\cD \le m_\cB + m_\cC + m_\cD\},
$$
$$
E_7 = \{T_\cD^\dagger > s_0\}, \quad E_8 = \{T_\cE \le m_\cB + \dots + m_\cE\}, \quad E_9 = \{T_\cE^\dagger > s_0\},
$$
$$
E_{10} = \{T_{\cG^{k-1}} \le m_\cB + \cdots + m_\cE + m_\cG\}, \quad E_{11} = \{T_{\cG^{k-1}}^\dagger > s_0\}, \quad \dots,
$$
$$
E_{2k+6} = \{T_{\cG^{1}} \le m_\cB + \cdots + (k-1) m_\cG\}, \quad E_{2k+7} = \{T_{\cG^1}^\dagger > s_0\},
$$
$$
E_{2k+8}= \{T_\cH \le m_\cB + \cdots + (k-1)m_\cG + m_\cH\}, \quad E_{2k+9} = \{T_\cH^\dagger > s_0\}.
$$
If $E_{2k+8}$ holds, then
\begin{eqnarray*}
T_\cH &\le& m_\cB + m_\cC + m_\cD + m_\cE + (k-1)m_\cG + m_\cH \\
&\le& 8k \eps^{-1} n (1-\la)^{-1} + 12 k n (1-\la)^{-1}(\la d)^{1-k} \\
&&\mbox{} + 8 \eps^{-1} n (1-\la)^{-1} (\la d)^{-k/2} + (13k+72g) \eps^{-1} n (1-\la)^{-1} \\
&&\mbox{} + 32 (k-1) k \eps^{-1} n (1-\la)^{-1} (\la d)^{-1} + n (8 \ell + 32\log^2 n) \\
&\le& k \eps^{-1} n (1-\la)^{-1} \big( 8 + \frac{12\eps}{\la d} + \frac{8}{\la d} + 13 + \frac{32(k-1)}{\la d} \\
&&\mbox{} + 32\eps \log^2 n (1-\la) \big) + 72 g \eps^{-1} n (1-\la)^{-1} + 8\ell n \\
&\le& \eps^{-1} n (1-\la)^{-1} ( 23k + 72g ) + 8 \ell n \\
&=& q(\ell,g),
\end{eqnarray*}
where we used (\ref{ineq:6}) and (\ref{ineq:12}) to tell us that $\frac{32(k-1) + 8 + 12\eps}{\la d} \le 1$, and (\ref{ineq:95}) to
show that $32\eps \log^2 n (1-\la) \le 1$.  Therefore, if $E = \bigcap_{j=1}^{2k+9} E_j$ holds, then in particular $E_{2k+8}$ and $E_{2k+9}$ hold,
which implies that $X_t \in \cH$ for $q(\ell,g) \le t \le s_0$.  Thus $E$ is contained in the event $\{ X_t \in \cH \mbox{ for all } t \in [q(\ell,g),s_0]\}$,
and it suffices to show that $\Pr (\overline{E}) \le \frac{2k+8}{s_0} + \Pr (\overline{E_1})$.  We write
$$
\Pr (\overline{E}) = \Pr (\overline {E_1}) + \sum_{j=2}^{2k+9} \Pr \left( \overline{E_j} \cap \bigcap_{i=1}^{j-1} E_i \right),
$$
and now we see that it suffices to prove that each of the terms $\Pr \left( \overline{E_j} \cap \bigcap_{i=1}^{j-1} E_i \right)$ is at most $1/s_0$.

We show how to derive the first few of these inequalities from Lemmas~\ref{lem.cB}-\ref{lem.cH}; first we have
$$
\Pr (\overline{E_2} \cap E_1) = \Pr (T_\cA^\dagger > s_0, \, T_\cB > m_\cB) \le \Pr (T_\cB \land T_\cA^\dagger \ge m_\cB) \le 1/s_0
$$
by Lemma~\ref{lem.cB}(1).  Then we have
$$
\Pr (\overline{E_3} \cap E_1 \cap E_2) \le \Pr(\overline{E_3} \cap E_1) = \Pr(T_\cB^\dagger \le s_0 < T_\cA^\dagger) \le 1/s_0
$$
by Lemma~\ref{lem.cB}(2).
Next we have, using the fact that $m_\cB + m_\cC \le s_0$,
\begin{eqnarray*}
\Pr(\overline{E_4} \cap E_1 \cap E_2 \cap E_3) &\le& \Pr (\overline{E_4} \cap E_2 \cap E_3) \\
&=&\Pr (T_\cB^\dagger > s_0, \, T_\cB \le m_\cB, \, T_\cC > m_\cB + m_\cC)\\
&\le& \Pr (T_\cC \land T_\cB^\dagger > m_\cB + m_\cC, \, T_\cB \le m_\cB) \\
&\le& \Pr (T_\cC \land T_\cB^\dagger > T_\cB + m_\cC) \\
&\le& 1/s_0,
\end{eqnarray*}
by Lemma~\ref{lem.cC}(1).  For $j = 5, \dots, 2k+9$, the upper bound on $\Pr \left( \overline{E_j} \cap \bigcap_{i=1}^{j-1} E_i \right)$ follows
either as for $j=3$ or as for $j=4$: it is important here that $m_\cB + m_\cC + m_\cD + m_\cE + (k-1)m_\cG + m_\cH \le q(\ell,g) \le s_0$.
\end{proof}

We now have the following consequence for an equilibrium copy $(Y_t)$ of the $(n,d,\la)$-supermarket process.

\begin{corollary} \label{cor.eqm}
$\Pr(Y_t \in \cH \mbox{ for all } t \in [0,s_0]) \ge 1-(4k+20)/s_0 \ge 1-e^{-\frac14 \log^2 n}$,
for $n \ge 1000$.
\end{corollary}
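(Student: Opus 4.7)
The proof combines Proposition~\ref{prop.stay-in-H} and Lemma~\ref{lem.compd1} via a stationarity argument. Set $\ell^* = \log^2 n\,(1-\la)^{-1}$, $g^* = 2(1-\la)^{-1}$, and $q = q(\ell^*,g^*)$. Using~(\ref{ineq:116}), (\ref{ineq:1155}), and the definition of $k$, one checks that $\ell^*, g^* \ge k$; and since $q$ is polynomial in $n$ while $s_0 = e^{\frac13 \log^2 n}$ is super-polynomial, one has $q \le s_0/2$. Note also that any state in $\cH$ lies in $\cA_0^*$.

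The obstacle to applying Proposition~\ref{prop.stay-in-H} directly is that it only controls $Y_t$ for $t \ge q$, whereas the Corollary asks for control over all of $[0,s_0]$. To cover the initial segment $[0,q]$, I would use stationarity to shift the starting time of the Proposition into the past of $t=0$. Formally, extend $(Y_t)$ to a doubly-infinite stationary Markov chain $(Y_t)_{t\in\Z}$; then for each $x_0 \in \cA_0^*$, the Markov property says that, conditional on $Y_{-q} = x_0$, the process $(Y_{-q+t})_{t \ge 0}$ is a copy of the $(n,d,\la)$-supermarket process started deterministically at $x_0$. Proposition~\ref{prop.stay-in-H} applied to this shifted copy gives $Y_t \in \cH$ for all $t \in [0, s_0 - q]$ except on an event of conditional probability at most $(2k+8)/s_0 + \Pr(T_\cA^\dagger \le s_0)$. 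By Lemma~\ref{lem.compd1}, $\Pr(Y_{-q} \notin \cA_0^*) \le 2n e^{-\log^2 n}$, and by Lemma~\ref{lem.cA} the exit-time probability is at most $e^{-\frac12 \log^2 n}$ uniformly over $x_0 \in \cA_0^*$. Averaging over $Y_{-q}$ yields
\[
\Pr\big( Y_t \in \cH \text{ for all } t \in [0, s_0 - q]\big) \ge 1 - \tfrac{2k+8}{s_0} - e^{-\frac12 \log^2 n} - 2n e^{-\log^2 n}.
\]
Applying the same argument to the copy starting at $Y_0$ gives $Y_t \in \cH$ for all $t \in [q, s_0]$ with the same error bound. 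Since $q \le s_0/2$, the intervals $[0, s_0 - q]$ and $[q, s_0]$ cover $[0, s_0]$, and a union bound yields
\[
\Pr\big( Y_t \in \cH \text{ for all } t \in [0, s_0] \big) \ge 1 - \tfrac{2(2k+8)}{s_0} - 2 e^{-\frac12 \log^2 n} - 4n e^{-\log^2 n}.
\]

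It remains to check that the two rapidly decaying terms satisfy $2 e^{-\frac12 \log^2 n} + 4n e^{-\log^2 n} \le 4/s_0 = 4 e^{-\frac13 \log^2 n}$ for $n$ large enough, so that they can be absorbed to give $1 - (4k+20)/s_0$. The second inequality $(4k+20)/s_0 \le e^{-\frac14 \log^2 n}$ is equivalent to $4k+20 \le e^{\log^2 n / 12}$, which follows from $k \le \log n$ (see~(\ref{ineq:116})) for $n \ge 1000$. The only conceptually non-routine step in the whole argument is the use of stationarity to shift the burn-in of Proposition~\ref{prop.stay-in-H} into the past of time~$0$; everything else is bookkeeping.
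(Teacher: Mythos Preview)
Your proof is correct and follows essentially the same approach as the paper's: both use stationarity of $(Y_t)$ to absorb the burn-in $q$ from Proposition~\ref{prop.stay-in-H}, invoke Lemmas~\ref{lem.compd1} and~\ref{lem.cA} for the error terms, and apply the resulting bound twice to cover $[0,s_0]$. The only cosmetic difference is that you extend to a doubly-infinite stationary chain and shift time backward by $q$, whereas the paper shifts forward (rewriting $\Pr(Y_t \notin \cH \text{ for some } t \in [0,s_0/2])$ as $\Pr(Y_t \notin \cH \text{ for some } t \in [q^*,q^*+s_0/2])$) and then doubles; these are equivalent uses of stationarity and lead to the same constant $(4k+20)/s_0$.
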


\begin{proof}
Recall the definitions of $\ell^*$, $g^*$, $\cA_0^*$ and $\cA_1^*$ from Section~\ref{sec.coupling}.  Set also $q^* = q(\ell^*,g^*)$, and note that
$q^* \le s_0/2$, with plenty to spare.
From Lemma~\ref{lem.compd1}, we have that
$\Pr (Y_0 \notin \cA_0^*) \le n e^{-\log^2 n} \le e^{-\frac13 \log^2 n} = 1/s_0$,
since $n\ge 5$.  Also, from Lemma~\ref{lem.cA}, for a copy $(X_t^x)$ of the process starting in a state $x \in \cA_0^*$, we have that
$\Pr(T_\cA^\dagger < s_0) \le 1/s_0$.  We now have
\begin{eqnarray*}
\lefteqn{ \Pr (Y_t \notin \cH \mbox{ for some } t \in [0,s_0/2])} \\
&=& \Pr (Y_t \notin \cH \mbox{ for some } t \in [q^*,q^*+s_0/2]) \\
&\le& \Pr (Y_t \notin \cH \mbox{ for some } t \in [q^*,q^*+s_0/2] \mid Y_0 \in \cA_0^*) + \Pr (Y_0 \notin \cA_0^*) \\
&\le& \Pr (Y_t \notin \cH \mbox{ for some } t \in [q^*,s_0] \mid Y_0 \in \cA_0^*) + \Pr (Y_0 \notin \cA_0^*) \\
&\le& \sup_{x \in \cA_0^*} \Pr (X^x_t \notin \cH \mbox{ for some } t \in [q^*,s_0]) + \frac{1}{s_0} \\
&\le& \frac {2k+8}{s_0} + \frac{1}{s_0} + \frac{1}{s_0} = \frac{2k+10}{s_0},
\end{eqnarray*}
by Proposition~\ref{prop.stay-in-H}.  Hence $\Pr (Y_t \notin \cH \mbox{ for some } t \in [0,s_0]) \le (4k+20)/s_0$.

For the final inequality, note that $(4k+20)/s_0 \le (4\log n +20)e^{-\frac13 \log^2 n} < e^{-\frac14 \log^2 n}$ for $n \ge 1000$.
\end{proof}

We can now use the result above to prove the following more explicit version of Proposition~\ref{prop.stay-in-H}.

\begin{theorem} \label{thm.stay-in-H}
Suppose that $\ell$ and $g$ are at least $k$, and that $q(\ell,g) \le s_0/2$.
Let $x_0$ be any queue-lengths vector in $\cA_0(\ell,g)$, and suppose that $X_0 = x_0$ a.s.
Then we have
$$
\Pr (X_t \in \cH \mbox{ for all } t \in [q(\ell,g),s_0] ) \ge 1 - \frac{6k+28}{s_0}.
$$
\end{theorem}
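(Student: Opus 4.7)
The plan is to apply Proposition~\ref{prop.stay-in-H} directly and bound the remaining term $\Pr(T_\cA^\dagger \le s_0)$ by a coupling argument to an equilibrium copy of the process. The Proposition gives
$$
\Pr(X_t \in \cH \text{ for all } t \in [q(\ell,g),s_0]) \ge 1 - \frac{2k+8}{s_0} - \Pr(T_\cA^\dagger \le s_0),
$$
so it suffices to show $\Pr(T_\cA^\dagger \le s_0) \le \frac{4k+20}{s_0}$ for any starting state in $\cA_0(\ell,g)$.

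The first step is to observe that $\cH \subseteq \cA_0(\ell,g)$. Indeed, every state $x \in \cH$ satisfies $u_{k+1}(x) = 0$, so $\|x\|_\infty \le k$ and $\|x\|_1 \le kn$; that is, $\cH \subseteq \cA_0(k,k)$. Since $\ell, g \ge k$ by hypothesis, $\cA_0(k,k) \subseteq \cA_0(\ell,g)$, and hence $\cH \subseteq \cA_0(\ell,g)$.

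The second step uses Lemma~\ref{lem.l1-inf} applied with $s = s_0$: for $x_0 \in \cA_0(\ell,g)$,
$$
\Pr(T_\cA^\dagger \le s_0) = \Pr(\exists t \in [0,s_0],\, X_t^{x_0} \notin \cA_1(\ell,g)) \le \Pr(\exists t \in [0,s_0],\, Y_t \notin \cA_0(\ell,g)).
$$
Combining with the inclusion $\cH \subseteq \cA_0(\ell,g)$ gives
$$
\Pr(T_\cA^\dagger \le s_0) \le \Pr(\exists t \in [0,s_0],\, Y_t \notin \cH) \le \frac{4k+20}{s_0},
$$
where the last inequality is Corollary~\ref{cor.eqm}.

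Putting these pieces together, we obtain
$$
\Pr(X_t \in \cH \text{ for all } t \in [q(\ell,g),s_0]) \ge 1 - \frac{2k+8}{s_0} - \frac{4k+20}{s_0} = 1 - \frac{6k+28}{s_0},
$$
as required. There is no substantive obstacle here: the statement is essentially a packaging of Proposition~\ref{prop.stay-in-H} with the equilibrium-based control of $\cA_1(\ell,g)$-exits that Lemma~\ref{lem.l1-inf} and Corollary~\ref{cor.eqm} already provide, once one notices that the already-established concentration $Y_t \in \cH$ is strong enough to force $Y_t \in \cA_0(\ell,g)$ whenever $\ell,g \ge k$.
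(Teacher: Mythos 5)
Your proof is correct and is essentially identical to the paper's: it applies Proposition~\ref{prop.stay-in-H}, then Lemma~\ref{lem.l1-inf}, then Corollary~\ref{cor.eqm}, using the observation that $\cH \subseteq \cA_0(k,k) \subseteq \cA_0(\ell,g)$ when $\ell,g \ge k$ to pass from exits of $\cA_0(\ell,g)$ to exits of $\cH$. The paper's proof chains these steps in the same order with the same bounds, so there is nothing further to add.
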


\begin{proof}
We apply, successively, Proposition~\ref{prop.stay-in-H}, Lemma~\ref{lem.l1-inf} and Corollary~\ref{cor.eqm} to obtain that
\begin{eqnarray*}
\lefteqn{\Pr (X_t \in \cH \mbox{ for all } t \in [q(\ell,g),s_0] )} \\
&\ge& 1 - \frac{2k+8}{s_0} - \Pr (T_\cA^\dagger \le s_0) \\
&=& 1- \frac{2k+8}{s_0} - \Pr (\exists t \in [0,s_0],\, X_t \notin \cA_1(\ell,g)) \\
&\ge& 1- \frac{2k+8}{s_0} - \Pr (\exists t \in [0,s_0],\, Y_t \notin \cA_0(\ell,g)) \\
&\ge& 1- \frac{2k+8}{s_0} - \Pr (\exists t \in [0,s_0],\, Y_t \notin \cH) \\
&\ge& 1- \frac{2k+8}{s_0} - \frac{4k+20}{s_0},
\end{eqnarray*}
as required.
\end{proof}

In the next sections, we shall prove Lemmas~\ref{lem.cB} to~\ref{lem.cH}.  Then we show that $\cH \subseteq \cN^\eps(n,d,\la,k)$.
Theorem~\ref{thm.technical} will then follow from Corollary~\ref{cor.eqm}, since
$s_0/2 = \frac12 e^{\frac13 \log^2 n} > e^{\frac14 \log^2 n}$ for $n \ge 18$.


We draw one further conclusion from the results in this section.  Suppose that $(X_t)$ starts in a state
$x_0$ in the set
$$
\cI = \cA_0^* \cap \cB_0 \cap \cC_0 \cap \cD_0 \cap \cE_0 \cap \bigcap_{j=1}^{k-1} \cG^j_0 \cap \cH_0.
$$
Then all the hitting times $T_\cB$, $T_\cC$, $T_\cD$, $T_\cE$, $T_\cG^j$ and $T_\cH$ are equal to~0.  In the notation of the proof of
Proposition~\ref{prop.stay-in-H}, this implies that the events $E_j$ for $j$ even occur with probability~1.  Also, by Lemma~\ref{lem.cA},
$\Pr (\overline{E_1}) \le 1/s_0$.  So following the proof of Proposition~\ref{prop.stay-in-H} yields the result below.

\begin{theorem} \label{thm.start-in-I-stay-in-H}
Suppose $x_0 \in \cI$, and $X_0 = x_0$ a.s.  Then
$$
\Pr (X_t \in \cH \mbox{ for all } t \in [0,s_0] ) \ge 1 - (k+5)/s_0.
$$
\end{theorem}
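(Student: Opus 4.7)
The plan is to adapt the argument of Proposition~\ref{prop.stay-in-H}, which already carries out the bulk of the work; starting from $x_0 \in \cI$ rather than merely from $\cA_0$ simply collapses one half of that proof to a triviality. Since $\cI \subseteq \cB_0 \cap \cC_0 \cap \cD_0 \cap \cE_0 \cap \bigcap_{j=1}^{k-1} \cG^j_0 \cap \cH_0$, with $X_0 = x_0$ almost surely the hitting times $T_\cB, T_\cC, T_\cD, T_\cE, T_{\cG^j}$ and $T_\cH$ all equal $0$ a.s. In the enumeration of events $E_1, \ldots, E_{2k+9}$ used in the proof of Proposition~\ref{prop.stay-in-H}, this forces every even-indexed event $E_2, E_4, \ldots, E_{2k+8}$ to hold with probability one, so they drop out of the union bound.

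What remains is to control the $k+5$ odd-indexed exit events, namely $E_1, E_3, E_5, E_7, E_9, E_{11}, E_{13}, \ldots, E_{2k+7}, E_{2k+9}$, which correspond respectively to $\cA^*, \cB, \cC, \cD, \cE$, the sets $\cG^{k-1}, \cG^{k-2}, \ldots, \cG^1$, and $\cH$. For $E_1 = \{T_{\cA}^\dagger > s_0\}$, since $x_0 \in \cA_0^*$, Lemma~\ref{lem.cA} gives $\Pr(\overline{E_1}) \le e^{-\frac{1}{2}\log^2 n} \le 1/s_0$. For each remaining odd $j$, the corresponding part~(2) of Lemma~\ref{lem.cB}, \ref{lem.cC}, \ref{lem.cD}, \ref{lem.cE}, \ref{lem.cG} (applied in turn for $j = k-1, \ldots, 1$), or \ref{lem.cH} supplies
\[
\Pr\Bigl(\overline{E_j} \cap \bigcap_{i=1}^{j-1} E_i\Bigr) \le \tfrac{1}{s_0},
\]
exactly in the manner used for $E_3$ in the proof of Proposition~\ref{prop.stay-in-H}.

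Applying a union bound to the decomposition $\Pr(\overline{E}) = \Pr(\overline{E_1}) + \sum_{j=2}^{2k+9} \Pr(\overline{E_j} \cap \bigcap_{i<j} E_i)$, in which the even-indexed terms vanish, then yields $\Pr(\overline{E}) \le (k+5)/s_0$. Finally, because every hitting time is now $0$, the quantity $q(\ell,g)$ appearing in Proposition~\ref{prop.stay-in-H} is effectively replaced by $0$, so the event $E$ is contained in $\{X_t \in \cH \text{ for all } t \in [0, s_0]\}$, giving the claimed bound. No step is expected to be a serious obstacle: all the drift and exit estimates are already encapsulated in Lemmas~\ref{lem.cB}--\ref{lem.cH}, and the only real work is the bookkeeping of matching the initial state condition $x_0 \in \cI$ to the conditional structure of the event list in the original proof.
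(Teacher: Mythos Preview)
Your proof is correct and follows essentially the same approach as the paper. The paper's argument, stated just before the theorem, observes that for $x_0 \in \cI$ all the hitting times vanish so every even-indexed $E_j$ holds almost surely, invokes Lemma~\ref{lem.cA} for $\Pr(\overline{E_1}) \le 1/s_0$, and then says that following the proof of Proposition~\ref{prop.stay-in-H} yields the result; you have written out exactly this bookkeeping explicitly, correctly counting the $k+5$ remaining odd-indexed terms.
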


We shall explore the consequences of this result further in Section~\ref{sec.H-I-N}.

\section{Proofs of Lemmas~\ref{lem.cB}, \ref{lem.cC} and~\ref{lem.cD}} \label{sec.BCD}

In this section, we prove the first three of the sequence of lemmas stated in the previous section, and also derive tighter
inequalities on the drifts of the functions $Q_j(x)$ for $x\in \cD_1$.
The proofs of the three lemmas are all straightforward applications of Lemma~\ref{lem.drifts-down2}, and all similar to one another.

\bigbreak

\noindent
{\bf Proof of Lemma~\ref{lem.cB}}

\begin{proof}
We apply Lemma~\ref{lem.drifts-down2}.  We set $(\phi_t) = (\cF_t)$, the natural filtration of the process, and also: $F=Q_k$, $\cS = \cA_1$,
$$
h = (1+\eps) (1-\la)n (\la d)^{k-1}, \quad \rho= \eps (1-\la) n (\la d)^{k-1},
$$
$m = m_\cB = 8 k \eps^{-1} n (1-\la)^{-1}$, $s = s_0 = e^{\frac13 \log^2 n}$ and $T^*=0$.
We have $\rho \ge 60000 (\la d)^{k-1} \ge 2$, by (\ref{ineq:1155}) and (\ref{ineq:12}).  It is also clear that
$Q_k(x) \le c := k n$ for any $x \in \Z_+^n$.  We note also that $Q_k$ takes jumps of size at most~1.

Suppose now that $Q_k(x) \ge h$.  Then
$$
\exp\left( -\frac{d Q_k(X_t)}{kn} \right) \le \exp\left( - \frac{(1-\la)(\la d)^k}{k} \right).
$$
Now we have, using first (\ref{ineq:12}) and (\ref{ineq:103}) and then (\ref{ineq:1155}), that
$$
\frac{(1-\la)(\la d)^k}{k} \ge \frac{9}{5} \log n >  - \log \left( \frac{\eps}{60000}(1-\la) \right).
$$
Thus we have
$$
\exp\left( -\frac{d Q_k(X_t)}{kn} \right) \le \frac{\eps}{60000}(1-\la).
$$

Hence, by Lemma~\ref{lem.qk-drift}, for $x$ with $Q_k(x) \ge h$, we have
\begin{eqnarray*}
(1+\la)\Delta Q_k(x) &\le & \beta_k \big( (1-\la) - u_{k+1} (x) + \la \exp( - d Q_k(x)/ k n) \big) \\
&&\mbox{} - \frac{1}{(\la d)^{k-1}} \frac{Q_k(x)}{n} \left(1-\frac{2}{\la d}\right), \\
&\le& \beta_k \left( (1-\la) + \la \frac{\eps(1-\la)}{60000} \right) - (1+\eps)(1-\la) \left( 1 -\frac{2}{\la d}\right) \\
&\le& (1-\la) \left[ 1 + \frac{\eps}{60000} -1 - \eps + (1+\eps)\frac{2}{\la d} \right] \\
&\le& - (1-\la) \frac{\eps}{2},
\end{eqnarray*}
where at the end we used the fact that $\frac{2}{\la d} \le \frac{\eps}{6}$, which follows comfortably from (\ref{ineq:6}).
So $\Delta Q_k(x) \le - (1-\la) \eps / 4 := - v$.  Note that $m_\cB v = 2c$.

We have now verified that the conditions of Lemma~\ref{lem.drifts-down2} are satisfied, for the given values of the parameters.
As in the lemma, we have $T_0 = T_\cA^\dagger$, $T_1 = \inf \{ t : Q_k(X_t) \le h \}$ and $T_2 = \inf \{ t > T_1 : Q_k(X_t) \ge h + \rho \}$.

It need not be the case that $T_1 = T_\cB$, since $X_{T_1}$ need not be in $\cA_1$.  However, we do have
$T_1 \land T_\cA^\dagger = T_\cB \land T_\cA^\dagger$ and thus
\begin{eqnarray*}
\Pr (T_\cB \land T_\cA^\dagger > m_\cB) &=& \Pr (T_1 \land T_\cA^\dagger > m_\cB) \\
&\le& \exp( - v^2 m_\cB/8 ) \\
&=& \exp( - \eps k n (1-\la) / 16) \\
&\le& \exp( - 3750 \eps^{-2} k^4 \log^2 n d^{k-2}) \\
&\le& 1/s_0.
\end{eqnarray*}
In the penultimate line, we used (\ref{ineq:115}); in the final line, all we needed was that $3750 \eps^{-2} k^4 \log^2 n d^{k-2} \ge \frac13 \log^2 n$,
which is true with plenty to spare.

Also the events $T_2 \le s_0 < T_\cA^\dagger$ and $T_\cB^\dagger \le s_0 < T_\cA^\dagger$ coincide, so we have
\begin{eqnarray*}
\Pr (T_\cB^\dagger \le s_0 < T_\cA^\dagger) &\le& \Pr (T_2 \le s_0 < T_\cA^\dagger) \\
&\le& s \exp( - \rho v) \\
&=& s_0 \exp (- \eps^2 (1-\la)^2 n (\la d)^{k-1} / 6) \\
&\le& s_0 \exp(- 90 k^2 \log^2 n \, d^{k-2}) \\
&\le& 1/s_0,
\end{eqnarray*}
as required, where in the penultimate line we used (\ref{ineq:8}) and (\ref{ineq:12}).
\end{proof}

\bigbreak

\noindent
{\bf Proof of Lemma~\ref{lem.cC}}

\begin{proof}
Again we apply Lemma~\ref{lem.drifts-down2} to the Markov process $(X_t)$ with its natural filtration.  Set $F = P_{k-1}$,
$\cS = \cB_1$,
$$
h = 2 kn (1-\la) (\la d)^{k-2}, \, \rho = kn (1-\la) (\la d)^{k-2},
$$
$m = m_\cC = 12k n(1-\la)^{-1} (\la d)^{1-k}$, and $s = s_0$.  Set $T^* = T_\cB$.  It is again clear from (\ref{ineq:1155})
that $\rho \ge 2$, and also that $P_{k-1}$ takes jumps of size at most~1, and that $P_{k-1}(x) \le c: = kn$ for all $x \in \Z^n_+$.
Here $T_0 = T_\cB^\dagger$, $T_1 = \inf \{ t \ge T_\cB : P_{k-1}(X_t) \le h \}$,
and $T_2 = \inf \{ t > T_1 : P_{k-1}(X_t) \ge h + \rho \}$.

For $x \in \cB_1$ with $P_{k-1}(x) \ge h$, we have $Q_k(x) \le (1+2\eps)n(1-\la)(\la d)^{k-1}$ and so, by Lemma~\ref{lem.pk-1-drift},
\begin{eqnarray*}
\lefteqn{(1+\la) \Delta P_{k-1}(x)} \\
& \le & - \la \left( 1 - \exp\left( - \frac{ d P_{k-1}(x)}{(k-1)n} \right) \right)  + \frac{Q_k(x)}{n} \\
& \le & - \la \left( 1 - \exp\left( - 2d(1-\la)(\la d)^{k-2}\right) \right) + (1+2\eps)(1-\la)(\la d)^{k-1}.
\end{eqnarray*}
Now we have $y = 2d(1-\la)(\la d)^{k-2} \le \frac{\eps}{50k} \le 1/6$, by (\ref{ineq:7}); it is easy to check that
$e^{-y} \le 1 - \frac56 y$ for $0\le y \le 1/6$.  Also $1+2\eps < 4/3$ from (\ref{ineq:5}), so
\begin{eqnarray*}
(1+\la) \Delta P_{k-1}(x) & \le & - \frac{5}{3} (1-\la) (\la d)^{k-1} + \frac{4}{3} (1-\la) (\la d)^{k-1} \\
& = & - \frac{1}{3} (1-\la) (\la d)^{k-1},
\end{eqnarray*}
We conclude that, for such $x$, $\Delta P_{k-1}(x) \le - \frac{1}{6} (1-\la) (\la d)^{k-1} := - v$.
Note that $m_\cC v = 2 c$.

As in the previous lemma, it need not be the case that $T_1 = T_\cC$, since $X_{T_1}$ need not be in $\cB_1$, so we may have $T_\cC > T_1$.  However,
we do have $T_1 \land T_\cB^\dagger = T_\cC \land T_\cB^\dagger$.  From Lemma~\ref{lem.drifts-down2}, we obtain, using also (\ref{ineq:115}),
\begin{eqnarray*}
\Pr(T_\cC \land T_\cB^\dagger > T_\cB + m_\cC) &=&
\Pr( T_1 \land T_0 > T_\cB + m_\cC) \\
&\le& \exp( - v^2 m_\cC /8) \\
&=& \exp ( - k n (1-\la) (\la d)^{k-1} / 24 ) \\
&\le& \exp \left( - 2500 \eps^{-3} n k^4 \log^2 n\right) \\
&\le& 1/s_0.
\end{eqnarray*}

Similarly, the events $T_2 \le s_0 < T_\cB^\dagger$ and $T_\cC^\dagger \le s_0 < T_\cB^\dagger$ coincide, and so
\begin{eqnarray*}
\Pr( T_\cC^\dagger \le s_0 < T_\cB^\dagger ) &=& \Pr (T_2 \le s_0 < T_0) \\
&\le& s_0 \exp( - \rho v) \\
&=& s_0 \exp( - kn (1-\la)^2 (\la d)^{2k-3} /6) \\
&\le& s_0 \exp \left( - 90 \eps^{-2} (\la d)^{2k-4} k^3 \log^2 n \right) \\
&\le& 1/s_0.
\end{eqnarray*}
as required.  Here, we used (\ref{ineq:8}) and (\ref{ineq:12}).
\end{proof}

\bigbreak

\noindent
{\bf Sketch of proof of Lemma~\ref{lem.cD}}

\begin{proof}
The basic plan for this proof is the same as for the previous two lemmas, but here we have to take account of the fact that $Q_{k-1}$ can take
jumps of size up to $(\la d)^{(k-2)/2}$, and accordingly we apply Lemma~\ref{lem.drifts-down2} to the ``scaled'' function
$F(x) = Q'_{k-1}(x) = Q_{k-1}(x)/(\la d)^{(k-2)/2}$.

Apart from this, the proof is identical in structure to that of Lemma~\ref{lem.cC}, and we give only the key calculation.
For $x\in \cC_1$ with $Q'_{k-1}(x) \ge h = (1+4\eps) n(1-\la) (\la d)^{(k-2)/2}$, we have $Q_k(x) \le (1+2\eps) n (1-\la) (\la d)^{k-1}$,
$P_{k-1}(x) \le 3kn (1-\la) (\la d)^{k-2}$ and $Q_{k-1}(x) \ge (1+4\eps) n (1-\la) (\la d)^{k-2}$.  Thus,
by Lemma~\ref{lem.qj-drift} with $j=k-1$, we have
\begin{eqnarray*}
\lefteqn{(1+\la) \Delta Q_{k-1}(x)} \\
&\le& - \la d \frac{Q_{k-1}(x)}{n} \left( 1 - \frac{2}{\sqrt{\la d}} - \frac{d P_{k-1}(x)}{n} \right) + \frac{Q_k(x)}{n}, \\
&\le& - \la d (1+4\eps) (1-\la) (\la d)^{k-2} \left( 1 - \frac{2}{\sqrt{\la d}} - 3 k d (1-\la) (\la d)^{k-2} \right) \\
&&\mbox{} + (1+2\eps) (1-\la) (\la d)^{k-1} \\
&=& - (1-\la)(\la d)^{k-1} \Big[  (1+4\eps) \left( 1 - \frac{2}{\sqrt{\la d}} - 3 k d (1-\la) (\la d)^{k-2} \right) - (1+2\eps) \Big] \\
&\le& - (1-\la)(\la d)^{k-1} \Big[  (1+4\eps) \big(1 - \frac{\eps}{50} - \frac{3\eps}{100}\big) - (1+2\eps) \Big] \\
&\le& - \eps (1-\la) (\la d)^{k-1}.
\end{eqnarray*}
In the penultimate line, we used (\ref{ineq:6}) and (\ref{ineq:12}), giving that $\eps \sqrt{\la d} \ge 100$, and also
(\ref{ineq:7}), giving that $\eps \ge 100k (1-\la) \la^{k-2} d^{k-1}$.  Thus, for such $x$, the drift in the scaled chain satisfies
$\Delta Q'_{k-1}(x) \le - \frac12 \eps (1-\la) (\la d)^{k/2} := - v$.  Now $m_\cC v = 4n$, and $Q'_{k-1}(x) \le 2n$ for all $x$
by (\ref{eq:Qjbound}).

It is now straightforward to derive the result.
\end{proof}

A queue-lengths vector $x \in \cD_1$ satisfies the three inequalities:
\begin{eqnarray}
Q_k(x) &\le& (1+2\eps) n (1-\la) (\la d)^{k-1}, \label{eq:Qkx} \\
P_{k-1}(x) &\le& 3k n (1-\la) (\la d)^{k-2}, \notag \\
Q_{k-1}(x) &\le& (1+5\eps) n (1-\la) (\la d)^{k-2}; \label{eq:Qk-1x}
\end{eqnarray}
in fact the second of these is redundant, as $P_{k-1}(x) \le Q_{k-1}(x) \le 2n (1-\la) (\la d)^{k-2}$ for all $x \in \Z_+^n$.
Substituting these bounds into the bounds of Lemmas~\ref{lem.qk-drift} and~\ref{lem.qj-drift}, we obtain the following.

\begin{lemma} \label{lem.drifts}
For $x \in \cD_1$, we have
\begin{eqnarray*}
(1+\la) \Delta Q_k(x) &\le& \beta_k(1-\la-u_{k+1}(x)) - \frac{Q_k(x)}{n (\la d)^{k-1}} \\
&& \mbox{} + \exp(-dQ_k(x)/k n) + \frac{\eps}{6}(1-\la), \\
(1+\la) \Delta Q_k(x) &\ge& \beta_k(1-\la-u_{k+1}(x)) - \frac{Q_k(x)}{n (\la d)^{k-1}} - \frac{\eps}{6}(1-\la),
\end{eqnarray*}
and, for $1\le j \le k-1$,
\begin{eqnarray*}
(1+\la) \Delta Q_j(x) &\le& -\la d \frac{Q_j(x)}{n} \left( 1 - \frac{\eps}{25k} \right) + \frac{Q_{j+1}(x)}{n},\\
(1+\la) \Delta Q_j(x) &\ge& -\la d \frac{Q_j(x)}{n} \left( 1 + \frac{\eps}{50k} \right) + \frac{Q_{j+1}(x)}{n}.
\end{eqnarray*}
\end{lemma}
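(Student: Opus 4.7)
The plan is to derive Lemma~\ref{lem.drifts} as an immediate consequence of Lemmas~\ref{lem.qk-drift} and~\ref{lem.qj-drift}, by simplifying the error terms using the structural information available for $x\in\cD_1$. Since $\cD_1\subseteq \cC_1\subseteq \cB_1$, both (\ref{eq:Qkx}) and (\ref{eq:Qk-1x}) hold; in addition, $P_{k-1}(x)\le Q_{k-1}(x)$ holds for every queue-lengths vector, since every $\gamma_{k-1,i}$ is at least~$1$. These three facts are the only inputs beyond the earlier drift bounds.

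For the upper bound on $\Delta Q_k(x)$ I would start from Lemma~\ref{lem.qk-drift}, discard the prefactor $\beta_k\la\le 1$ in front of the exponential term, and rewrite the negative drift as $-\frac{Q_k(x)}{n(\la d)^{k-1}}+\frac{2Q_k(x)}{n(\la d)^{k}}$. Substituting (\ref{eq:Qkx}), the excess term is at most $\frac{2(1+2\eps)(1-\la)}{\la d}$, which is bounded by $\frac{\eps}{6}(1-\la)$ as soon as $\eps\la d\ge 12(1+2\eps)$; this follows comfortably from (\ref{ineq:6}) and (\ref{ineq:12}). For the lower bound, the additional term from Lemma~\ref{lem.qk-drift} is $\bigl(Q_{k-1}(x)/n\bigr)^2 (\la d)^{-(k-3)}$; (\ref{eq:Qk-1x}) gives $\bigl(Q_{k-1}(x)/n\bigr)^2\le (1+5\eps)^2(1-\la)^2(\la d)^{2(k-2)}$, so the term is at most $(1+5\eps)^2(1-\la)^2(\la d)^{k-1}$, and using $(1-\la)d^{k-1}\le \eps/(100k)$ from (\ref{ineq:7}) yields a bound well below $\frac{\eps}{6}(1-\la)$.

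For $1\le j\le k-1$, Lemma~\ref{lem.qj-drift} produces error terms involving $\frac{2}{\sqrt{\la d}}$ (upper and lower) and $\frac{dP_{k-1}(x)}{n}$ (upper only). By (\ref{ineq:6}) and (\ref{ineq:12}), $\sqrt{\la d}\ge \sqrt{9/10}\cdot 150k/\eps\ge 100k/\eps$, so $\frac{2}{\sqrt{\la d}}\le \frac{\eps}{50k}$; this already gives the lower bound. For the upper bound, the key observation is to bound $P_{k-1}(x)\le Q_{k-1}(x)\le (1+5\eps)n(1-\la)(\la d)^{k-2}$ via (\ref{eq:Qk-1x}), rather than using the coarser inequality $P_{k-1}(x)\le 3kn(1-\la)(\la d)^{k-2}$ coming from $\cC_1$ (which is off by a factor of order~$k$ and would overshoot the target). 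This gives $\frac{dP_{k-1}(x)}{n}\le (1+5\eps)\la^{k-2}(1-\la)d^{k-1}\le \frac{3\eps}{200k}$ using (\ref{ineq:7}), so the two error terms sum to at most $\frac{\eps}{50k}+\frac{3\eps}{200k}=\frac{7\eps}{200k}\le \frac{\eps}{25k}$, as needed.

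There is no genuinely hard step: the argument is bookkeeping, substituting the $\cD_1$ bounds into the expressions from the previous section and then verifying numerical inequalities using Lemma~\ref{lem.inequalities}. The only mildly delicate point is the choice of bound on $P_{k-1}(x)$ in the $Q_j$ calculation; using the inequality $P_{k-1}\le Q_{k-1}$ (which costs nothing) rather than the $\cC_1$-defining bound is what allows the correction to come in at the required $O(\eps/k)$ level rather than $O(\eps)$.
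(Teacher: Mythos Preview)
Your proposal is correct and follows essentially the same route as the paper's own proof: both substitute the $\cD_1$ bounds (\ref{eq:Qkx}), (\ref{eq:Qk-1x}) and the observation $P_{k-1}\le Q_{k-1}$ into Lemmas~\ref{lem.qk-drift} and~\ref{lem.qj-drift}, then check the resulting numerical inequalities via (\ref{ineq:6}), (\ref{ineq:7}) and (\ref{ineq:12}). The only cosmetic difference is that the paper bounds $\frac{dP_{k-1}(x)}{n}\le 2(1-\la)d^{k-1}\le \frac{\eps}{50k}$ directly, whereas you carry the constant $(1+5\eps)$ and arrive at $\frac{3\eps}{200k}$; both yield the same $\frac{\eps}{25k}$ total.
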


\begin{proof}
For $x \in \cD_1$, we combine the upper bound for $Q_k(x)$ in Lemma~\ref{lem.qk-drift} with (\ref{eq:Qkx}), and obtain
\begin{eqnarray*}
(1+\la) \Delta Q_k(x) &\le& \beta_k(1-\la-u_{k+1}(x)) - \frac{Q_k(x)}{n (\la d)^{k-1}} \\
&& \mbox{} + \beta_k \la \exp(-dQ_k(x)/k n) + \frac{2}{\la d} \frac{Q_k(x)}{n(\la d)^{k-1}} \\
&\le& \beta_k(1-\la-u_{k+1}(x)) - \frac{Q_k(x)}{n (\la d)^{k-1}} \\
&& \mbox{} + \exp(-dQ_k(x)/k n) + \frac{2}{\la d} (1+2\eps) (1-\la).
\end{eqnarray*}
Here we used also the facts that $\beta_k<1$ and $\la<1$.
Using (\ref{ineq:5}) and (\ref{ineq:12}), we have $\frac{2}{\la d}(1+2\eps) \le 3/d$.  Also, by (\ref{ineq:6}),
$3/d \le \eps^2/7500 k^2 \le \eps/6$, which gives the required result.

To obtain the second inequality from its counterpart in Lemma~\ref{lem.qk-drift}, we need to show that
$$
\left( \frac{Q_{k-1}(x)}{n} \right)^2 \frac{1}{(\la d)^{k-3}} \le \frac{\eps}{6} (1-\la).
$$
From (\ref{eq:Qk-1x}), the left-hand side is at most $(1+5\eps)^2 (1-\la)^2 (\la d)^{k-1} \le 3 (1-\la)^2 (\la d)^{k-1}$,
using also (\ref{ineq:5}).  Also, from (\ref{ineq:7}), we have
$\eps \ge 100 k (1-\la) (\la d)^{k-1} \ge 18 (1-\la) (\la d)^{k-1}$.  Combining these inequalities gives the result.

To deduce the upper and lower bounds on $(1+\la) \Delta Q_j(x)$ from Lemma~\ref{lem.qj-drift}, we first note that
$2/\sqrt{\la d} \le \eps/50k$,
by (\ref{ineq:6}) and (\ref{ineq:12}).  This already gives the lower bound; for the upper bound, we also observe that
$$
\frac{d P_{k-1}(x)}{n} \le \frac{d Q_{k-1}(x)}{n} \le 2 (1-\la) d^{k-1} \le \frac{\eps}{50 k},
$$
using (\ref{eq:Qk-1x}) and (\ref{ineq:7}).
\end{proof}

\bigbreak

\section {Proof of Lemma~\ref{lem.cE}} \label{sec.E}

This section is devoted to the rather more complex proof of Lemma~\ref{lem.cE}.  First, we prove a statement
stronger than part~(1) of the lemma.  We set
$$
\cK = \left\{ x : u_{k+1} (x) \le \eps (1-\la) \mbox{ and } Q_k (x) \ge n(1-\frac{\eps}{3}) (1-\la) (\la d)^{k-1} \right\} \cap \cD_1;
$$
$$
W_\cK = \inf \{t \ge T_\cD : X_t \in \cK \}.
$$
Note that $\cK \subseteq \cE_0$, so to prove Lemma~\ref{lem.cE}(1) it suffices to prove that
$$
\Pr (W_\cK \land T_\cD^\dagger \ge T_\cD + m_\cE) \le 1/s_0.
$$
We prove this result on the assumption that $T_\cD=0$ (i.e., that $x_0 \in \cA_0 \cap \cB_0 \cap \cC_0 \cap\cD_0$).  The general case follows
immediately by applying the result for $T_\cD =0$ to the shifted process $(X'_t) = (X_{T_\cD+t})$, using the strong Markov property.  So our task is to show
that $\Pr (W_\cK \land T_\cD^\dagger \ge m_\cE) \le 1/s_0$, where $W_\cK = \inf \{ t \ge 0 : X_t \in \cK \}$, whenever
$X_0 = x_0$ a.s., for any $x_0 \in \cA_0 \cap \cB_0 \cap \cC_0 \cap\cD_0$.

We define the following further sets, hitting times and exit times.  We set
\begin{eqnarray*}
\cL^{k+1}_1 &=& \cD_1 \setminus \cK \\
&=& \left\{ x : u_{k+1} (x) > \eps (1-\la) \mbox{ or } Q_k (x) < n(1-\frac{\eps}{3}) (1-\la) (\la d)^{k-1} \right\} \cap \cD_1,
\end{eqnarray*}
$W_{\cL^{k+1}} = 0$ and
$W_{\cL^{k+1}}^\dagger = \inf \{ t \ge 0 : X_t \notin \cL^{k+1}_1\} = W_\cK \land T_\cD^\dagger$.
Also, for $j=k, \ldots, 1$, let
\begin{eqnarray*}
\cL^j_0 &=& \left\{ x : Q_j (x) \le n(1-\la )(\la d)^{j-1} (1- \frac{\eps}{6} - \frac{j \eps}{6k}) \right\} \cap \cL^{j+1}_1; \\
\cL^j_1 &=& \left\{ x : Q_j (x) \le n(1-\la )(\la d)^{j-1} (1- \frac{\eps}{6} - \frac{j \eps}{6k} + \frac{\eps}{24k}) \right\} \cap \cL^{j+1}_1; \\
W_{\cL^j} &=& \inf \{t \ge W_{\cL^{j+1}}: X_t \in \cL_0^j \};\\
W_{\cL^j}^\dagger &=& \inf \{ t \ge W_{\cL^j}: X_t \notin \cL_1^j )\}.
\end{eqnarray*}

Our goal is to show that $\Pr (W_{\cL^{k+1}}^\dagger < m_\cE) \ge 1-1/s_0$.
If $x_0 \in \cK$, then $W_{\cL^{k+1}}^\dagger = 0$ and we are done, so we may assume that $x_0 \notin \cK$, and hence that $x_0 \in \cL^{k+1}_1$.
Thus Lemma~\ref{lem.cE}(1) follows from the proposition below.

\begin{proposition} \label{prop.prop}
Let $x_0$ be any queue-lengths vector in $\cL^{k+1}_1$.  For a copy $(X_t)$ of the $(n,d,\la)$-supermarket process with
$X_0 = x_0$ a.s., we have
$$
\Pr (W_{\cL^{k+1}}^\dagger \ge m_\cE) \le 1/s_0.
$$
\end{proposition}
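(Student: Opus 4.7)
The plan is to apply Lemma~\ref{lem.drifts-down2} successively to the functions $Q_k, Q_{k-1}, \ldots, Q_1$ and finally to the total-load function $\|X_t\|_1$, in order to derive a contradiction with the assumption $W_{\cL^{k+1}}^\dagger \ge m_\cE$. The heuristic, already given in Section~\ref{sec.heuristics}, is the following: as long as $X_t \in \cL^{k+1}_1$ and $Q_k(X_t)$ lies above the $\cK$ threshold, we must have $u_{k+1}(X_t) > \eps(1-\la)$, which drags $Q_k$ downwards; via the coupling terms in Lemma~\ref{lem.drifts}, a small $Q_k$ then drags $Q_{k-1}$ down, and iteratively all the way to $Q_1(x) = n(1-u_1(x))$. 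Once $Q_1$ is small, too few queues are empty, so $\|X_t\|_1$ must drift down at a macroscopic rate, which cannot persist since $\|X_t\|_1 \ge 0$.

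For the first step ($j=k$), I would observe that on $\cL^{k+1}_1 \setminus \cL^k_0$ one must have $u_{k+1}(x) > \eps(1-\la)$, for otherwise $x$ would satisfy both conditions defining $\cK$. Plugging this into the upper bound of Lemma~\ref{lem.drifts} on $\Delta Q_k$ (the $\exp(-dQ_k/(kn))$ correction is negligible in our regime, via (\ref{ineq:103}) and (\ref{ineq:12})) yields $\Delta Q_k(x) \le -\eps(1-\la)/6$. Lemma~\ref{lem.drifts-down2} applied with $\cS = \cL^{k+1}_1$, $F = Q_k$, $h$ the $\cL^k_0$ threshold, $\rho$ the small gap to the $\cL^k_1$ threshold and $T^* = 0$, then delivers both $W_{\cL^k} \le m^{(k)}$ and $W_{\cL^k}^\dagger > s_0 \land W_{\cL^{k+1}}^\dagger$ with failure probability at most $1/(4k s_0)$ each, for a suitable $m^{(k)} = O(n(\la d)^{k-1})$. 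For $j = k-1, \ldots, 1$, I would iterate the same idea: on $\cL^{j+1}_1 \setminus \cL^j_0$, inserting the constraint $Q_{j+1}(x) \le n(1-\la)(\la d)^j[1 - \eps/6 - (j+1)\eps/(6k) + \eps/(24k)]$ into the upper bound of Lemma~\ref{lem.drifts} on $\Delta Q_j$, a short arithmetic check yields $\Delta Q_j(x) \le -(1-\la)(\la d)^j \eps/(12k)$. After rescaling $Q_j$ by its maximum step size $(\la d)^{(j-1)/2}$ to bring jumps into $[-1,1]$, Lemma~\ref{lem.drifts-down2} with $\cS = \cL^{j+1}_1$ and $T^* = W_{\cL^{j+1}}$ gives analogous bounds on $m^{(j)}$ and on the exit time, again each with failure probability at most $1/(4k s_0)$, the concentration exponents being controlled using Lemma~\ref{lem.inequalities}.

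The final step deals with the drift of $\|X_t\|_1$. For $x \in \cL^1_1$, $Q_1(x) = n(1-u_1(x)) \le n(1-\la)(1-11\eps/48)$ (using $k \ge 2$), so $u_1(x) \ge \la + 11\eps(1-\la)/48$ and hence $\Delta \|X_t\|_1 = (\la - u_1(X_t))/(1+\la) \le -\eps(1-\la)/10$. I would apply Lemma~\ref{lem.drifts-down2} with $F = \|\cdot\|_1$, $\cS = \cL^1_1$, $h = 0$, $c = 3gn$ (from $\cL^1_1 \subseteq \cA_1$), $v = \eps(1-\la)/10$, $T^* = W_{\cL^1}$ and $m_{\|\cdot\|} = 60 g n/(\eps(1-\la))$, noting that the all-empty state is automatically outside $\cL^1_1$, so that $T_1 \ge T_0 = W_{\cL^1}^\dagger$ in the lemma's notation; this gives $\Pr(W_{\cL^1}^\dagger > W_{\cL^1} + m_{\|\cdot\|}) \le 1/s_0$, since the exponent $v^2 m_{\|\cdot\|}/8$ comfortably exceeds $\log^2 n/3$ by virtue of (\ref{ineq:115}). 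A union bound then yields a good event of probability at least $1 - 1/s_0$ on which $W_{\cL^1}^\dagger > s_0 \land W_{\cL^{k+1}}^\dagger$ (by chaining $W_{\cL^j}^\dagger > s_0 \land W_{\cL^{j+1}}^\dagger$ down from $j = k$), $W_{\cL^1} \le \sum_{j=1}^k m^{(j)}$, and $W_{\cL^1}^\dagger \le W_{\cL^1} + m_{\|\cdot\|}$. Checking that $\sum_{j=1}^k m^{(j)} + m_{\|\cdot\|} \le m_\cE = (13k + 72g)\eps^{-1} n (1-\la)^{-1}$ (the three contributions being, respectively, negligibly small, $O(k^2 m_\cG)$, and $60gn/(\eps(1-\la)) \le 72 g \eps^{-1} n (1-\la)^{-1}$), one concludes $s_0 \land W_{\cL^{k+1}}^\dagger \le W_{\cL^1}^\dagger \le m_\cE \le s_0/2$, hence $W_{\cL^{k+1}}^\dagger \le m_\cE$ on the good event. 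The hard part will be coordinating the thresholds defining $\cL^j_0, \cL^j_1$ so that every level delivers a drift of the right sign and magnitude, the rescaled concentration inequalities all yield failure probability $\le 1/(4k s_0)$, and the hitting-time contributions together with $m_{\|\cdot\|}$ sum to at most $m_\cE$.
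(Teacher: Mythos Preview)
Your approach is essentially the same as the paper's: cascade Lemma~\ref{lem.drifts-down2} down through $Q_k,Q_{k-1},\dots,Q_1$ using the sets $\cL^j$, and then use the downward drift of $\|X_t\|_1$ on $\cL^1_1$ to force an exit. Your observation that the all-empty state lies outside $\cL^1_1$ (hence $T_1\ge T_0$ in the final application) is exactly the paper's case~(f) argument, phrased slightly differently.

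There is one quantitative slip you should fix. The hitting-time bound $m^{(k)}$ for $Q_k$ is \emph{not} $O(n(\la d)^{k-1})$: in Lemma~\ref{lem.drifts-down2} you need $vm\ge 2(c-h)$ with $c=kn$ (since $Q_k(x)\le kn$ always) and $v\approx \eps(1-\la)/6$, so $m^{(k)}$ must be of order $k\eps^{-1}n(1-\la)^{-1}$. By~(\ref{ineq:7}) this is larger than $n(\la d)^{k-1}$ by a factor $\gtrsim k^2/\eps^2$. In particular $m^{(k)}$ is \emph{not} ``negligibly small'' in the final accounting: it is precisely the source of the $13k$ in $m_\cE=(13k+72g)\eps^{-1}n(1-\la)^{-1}$. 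The paper takes $m^{(k)}=12k\eps^{-1}n(1-\la)^{-1}$, each $m^{(j)}$ for $j<k$ equal to $\eps^{-1}n(1-\la)^{-1}$, and $m_{\|\cdot\|}=72g\eps^{-1}n(1-\la)^{-1}$, so that the sum is below $m_\cE$. With that correction your sketch goes through.
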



For the proof of Proposition~\ref{prop.prop}, we fix a state $x_0 \in \cL^{k+1}_1$, and
work with a copy $(X_t)$ of the $(n,d,\la)$-supermarket process where $X_0 = x_0$ a.s.

Our general plan for proving Proposition~\ref{prop.prop} is as follows.  We suppose that the process $(X_t)$ stays inside
$\cL_1^{k+1} = \cD_1 \setminus \cK$ over the interval $[0,m_\cE)$, with the aim of showing that this event has low probability.  Observe that, if
$x \in \cL_1^{k+1} \setminus \cL^k_0$, then $u_{k+1}(x) > \eps (1-\la)$ and $Q_k(x) > n(1-\frac{\eps}{3}) (1-\la) (\la d)^{k-1}$.
This ``excess'' in $u_{k+1}$ would result in a downward drift in $Q_k(X_t)$, so if the
process does not exit $\cL_1^{k+1}$ quickly, then it enters $\cL^k_0$ quickly, and stays in $\cL^k_1$ throughout the interval $[0,m_\cE)$:
i.e., $W_{\cL^k}$ is small and $W_{\cL^k}^\dagger$ is large, with high probability.  This
means that $Q_k(X_t)$ maintains a ``deficit'' compared to $\tilde Q_k := n(1-\la)(\la d)^{k-1}$ until time $m_\cE$.
A deficit in $Q_k(X_t)$ would lead to a deficit in each $Q_j(X_t)$ in turn, compared to
$\tilde Q_j := n(1-\la)(\la d)^{j-1}$, for $j=k-1, k-2, \dots, 1$: each $W_{\cL^j}$ is small, and $W_{\cL^j}^\dagger$
is large, with high probability.  Finally, a deficit in $Q_1(X_t)$ compared to $\tilde Q_1 = n(1-\la)$ is unsustainable,
as this would lead to a drift down in the total number of customers over a long enough time interval to empty the entire system of customers.
This would entail exiting the set $\cB_1 \supseteq \cL_1^{k+1}$, a contradiction.

\begin{lemma}
\label{lem.qk}
\begin{enumerate}
\item
$\displaystyle \Pr (W_{\cL^k} \land W_{\cL^{k+1}}^\dagger \ge 12k\eps^{-1} n (1-\la)^{-1} ) \le 1/12s_0$.
\item
$\displaystyle \Pr (W_{\cL^k}^\dagger < m_\cE \le W_{\cL^{k+1}}^\dagger) \le 1/6s_0$.
\end{enumerate}
\end{lemma}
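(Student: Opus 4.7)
The plan is to apply Lemma~\ref{lem.drifts-down2} to the function $F = Q_k$ on the set $\cS = \cL^{k+1}_1$, starting from $T^* = W_{\cL^{k+1}} = 0$. I will take $h = n(1-\la)(\la d)^{k-1}(1 - \eps/3)$ and $\rho = n(1-\la)(\la d)^{k-1}\eps/(24k)$, so that $\cL^k_0$ and $\cL^k_1$ coincide with $\{Q_k \le h\} \cap \cL^{k+1}_1$ and $\{Q_k \le h + \rho\} \cap \cL^{k+1}_1$ respectively. Since $x_0 \in \cL^{k+1}_1 \subseteq \cD_1 \subseteq \cB_1$, we have the initial bound $Q_k(x_0) \le c := (1+2\eps)n(1-\la)(\la d)^{k-1}$, and jumps of $Q_k$ are bounded in absolute value by $\beta_1 \le 1$.

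The key observation is that any $x \in \cL^{k+1}_1$ with $Q_k(x) > h$ must satisfy $u_{k+1}(x) > \eps(1-\la)$: indeed, $x \in \cL^{k+1}_1 = \cD_1 \setminus \cK$, while the other defining inequality of $\cK$ (namely $Q_k(x) \ge h$) already holds, so failure of $u_{k+1}(x) \le \eps(1-\la)$ is forced. Plugging this deficit into the upper drift bound of Lemma~\ref{lem.drifts} and using $\beta_k \le 1$ yields
$$(1+\la)\Delta Q_k(x) \le (1-\la)\left[(1-\eps) - (1-\eps/3) + \eps/6\right] + \exp(-dQ_k(x)/kn).$$
A short calculation using (\ref{ineq:103}), (\ref{ineq:12}) and (\ref{ineq:1155}), analogous to the one at the start of the proof of Lemma~\ref{lem.cB}, shows that the exponential term is at most $\eps(1-\la)/12$; collecting terms gives $\Delta Q_k(x) \le -v$ for $v := \eps(1-\la)/5$. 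The condition $vm \ge 2(c-h)$ required by Lemma~\ref{lem.drifts-down2} holds comfortably for $m = 12k\eps^{-1}n(1-\la)^{-1}$ using (\ref{ineq:7}), and $\rho \ge 2$ follows from (\ref{ineq:1155}).

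For part (1), Lemma~\ref{lem.drifts-down2}(i) gives $\Pr(T_1 \wedge T_0 > m) \le \exp(-v^2 m/8)$, where $T_0 = W_{\cL^{k+1}}^\dagger$ and $T_1 = \inf\{t : Q_k(X_t) \le h\}$. If the process stays in $\cL^{k+1}_1$ throughout $[0,m]$, then ``$Q_k(X_t) \le h$'' is equivalent to $X_t \in \cL^k_0$, so $\{W_{\cL^k} \wedge W_{\cL^{k+1}}^\dagger > m\} \subseteq \{T_1 \wedge T_0 > m\}$. The exponent $v^2 m/8$ is of order $\eps k n(1-\la)$, which by (\ref{ineq:115}) dwarfs $\log(12 s_0) = \tfrac{1}{3}\log^2 n + O(1)$, giving a bound $\le 1/(12 s_0)$. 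For part (2), the event $\{W_{\cL^k}^\dagger < m_\cE \le W_{\cL^{k+1}}^\dagger\}$ is contained in $\{T_2 \le m_\cE \wedge T_0\}$, because leaving $\cL^k_1$ while remaining in $\cL^{k+1}_1$ forces $Q_k$ to cross from $\le h$ (attained at $W_{\cL^k} \ge T_1$) up past $h + \rho$. Lemma~\ref{lem.drifts-down2}(ii) then contributes $m_\cE \exp(-\rho v)$, and $\rho v = \eps^2(1-\la)^2 n(\la d)^{k-1}/(120k)$, which by (\ref{ineq:8}) and (\ref{ineq:12}) is of order $k\log^2 n$, so the bound is easily $\le 1/(6 s_0)$.

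The main obstacle is not the drift estimate or the probability bounds, which are routine once the setup is in place, but rather the combinatorial identification of events. One must verify carefully that (a) inside $\cL^{k+1}_1$, the $Q_k$-threshold defining $\cL^k_0$ coincides precisely with the ``not-in-$\cK$'' condition that forces the $u_{k+1}$ excess (so that the negative-drift hypothesis of Lemma~\ref{lem.drifts-down2} is valid on all of $\cS \cap \{F > h\}$), and (b) that exiting $\cL^k_1$ while still in $\cL^{k+1}_1$ really is a $Q_k$-upcrossing of width $\rho$ rather than some other type of exit, so that $T_2$ indeed controls $W_{\cL^k}^\dagger$ on this event. With these two identifications pinned down, the rest is numerical verification using the inequalities collected in Lemma~\ref{lem.inequalities}.
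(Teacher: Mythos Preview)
Your proposal is correct and follows essentially the same approach as the paper: apply Lemma~\ref{lem.drifts-down2} to $F=Q_k$ on $\cS=\cL^{k+1}_1$, using the key observation that $x\in\cL^{k+1}_1$ with $Q_k(x)>h$ forces $u_{k+1}(x)>\eps(1-\la)$, then invoke Lemma~\ref{lem.drifts} for the drift bound. The only differences are cosmetic numerical choices (you take $c=(1+2\eps)n(1-\la)(\la d)^{k-1}$ and $v=\eps(1-\la)/5$, while the paper uses the cruder $c=kn$ and $v=\eps(1-\la)/6$; the paper also explicitly sets $s=m_\cE-1$), and your event identifications in (a) and (b) are exactly what the paper verifies.
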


\begin{proof}
We apply Lemma~\ref{lem.drifts-down2} to the process $(X_t)$, with its natural filtration, and the function $F = Q_k$.
We set $h = (1-\frac{\eps}{3}) n (1-\la) (\la d)^{k-1}$ and $\rho = \frac{\eps}{24k} n (1-\la) (\la d)^{k-1}$; it follows
from (\ref{ineq:115}) that $\rho \ge 2$.  We also set
$\cS = \cL_1^{k+1}$ and $T^*=0$.  We note that $Q_k(x) \le c := kn$ for every $x$, and we take $m = 12k \eps^{-1} n (1-\la)^{-1}$,
and $s = m_\cE - 1$.  Then $T_0 = W_{\cL^{k+1}}^\dagger$, $T_1 = \inf \{ t : Q_k(X_t) \le h\}$ and
$T_2 = \inf \{ t > T_1 : Q_k(X_t) \ge h + \rho \}$, as in the lemma.

For $x \in \cL_1^{k+1}$ with $Q_k(x) > h$, we have $u_{k+1}(x) > \eps (1-\la)$ and $x \in \cD_1$.  So Lemma~\ref{lem.drifts} applies, and we have
\begin{eqnarray*}
\lefteqn{(1+\la) \Delta Q_k(x)} \\
&\le& \beta_k(1-\la-u_{k+1}(X_t)) - \frac{Q_k(x)}{n (\la d)^{k-1}} + \exp(-dQ_k(x)/kn) + \frac{\eps}{6} (1-\la) \\
&\le& (1-\la)(1-\eps) - (1-\la)(1-\frac\eps 3) \\
&&\mbox{} + \exp\left(-\frac{d(1-\frac \eps 6)(1-\la)(\la d)^{k-1}}{k}\right) + \frac{\eps}{6} (1-\la)\\
&\le& -\frac12 \eps (1-\la) + \exp\left(-\frac{(1-\frac \eps 6)d^k(1-\la)\la^{k-1}}{k}\right).
\end{eqnarray*}
From (\ref{ineq:5}), (\ref{ineq:103}) and (\ref{ineq:12}), we have that
$$
\frac{(1-\frac \eps 6)d^k(1-\la)\la^{k-1}}{k} \ge \frac{177}{100}\log n.
$$
Also $e^{-\frac{177}{100} \log n } \le 1/2n \le (1-\la) \eps/6$
for $n\ge 2$, using (\ref{ineq:1155}).  So
$$
(1+\la) \Delta Q_k(x) \le - \frac13 \eps (1-\la) \mbox{ and }
\Delta Q_k(x) \le - \frac16 \eps (1-\la) := -v,
$$
for such $x$.  Note that $m v = 2 c$.  Hence we may apply Lemma~\ref{lem.drifts-down2}.

As in earlier lemmas, we have $T_1 \land W_{\cL^{k+1}}^\dagger = W_{\cL^k} \land W_{\cL^{k+1}}^\dagger$, so we obtain
\begin{eqnarray*}
\Pr (W_{\cL^k} \land W_{\cL^{k+1}}^\dagger > m) &=& \Pr (T_1 \land T_0 > m) \\
&\le& \exp( - v^2 m /8) \\
&=& \exp(-\eps k n (1-\la) / 24) \\
&\le& \exp(- 2500 \eps^{-2} k^4 \log^2 n \,d^{k-2}) \\
&<& 1/12s_0,
\end{eqnarray*}
where we used (\ref{ineq:115}).

Also the events $W_{\cL^k}^\dagger < m_\cE \le W_{\cL^{k+1}}^\dagger$ and $T_2 < m_\cE \le W_{\cL^{k+1}}^\dagger$ coincide, and the
second is equivalent to $T_2 \le s < W_{\cL^{k+1}}^\dagger$ (since $s = m_\cE-1$).  So
\begin{eqnarray*}
\Pr ( W_{\cL^k}^\dagger < m_\cE \le W_{\cL^{k+1}}^\dagger) &=& \Pr (T_2 \le s < T_0) \\
&\le& s \exp( - \rho v) \\
&=& s \exp(- \eps^2 n (1-\la)^2 (\la d)^{k-1}/144k) \\
&\le& m_\cE \exp (- \frac{15}{4} k \log^2 n \, d^{k-2}) \\
&<& 1/6s_0,
\end{eqnarray*}
as required.  Here we used (\ref{ineq:8}) and (\ref{ineq:12}).
\end{proof}

The next lemma states that, if the process stays in some set $\cL^{j+1}_1$ for a long time, then it quickly enters the ``next'' set $\cL^j_0$.

\begin{lemma}
\label{lem.qj}
For each $j=k-1, \ldots,1$,
\begin{enumerate}
\item
$\displaystyle \Pr (W_{\cL^{j+1}}^\dagger \land W_{\cL^j} > W_{\cL^{j+1}} + \eps^{-1} n (1-\la)^{-1} ) \le 1/3ks_0$.
\item
$\displaystyle \Pr (W_{\cL^j}^\dagger < m_\cE \le W_{\cL^{j+1}}^\dagger ) \le 1/3ks_0$.
\end{enumerate}
\end{lemma}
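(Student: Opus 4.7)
The plan is to apply Lemma~\ref{lem.drifts-down2} to the scaled function
$$ Q'_j(x) := Q_j(x)/(\la d)^{(j-1)/2}, $$
whose jumps are bounded by $1$ (since each single $\pm 1$ change in a coordinate of $x$ alters $Q_j$ by at most $\gamma_{j,1}=(\la d)^{(j-1)/2}$), and whose values satisfy $Q'_j(x) \le 2n$ by (\ref{eq:Qjbound}). The argument is structurally identical to the proof of Lemma~\ref{lem.cD} and Lemma~\ref{lem.qk}: we show that on the ``good'' set $\cL_1^{j+1}$, the scaled drift is bounded above by a definite negative constant whenever $Q_j(x)$ is above the $\cL^j_0$-threshold, and then invoke both parts of Lemma~\ref{lem.drifts-down2}.

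By the strong Markov property applied at $W_{\cL^{j+1}}$, we may assume $W_{\cL^{j+1}}=0$ and $X_0 \in \cL_0^{j+1}$ (so in particular $X_0 \in \cD_1$). Set
$$ h_j = n(1-\la)(\la d)^{j-1}\big(1 - \tfrac{\eps}{6} - \tfrac{j\eps}{6k}\big), \qquad \rho_j = \tfrac{\eps}{24k}\, n(1-\la)(\la d)^{(j-1)/2}, $$
(where $h_j$ is the threshold defining $\cL^j_0$, and $\rho_j$ is its scaled gap to the $\cL^j_1$-threshold). The key calculation is the drift bound: for $x \in \cL_1^{j+1}$ with $Q_j(x) > h_j$, Lemma~\ref{lem.drifts} combined with the $\cL_1^{j+1}$-bound $Q_{j+1}(x) \le n(1-\la)(\la d)^j\big(1 - \tfrac{\eps}{6} - \tfrac{(j+1)\eps}{6k} + \tfrac{\eps}{24k}\big)$ gives
\begin{align*}
(1+\la)\Delta Q_j(x)
&\le -\la d \cdot \tfrac{h_j}{n}\big(1-\tfrac{\eps}{25k}\big) + \tfrac{Q_{j+1}(x)}{n} \\
&\le (1-\la)(\la d)^j\Big[ A \cdot \tfrac{\eps}{25k} - \tfrac{\eps}{6k} + \tfrac{\eps}{24k}\Big]
\le -\tfrac{17\eps}{200k}(1-\la)(\la d)^j,
\end{align*}
where $A = 1 - \eps/6 - j\eps/6k \le 1$, and we used $-\eps/6k + \eps/24k = -\eps/8k$. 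After dividing by $(1+\la)(\la d)^{(j-1)/2}$, the scaled drift satisfies
$$ \Delta Q'_j(x) \le -v, \qquad v := \tfrac{17}{400 k}\,\eps(1-\la)(\la d)^{(j+1)/2}. $$
The rest of the argument is now mechanical: for part~(1) with $m = \eps^{-1} n(1-\la)^{-1}$, the inequality $vm \ge 2c = 4n$ reduces to $\la d \ge 95k$, which follows comfortably from (\ref{ineq:6}) and (\ref{ineq:12}); then $\Pr(W_{\cL^{j+1}}^\dagger \land W_{\cL^j} > m) \le \exp(-v^2 m/8)$, and $v^2 m$ is bounded below by a large multiple of $\log^2 n$ using (\ref{ineq:8}) and (\ref{ineq:115}) (together with $j+1 \ge 2$). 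For part~(2), we have $\rho_j \ge 2$ (this follows from (\ref{ineq:115}) and (\ref{ineq:116})), and the level-crossing clause of Lemma~\ref{lem.drifts-down2} yields $\Pr(W_{\cL^j}^\dagger < m_\cE \le W_{\cL^{j+1}}^\dagger) \le m_\cE \exp(-\rho_j v)$, with $\rho_j v \ge \tfrac{17\eps^2}{9600 k^2}n(1-\la)^2\la d \ge \log^2 n$ by (\ref{ineq:8}) and (\ref{ineq:12}); both bounds easily beat $1/(3ks_0)$.

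The only subtlety, and the place where care is needed, is the arithmetic in the drift calculation: the slack $\eps/24k$ built into the definition of $\cL_1^{j+1}$ (compared to $\cL_0^{j+1}$), the slack $\eps/6k$ between the $Q_j$ and $Q_{j+1}$ thresholds, and the multiplicative error $1-\eps/25k$ in the drift bound of Lemma~\ref{lem.drifts} must conspire to produce a definitely negative drift uniformly in $j$. The calculation above shows they do, with a margin of order $\eps/k$, which is the quantitatively sharp feature that propagates the control of $Q_{j+1}$ down to $Q_j$.
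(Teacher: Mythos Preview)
Your proof is correct and follows essentially the same approach as the paper: apply Lemma~\ref{lem.drifts-down2} to the scaled function $Q'_j = Q_j/(\la d)^{(j-1)/2}$, establish the drift bound on $\cL_1^{j+1}$ via Lemma~\ref{lem.drifts}, and read off both conclusions. Your explicit drift constant $v = \tfrac{17}{400k}\eps(1-\la)(\la d)^{(j+1)/2}$ is essentially the paper's $v = \tfrac{1}{24k}\eps(1-\la)(\la d)^{(j+1)/2}$ (indeed $17/400 > 1/24$, so your bound is marginally sharper), and the verifications of $vm \ge 2c$, $\rho_j \ge 2$, and the final exponential bounds are all as in the paper.
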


\begin{proof} (Sketch)
This proof is very similar to that of earlier lemmas, and we mention only a few points.  As in Lemma~\ref{lem.cD}, we apply Lemma~\ref{lem.drifts-down2}
to the scaled process $Q'_j(x) = Q_j(x)/(\la d)^{(j-1)/2}$.  The key step is to show that, for $x \in \cL_1^{j+1}$ with $Q'_j \ge h =
n (1-\la) (\la d)^{(j-1)/2} (1-\frac{\eps}{6}-\frac{j\eps}{6k})$, we have
$\Delta Q'_j(x) \le - \frac{\eps}{24k} (1-\la) (\la d)^{(j+1)/2} := -v$.  The proof now proceeds as earlier ones.
The calculation for part (2) of the lemma goes as follows, with $\rho = \frac{\eps}{24k} n (1-\la) (\la d)^{(j-1)/2}$:
\begin{eqnarray*}
\Pr ( W_{\cL^j}^\dagger < m_\cE \le W_{\cL^{j+1}}^\dagger)
&\le& m_\cE \exp( - \rho v) \\
&=& m_\cE \exp \left( - \frac{\eps^2}{576k^2} n (1-\la)^2 (\la d)^j \right) \\
&\le& \frac12 s_0 \exp \left( - \frac{15}{16} \log^2 n \, d^{j-1}\right) \\
&\le& 1/3ks_0,
\end{eqnarray*}
where we used (\ref{ineq:8}) and (\ref{ineq:12}).
In the case $j=1$, this is the place where practically the full strength of (\ref{ineq:8}) is used.
\end{proof}

We now prove a hitting time lemma for $\|X_t\|_1$, the total number of customers in the system at time~$t$.
Let $W_\cM =  \min \{t \ge W_{\cL^1}: \|X_t\|_1 = 0\}$.

\begin{lemma} \label{lem.h}
$$
\Pr (W_{\cL^1}^\dagger \land W_{\cM} > W_{\cL^1} + 72 g \eps^{-1} n (1-\la)^{-1}) \le 1/12s_0.
$$
\end{lemma}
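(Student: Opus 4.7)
\smallskip
\noindent\textbf{Proof plan.} The plan is to apply Lemma~\ref{lem.drifts-down2}(i) to the natural filtration of $(X_t)$, taking $F(x) = \|x\|_1$, stopping time $T^* = W_{\cL^1}$, good set $\cS = \cL^1_1$, threshold $h = 0$, target time $m = 72 g \eps^{-1} n (1-\la)^{-1}$, and upper bound $c = 3gn$. The bound $F(X_{T^*}) \le c$ is valid on $\{W_{\cL^1} < \infty\}$ because $\cL^1_0 \subseteq \cL^1_1 \subseteq \cA_1(\ell,g)$; the jumps of $\|X_t\|_1$ lie in $\{-1,0,+1\}$. In the notation of Lemma~\ref{lem.drifts-down2} we then have $T_0 = W_{\cL^1}^\dagger$ and $T_1 = W_\cM$, so part~(i) gives exactly the quantity we want to bound.

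The key input is a drift estimate on $\cL^1_1$. Using $\gamma_{1,1}=1$ we have $Q_1(x) = n(1-u_1(x))$, so the defining inequality of $\cL^1_1$ rearranges to
\[
u_1(x) - \la \;\ge\; (1-\la)\!\left(\tfrac{\eps}{6}+\tfrac{\eps}{8k}\right) \;\ge\; \tfrac{\eps(1-\la)}{6}.
\]
A single step of the chain increases $\|X_t\|_1$ by $1$ with probability $\la/(1+\la)$ (arrival) and decreases it by $1$ with probability $u_1(X_t)/(1+\la)$ (a potential departure from a non-empty queue). Hence, for $x \in \cL^1_1$,
\[
\Delta \|x\|_1 \;=\; \frac{\la - u_1(x)}{1+\la} \;\le\; -\frac{\eps(1-\la)}{12} \;=:\; -v.
\]
The constant $72$ appearing in $m$ is calibrated precisely so that $mv = 6gn = 2(c-h)$, matching the sizing hypothesis of Lemma~\ref{lem.drifts-down2}.

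Part~(i) of that lemma then yields
\[
\Pr\bigl(W_\cM \land W_{\cL^1}^\dagger > W_{\cL^1} + m\bigr) \;\le\; \exp(-v^2 m/8) \;=\; \exp\!\left(-\tfrac{\eps\, g\, n (1-\la)}{16}\right).
\]
To finish, use $g \ge k$ together with (\ref{ineq:115}), which combined with $\eps \le 1/10$ gives $\eps n (1-\la) \ge 6 \times 10^6\, k^3 \log^2 n$, so the exponent is at least $3.75 \times 10^5\, k^4 \log^2 n$, comfortably above $\log(12 s_0) = \tfrac13 \log^2 n + \log 12$.

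The only point requiring any care is converting the defining inequality of $\cL^1_1$ into the drift estimate $\Delta \|x\|_1 \le -\eps(1-\la)/12$; once that is in hand, the lemma is a direct application of Lemma~\ref{lem.drifts-down2}(i) with the constants chosen so that the drift exactly consumes the available budget $2c = 6gn$ over $m$ steps. There is no genuine obstacle: this step embodies the heuristic discussion at the end of Section~\ref{sec.heuristics}, namely that a sustained deficit in $Q_1$ forces a deterministic-looking drift down in the total customer count.
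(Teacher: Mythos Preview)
Your proof is correct and follows essentially the same approach as the paper: apply Lemma~\ref{lem.drifts-down2}(i) with $F(x)=\|x\|_1$, $\cS=\cL^1_1$, $T^*=W_{\cL^1}$, $h=0$, $c=3gn$, and the drift bound $\Delta\|x\|_1 \le -\eps(1-\la)/12$ obtained from $Q_1(x)=n(1-u_1(x))$ and the defining inequality of $\cL^1_1$. Your arithmetic (including the constant $\eps/8k$ you compute explicitly, which the paper simply drops) and the final comparison with $1/12s_0$ via (\ref{ineq:115}) match the paper's argument.
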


\begin{proof}
We apply Lemma~\ref{lem.drifts-down2}(i) to the chain $(X_t)$, with the filtration $(\cF_t)$, and the function $F(x) = \|x\|_1$, which takes jumps of size
at most~1.  Since $\cA_1(\ell,g) \supseteq \cL_1^1$, we have $\|X_0\|_1 \le c := 3gn$.
We also set $\cS = \cL_1^1$, $T^* = W_{\cL^1}$, $h = 0$ and $m = 72 g \eps^{-1} n (1-\la)^{-1}$.

Note that $\|X_{t+1}\|_1 - \|X_t\|_1$ is equal to $+1$ if the event at time $t$ is an arrival, with probability $\la/(1+\la)$, and equal
to $-1$ if the event is a potential departure from a non-empty queue, with probability $u_1(X_t)/(1+\la)$, so the
drift $\Delta \|x\|_1$ is equal to $\frac{1}{1+\la} (\la - u_1(x))$.  For $x \in \cL_1^1$, we have
$$
1-u_1(x) = \frac{Q_1(x)}{n} \le (1-\la) \left( 1 - \frac{\eps}{6}-\frac{\eps}{6k}+ \frac{\eps}{24k} \right) \le (1-\la) \left( 1-\frac{\eps}{6} \right).
$$
Hence, for $x \in \cL_1^1$,
$$
(1 + \la) \Delta \|x\|_1 = (1-u_1(x)) - (1-\la) \le -\frac{\eps}{6} (1-\la),
$$
and so
$\Delta \|x\|_1 \le - \frac{\eps}{12} (1-\la) := -v$.  Note that $v m = 2 c$.

Hence we may apply Lemma~\ref{lem.drifts-down2}(i).  With $T_0$ and $T_1$ as in that lemma, we have
$T_0 = W_{\cL^1}^\dagger$ and $T_1 = W_{\cM}$, so we conclude that
\begin{eqnarray*}
\Pr ( W_{\cL^1}^\dagger \land W_{\cM} \ge W_{\cL^1} + m ) &\le& \exp( - v^2m/8 ) \\
&=& \exp( - \eps g n (1-\la) / 16) \\
&\le& \exp \left( - 3750 \eps^{-2} g k^3 \log^2 n \, d^{k-2} \right) \\
&\le& 1/12s_0,
\end{eqnarray*}
as required.  Here we used (\ref{ineq:115}).
\end{proof}


We now combine Lemmas~\ref{lem.qk}, \ref{lem.qj} and \ref{lem.h} to prove Proposition~\ref{prop.prop}.

Observe that, for a copy $(X_t)$ of the $(n,d,\la)$-supermarket process starting in a state $x_0 \in \cL_1^{k+1}$, exactly one of the following occurs:
\begin{itemize}
\item[(a)] $W_{\cL^{k+1}}^\dagger < m_\cE$,
\item[(b)] not (a), and one of $W_{\cL^k}^\dagger$, $W_{\cL^{k-1}}^\dagger$, \dots, $W_{\cL^1}^\dagger$ is less than $m_\cE$,
\item[(c)] neither of the above, and $W_{\cL^k} > 12 k\eps^{-1} n (1-\la)^{-1}$,
\item[(d)] none of the above, and $W_{\cL^j} > W_{\cL^{j+1}} + \eps^{-1} n (1-\la)^{-1}$ for some $j =k-1, \dots, 1$,
\item[(e)] none of the above, and $W_{\cM} > W_{\cL^1} + 72 g \eps^{-1} n (1-\la)^{-1}$,
\item[(f)] none of the above, and $W_{\cM} < m_\cE \le W_{\cL^{k+1}}^\dagger$.
\end{itemize}
Indeed, if none of (a)--(e) occurs, then $W_{\cL^{k+1}}^\dagger \ge m_\cE$ since (a) fails, and also
\begin{eqnarray*}
W_\cM &=& W_{\cL^k} + \sum_{j=1}^{k-1} (W_{\cL^j} - W_{\cL^{j+1}}) + (W_\cM - W_{\cL^1}) \\
&\le& 12k\eps^{-1} n (1-\la)^{-1} + (k-1) \eps^{-1} n (1-\la)^{-1} + 72 g \eps^{-1} n (1-\la)^{-1} \\
&<& (13k + 72g) \eps^{-1} n (1-\la)^{-1} \\
&=& m_\cE.
\end{eqnarray*}

We now show that the probability of each of (b)--(f) is small.
For (b), Lemmas~\ref{lem.qk}(2) and \ref{lem.qj}(2) give that
\begin{eqnarray*}
\lefteqn{ \Pr (W_{\cL^k}^\dagger \land W_{\cL^{k-1}}^\dagger \land \cdots \land W_{\cL^1}^\dagger <
m_\cE \le W_{\cL^{k+1}}^\dagger) } \\
&\le& \Pr (W_{\cL^k}^\dagger < m_\cE \le W_{\cL^{k+1}}^\dagger)
+ \sum_{j=1}^{k-1} \Pr (W_{\cL^j}^\dagger < m_\cE \le W_{\cL^{j+1}}^\dagger) \\
&\le&  \frac{1}{6s_0} + (k-1) \frac{1}{3ks_0}  \\
&\le&  \frac{1}{2s_0},
\end{eqnarray*}
i.e., the probability of (b) is at most $1/2s_0$.
The probability of (c) is at most $1/12s_0$ by Lemma~\ref{lem.qk}(1).
The probability of (d) is at most $(k-1) \frac{1}{3ks} \le 1/3s_0$ by Lemma~\ref{lem.qj}(1).  The probability of (e) is at most
$1/12s_0$ by Lemma~\ref{lem.h}.  Finally, (f) is not possible, since at time $W_{\cM}$ there are no customers in the system, so
$Q_k(X_{W_{\cM}}) > n$, and thus $W_{\cM} \ge T_\cB^\dagger$, but also $T_\cB^\dagger \ge W_{\cL^{k+1}}^\dagger$ since
$\cL_1^{k+1} \subseteq \cD_1 \subseteq \cB_1$ by definition.

Thus the probability of (a), for a copy of the process starting in a state in $\cL_1^{k+1}$, is at least
$$
1 - \frac{1}{2s_0} - \frac{1}{12s_0} - \frac{1}{3s_0} - \frac{1}{12s_0} = 1- \frac{1}{s_0},
$$
which is what we need to prove Proposition~\ref{prop.prop}, and thus also Lemma~\ref{lem.cE}(1).


Now we move to the proof of Lemma~\ref{lem.cE}(2), stating that the exit time $T_\cE^\dagger$ is large with
high probability.  There are two things to prove here.  The first is that, if $X_t \in \cE_1$, then it is very
unlikely that, at time $t+1$, a customer arrives and creates a queue of length $k+1$.  The second is that, once $Q_k(X_t)$ has reached
$(1-3\eps) n (1-\la) (\la d)^{k-1}$, while $u_{k+1}(X_t)$ is at most $\eps (1-\la)$, $Q_k$ is unlikely to ``cross down against the
drift'' to $(1-4\eps) n (1-\la) (\la d)^{k-1}$.

For $t \ge 0$, let $L_t$ denote the event that, at time~$t$, a customer arrives and joins a queue of length at least~$k$ (equivalently,
the probability that the event is an arrival and that all the selected queues have length at least~$k$).
So $L_t$ is the event that $u_j(X_t) > u_j(X_{t-1})$ for some $j \ge k+1$.

\begin{lemma} \label{lem.no-new-k+1}
On the event that $X_t \in \cE_1$, we have $\Pr (L_{t+1} \mid \cF_t) < e^{- \log^2 n}$.
\end{lemma}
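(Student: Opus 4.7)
The plan is to observe that $L_{t+1}$ occurs precisely when the event at time $t+1$ is an arrival and all $d$ sampled queues have length at least $k$, so by the transition rules
\[
\Pr(L_{t+1} \mid \cF_t) \;=\; \frac{\la}{1+\la}\, u_k(X_t)^d \;\le\; \tfrac12\, u_k(X_t)^d.
\]
The task is then to show that, for $x \in \cE_1$, $u_k(x)^d$ is at most a suitably small power of $1/n$. Using the elementary estimate $u_k(x)^d \le \exp(-d(1-u_k(x)))$, it suffices to produce a lower bound on $1-u_k(x)$ of the form $c(1-\la)(\la d)^{k-1}$ for an explicit constant $c > 0$.

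To this end, I would apply the identity/inequality (\ref{eq:Qk}), which gives
\[
1 - u_k(x) \;\ge\; \frac{Q_k(x)}{n} - \frac{Q_{k-1}(x)}{n}.
\]
Since $x \in \cE_1 \subseteq \cD_1$, we have $Q_k(x) \ge (1-4\eps) n(1-\la)(\la d)^{k-1}$ (from the definition of $\cE_1$) and $Q_{k-1}(x) \le (1+5\eps) n(1-\la)(\la d)^{k-2}$ (from the definition of $\cD_1$). Substituting and factoring out $(1-\la)(\la d)^{k-1}$,
\[
1 - u_k(x) \;\ge\; (1-\la)(\la d)^{k-1}\left[(1-4\eps) - \frac{1+5\eps}{\la d}\right].
\]
From (\ref{ineq:6}) and (\ref{ineq:12}) we get $\la d \ge (9/10)(150k/\eps)^2 \ge 20000 k^2/\eps^2$, so $(1+5\eps)/(\la d)$ is dwarfed by $\eps$; combined with (\ref{ineq:5}) this shows that the bracketed quantity is at least, say, $3/5$.

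Plugging back in,
\[
u_k(x)^d \;\le\; \exp\!\left(-\tfrac{3}{5}(1-\la)(\la d)^{k-1}\cdot d\right) \;=\; \exp\!\left(-\tfrac{3}{5}(1-\la)\la^{k-1} d^k\right).
\]
Now I invoke (\ref{ineq:2}), giving $(1-\la)d^k \ge 2 \log^2 n$, and (\ref{ineq:12}), giving $\la^{k-1} \ge \la^k \ge 9/10$. Thus the exponent is at least $\tfrac{3}{5}\cdot \tfrac{9}{10}\cdot 2\log^2 n = \tfrac{27}{25}\log^2 n$, so
\[
\Pr(L_{t+1}\mid \cF_t) \;\le\; \tfrac12 \exp\!\left(-\tfrac{27}{25}\log^2 n\right) \;<\; e^{-\log^2 n}
\]
for all $n$ large enough (and $n \ge 10^{15}$ suffices very comfortably by (\ref{ineq:85})). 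There is no real obstacle here; the only mild delicacy is verifying that the correction $(1+5\eps)/(\la d)$ to the main term $(1-4\eps)$ is negligible, which is what the lower bound (\ref{ineq:6}) on $\eps\sqrt{d}$ is designed to provide.
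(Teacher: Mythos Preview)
Your proof is correct and follows essentially the same approach as the paper: bound $\Pr(L_{t+1}\mid\cF_t)\le u_k(X_t)^d$, use (\ref{eq:Qk}) together with the $\cE_1$ and $\cD_1$ bounds on $Q_k$ and $Q_{k-1}$ to get $1-u_k(x)\ge c(1-\la)(\la d)^{k-1}$ for an explicit constant, and then apply (\ref{ineq:2}) and (\ref{ineq:12}). The only differences are cosmetic bookkeeping of constants (the paper gets $c=1/2$ after absorbing $\la^{k-1}$, you get $c=3/5$ before doing so), and your final appeal to large $n$ is in fact unnecessary since $\tfrac12 e^{-(27/25)\log^2 n}<e^{-\log^2 n}$ holds for all $n\ge 2$.
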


\begin{proof}
From the definition of $L_t$, we have
$\Pr (L_{t+1} \mid \cF_t) = \frac{\la}{1+\la} u_k(X_t)^d \le u_k(X_t)^d$.
For $x \in \cE_1$, we have $Q_k(x) \ge (1-4\eps) n (1-\la) (\la d)^{k-1}$ and
$Q_{k-1}(x) \le (1+5\eps) n (1-\la) (\la d)^{k-2} \le \frac13 \eps n (1-\la) (\la d)^{k-1}$, using (\ref{ineq:6}) with a lot to spare.
Therefore, by (\ref{eq:Qk}), (\ref{ineq:5}) and (\ref{ineq:12}), we have
$$
1-u_k(x) \ge \frac{Q_k(x)}{n} - \frac{Q_{k-1}(x)}{n} \ge (1-\frac{13}{3} \eps) (1-\la) (\la d)^{k-1} \ge \frac12 (1-\la) d^{k-1}.
$$
Hence, from (\ref{ineq:2}), we have, on the event that $X_t \in \cE_1$,
$$
u_k(X_t)^d \le \left( 1 - \frac12 (1-\la) d^{k-1} \right)^d \le
\exp\left( -\frac{1}{2} (1-\la) d^k \right) \le \exp (- \log^2 n),
$$
as required.
\end{proof}


Let $U^\dagger = \inf \{ t > T_\cE : u_{k+1}(X_t) > \eps (1-\la)\}$ and
$V^\dagger = \inf \{ t > T_\cE : Q_k(X_t) < (1-4\eps) n (1-\la) (\la d)^{k-1} \}$, and note that
$T_\cE^\dagger = T_\cD^\dagger \land U^\dagger \land V^\dagger$.  We thus have
$$
\Pr (T_\cE^\dagger \le s_0 < T_\cD^\dagger) \le \Pr (U^\dagger \le s_0 \land T_\cD^\dagger \land  V^\dagger)
 + \Pr (V^\dagger \le s_0 \land T_\cD^\dagger \land U^\dagger).
$$

We claim that each of these last two probabilities is at most $1/2s_0$.  For the first, we may apply Lemma~\ref{lem.no-new-k+1}.
Observe that, if $U^\dagger = t+1$, then the event $L_{t+1}$ occurs.  We now have:
\begin{eqnarray*}
\Pr (U^\dagger \le s_0 \land T_\cD^\dagger \land V^\dagger)
&=& \sum_{t=0}^{s_0-1} \Pr (U^\dagger = t+1 \le T_\cD^\dagger \land V^\dagger) \\
&=& \sum_{t=0}^{s_0-1} \Pr (U^\dagger = t+1 \mbox{ and } X_t \in \cE_1) \\
&=& \sum_{t=0}^{s_0-1} \E [ \1_{\{X_t \in \cE_1\}} \E( \1_{\{U^\dagger = t+1\}} \mid \cF_t ) ] \\
&\le& \sum_{t=0}^{s_0-1} \E [ \1_{\{X_t \in \cE_1\}} \E( \1_{L_{t+1}} \mid \cF_t ) ].
\end{eqnarray*}
By Lemma~\ref{lem.no-new-k+1}, each term is at most $e^{-\log^2 n}$, and so we have
$$
\Pr (U^\dagger \le s_0 \land T_\cD^\dagger \land V^\dagger) \le s_0 e^{-\log^2 n} < 1/2s_0,
$$
as claimed.


To obtain the other required inequality, we apply the reversed version of Lemma~\ref{lem.drifts-down2}(2).
We consider the process $(X_t)$, with its natural filtration, the function $F = Q_k$, and the set
$\cS = \{ x : u_{k+1}(x) \le \eps (1-\la) \} \cap \cD_1$.
We set $h = (1-3\eps) n (1-\la) (\la d)^{k-1}$ and $\rho = \eps n (1-\la) (\la d)^{k-1} \ge 2$.  We also set
$s = s_0$ and $T^* = T_\cE$.
We have $T_0 = \inf \{ t \ge T_\cE : X_t \notin \cD_1 \mbox{ or } u_{k+1}(X_t) > \eps (1-\la) \}$,
so that $T_0 \ge T_\cD^\dagger \land U^\dagger$ (strict inequality occurs if $T_\cD^\dagger < T_\cE$).  Also
$T_1 = \inf \{ t \ge T_\cE : Q_k(X_t) \ge h\} = T_\cE$, and
$T_2 = \inf \{ t > T_\cE : Q_k(X_t) \le h - \rho \} = V^\dagger$.

Take $x \in \cS$ with $Q_k(x) \le h$.  As $x \in \cD_1$, we apply Lemma~\ref{lem.drifts} to obtain
\begin{eqnarray*}
(1+\la) \Delta Q_k(x) &\ge& \beta_k  (1-\la - u_{k+1}(x)) - \frac{Q_k(x)}{n(\la d)^{k-1}} - \frac{\eps}{6} (1-\la) \\
&\ge& \beta_k (1-\la)(1-\eps) - (1-\la)(1-3\eps) - \frac{\eps}{6}(1-\la) \\
&\ge& (1-\la) \left[ \left( 1- \frac{\eps}{2} \right)(1-\eps) - 1 + 3\eps - \frac{\eps}{6} \right] \\
&\ge& \eps (1-\la),
\end{eqnarray*}
where we also used (\ref{ineq:13}).  This yields $\Delta Q_k(x) \ge \frac12 \eps (1-\la) := v$, for such~$x$.

The reversed version of Lemma~\ref{lem.drifts-down2}(2) gives that
\begin{eqnarray*}
\Pr (V^\dagger \le s_0 \land T_\cD^\dagger \land U^\dagger )
&\le& \Pr (T_2 \le s_0 \land T_0) \\
&\le& s_0 \exp (-\rho v) \\
&=& s_0 \exp ( - \eps^2 n (1-\la)^2 (\la d)^{k-1} /2 ) \\
&\le& s_0 \exp( - 270 k^2 \log^2 n \, d^{k-2}) \\
&\le& 1/2s_0,
\end{eqnarray*}
as required.  Here we used (\ref{ineq:8}) and (\ref{ineq:12}).

This completes the proof of Lemma~\ref{lem.cE}.

\section{Proofs of Lemmas~\ref{lem.cG} and \ref{lem.cH}} \label{sec.GH}

In this section, we prove the final two of our sequence of lemmas.

\medbreak

\noindent
{\bf Proof of Lemma~\ref{lem.cG}}

\begin{proof}
Fix $j$ with $1 \le j \le k-1$, and consider the state of the process at the hitting time $T_{\cG^{j+1}}$.  The hitting time
$T_{\cG^j}$ is the first time $t \ge T_{\cG^{j+1}}$ that $Q_j(X_t)$ lies in the interval between
$\Big[ 1 - (4 + \frac{k-j-1/2}{k}) \eps \Big] n (1-\la) (\la d)^{j-1}$ and
$\Big[ 1 + (4 + \frac{k-j-1/2}{k}) \eps \Big] n (1-\la) (\la d)^{j-1}$.
Let $B_h$ be the event that
$Q_j(X_{T_{\cG^{j+1}}}) > \Big[ 1 + (4 + \frac{k-j-1/2}{k}) \eps \Big] n (1-\la) (\la d)^{j-1}$, and $B_\ell$ be the
event that $Q_j(X_{T_{\cG^{j+1}}}) < \Big[ 1 - (4 + \frac{k-j-1/2}{k}) \eps \Big] n (1-\la) (\la d)^{j-1}$.

For part~(1) of the lemma, we have to show that, on the event $B_h$, with high
probability $Q_j(X_t)$ enters the interval from above within time $m_\cG$, and also that, on the event $B_\ell$, with high
probability $Q_j(X_t)$ enters the interval from below within time $m_\cG$.  These two results are essentially the same, and we
give details only for the first.  Of course, we have nothing to prove on the event that $Q_j(X_{T_{\cG^{j+1}}})$ is already in
the interval.

We apply Lemma~\ref{lem.drifts-down2}(i) to $(X_t)$, with its natural filtration, and the scaled function
$F(x)=Q'_j(x) = Q_j(x)/(\la d)^{(j-1)/2}$.  We take $\cS = \cG_1^{j+1}$ and $T^* = T_{\cG^{j+1}}$.  We set
$$
h = \Big[ 1 + (4 + \frac{k-j-1/2}{k}) \eps \Big] n (1-\la) (\la d)^{(j-1)/2},
$$
and $m = m_\cG = 32 k \eps^{-1} n (1-\la)^{-1} (\la d)^{-1}$.  From (\ref{eq:Qjbound}), we have that $Q'_j(x) \le c := 2n$ for all $x$.
Also $T_0 = T_{\cG^{j+1}}^\dagger$ and $T_1 = \inf \{ t \ge T_{\cG^{j+1}} : Q'_j(X_t) \le h\}$.

For $x \in \cG_1^{j+1}$, we have
$$
Q_{j+1}(x) \le \left[ 1+ (4 + \frac{k-j-1}{k}) \eps \right] n (1-\la)(\la d)^j.
$$
(This follows from the specification
of $\cG_1^{j+1}$ for $j < k-1$, and since $\cG_1^k = \cE_1 \subseteq \cB_1$ for $j=k-1$.)  If also $Q'_j(x) \ge h$,
we have
$$
Q_j(x) \ge \left[ 1 + (4 + \frac{k-j-1/2}{k}) \eps \right] n (1-\la) (\la d)^{j-1}.
$$
Lemma~\ref{lem.drifts} applies since $x \in \cD_1$, so
\begin{eqnarray*}
(1+\la) \Delta Q_j(x)
&\le& -\la d \frac{Q_j(x)}{n}\left( 1 - \frac{\eps}{25k} \right) + \frac{Q_{j+1}(x)}{n} \\
&\le& - \Big[ 1 + (4 + \frac{k-j-1/2}{k}) \eps \Big] (1-\la) (\la d)^j \Big( 1 - \frac{\eps}{25k} \Big) \\
&&\mbox{} + \Big[1 + (4 + \frac{k-j-1}{k}) \eps \Big] (1-\la) (\la d)^j \\
&\le& -\frac1{4k} \eps (1-\la) (\la d)^j,
\end{eqnarray*}
and so $\Delta Q'_j(x) \le -\frac1{8k} \eps (1-\la) (\la d)^{(j+1)/2} := -v$.  Note that $v m_\cG \ge 2 c$.

Lemma~\ref{lem.drifts-down2}(i) now gives, using (\ref{ineq:115}),
\begin{eqnarray*}
\Pr (T_1 \land T_0 > T_{\cG^{j+1}} + m_\cG) &\le& \exp( -v^2 m_\cG /8) \\
&=& \exp(-\eps n (1-\la) (\la d)^j /16k) \\
&\le & \exp \left( - 3750 \eps^{-2} k^2 \log^2 n \right) \\
&\le& 1/2s_0.
\end{eqnarray*}
On the $T_{\cG^j}$-measurable event $B_h$, the stopping times
$T_1 \land T_0$ and $T_{\cG^j} \land T_0$ coincide, so we have
$$
\Pr ( B_h \cap \{ T_{\cG^j} \land T_{\cG^{j+1}}^\dagger > T_{\cG^{j+1}} + m_\cG\} ) \le 1/2s_0.
$$

Essentially exactly the same calculation gives
$$
\Pr ( B_\ell \cap \{ T_{\cG^j} \land T_{\cG^{j+1}}^\dagger > T_{\cG^{j+1}} + m_\cG\} ) \le 1/2s_0,
$$
and part~(1) of the lemma now follows, for this value of $j$.


To prove part~(2) of the lemma, we need to show that, once $X_t$ has reached $\cG_0^j$, and while it remains in $\cG^{j+1}_1$, the process is unlikely
to leave the set $\cG_1^j$ quickly.  There are two separate things to prove: that $Q_j(X_t)$ is unlikely to cross against the drift from
$\Big[ 1 + (4 + \frac{k-j-1/2}{k}) \eps \Big] n (1-\la) (\la d)^{j-1}$ to $\Big[ 1 + (4 + \frac{k-j}{k}) \eps \Big] n (1-\la) (\la d)^{j-1}$ before time $s_0$,
and also that $Q_j(X_t)$ is unlikely to cross against the drift from
$\Big[ 1 - (4 + \frac{k-j-1/2}{k}) \eps \Big] n (1-\la) (\la d)^{j-1}$ to $\Big[ 1 - (4 + \frac{k-j}{k}) \eps \Big] n (1-\la) (\la d)^{j-1}$ before time $s_0$.
Again, the two calculations required here are essentially identical, and we shall concentrate on the first.

We apply Lemma~\ref{lem.drifts-down2}(ii), again for the process $(X_t)$ with its natural filtration, and the scaled function
$F(x) = Q_j(x)/(\la d)^{(j-1)/2}$.  We take the same values of parameters as above, and additionally set
$\rho = \frac{\eps}{2k} n (1-\la) (\la d)^{(j-1)/2}$ and $s=s_0$.  Here $T_2 = \inf\{ t > T_1 : Q'_j(X_t) \ge h + \rho\}$,
and we have, using (\ref{ineq:8}) and (\ref{ineq:12}),
\begin{eqnarray*}
\Pr (T_2 \le s_0 < T_{\cG^{j+1}}^\dagger) &\le& s_0 \exp(-\rho v) \\
&=& s_0 \exp ( - \eps^2 n (1-\la)^2 (\la d)^j /16k ) \\
&\le& s_0 \exp \left( - \frac{135}{4} k \log^2 n \right) \\
&\le& 1/2s_0.
\end{eqnarray*}
Setting $U_2 = \inf \{ t > T_1 : Q_j(X_t) \le \Big[ 1 - (4 + \frac{k-j}{k}) \eps \Big] n (1-\la) (\la d)^{j-1} \}$,
we have, similarly, $\Pr (U_2 \le s_0 < T_{\cG^{j+1}}^\dagger) \le 1/2s_0$.

The events $T_2 \land U_2 \le s_0 < T_{\cG^{j+1}}^\dagger$ and $T_{\cG^j}^\dagger \le s_0 < T_{\cG^{j+1}}^\dagger$ coincide,
so
\begin{eqnarray*}
\Pr (T_{\cG^j}^\dagger \le s_0 < T_{\cG^{j+1}}^\dagger)
&\le& \Pr (T_2 \le s < T_0) + \Pr (U_2 \le s < T_0) \\
&\le&\frac{1}{2s_0} + \frac{1}{2s_0} = \frac{1}{s_0},
\end{eqnarray*}
as required for part~(2) for this value of $j$.
\end{proof}

\bigbreak

\noindent
{\bf Proof of Lemma~\ref{lem.cH}}

\begin{proof}
We first prove part~(1).  For $i=1, \dots, n$, let $N_i$ be the number of potential departures from queue~$i$ over the time period
between $T_{\cG^1}$ and $T_{\cG^1}+ m_\cH$, so $N_i$ is a binomial random variable with parameters $(m_\cH, 1/n(1+\la))$.
Recall that $L_t$ is the event that, at time $t$, a customer arrives
and joins a queue of length $k$ or longer, and observe that
\begin{eqnarray*}
\lefteqn{ \Pr (T_\cH \land T_{\cG^1}^\dagger \ge T_{\cG^1} + m_\cH) } \\
&\le& \Pr \Big( \bigcup_{t=T_{\cG^1}+1}^{T_{\cG^1} + m_\cH} ( L_t \cap \{ X_{t-1} \in \cG_1^1 \} ) \Big)
+ \Pr \Big( \exists i , N_i < 3 \ell \Big).
\end{eqnarray*}
Indeed, at time $T_{\cG^1}$, the
process is in $\cA_1(\ell,g)$, and so there is no queue with more than $3 \ell$ customers in it at that time.
If there are at least $3 \ell$ potential departures from each queue over the time interval, and
$\bigcup_{t=T_{\cG^1}+1}^{T_{\cG^1} + m_\cH} L_t$ does not occur, then by time $T_{\cG^1} + m_\cH$, every queue is reduced to
length at most~$k$, and no new queue of length $k+1$ is created before $T_{\cG^1} + m_\cH$.

Now let $(X'_t) = (X_{T_{\cG^1}+t})$, $(\cF'_t) = (\cF_{T_{\cG^1}+t})$ and $L'_t = L_{T_{\cG^1}+t}$.  We have:
\begin{eqnarray*}
\Pr \Big( \bigcup_{t=T_{\cG^1}+1}^{T_{\cG^1} + m_\cH} ( L_t \cap \{ X_{t-1} \in \cG_1^1 \} ) \Big)
&=& \Pr \Big( \bigcup_{t=1}^{m_\cH} ( L'_t \cap \{ X'_{t-1} \in \cG_1^1 \} )  \Big)\\
&\le & \sum_{t=1}^{m_\cH} \Pr ( L'_t \cap \{ X'_{t-1} \in \cG_1^1 \} ) \\
&=& \sum_{t=1}^{m_\cH} \E \big[ \1_{\{X'_{t-1} \in \cG_1^1\}} \E [ \1_{L'_t} \mid \cF'_{t-1} ] \big] \\
&\le& m_\cH e^{-\log^2 n} \\
&\le& 1/2s_0,
\end{eqnarray*}
where we used the strong Markov property, and Lemma~\ref{lem.no-new-k+1}.

Recall that $m_\cH = n (8\ell + 32\log^2 n)$, so that
the mean $\mu$ of each $N_i$ is $m_\cH / n(1+\la) \ge 4\ell + 16\log^2 n$.  By (\ref{eq.bin-lower}),
with $\eps = 1/4$, we have
$$
\Pr (N_i \le 3 \ell) \le \Pr (N_i \le \frac34 \mu) \le e^{-\mu/32} \le e^{-\frac12 \log^2 n}
$$
for each $i$.  Thus the probability that there are fewer than $3 \ell$ departures from any queue over the interval
from $T_{\cG^1}$ to $T_{\cG^1} + m_\cH$ is at most $n e^{-\frac12 \log^2 n} < 1/2s_0$, and part (1) follows.

For part~(2), as above we have
$$
\Pr \Big( \bigcup_{t=T_{\cG^1}+1}^{T_{\cG^1} + s_0} ( L_t \cap \{ X_{t-1} \in \cG_1^1 \} ) \Big)
\le s_0 e^{-\log^2 n}
\le 1/s_0.
$$
Thus $\Pr (T_\cH^\dagger \le s_0 < T_{\cG_1}^\dagger)$ is at most the probability that $X_t$ exits the set $\cH_1$ before time $T_{\cG_1}^\dagger \land s_0$, necessarily by the creation of a new queue of length $k+1$, is at most $1/s_0$, as required.
\end{proof}

\section{The sets $\cH$, $\cI$ and $\cN$} \label{sec.H-I-N}

One goal of this section is to show that $\cH \subseteq \cN$, thus completing the proof of Theorem~\ref{thm.technical}: see Corollary~\ref{cor.eqm} and the
remarks after.  We also show that the set $\cN$ is ``path-connected'', a fact we shall need in the next section.

We continue to assume that $n$, $d$, $k$, $\la$ and $\eps$ satisfy the hypotheses of Theorem~\ref{thm.technical}.
For this section, it will be important to be explicit about the fact that the various sets we have defined depend on the value of
the parameter $\eps$: accordingly, we shall refer to our sets as (e.g.) $\cH^\eps$, $\cI^\eps$ and $\cN^\eps$.
(Note that these sets do not depend on the values of $g$ and $\ell$ used in defining earlier sets in the sequence.)


By definition, the set $\cH^\eps$ consists of those queue-lengths vectors $x$ satisfying all of the following:
\begin{eqnarray*}
\| x \|_\infty &\le& 3\log^2 n (1-\la)^{-1}, \quad
\| x \|_1 \,\le\, 6n (1-\la)^{-1}, \\
Q_k(x) &\le& (1+2\eps) n (1-\la) (\la d)^{k-1}, \quad
P_{k-1}(x) \,\le\, 3k n (1-\la) (\la d)^{k-2}, \\
Q_{k-1}(x) &\le& (1+5\eps) n (1-\la) (\la d)^{k-2}, \quad
u_{k+1}(x) \,\le\, \eps (1-\la), \\
Q_k(x) &\ge& (1-4\eps) n (1-\la) (\la d)^{k-1}, \quad
u_{k+1}(x) \,=\, 0, \\
Q_j(x) &\ge& \Big[ 1-(4+\frac{k-j}{k})\eps \Big] n (1-\la) (\la d)^{j-1} \quad (1\le j \le k-1), \\
Q_j(x) &\le& \Big[ 1+(4+\frac{k-j}{k})\eps \Big] n (1-\la) (\la d)^{j-1} \quad (1\le j \le k-1).
\end{eqnarray*}
Evidently many of these conditions are redundant.  The condition that $u_{k+1}(x) = 0$ implies not only that $u_{k+1}(x) \le \eps (1-\la)$, and
that $\| x \|_\infty \le 3 \log^2 n (1-\la)^{-1}$, but also that $\| x \|_1 \le kn < 6n (1-\la)^{-1}$.  The upper bound on $P_{k-1}(x)$
is implied by $P_{k-1}(x) \le Q_{k-1}(x) \le (1+5\eps) n (1-\la) (\la d)^{k-2}$.  Also, the earlier upper bound on $Q_{k-1}(x)$ is weaker than the
final one.  Thus $\cH^\eps$ consists of those queue-lengths vectors $x$ satisfying all of:
\begin{eqnarray*}
u_{k+1}(x) &=& 0, \\
Q_k(x) &\le& (1+2\eps) n (1-\la) (\la d)^{k-1}, \\
Q_j(x) &\ge& \Big[ 1-(4+\frac{k-j}{k})\eps \Big] n (1-\la) (\la d)^{j-1} \quad (1\le j \le k), \\
Q_j(x) &\le& \Big[ 1+(4+\frac{k-j}{k})\eps \Big] n (1-\la) (\la d)^{j-1} \quad (1\le j \le k-1).
\end{eqnarray*}


Similarly, $\cI^\eps$ consists of those queue-lengths vectors $x$ satisfying:
\begin{eqnarray*}
\| x \|_\infty &\le& \log^2 n (1-\la)^{-1}, \quad
\| x \|_1 \,\le\, 2n (1-\la)^{-1}, \\
Q_k(x) &\le& (1+\eps) n (1-\la) (\la d)^{k-1}, \quad
P_{k-1}(x) \,\le\, 2k n (1-\la) (\la d)^{k-2}, \\
Q_{k-1}(x) &\le& (1+4\eps) n (1-\la) (\la d)^{k-2}, \quad
u_{k+1}(x) \,\le\, \eps (1-\la), \\
Q_k(x) &\ge& (1-3\eps) n (1-\la) (\la d)^{k-1}, \quad
u_{k+1}(x) \,=\, 0, \\
Q_j(x) &\ge& \Big[ 1-(4+\frac{k-j-1/2}{k})\eps \Big] n (1-\la) (\la d)^{j-1} \quad (1\le j \le k-1), \\
Q_j(x) &\le& \Big[ 1+(4+\frac{k-j-1/2}{k})\eps \Big] n (1-\la) (\la d)^{j-1} \quad (1\le j \le k-1).
\end{eqnarray*}
Removing redundancies, $\cI^\eps$ consists of those vectors $x$ such that:
\begin{eqnarray*}
u_{k+1}(x) &=& 0, \\
Q_k(x) &\le& (1+\eps) n (1-\la) (\la d)^{k-1}, \\
Q_k(x) &\ge& (1-3\eps) n (1-\la) (\la d)^{k-1}, \\
Q_j(x) &\ge& \Big[ 1-(4+\frac{k-j-1/2}{k})\eps \Big] n (1-\la) (\la d)^{j-1} \quad (1\le j \le k-1), \\
Q_j(x) &\le& \Big[ 1+(4+\frac{k-j-1/2}{k})\eps \Big] n (1-\la) (\la d)^{j-1} \quad (1\le j \le k-1).
\end{eqnarray*}
Rather crudely, we have $\cI^\eps \subseteq \cH^\eps \subseteq \cI^{2\eps}$, for any $\eps > 0$.

Now we bring the sets $\cN^\eps$ into the picture.
Recall that
\begin{eqnarray*}
\lefteqn{\cN^\eps = \{ x: u_{k+1}(x) = 0,} \\
&& \quad (1 - 5\eps) (1-\la) (\la d)^{j-1} \le 1- u_j (x) \le (1 + 5\eps) (1-\la) (\la d)^{j-1} \\
&& \qquad (j=1, \dots, k) \}.
\end{eqnarray*}

\begin{lemma} \label{lem.H-N-H}
For any $\eps > 0$, $\cH^\eps \subseteq \cN^\eps \subseteq \cH^{3\eps}$.
\end{lemma}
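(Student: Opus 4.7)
The plan is to handle the two inclusions separately. The containment $\cN^\eps\subseteq \cH^{3\eps}$ will follow from a direct computation using the $\cN^\eps$ bounds on $1-u_j(x)$; the factor $3$ in $\cH^{3\eps}$ leaves ample slack. The reverse containment $\cH^\eps\subseteq \cN^\eps$ is the substantive direction, and will be proved by induction on $j$ with all constants tight.

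For the inductive direction, the condition $u_{k+1}(x)=0$ is inherited directly from the definition of $\cH^\eps$. The base case $j=1$ is immediate, since $Q_1(x)/n=1-u_1(x)$, so the $\cH^\eps$ bound gives $|1-u_1(x)-(1-\la)|\le (4+(k-1)/k)\eps(1-\la)\le 5\eps(1-\la)$. For the inductive step, I rearrange the definition of $Q_j$ (using coefficients $\gamma_{j,i}$ for $j<k$, and $\beta_i$ for $j=k$) to isolate $1-u_j(x)$:
\[
(1-u_j(x))=\frac{Q_j(x)}{n}-\sum_{i=1}^{j-1}\gamma_{j,i}(1-u_i(x))\qquad(j<k),
\]
with the analogous identity for $j=k$ producing a factor $\beta_k$ on the left, which is harmless because (\ref{ineq:13}) gives $1/\beta_k\le 1+\eps$. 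The upper bound $1-u_j(x)\le Q_j(x)/n\le(1+5\eps)(1-\la)(\la d)^{j-1}$ follows at once from positivity of the subtracted sum together with the $\cH^\eps$ upper bound on $Q_j(x)$ (using $4+(k-j)/k\le 5$, and the sharper bound $1+2\eps$ for $j=k$ which absorbs the $1/\beta_k$ inflation).

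The matching lower bound is where the arithmetic must be tracked. Using the inductive hypothesis $1-u_i(x)\le(1+5\eps)(1-\la)(\la d)^{i-1}$ for $i<j$ together with $\gamma_{j,i}\le i(\la d)^{(j-i)/2}$ from (\ref{eq:gamma-bounds}), the subtracted sum is at most
\[
(1+5\eps)(1-\la)(\la d)^{j-1}\sum_{m=1}^{j-1}(j-m)(\la d)^{-m/2}\le (1+5\eps)(1-\la)(\la d)^{j-1}\cdot\frac{(j-1)\eps}{2k},
\]
where the final step applies (\ref{ineq:14}) term by term. For $\eps\le 1/10$ this is at most $3(j-1)\eps/(4k)\cdot(1-\la)(\la d)^{j-1}$, so combining with the $\cH^\eps$ lower bound $Q_j(x)\ge[1-(4+(k-j)/k)\eps]n(1-\la)(\la d)^{j-1}$ produces a total coefficient of $\eps$ equal to $4+(k-j)/k+3(j-1)/(4k)=5-(j+3)/(4k)<5$, which is exactly the slack needed to land inside $\cN^\eps$; for $j=k$, the tighter $\cH^\eps$ bound $Q_k(x)\ge(1-4\eps)n(1-\la)(\la d)^{k-1}$ handles the $\beta_k$ normalisation with room to spare. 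This delicate arithmetic cancellation is the main obstacle. The opposite inclusion $\cN^\eps\subseteq \cH^{3\eps}$ then requires only a quick calculation with the same geometric tail: the redundant conditions on $\|x\|_\infty$, $\|x\|_1$, $P_{k-1}$ and $u_{k+1}(x)\le\eps(1-\la)$ are trivial from $u_{k+1}(x)=0$, while $Q_j(x)/n\le(1+5\eps)(1+\eps/2)(1-\la)(\la d)^{j-1}\le(1+6\eps)(1-\la)(\la d)^{j-1}$ and $Q_j(x)/n\ge\gamma_{j,j}(1-u_j(x))\ge(1-5\eps)(1-\la)(\la d)^{j-1}$ sit comfortably inside the $\cH^{3\eps}$ tolerances $1\pm(12+3(k-j)/k)\eps$.
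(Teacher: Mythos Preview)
Your proof is correct. Both inclusions are handled soundly, and your arithmetic for the tight direction $\cH^\eps\subseteq\cN^\eps$ checks out: the coefficient $4+(k-j)/k+3(j-1)/(4k)=5-(j+3)/(4k)<5$ is exactly right, and the $j=k$ case goes through as you say since $\beta_i\le 1$ makes the subtracted sum even smaller (bounded by $(1+5\eps)\sum_{m\ge 1}(\la d)^{-m}\le 3\eps/(4k)$ times the main term), while the $\cH^\eps$ bound $Q_k\ge(1-4\eps)n(1-\la)(\la d)^{k-1}$ absorbs this and the $1/\beta_k\le 1+\eps$ factor comfortably.

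The route differs from the paper's in the $\cH^\eps\subseteq\cN^\eps$ direction. You expand $Q_j$ fully as $\sum_{i\le j}\gamma_{j,i}(1-u_i)$ and control the entire tail $\sum_{i<j}$ via the already-established upper bounds on $1-u_i$, which you package as an induction. The paper instead uses the two-term recursion~(\ref{eq:Qj+1}), rearranged as $1-u_j\ge Q_j/n-2\sqrt{\la d}\,Q_{j-1}/n$, and then applies the $\cH^\eps$ bounds on $Q_j$ and $Q_{j-1}$ directly, with no inductive structure. The paper's argument is slightly slicker (one only ever looks one step back, and the error term $2\sqrt{\la d}\cdot Q_{j-1}/n\le 4(1-\la)(\la d)^{j-1}/\sqrt{\la d}$ is dispatched in one line via $1/\sqrt{\la d}\le\eps/(135k)$), whereas yours is more self-contained, working straight from the definition of $Q_j$ without appealing to the comparison inequality~(\ref{eq:Qj+1}). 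For the other inclusion $\cN^\eps\subseteq\cH^{3\eps}$ your argument and the paper's are essentially identical.

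One small remark: framing the argument as ``induction'' slightly overstates the logical dependence. The upper bounds $1-u_i\le Q_i/n$ (respectively $Q_k/(n\beta_k)$) hold directly for all $i$ without any inductive input, so you could equally well establish all upper bounds first and then all lower bounds in one pass.
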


\begin{proof}
Take $x \in \cH^\eps$: we check that $x \in \cN^\eps$.  We do have $u_{k+1}(x) = 0$.  Note that
$\frac{1}{\sqrt{\la d}} \le \frac{\eps}{135k}$, from (\ref{ineq:6}) and (\ref{ineq:12}).  Using also (\ref{eq:Qj+1}), and that
$Q_j(x) \le 2 n (1-\la) (\la d)^{j-2}$ by (\ref{ineq:5}), we have
for $j=1, \dots, k-1$,
\begin{eqnarray*}
1-u_j(x) &\ge& \frac{Q_j(x)}{n} - 2 \sqrt{\la d} \frac{Q_{j-1}(x)}{n} \\
&\ge & \Big[ 1 - (4+\frac{k-j}{k})\eps \Big] (1-\la) (\la d)^{j-1} - 4 \sqrt{\la d} (1-\la) (\la d)^{j-2} \\
&\ge & (1-\la) (\la d)^{j-1} \Big[ 1 - (5 -\frac{1}{k})\eps - 4 \frac{\eps}{135 k} \Big] \\
&\ge & (1 - 5\eps) (1-\la) (\la d)^{j-1}.
\end{eqnarray*}
We also have
\begin{eqnarray*}
1-u_j(x) &\le& \frac{Q_j(x)}{n} \\
&\le& \Big[ 1 + (4 + \frac{k-j}{k}) \eps \Big] (1-\la) (\la d)^{j-1} \quad (j=1,\dots, k-1) \\
&\le& (1 + 5\eps) (1-\la) (\la d)^{j-1}, \\
1-u_k(x) &\le& \frac{Q_k(x)}{n} \frac{1}{\beta_k} \le (1 + 5\eps) (1-\la) (\la d)^{k-1},
\end{eqnarray*}
using (\ref{ineq:13}).
So $x \in \cN^\eps$, as required.


Now take $x \in \cN^\eps$; we check that $x \in \cH^{3\eps}$.  We do have $u_{k+1}(x) = 0$.  Also
\begin{eqnarray*}
Q_k(x) &\le& n \sum_{j=1}^k (1-u_j(x)) \\
&\le& (1 + 5\eps) n (1-\la) \sum_{j=1}^k (\la d)^{j-1} \\
&\le& (1 + 5\eps) n (1-\la) (\la d)^{k-1} \big(1 + \sum_{i=1}^{k-1} \frac{1}{(\la d)^i} \big) \\
&<& (1 + 6\eps) n (1-\la) (\la d)^{k-1},
\end{eqnarray*}
where we used (\ref{ineq:14}).
For $1\le j \le k-1$, we have, using (\ref{eq:gamma-bounds}) and (\ref{ineq:14}),
\begin{eqnarray*}
Q_j(x) &=& n \sum_{i=1}^j \gamma_{j,i} (1-u_i(x)) \\
&\le& n (1-u_j(x)) + n \sum_{i=1}^{j-1} i (\la d)^{(j-i)/2} (1-u_i(x)) \\
&\le& (1 + 5 \eps) n (1-\la) (\la d)^{j-1} + n k \sum _{i=1}^{j-1} (\la d)^{(j-i)/2} (1+5\eps) (1-\la) (\la d)^{i-1} \\
&\le& n (1-\la) (\la d)^{j-1} \left[ 1 + 5 \eps + 2 k \sum_{i=1}^{j-1} \frac{1}{(\la d)^{(j-i)/2}} \right] \\
&\le& (1 + 6 \eps) n (1-\la) (\la d)^{j-1}.
\end{eqnarray*}
For $1\le j \le k-1$, we have
$Q_j(x) \ge n (1-u_j(x)) \ge (1-5\eps) n (1-\la) (\la d)^{j-1}$,
while also, using (\ref{ineq:13}),
\begin{eqnarray*}
Q_k(x) &\ge& n \beta_k (1-u_k(x)) \\
&\ge& \beta_k (1-5\eps) n (1-\la) (\la d)^{k-1} \\
&\ge& (1-6\eps) n (1-\la) (\la d)^{k-1}.
\end{eqnarray*}
So indeed $x \in \cH^{3\eps}$.
\end{proof}

Hence Theorem~\ref{thm.stay-in-H} and Corollary~\ref{cor.eqm} hold with $\cH$ replaced by $\cN$.
Moreover, we have the following analogue of Theorem~\ref{thm.start-in-I-stay-in-H}.
Here and subsequently, we require that $\eps \le 1/60$, to ensure that the conditions of Theorem~\ref{thm.technical} are met with
$\eps$ replaced by $6\eps$.

\begin{theorem} \label{thm.start-in-N-stay-in-N}
Take $\eps \le 1/60$, and $x_0 \in \cN^\eps$.  Suppose $(X_t)$ is a copy of the $(n,d,\la)$-supermarket process in which $X_0 = x_0$ almost surely.
Then
$$
\Pr (X_t \in \cH^{6\eps} \mbox{ for all } t \in [0,s_0]) \ge 1 - (k+5)/s_0,
$$
and hence
$$
\Pr (X_t \in \cN^{6\eps} \mbox{ for all } t \in [0,s_0]) \ge 1 - (k+5)/s_0.
$$
\end{theorem}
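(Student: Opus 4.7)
The plan is to reduce this to Theorem~\ref{thm.start-in-I-stay-in-H} (proved in the previous section) by rescaling $\eps$ and using the inclusions between the sets $\cN^\eps$, $\cH^\eps$ and $\cI^\eps$ that are available from Lemma~\ref{lem.H-N-H} together with the direct comparison between $\cH^\eps$ and $\cI^\eps$ that follows from their explicit descriptions at the start of Section~\ref{sec.H-I-N}.

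First I would observe, by inspecting the reduced definitions of $\cH^\eps$ and $\cI^\eps$ given at the start of this section, that $\cH^\eps \subseteq \cI^{2\eps}$ for every admissible $\eps$. Indeed, each of the defining inequalities of $\cI^{2\eps}$ is a strictly looser version of the corresponding defining inequality of $\cH^\eps$ (for instance, the lower bound $Q_k(x) \ge (1-4\eps)n(1-\la)(\la d)^{k-1}$ from $\cH^\eps$ is stronger than the bound $Q_k(x) \ge (1-6\eps)n(1-\la)(\la d)^{k-1}$ required for membership in $\cI^{2\eps}$, and similarly for the $Q_j$'s since $(4+\tfrac{k-j}{k})\eps \le (4+\tfrac{k-j-1/2}{k}) 2\eps$). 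Combining this with the inclusion $\cN^\eps \subseteq \cH^{3\eps}$ from Lemma~\ref{lem.H-N-H} gives the chain $\cN^\eps \subseteq \cH^{3\eps} \subseteq \cI^{6\eps}$.

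Next I would check that the hypotheses (\ref{ineq:2})--(\ref{ineq:8}) of Theorem~\ref{thm.technical} remain valid when $\eps$ is replaced by $6\eps$. The inequalities (\ref{ineq:2}) and (\ref{ineq:4}) do not involve $\eps$; the inequality $6\eps \le 1/10$ required in (\ref{ineq:5}) is precisely the reason we impose $\eps \le 1/60$; and each of (\ref{ineq:6}), (\ref{ineq:7}), (\ref{ineq:8}) only gets easier when $\eps$ is enlarged. Thus Theorem~\ref{thm.start-in-I-stay-in-H} applies with $6\eps$ in the role of $\eps$. Since $x_0 \in \cN^\eps \subseteq \cI^{6\eps}$, we obtain
\[
\Pr\bigl(X_t \in \cH^{6\eps} \text{ for all } t \in [0,s_0]\bigr) \ge 1 - (k+5)/s_0,
\]
which is the first claimed bound. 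The second statement follows immediately from the inclusion $\cH^{6\eps} \subseteq \cN^{6\eps}$ in Lemma~\ref{lem.H-N-H}.

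The argument is essentially a bookkeeping reduction, so there is no real obstacle once the inclusion $\cH^\eps \subseteq \cI^{2\eps}$ is verified; the one place where care is needed is the constant: we must check that the enlargement factor from $\cN^\eps$ to the enclosing $\cI^{\eps'}$ is small enough that the resulting $\eps' = 6\eps$ still satisfies $\eps' \le 1/10$, which is exactly why the threshold $1/60$ appears in the hypothesis.
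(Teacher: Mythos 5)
Your proposal is correct and follows exactly the paper's own argument: both establish the chain $\cN^\eps \subseteq \cH^{3\eps} \subseteq \cI^{6\eps}$ (using Lemma~\ref{lem.H-N-H} and the remark $\cH^\eps \subseteq \cI^{2\eps}$), verify that $(n,d,k,\la,6\eps)$ still satisfies (\ref{ineq:2})--(\ref{ineq:8}) because of the hypothesis $\eps \le 1/60$, invoke Theorem~\ref{thm.start-in-I-stay-in-H} at scale $6\eps$, and finish with $\cH^{6\eps}\subseteq\cN^{6\eps}$. The only difference is that you spell out the elementwise verification of $\cH^\eps\subseteq\cI^{2\eps}$, which the paper simply asserts as ``rather crude''.
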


\begin{proof}
If $x_0 \in \cN^\eps \subseteq \cH^{3\eps} \subseteq \cI^{6\eps}$, then, by Theorem~\ref{thm.start-in-I-stay-in-H},
with probability at least $1-(k+5)/s_0$, for all times $t \in [0,s_0]$, $X_t \in \cH^{6\eps} \subseteq \cN^{6\eps}$,
as required.
\end{proof}


We say two queue-lengths vectors are {\em adjacent} if they differ by one customer in one queue.  A {\em path} of length $m$ between two
vectors $x$ and $y$ is a sequence $x=x_0x_1\cdots x_m=y$ of queue-lengths vectors, with each pair $(x_i,x_{i+1})$
adjacent.  The path is said to lie in a set $\cS$ if each $x_i$ is in $\cS$.

\begin{lemma} \label{lem.path-connected}
Between any two queue-lengths vectors in $\cN^\eps$, there is a path of length at most $4 n (1-\la) (\la d)^{k-1}$ lying in $\cN^\eps$.
\end{lemma}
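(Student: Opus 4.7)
The plan is to fix a single canonical state $x^\star\in\cN^\eps$ and show that every $x\in\cN^\eps$ is connected to $x^\star$ by a path in $\cN^\eps$ of length at most $2n(1-\la)(\la d)^{k-1}$; concatenating the $x$-to-$x^\star$ path with the reverse of the $y$-to-$x^\star$ path then gives the desired path between arbitrary $x,y\in\cN^\eps$, of length at most $4n(1-\la)(\la d)^{k-1}$.

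For the canonical state I would take $c_j:=\lfloor n(1-\la)(\la d)^{j-1}\rfloor$ for $1\le j\le k$, and let $x^\star$ consist of $c_1$ empty queues, $c_{j+1}-c_j$ queues of length $j$ for $1\le j\le k-1$, and $n-c_k$ queues of length exactly $k$. Using $\la d\ge 4$ (comfortable from (\ref{ineq:6}) and (\ref{ineq:12})), $(1-\la)(\la d)^{k-1}<1$ via (\ref{ineq:7}), and $n(1-\la)\ge 60000$ via (\ref{ineq:1155}), one checks that the $c_j$ are strictly increasing with $c_k<n$, so $x^\star$ is well-defined; and $1-u_j(x^\star)=c_j/n$ lies within $1/n$ of $(1-\la)(\la d)^{j-1}$, well inside the $\pm 5\eps$ band, so $x^\star\in\cN^\eps$.

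To construct the path from $x$ to $x^\star$ I would process the levels $j=k,k-1,\ldots,1$ in that order. At stage $j$ I adjust $u_j$ from its current value (still $u_j(x)$, since earlier stages only touched queues at lengths $\ge j$) to $u_j(x^\star)$ through single-step moves, each of which swaps one queue between length $j-1$ and length $j$: decrement a queue of length exactly $j$ if $u_j$ must decrease, increment one of length exactly $j-1$ if it must increase. Such a move alters only $u_j$ among the $u_i$, and it never creates a queue of length $>k$, so $u_{k+1}=0$ is preserved throughout. During stage $j$ the coordinate $u_j$ monotonically interpolates between $u_j(x)$ and $u_j(x^\star)$, both inside the $\cN^\eps$ band, while every other $u_i$ is frozen at $u_i(x^\star)$ (for $i>j$) or at $u_i(x)$ (for $i<j$), which is also in the band; hence every intermediate state lies in $\cN^\eps$.

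The one genuine thing to verify, which I expect to be the main (mild) obstacle, is that at each step the required queue actually exists, i.e.\ $s_j=n(u_j-u_{j+1})>0$ when decrementing and $s_{j-1}=n(u_{j-1}-u_j)>0$ when incrementing. Since at every such step both $u_{j-1}(\text{current})$ and $u_j(\text{current})$ lie in the $\cN^\eps$ band, one gets for $j\ge 2$
$$
u_{j-1}(\text{current})-u_j(\text{current})\ge(1-\la)(\la d)^{j-2}\bigl[(1-5\eps)\la d-(1+5\eps)\bigr]\ge\tfrac12(1-\la)(\la d)^{j-2},
$$
using $\la d\ge 4$ and $\eps\le 1/10$ from (\ref{ineq:5}); an entirely analogous check (or the cruder $u_j(\text{current})-u_{j+1}(x^\star)\ge s_j(x^\star)/n\ge 1/n$) handles the decrementing case, and the $j=1$ case reduces to $s_0\ge(1-5\eps)n(1-\la)\ge 1$ by (\ref{ineq:1155}). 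Finally, the length of the $x$-to-$x^\star$ path is $\sum_{j=1}^k|nu_j(x)-nu_j(x^\star)|\le\sum_{j=1}^k(5\eps n(1-\la)(\la d)^{j-1}+1)$; since $5\eps n(1-\la)(\la d)^{j-1}\ge 1$ by (\ref{ineq:1155}), the $+1$ terms can be absorbed into the leading term, and summing the geometric series using $\la d\ge 4$ yields a bound of at most $8\eps n(1-\la)(\la d)^{k-1}\le\tfrac{4}{5}n(1-\la)(\la d)^{k-1}$ when $\eps\le 1/10$. Concatenating two such paths gives the required bound of $4n(1-\la)(\la d)^{k-1}$ with plenty of room to spare.
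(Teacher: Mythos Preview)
Your construction has a genuine gap: it only matches the \emph{profile} $(u_1,\dots,u_k)$ of $x^\star$, not the actual queue-lengths vector $x^\star$. A queue-lengths vector is an $n$-tuple $(x(1),\dots,x(n))$, and two adjacent vectors differ in one coordinate by $\pm 1$; the set $\cN^\eps$ contains many distinct vectors with the same profile. Your path, as described, performs exactly $\sum_{j=1}^k |n u_j(x) - n u_j(x^\star)|$ moves and ends at some vector $x'$ with $u_j(x') = u_j(x^\star)$ for all $j$, but there is no reason for $x'$ to equal $x^\star$. Concretely, if $x$ already has the same profile as $x^\star$ but a different assignment of lengths to queues, your path has length zero and you have not moved at all.

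This is not a minor bookkeeping slip: rearranging which specific queues carry which lengths, while staying inside $\cN^\eps$, is the substantive part of the argument and is where almost all of the $4n(1-\la)(\la d)^{k-1}$ budget is spent. The paper does exactly what you do as a first step (adjusting each $u_j$ to a canonical value, costing only $O(\eps)\, n(1-\la)(\la d)^{k-1}$), but then must connect any two vectors with the canonical profile. That second phase processes levels $j=0,1,\dots,k-1$ in turn: at level $j$ it swaps the queues that are at length $j$ in the current state but longer in the target with those that are at length $j$ in the target but longer currently, emptying the latter down to $j$ and pushing the former up by one. The total cost of this phase is bounded by $\sum_i |x(i)-x'(i)| \le 2n\sum_{j}(1-u_j^\star)$, which is of order $n(1-\la)(\la d)^{k-1}$ and dominates your profile-adjustment cost. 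Your length estimate $\sum_j |n u_j(x)-n u_j(x^\star)| \le 8\eps\, n(1-\la)(\la d)^{k-1}$ is correct for what you wrote, but it is a warning sign: it is far too small precisely because the hard part is missing.
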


\begin{proof}
For $j =1, \dots, k$, set $u^*_j = \frac{1}{n} \lfloor n (1- (1-\la) (\la d)^{j-1}) \rfloor$.  Now set
$$
\cP = \left\{ z: u_{k+1}(z) = 0, u_j(z) = u^*_j \mbox{ for } j=1, \dots, k \right\}.
$$
In other words, $\cP$ consists of those queue-lengths vectors $z$ with no queues of length greater than $k$, and such that the number
$n u_j(z)$ of queues of length at least $j$ is equal to $n u^*_j = \lfloor n (1- (1-\la) (\la d)^{j-1}) \rfloor$, for each $j = 1, \dots, k$.
We think of $\cP$ as forming the ``centre'' of the set $\cN^\eps$.  Our plan is to show that every queue-lengths vector in $\cN^\eps$
is joined to some vector in $\cP$ by a short path lying entirely in $\cN^\eps$, and then to show that every two vectors in $\cP$ are
connected by a short path, again lying entirely in $\cN^\eps$.

Let $x$ be a queue-lengths vector in $\cN^\eps$.  We first show that there is a path within $\cN^\eps$ of length at most
$6k\eps n (1-\la) (\la d)^{k-1}$ from $x$ to a vector $x'$ in $\cP$, of the form
$x = x^{(k+1)} \cdots x^{(k)} \cdots x^{(k-1)} \cdots \cdots x^{(0)} = x'$, where:
\begin{itemize}
\item [(a)] $u_{k+1}(y) = 0$ for all vectors $y$ on the path,
\item [(b)] for each $j=1, \dots, k$:
\begin{itemize}
\item all the vectors $y$ on the section of the path from $x$ up to $x^{(j+1)}$ satisfy $u_j(y) = u_j(x)$,
\item all the vectors $y$ on the section of the path from $x^{(j)}$ to $x'$ satisfy $u_j(y) =u^*_j$,
\item on the section of path between $x^{(j+1)}$ and $x^{(j)}$, the value of $u_j(y)$ changes monotonically in steps of size $1/n$
from $u_j(x)$ to $u^*_j$.
\end{itemize}
\end{itemize}
Such a path will certainly lie in $\cN^\eps$, since, for each $j$ and each $y$ on the path, $u_j(y)$ lies between $u_j(x)$ and $u^*_j$.

To establish the existence of such a path, we explain how to construct each section individually.  For each $j$, assuming only that $x^{(j+1)}$ is in
$\cN^\eps$, we show that there is a sequence of adjacent vectors $y$, starting with $x^{(j+1)}$, so that $n u_j(y)$ changes monotonically from
$n u_j(x)$ to $n u^*_j$ in steps of size $1$ along the sequence, while each other $u_i$ is constant along the sequence.  If $u_j(x^{(j+1)}) > u^*_j$, the number
of queues of length exactly $j$ in $x^{(j+1)}$ is
\begin{eqnarray*}
\lefteqn{ n (u_j(x^{j+1}) - u_{j+1}(x^{(j+1)}))} \\
&\ge& n [u_j(x^{j+1}) - u^*_j] + \lfloor n (1- (1-\la) (\la d)^{j-1}) \rfloor -n [ 1- (1-5\eps)(1-\la)(\la d)^j ] \\
&\ge& n [u_j(x^{j+1}) - u^*_j],
\end{eqnarray*}
so we may form a suitable sequence by taking $n [u_j(x^{j+1}) - u^*_j]$ queues of length exactly $j$ and removing one customer from each of these
queues in turn.  Similarly, if $u_j(x^{j+1}) < u^*_j$, the number of queues of length exactly $j-1$ in $x^{(j+1)}$ is
\begin{eqnarray*}
\lefteqn{ n [u_{j-1}(x^{j+1}) - u_j(x^{(j+1)})]} \\
&\ge& n [u^*_j - u_j(x^{j+1})] - \lfloor n (1- (1-\la) (\la d)^{j-1}) \rfloor + n [ 1- (1+5\eps)(1-\la)(\la d)^{j-2} ] \\
&\ge& n [u^*_j - u_j(x^{j+1})],
\end{eqnarray*}
so we may form a suitable sequence by taking $n [u^*_j - u_j(x^{j+1})]$ queues of length exactly $j-1$ and adding one customer to each of these queues in
turn.  Neither of these operations affects $u_i$, the proportion of queues of length at least $i$, for any value of $i$ other
than $j$.  The section of path from $x^{(j+1)}$ and $x^{(j)}$ has length $n |u_j(x^{j+1}) - u^*_j| \le 5\eps n (1-\la) (\la d)^{j-1}$.

Thus we can construct a path from $x$ to $x'$ staying within $\cN^\eps$, and this path has length at most
$$
\sum_{j=1}^k \lceil 5\eps n (1-\la) (\la d)^{j-1} \rceil \le 6\eps n (1-\la) (\la d)^{k-1},
$$
where we used (\ref{ineq:14}).


We now show that there is a path of length at most $3n (1-\la) (\la d)^{k-1}$ between any two queue-lengths vectors $x$ and $x'$ in
the ``centre'' $\cP$ of $\cN^\eps$, staying within $\cN^\eps$.  This path will be of the form $x = x_{(0)} \cdots x_{(1)} \cdots \cdots x_{(k)} = x'$.
Our path will have the following properties, for each $j =1, \dots, k-1$:
\begin{itemize}
\item [(a)] for every vector $y$ on the path from $x_{(j)}$ to $x_{(k)}$, the set of queues of length $j-1$ is the same in $y$ as in $x'$; thus the
set of all queues of length at most $j-1$ is fixed from $x_{(j)}$ to the end of the path $x_{(k)}$, and hence so is their number
$n(1-u_j(x')) = n(1-u^*_j)$;
\item [(b)] along the path from $x$ to $x_{(j-1)}$, $u_j(y)$ decreases monotonically from $u_j^*$ to a value $u_j(x_{(j-1)}) \ge u^*_j (1 - \eps)$;
\item [(c)] along the section of path from $x_{(j-1)}$ to $x_{(j)}$, $u_j(y)$
stays between $u_j(x_{(j-1)})$ and $u^*_j + 1/n$, until it becomes equal to $u^*_j$ at $x_{(j)}$.
\end{itemize}

With reference to (b) above, we shall actually show that: (b')~for each $j > i$,
as $y$ goes from $x_{(i-1)}$ and $x_{(i)}$ along the path, the value of $n u_j(y)$ decreases monotonically by at most $n(1-u^*_i)$.
Thus, for $j = 2, \dots, k$, the total decrease in $u_j(y)$ as $y$ goes from $x$ to $x_{(j-1)}$ is at most
$$
\sum_{i=1}^{j-1} n (1-u^*_i) \le n (1-\la) \sum_{i=1}^{j-1} (\la d)^{j-1} < \frac12 \eps n (1-\la) (\la d)^{j-1} < \eps n (1-u^*_j),
$$
where we used (\ref{ineq:14}).  So (b') implies (b).

To construct the section of path between $x = x_{(0)}$ and $x_{(1)}$, we consider queues of length~0: in both $x$ and $x'$, the number of these empty
queues is equal to $n (1-u^*_1) = \lceil n (1-\la) \rceil$.  We let $K_{i_1}, \dots, K_{i_r}$ be the queues that are empty in $x$ but not in
$x'$, and $K_{\ell_1}, \dots, K_{\ell_r}$ be the queues that are empty in $x'$ but not in $x$ -- so $r \le n(1-u^*_1)$.
The section of path from $x_{(0)}$ to $x_{(1)}$ is constructed by: adding a customer to queue $K_{i_1}$, emptying out queue $K_{\ell_1}$, adding a customer
to queue $K_{i_2}$, emptying out queue $K_{\ell_2}$, and so on.  In this way, for every vector on this section of path, the number of empty queues is
within~1 of $n (1-u^*_1)$.  Meanwhile, for
$j \ge 2$, as we go along the path from $x_{(0)}$ to $x_{(1)}$, the value of $u_j(y)$ may be decreased (since some of the queues $K_{\ell_q}$
may have length greater than $j$ in $x_{(0)}$: we decrease the lengths of these queues to~0, without creating any new queues of length at least $j$),
but by at most $r \le n (1-u^*_1)$, as required.

We now proceed in the same way for the set of queues of each length $j=1, 2, \dots, k-1$ in turn.  We describe the construction of the
section of path from $x_{(j)}$ to $x_{(j+1)}$.  In $x_{(j)}$, the set of queues of each length less than $j$ is the same as in $x'$, and also
we have $u_{j+1}(x_{(j)}) \le u^*_{j+1}$, by the properties of the construction up to this point.
Let $K_{i_1}, \dots, K_{i_r}$ be the queues that have length exactly $j$ in $x_{(j)}$ but are
longer in $x'$, and $K_{\ell_1}, \dots, K_{\ell_s}$ be the queues that have length $j$ in $x'$ but are longer in $x_{(j)}$.  Note that
$r - s = n [ (1 - u_{j+1}(x_{(j)})) - (1 - u^*_{j+1}) ] \ge 0$, and also that $s$ is at most the number of queues of length~$j$ in $x'$, which is
at most $n (1-u^*_{j+1})$.  The section of path from $x_{(j)}$ to $x_{(j+1)}$ is
constructed by: adding a customer to queue $K_{i_1}$, reducing the length of queue $K_{\ell_1}$ to $j$, adding a customer to queue $K_{i_2}$,
reducing the length of queue $K_{\ell_2}$ to $j$, and so on.  At the end, we add a customer to each of the remaining queues $K_{i_{s+1}}, \dots K_{i_r}$
in turn.  In this way, for each intermediate vector $y$ on this section of path, the number $n u_{j+1}(y)$ of queues of length at least $j+1$ is at least
its value $n u_{j+1} (x_{(j)}$ at the beginning of this section of path, and at most
$n u^*_{j+1} + 1$ (this can be achieved if $u_{j+1}(x_{(j)}) = u^*_{j+1}$, in which case $n u_{j+1}(y)$ alternates between $n u^*_{j+1}$ and $n u^*_{j+1} +1$
along the section of path).  For $h > j+1$, $u_h(y)$ is decreased as go along this section of the path, by at most $s \le n (1-u^*_{j+1})$, as
required.

Along this path from $x$ to $x'$ as a whole, for each queue $i$, its length changes monotonically from $x(i)$ to $x'(i)$.  So the total length
of the path is at most
\begin{eqnarray*}
\sum_{i=1}^n |x(i) - x'(i)| &\le& \sum_{i=1}^n (k-x(i)) + (k-x'(i)) \\
&=& n \left( \sum_{i=1}^k (1-u_k(x)) + \sum_{i=1}^k (1-u_k(x')) \right) \\
&=& 2n \sum_{i=1}^k (1-u^*_i) \\
&\le& \frac{5}{2} n \sum_{i=1}^k (1-\la) (\la d)^{i-1},
\end{eqnarray*}
which is at most $3n (1-\la) (\la d)^{k-1}$ by (\ref{ineq:14}).

Thus there is a path between any pair of states in $\cN^\eps$ of length at most
$$
3n (1-\la) (\la d)^{k-1} + 12\eps n (1-\la) (\la d)^{k-1} \le 4 n (1-\la) (\la d)^{k-1},
$$
as claimed.
\end{proof}

\section{Rapid Mixing} \label{sec.mixing}

Our aims in this section are to prove a variety of results about rapid mixing of the $(n,d,\la)$-supermarket process.  We continue to
assume that the parameters $n$, $d$, $k$, $\la$ and $\eps$ of the model satisfy the conditions of Theorem~\ref{thm.technical} (or equivalently
(\ref{ineq:2})--(\ref{ineq:8})).  For this section, we make the stronger assumption that
$\eps \le 1/60$, so that $(n,d,k,\la,6\eps)$ also satisfies the
conditions.

We first consider two copies of the process starting in adjacent
states in $\cA_0(\ell,g)$, coupled according to the coupling referred to in Lemma~\ref{lem.coupling-distance}.
The proof partly follows along the lines of the proof of Lemma 2.6 in~\cite{lmcd06}.

We set
$$
q(\ell,g) = (23k + 72g) \eps^{-1} n (1-\la)^{-1} + 8 \ell n,
$$
as in Proposition~\ref{prop.stay-in-H}.  We assume throughout this section that $q(\ell,g) \le s_0/2$.

\begin{lemma}
\label{lem.coalesce-adj}
Let $x,y$ be a pair of adjacent states
in $\cA_0(\ell,g)$, with $x(j_0) = y(j_0)-1$ for some queue $j_0$, and $x(j) = y(j)$ for $j \not = j_0$.  Consider coupled copies $(X^x_t)$ and
$(X^y_t)$ of the $(n,d,\la)$-supermarket process, where $X^x_0 = x$ and $X^y_0=y$.
For all times $t \ge 2q(\ell,g)$, we have
\begin{eqnarray*}
\E \|X^x_t - X^y_t\|_1 &=& \Pr (X^x_t \not = X^y_t) \\
&\le& e^{-\frac14 \log^2 n} + 4\exp \left( - \frac{t}{3200 k d^{k-1} n}\right).
\end{eqnarray*}
\end{lemma}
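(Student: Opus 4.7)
By Lemma~\ref{lem.coupling-distance}, the function $t\mapsto\|X^x_t - X^y_t\|_1$ is non-increasing, and since it equals $1$ at time~$0$ we have $\|X^x_t - X^y_t\|_1\in\{0,1\}$ for all $t$; thus $\Pr(X^x_t\neq X^y_t) = \E\|X^x_t-X^y_t\|_1$ and the task reduces to bounding the tail of the coalescence time $\tau := \inf\{t:X^x_t=X^y_t\}$. Applying Theorem~\ref{thm.stay-in-H} separately to the two copies (combined with Lemma~\ref{lem.H-N-H} to translate $\cH^\eps$ into $\cN^\eps$) and taking a union bound, except on an event of probability at most $2(6k+28)/s_0$ both processes lie in $\cN^\eps$ throughout $[q(\ell,g),s_0]$, and in particular throughout $[2q(\ell,g),s_0]$; call this good event $\Xi$.

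On the event $\{X^x_t\neq X^y_t\}$, because $x\le y$ coordinatewise the coupling gives $X^x_t\le X^y_t$ coordinatewise, so the discrepancy is concentrated at a single queue $j^*_t$ with $X^y_t(j^*_t)=X^x_t(j^*_t)+1$. Define $L_t := X^y_t(j^*_t)$, with the convention $L_t=0$ once the chains coalesce. While $\Xi$ holds we have $L_t\in\{0,1,\dots,k\}$ with $0$ absorbing. A careful tracing through the coupling rules shows that $L_t$ changes value only when the driving sequence targets $j^*_t$: a potential departure at $j^*_t$ (probability $1/((1+\la)n)$) takes $L_t\to L_t-1$, causing coalescence when $L_t=1$; an arrival whose tuple $D_i$ contains $j^*_t$ with $j^*_t$ the first minimum of $D_i$ in $X^y_t$ takes $L_t\to L_t+1$ (which is impossible for $L_t=k$ since $\cN^\eps$ forbids queues of length $k+1$); all other transitions either leave $L_t$ unchanged or merely relocate the discrepancy to another queue of equal length. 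Hence the down-probability from every non-absorbing state is exactly $1/((1+\la)n)$, while the up-probability is bounded by $\la d/((1+\la)n)$ (the probability that the arrival tuple contains $j^*_t$). Thus, as long as $\Xi$ holds, $L_t$ is stochastically dominated by a birth--death chain on $\{0,1,\dots,k\}$ that is absorbed at $0$ and reflected at $k$, with per-step down rate $1/((1+\la)n)$ and birth-to-death ratio at most $\la d$.

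A standard hitting-time computation for this dominating walk, in the spirit of Lemma~\ref{lem.return-time} with $\delta=(1+\la d)^{-1}$ and $k_0=k$, shows that from any starting state in $\{0,\dots,k\}$ the chain reaches $0$ within the next $s$ steps with probability at least $1 - c_1\exp(-c_2 s/(k d^{k-1} n))$. To convert this into a tail bound valid for arbitrary $t\ge 2q(\ell,g)$, I partition the interval $[2q(\ell,g),t]$ into consecutive windows of length $s_0$, apply the above estimate on each using the strong Markov property, and at each window pay an $e^{-\log^2 n/4}$ penalty coming from the failure of $\Xi$ on that window (again by Theorem~\ref{thm.stay-in-H} applied to the restarted copies, which lie in $\cN^\eps\subseteq\cA_0(k,k)$). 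A union bound over windows, combined with the initial hit-$\cN^\eps$ event, yields the stated bound $e^{-\log^2 n/4} + 4\exp(-t/(3200 k d^{k-1} n))$.

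The main obstacle is the dominating random walk $L_t$: because its bias is \emph{upward} toward the reflecting boundary at $k$, drift-down arguments as in Lemma~\ref{lem.drifts-down2} cannot be used directly, and the $d^{k-1}$ in the mixing-time exponent must arise as the geometric cost of a full descent from $k$ to $0$ against this upward drift. Nailing down the correct multiplicative constant and verifying the exact up- and down-probabilities (in particular the statement that ``the discrepancy either moves to a queue of the same length or is unaffected'') demands a slightly delicate case analysis of the coupling, and propagating the ``stays in $\cN^\eps$'' event across successive windows with an acceptable error bound is the other place where care is needed.
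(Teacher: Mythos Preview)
Your approach is essentially the paper's: track the longer unbalanced queue length $W_t$, wait until both copies are in the good set (the paper uses $\cH$ rather than $\cN^\eps$, via Theorem~\ref{thm.stay-in-H}), and then invoke Lemma~\ref{lem.return-time} with $k_0=k$ and $\delta=(1+\la d)^{-1}$. Two places where the paper's execution differs and your sketch is loose. First, the paper does not condition on a global event $\Xi$ and then assert stochastic domination; conditioning on the entire trajectory staying in $\cN^\eps$ is future-dependent and incompatible with a clean Markov domination. Instead it passes to the \emph{embedded jump chain} $S_j=W_{T_j}$ at the successive jump times of $W_t$, and feeds the filtration events $A_j=\{X^x_s,X^y_s\in\cH \text{ for } T^*_\cH\le s\le T_{j+1}-1\}$ directly into Lemma~\ref{lem.return-time}. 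On $A_{j-1}\cap\{S_{j-1}\ge k\}$, Lemma~\ref{lem.no-new-k+1} gives down-probability $\ge 3/4$; otherwise the down-probability is $\ge\delta$. A separate binomial estimate then converts $m$ jumps into $r\approx 4nm$ real-time steps. Your per-step rates are the correct inputs to this reduction, but Lemma~\ref{lem.return-time} does not apply to $L_t$ itself with $\delta=(1+\la d)^{-1}$, since the per-step down-probability is only $1/((1+\la)n)$. (A minor point: your claim that a relocating discrepancy always lands at a queue of the \emph{same} length is not quite right---there are cases where $W_t$ increases upon relocation---but the up-probability bound $\la d/((1+\la)n)$ is still valid, since any increase requires $j^*_t\in D$.)

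Second, the windowing for $t>s_0$ is unnecessary, and as sketched the union bound over windows does not yield a single $e^{-\frac14\log^2 n}$ term. The paper simply observes that once the argument gives $\Pr(T>s_0)\le e^{-\frac14\log^2 n}$, the monotonicity $\Pr(T>t)\le\Pr(T>s_0)$ handles all larger $t$ for free.
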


\begin{proof}
By Lemma~\ref{lem.coupling-distance}, $X^x_t$ and $X^y_t$ are always neighbours or equal, always $X_t^x \le X_t^y$, and if for some
time $s$ we have $X^x_s= X^y_s$, then $X^x_t = X^y_t$ for all $t \ge s$. Thus in particular $\E \|X_t^x - X_t^y\|_1 = \Pr (X^x_t \not = X^y_t)$.

Initially, the queue $j_0$ is unbalanced, i.e., $X^x_0 (j_0) \not = X^y_0(j_0)$, and all other queues are balanced.  Observe that the index of the
unbalanced queue in the coupled pair of processes may change over time. Let $W_t$ denote the longer of the unbalanced queue lengths at time $t$, if there
is such a queue, and let $W_t = 0$ otherwise. The time for the two coupled processes to coalesce is the time $T$ until $W_t$ hits $0$.

Let us first run $(X_t^x)$ and $(X_t^y)$ together using the coupling.  Let $T_\cH^x$ and $T_\cH^y$ denote the times $T_\cH$, as defined
in Section~\ref{sec.hit-and-exit}, for the two copies of the process, and set
$T^*_\cH = T_\cH^x \lor T_\cH^y$.  By Theorem~\ref{thm.stay-in-H}, $T^*_\cH \le q(\ell,g)$ with probability at least
\begin{eqnarray*}
1-\frac{2(6k+28)}{s_0} &=& 1 - (12k+56)e^{-\frac13 \log^2 n} \\
&\ge& 1 - (12 \log n + 56) e^{-\frac13 \log^2 n} \\
&\ge& 1 - \frac13 e^{-\frac14 \log^2 n},
\end{eqnarray*}
where we used (\ref{ineq:116}); the final inequality holds for $n \ge 10000$.

We now track the performance of the coupling after time $T^*_\cH$. If the processes have coalesced by time $T^*_\cH$ (i.e., if $T \le T^*_\cH$),
then we are done.  Otherwise, $X^x_{T^*_\cH}$ and $X^y_{T^*_\cH}$ are still adjacent, and there is some random index $J_0$ such that the queue $J_0$
is unbalanced, i.e., $X^x_{T^*_\cH} (J_0) \not = X^y_{T^*_\cH}(J_0)$, and all other queues are balanced.  Moreover, since $u_{k+1}(x) = 0$ for
all $x \in \cH$, we have $W_{T^*_\cH} \le k$.

We shall use Lemma~\ref{lem.return-time} to give a suitable upper bound on $\Pr (W_t > 0)$. The idea is that, since, with high probability,
both copies of the process remain in $\cH$ for a long time, the unbalanced queue length $W_t$ will often be driven below $k$, and then
there is a chance of going all the way down to $0$.

For each $t \ge 0$, let $B_t$ be the event that $X^y_s, X^x_s \in \cH$ for all $s$ with $T^*_\cH \le s \le t-1$.
It follows from Theorem~\ref{thm.stay-in-H} that $\Pr (\overline{B}_t) \le (12k+56)/s_0 \le \frac13 e^{-\frac14 \log^2 n}$ (as above),
provided $t \le s_0$.

Let $N_r$ be the number of jumps of the longer unbalanced queue length in the first $r$ steps after $T^*_\cH$.  Also set $N=N_T$, the total
number of these jumps, with $N_T=0$ if $T \le T^*_\cH$.  For $j=1,2, \ldots$, let $T_j$ be the time of the $j$th jump after $T^*_\cH$ if $N \ge j$,
and otherwise set $T_j = T^*_\cH \lor T$. 
Thus, if $T_\cH^* < T$, we have $T^*_\cH < T_1 < \cdots < T = T_N = T_{N+1} = \cdots$.  If $T_\cH^* \ge T$, then all of the $T_j$ are equal to $T_\cH^*$.

Let $S_0 = y(J_0)= W_{T^*_\cH} \1_{\{T^*_\cH < T\}}$, the longer unbalanced queue length at time $t=T^*$ if coalescence has not occurred.  For each
positive integer $j$, if $N \ge j$, let $S_j = W_{T_j}$, which is either $0$ or the longer of the unbalanced queue lengths at time $T_j$,
immediately after the $j$th arrival or departure at the unbalanced queue.  Also, if $N \ge j$, let $Z_j$ be the $\pm 1$-valued random variable
$S_j - S_{j-1}$.  For each non-negative integer $j$, let $\phi_j$ be the $\sigma$-field ${\mathcal F}_{T_{j+1}-1}$, of all events before
time $T_{j+1}$.  Let also $A_j$ be the $\phi_j$-measurable event $B_{T_{j+1}}$, that is the event that $X^y_s,X^x_s \in \cH$
for each $s$ with $T^*_\cH \le s \le T_{j+1}-1$.

We shall use Lemma~\ref{lem.return-time}.  We shall take the sequences $(\phi_j)_{j\ge 0}$, $(Z_j)_{j\ge 0}$, $(S_j)_{j\ge 0}$ and $(A_j)_{j\ge 0}$
as defined above, and we set $k_0=k$ and $\delta = 1/(\la d +1)$.  Note first, at any time $t < T$, the probability, conditioned on $\cF_t$, of an arrival
to the longer of the unbalanced queues is at most $d\la/n(1+ \la)$, while the conditional probability of a departure from that queue is
$1/n (1+\la)$.  Therefore, on the event that $N \ge j$, the probability, conditioned on $\phi_{j-1}$, that the event at time $T_j$ is a departure from the
longer unbalanced queue is at least
$$
\frac{1/n(1+\la)}{1/n(1+\la) + d\la/n(1+\la)} = \frac{1}{1+d\la} = \delta.
$$
In other words, on the event $N \ge j$ we have
$\Pr (Z_j = -1\mid \phi_{j-1} ) \ge \delta$.

We now show that, on the event $\{N \ge j\} \cap A_{j-1} \cap \{S_{j-1} \ge k\}$, we have
$$
\Pr (Z_j = -1 \mid \phi_{j-1} ) \ge \frac34.
$$
To see this, consider a time $t \ge T^*_\cH$.  On the event $B_t$, we have $X_t \in \cH \subseteq \cE_1$, and so, by Lemma~\ref{lem.no-new-k+1},
the conditional probability $\Pr(L_{t+1} \mid \cF_t)$ that the event at time $t+1$ is an arrival to a queue of length $k$ or greater is at most
$e^{-\log^2 n}$.  In particular, on the event
$B_t \cap \{W_{t-1} \ge k\}$, the conditional probability that the event at time $t+1$ is an arrival joining the longer unbalanced queue is at most
$e^{-\log^2 n}$, while the conditional probability that the event at time $t+1$ is a departure from the longer unbalanced queue is
$1/n(1+\la)$.
Therefore, on the event $\{N \ge j\} \cap A_{j-1} \cap \{S_{j-1} \ge k\}$, we have
$$
\Pr (Z_j = -1 \mid \phi_{j-1} ) \ge \frac{1/n(\la +1)}{1/n(\la+1) + e^{-\log^2 n}} \ge \frac34,
$$
for $n \ge 10$.

We have now shown that $S_m-S_0$ can be written as a sum $\sum_{i=1}^m Z_i$ for $\{0,\pm1\}$-valued random variables
$Z_i$ that satisfy the conditions of Lemma~\ref{lem.return-time}, with $k_0=k$ and $\delta=1/(\la d+1)$.  (The argument above establishes
this for $m \le N$: for $m > N$, we have set $Z_m = S_m =0$, which also meets the requirements of the lemma.)
Note that
$$
\delta^{-(k-1)} = (\la d +1)^{k-1} \le d^{k-1} (1+1/d)^k \le d^{k-1} e^{k/d} \le d^{k-1} e^{\eps/150 \sqrt d} \le 2 d^{k-1},
$$
where we used~(\ref{ineq:6}).
Hence, for $m \ge 16 k$,
\begin{eqnarray*}
\Pr \Big ( \bigcap_{i=1}^m \{S_i \not = 0\} \cap \bigcap_{i=0}^{m-1} A_i \Big ) &\le& \Pr (S_0 > \lfloor m/16 \rfloor) +
3 \exp\left( - \frac{\delta^{k-1}}{200k} m\right) \\
&\le& 0 + 3\exp \left( - \frac{m}{400 k d^{k-1}}\right).
\end{eqnarray*}
Here $\Pr(\cdot)$ refers to the coupling measure in the probability space of Section~\ref{sec.coupling}, with coupled copies of the process for each
possible starting state.

Let $q = q(\ell,g)$, take $r$ with $64k n \le r \le s_0 - q$ and let $m = \lfloor r/4n \rfloor \ge 16k$.
Since, at each time after $T^*_\cH$ and before $T$, a jump in the longer unbalanced queue occurs with probability at least $1/2n$ while the queue is nonempty,
we have, by inequality~(\ref{eq.bin-lower}),
$\Pr (\{T > T^*_\cH + r\} \cap \{N_r < m\}) \le e^{-r/16n}$.
Also,
\begin{eqnarray*}
\Pr \Big (\{N_r \ge m\} \cap \bigcup_{i=0}^{m-1} \overline{A_i} \cap \{ T^*_\cH \le q\} \Big) &\le& \Pr (\overline{B_{q+r}}) \\
&\le& \Pr (\overline{B_{s_0}}) \\
&\le& \frac13 e^{-\frac14 \log^2 n}.
\end{eqnarray*}

Now we have that
\begin{eqnarray*}
\Pr (T > q + r) &\le& \Pr (T^*_\cH > q) + \Pr (\{ T > T^*_\cH + r \} \cap \{ T^*_\cH \le q \}) \\
&\le& \Pr (T^*_\cH > q) + \Pr (\{T > T^*_\cH + r\} \cap \{N_r < m\}) \\
&& \mbox{} + \Pr \Big (\{N_r \ge m\} \cap \bigcup_{i=0}^{m-1} \overline{A_i} \cap \{ T^*_\cH \le q \} \Big )\\
&& \mbox{} + \Pr \Big ( \{N_r \ge m\} \cap \bigcap_{i=0}^{m-1} A_i \cap \bigcap_{i=1}^m \{S_i \not = 0\} \Big ).
\end{eqnarray*}
To see this, note that $\{ N_r \ge m\} \cap \bigcup_{i=1}^m \{ S_i = 0\} \subseteq \{ T \le T^*_\cH +r \}$.
Now we have
\begin{eqnarray*}
\Pr (T > q+r) &\le& \frac13 e^{-\frac14 \log^2 n} + e^{-r/16n} + \frac13 e^{-\frac14 \log^2 n}  \\
&& \mbox{} + 3\exp \left( - \frac{r}{1600 k d^{k-1} n}\right) \\
&\le& \frac 23 e^{-\frac14 \log^2 n} + 4\exp \left( - \frac{r}{1600 k d^{k-1} n}\right),
\end{eqnarray*}
provided $64k n \le r \le s_0 - q$.

If $2q \le t \le s_0$, then setting $r = t - q \ge t/2$ gives
$$
\Pr (T > t) \le \frac 23 e^{-\frac14 \log^2 n} + 4\exp \left( - \frac{t}{3200 k d^{k-1} n}\right),
$$
which gives the required result.  For $t > s_0$, we have
$$
\Pr (T > t) \le \Pr (T > s_0) \le \frac 23 e^{-\frac14 \log^2 n} + 4\exp \left( - \frac{s_0 - q}{1600 k d^{k-1} n}\right) \le e^{-\frac14 \log^2 n},
$$
so the result holds in this case too.
\end{proof}

\begin{theorem}
\label{thm.coalesce-general}
Let $(X^x_t)$ and $(X^y_t)$ be two copies of the $(n,d,\la)$-supermarket process, starting in states $x$ and $y$ in $\cA_0(\ell,g)$.
Then, for $t \ge 2q(\ell,g)$, we have
$$
\E  \| X^x_t - X^y_t \|_1  \le 2gn \left( e^{-\frac14 \log^2 n} + 4\exp \left( - \frac{t}{3200 k d^{k-1} n}\right) \right).
$$
\end{theorem}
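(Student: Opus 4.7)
The plan is to reduce the statement to Lemma~\ref{lem.coalesce-adj} by interpolating between $x$ and $y$ along a short path of adjacent queue-lengths vectors, all of which lie in $\cA_0(\ell,g)$, and then applying the triangle inequality, using that the coupling of Section~\ref{sec.coupling} simultaneously defines copies $(X^z_t)$ for every starting state~$z$ on a common probability space.

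First I would construct an explicit path $x = z_0, z_1, \ldots, z_m = y$ of pairwise adjacent queue-lengths vectors, with $m \le \|x\|_1 + \|y\|_1 \le 2gn$, that stays inside $\cA_0(\ell,g)$. The simplest choice is to first empty $x$ one customer at a time (in any order), reaching the all-zeros vector $\mathbf 0$, and then to build up $y$ one customer at a time. Along the first leg, each intermediate $z_i$ satisfies $z_i \le x$ coordinatewise, so $\|z_i\|_\infty \le \|x\|_\infty \le \ell$ and $\|z_i\|_1 \le \|x\|_1 \le gn$; along the second leg the same bound holds with respect to $y$. Hence every $z_i$ lies in $\cA_0(\ell,g)$, and consecutive ones differ in exactly one queue by one customer.

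Next I would apply Lemma~\ref{lem.coalesce-adj} to each adjacent pair $(z_i, z_{i+1})$ (with coupled copies drawn from the common triple $({\bf V},{\bf D},\tilde{\bf D})$), which gives, for every $t \ge 2q(\ell,g)$,
\[
\E\,\|X^{z_i}_t - X^{z_{i+1}}_t\|_1 \;\le\; e^{-\frac14 \log^2 n} + 4\exp\!\left(-\frac{t}{3200\, k\, d^{k-1}\, n}\right).
\]
Then the triangle inequality in $\|\cdot\|_1$, applied along the coupled trajectories, yields
\[
\E\,\|X^x_t - X^y_t\|_1 \;\le\; \sum_{i=0}^{m-1} \E\,\|X^{z_i}_t - X^{z_{i+1}}_t\|_1 \;\le\; 2gn\left(e^{-\frac14 \log^2 n} + 4\exp\!\left(-\frac{t}{3200\, k\, d^{k-1}\, n}\right)\right),
\]
which is exactly the desired bound.

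There is no real obstacle here: the work has already been done in Lemma~\ref{lem.coalesce-adj}, and the only point that needs care is to verify that the interpolating path can be chosen entirely inside $\cA_0(\ell,g)$ so that Lemma~\ref{lem.coalesce-adj} applies at every link; the ``empty then refill'' path above takes care of this cleanly. The factor $2gn$ in the final estimate is precisely the length of this path, and it matches the $2gn$ appearing in the statement.
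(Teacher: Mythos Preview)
Your proposal is correct and is essentially identical to the paper's own proof: the paper too interpolates from $x$ down to the empty vector and back up to $y$, observes that this path of length $m=\|x\|_1+\|y\|_1\le 2gn$ stays in $\cA_0(\ell,g)$, and then sums the bound from Lemma~\ref{lem.coalesce-adj} along the path via the triangle inequality.
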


\begin{proof}
Given two distinct states $x$ and $y$ in $\cA_0(\ell,g)$, we can choose a path $x = z_0, z_1, \ldots, z_m=y$ of adjacent states in $\cA_0(\ell,g)$ from $x$
down to the empty queue-lengths vector and back up to $y$, where $m = \|x\|_1 + \|y\|_1 \le 2gn$.  By Lemma~\ref{lem.coalesce-adj},
for $t \ge 2q(\ell,g)$,
\begin{eqnarray*}
\E \|X^x_t - X^y_t\|_1 & \le & \sum_{i=0}^{m-1} \E \|X^{z_i}_t - X^{z_{i+1}}_t\|_1 \\
& \le & 2gn \left( e^{-\frac14 \log^2 n} + 4\exp \left( - \frac{t}{3200 k d^{k-1} n}\right) \right),
\end{eqnarray*}
as required.
\end{proof}

We saw in Corollary~\ref{cor.eqm} that $Y_t \in \cA_0(\ell,g)$ with probability at least $1-e^{-\frac14 \log^2 n}$,
whenever $\ell, g \ge k$, where $(Y_t)$ is a copy of the $(n,d,\la)$-supermarket process in equilibrium.
Thus we have the following corollary.

\begin{corollary} \label{cor.tv}
Take any $\ell, g \ge k$, and let $(X^x_t)$ be a copy of the $(n,d,\la)$-supermarket process with starting in a state $x \in \cA_0(\ell,g)$.
Also let $(Y_t)$ be a copy in equilibrium.  Then, for $t \ge 2q(\ell,g)$, we have
$$
d_{TV}(\cL(X^x_t),\cL(Y_t)) \le
2gn \left( 2 e^{-\frac14 \log^2 n} + 4\exp \left( - \frac{t}{3200 k d^{k-1} n}\right) \right).
$$
\end{corollary}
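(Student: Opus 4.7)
The plan is to couple $(X^x_t)$ with the equilibrium copy $(Y_t)$ using the grand coupling of Section~\ref{sec.coupling}: sample $Y_0$ from the stationary distribution $\Pi$ independently of the driving sequences $(\mathbf{V},\mathbf{D},\tilde{\mathbf{D}})$, and set $Y_t = s_t(Y_0;\mathbf{V},\mathbf{D},\tilde{\mathbf{D}})$ and $X^x_t = s_t(x;\mathbf{V},\mathbf{D},\tilde{\mathbf{D}})$. Under this joint law both processes have their correct marginals, so it is a valid coupling of $\cL(X^x_t)$ and $\cL(Y_t)$, and therefore
$$
d_{TV}(\cL(X^x_t),\cL(Y_t)) \le \Pr(X^x_t \ne Y_t).
$$

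Next I would split on whether $Y_0$ lies in $\cA_0(\ell,g)$. Since $\ell, g \ge k$, we have $\cH \subseteq \cA_0(k,k) \subseteq \cA_0(\ell,g)$, and Corollary~\ref{cor.eqm} (applied at $t=0$) gives $\Pr(Y_0 \notin \cA_0(\ell,g)) \le \Pr(Y_0 \notin \cH) \le e^{-\frac14 \log^2 n}$. Hence
$$
\Pr(X^x_t \ne Y_t) \le \Pr(Y_0 \notin \cA_0(\ell,g)) + \Pr\bigl(X^x_t \ne Y_t,\, Y_0 \in \cA_0(\ell,g)\bigr).
$$

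For the second term, condition on $Y_0 = y$ with $y \in \cA_0(\ell,g)$. By the independence of $Y_0$ and the driving sequences, conditionally on $Y_0 = y$ the coupled pair $(X^x_t, Y_t)$ has the same law as $(X^x_t, X^y_t)$ under the coupling of Section~\ref{sec.coupling}. Applying Theorem~\ref{thm.coalesce-general} (with $t \ge 2q(\ell,g)$) gives
$$
\Pr(X^x_t \ne X^y_t) \le \E\|X^x_t - X^y_t\|_1 \le 2gn\!\left(e^{-\frac14 \log^2 n} + 4\exp\!\left(-\frac{t}{3200 k d^{k-1} n}\right)\right),
$$
uniformly over $y \in \cA_0(\ell,g)$. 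Integrating against the conditional law of $Y_0$ yields the same bound for $\Pr(X^x_t \ne Y_t,\, Y_0 \in \cA_0(\ell,g))$.

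Combining the two contributions and using $2gn \ge 1$ to absorb the single $e^{-\frac14 \log^2 n}$ term into the factor $2gn \cdot 2 e^{-\frac14 \log^2 n}$ completes the proof. The argument is essentially mechanical once Theorem~\ref{thm.coalesce-general} and Corollary~\ref{cor.eqm} are in hand; the only small subtlety is making sure the coupling of $X^x$ with $Y$ is valid, which is why I emphasize independence of $Y_0$ from $(\mathbf{V},\mathbf{D},\tilde{\mathbf{D}})$ at the outset.
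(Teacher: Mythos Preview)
Your proof is correct and follows essentially the same approach as the paper's own proof: couple $(X^x_t)$ with the equilibrium copy via the grand coupling, split on whether $Y_0\in\cA_0(\ell,g)$, bound the bad event by Corollary~\ref{cor.eqm} (using $\cH\subseteq\cA_0(k,k)\subseteq\cA_0(\ell,g)$), bound the good event by Theorem~\ref{thm.coalesce-general}, and absorb the extra $e^{-\frac14\log^2 n}$ using $2gn\ge 1$. The only difference is that you are slightly more explicit about the independence of $Y_0$ from the driving sequences, which is a welcome clarification.
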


\begin{proof}
The total variation distance is at most the probability that $Y_0 \notin \cA_0(\ell,g)$, plus the maximum, over all states $y \in \cA_0(\ell,g)$, of
$\Pr (X^x_t \not= X^y_t)$.  By Corollary~\ref{cor.eqm} and Theorem~\ref{thm.coalesce-general}, this is at most
$$
e^{-\frac14 \log^2 n} +
2gn \left( e^{-\frac14 \log^2 n} + 4\exp \left( - \frac{t}{3200 k d^{k-1} n}\right) \right)
$$
$$
\le 2gn \left( 2 e^{-\frac14 \log^2 n} + 4\exp \left( - \frac{t}{3200 k d^{k-1} n}\right) \right),
$$
as claimed.
\end{proof}

This implies Theorem~\ref{thm.mixing2}, on setting $\eps = 1/60$ (the conclusion is independent of $\eps$, and $q(\ell,g)$ is decreasing in $\eps$,
so it is best to take the highest legitimate value), $\ell = \max (k, \| x\|_\infty)$ and $g = \max ( k, \|x\|_1 / n)$, so that
\begin{eqnarray*}
q(\ell,g) &\le& 60 (23k + 72 (k + \|x\|_1/n)) n (1-\la)^{-1} + 8 (k + \| x \|_\infty) n \\
&\le& 6000 k n (1-\la)^{-1} + 4320 \|x\|_1 (1-\la)^{-1} + 8 n \| x\|_\infty,
\end{eqnarray*}
as in the statement of the theorem.

We interpret Theorem~\ref{thm.mixing2} as saying that we have mixing in time of order $\max\{ q (\ell,g), k d^{k-1}n \log n \}$, where $q(\ell,g)$
is bounded as above, or alternatively of order
$$
\max \{ k n (1-\la)^{-1}, g n (1-\la)^{-1}, \ell n, k d^{k-1} n \log n \}.
$$
The first and last terms here are of similar magnitude: either could be larger.

Let us indicate briefly why the dependence on $\ell$ and $g$ is best possible.
Suppose first that, for the starting state $x$, $\| x \| = g n$, and that $g > 2k$.  For the equilibrium copy, $\Pr( \|Y_t\|_1  > kn) \le 1/s_0$,
so if $d_{TV} (\cL(X^x_t), \cL(Y_t)) \le 1/2$, then we have
$$
\Pr (\| X^x_0 \|_1 - \| X^x_t\|_1 \ge (g-k) n ) \ge \Pr ( \| X^x_t \|_1 \le k n ) \ge 1/3.
$$
Now, $\| X^x_0 \|_1 - \|X^x_t\|_1$ is at most the number of
potential departures minus the number of arrivals over the interval $[0,t]$, and this number is a sum of $t$ Bernoulli random variables of
mean $\frac{1}{1+\la} - \frac{\la}{1+\la}$, which is well-concentrated around
$\frac{1-\la}{1+\la} t$.  Therefore, if $d_{TV} (\cL(X^x_t), \cL(Y_t)) \le 1/2$, we must have
$\frac{1-\la}{1+\la} t \ge \frac23 (g-k)n$, which implies that $t \ge (g-k) n(1-\la)^{-1} \ge \frac12 g n (1-\la)^{-1}$.
Similarly, suppose that, in the initial state $x$, there is a queue of length at least $\ell \ge k (1-\la)^{-1}$.  In order for such a queue to be reduced to
length $k$, there must be at least $\frac12 \ell$ departures from the queue, and this is unlikely to occur before
time $t = \frac12 \ell n$.

We now show that mixing actually takes place faster if we start from a ``good'' state, i.e., a state in $\cN = \cN^\eps$.

\begin{lemma}
\label{lem.coalesce-adj-N}
Let $x,y$ be a pair of adjacent states
in $\cN^\eps$, with $x(j_0) = y(j_0)-1$ for some queue $j_0$, and $x(j) = y(j)$ for $j \not = j_0$.  Consider coupled copies $(X^x_t)$ and
$(X^y_t)$ of the $(n,d,\la)$-supermarket process.
For all times $t \ge 0$, we have
\begin{eqnarray*}
\E \|X^x_t - X^y_t\|_1 &=& \Pr (X^x_t \not = X^y_t) \\
&\le& e^{-\frac14 \log^2 n} + 4\exp \left( - \frac{t}{1600 k d^{k-1} n}\right).
\end{eqnarray*}
\end{lemma}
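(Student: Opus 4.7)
The plan is to follow the structure of the proof of Lemma~\ref{lem.coalesce-adj} almost verbatim, but exploit the fact that we start in $\cN^\eps$ to eliminate the burn-in phase. Since $\cN^\eps \subseteq \cH^{3\eps}$ by Lemma~\ref{lem.H-N-H}, and the assumption $\eps \le 1/60$ ensures that all our hypotheses hold with $\eps$ replaced by $6\eps$, Theorem~\ref{thm.start-in-N-stay-in-N} applies to each of the coupled copies separately: for each of $z \in \{x,y\}$, with probability at least $1-(k+5)/s_0$, we have $X^z_s \in \cN^{6\eps}$ for all $s \in [0,s_0]$. Define $B_t$ to be the event that $X^x_s, X^y_s \in \cN^{6\eps}$ for all $s$ with $0 \le s \le t-1$; then $\Pr(\overline{B_t}) \le 2(k+5)/s_0 \le \tfrac13 e^{-\frac14 \log^2 n}$ for $t \le s_0$, provided $n$ is large.

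The key point is that, whereas in Lemma~\ref{lem.coalesce-adj} one first had to wait until time $T^*_\cH \le q(\ell,g)$ for both copies to reach $\cH$, here we may take $T^*_\cH = 0$: because $x,y \in \cN^\eps$ are adjacent, the unique unbalanced queue already has longer length $W_0 \le k$, and Lemma~\ref{lem.no-new-k+1} is applicable to states in $\cN^{6\eps}$ (one only needs $1 - u_k(x) \ge \tfrac12(1-\la) d^{k-1}$, which follows from $1 - u_k(x) \ge (1 - 30\eps)(1-\la)(\la d)^{k-1}$). I would then set up the sequence $(S_j)$ of longer-unbalanced-queue-lengths at the successive times $T_j$ of jumps of the unbalanced queue, with $Z_j = S_j - S_{j-1} \in \{0, \pm 1\}$, exactly as in the earlier proof, and verify the two drift conditions for Lemma~\ref{lem.return-time}: $\Pr(Z_j=-1 \mid \phi_{j-1}) \ge \delta := 1/(\la d +1)$ unconditionally (from the arrival/departure ratio), and $\Pr(Z_j=-1 \mid \phi_{j-1}) \ge 3/4$ on $A_{j-1} \cap \{S_{j-1} \ge k\}$ (from Lemma~\ref{lem.no-new-k+1} applied via $B_t$).

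Applying Lemma~\ref{lem.return-time} with $k_0=k$ and using $\delta^{-(k-1)} \le 2 d^{k-1}$ (from $k/d \le \eps/(150\sqrt d)$, via~(\ref{ineq:6})), one obtains
\[
\Pr\Big(\bigcap_{i=1}^m \{S_i \ne 0\} \cap \bigcap_{i=0}^{m-1} A_i\Big)
\le 3 \exp\!\left( - \frac{m}{400 k d^{k-1}} \right),
\]
for $m \ge 16k$. Choosing $m = \lfloor t/4n \rfloor$, the concentration estimate~(\ref{eq.bin-lower}) for the number of jumps of the unbalanced queue in $t$ steps gives $\Pr(\{T > t\} \cap \{N_t < m\}) \le e^{-t/16n}$. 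Combining these with $\Pr(\overline{B_t}) \le \tfrac13 e^{-\frac14 \log^2 n}$ and the crucial saving that $r$ equals $t$ (not $t - q$), one obtains
\[
\Pr(T > t) \le \tfrac23 e^{-\frac14 \log^2 n} + 4 \exp\!\left( - \frac{t}{1600 k d^{k-1} n} \right)
\]
for all $t$ in the range where $m \ge 16k$; the cases $t < 64kn$ and $t > s_0$ are handled as in Lemma~\ref{lem.coalesce-adj}, the former being vacuous once the bound is tuned, and the latter by monotonicity using $\Pr(T>t) \le \Pr(T > s_0)$.

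The main potential obstacle is verifying that Lemma~\ref{lem.no-new-k+1} indeed applies within $\cN^{6\eps}$ with the same $e^{-\log^2 n}$ bound: the calculation there used the lower bound on $1 - u_k(x)$ coming from membership in $\cE_1$, and one must check that the analogous lower bound $1 - u_k(x) \ge \tfrac12 (1-\la) d^{k-1}$ still follows from $x \in \cN^{6\eps}$ under our constraints on $\eps$, $k$, $d$, $\la$. This is a routine substitution using $\eps \le 1/60$ and~(\ref{ineq:2}), but it is where the choice $\eps \le 1/60$ (rather than $\eps \le 1/10$) gets used and must be handled carefully.
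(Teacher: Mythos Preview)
Your proposal is correct and follows essentially the same approach as the paper: replace the burn-in time $T^*_\cH$ by $0$, invoke Theorem~\ref{thm.start-in-N-stay-in-N} to keep both copies in a good set on $[0,s_0]$, and rerun the argument of Lemma~\ref{lem.coalesce-adj}. The only cosmetic difference is that the paper tracks membership in $\cH^{6\eps}$ rather than $\cN^{6\eps}$; since $\cH^{6\eps}\subseteq\cE_1^{6\eps}$ by construction, this lets the paper invoke Lemma~\ref{lem.no-new-k+1} without the extra verification you flag as a potential obstacle, but your direct check that $1-u_k(x)\ge\tfrac12(1-\la)d^{k-1}$ on $\cN^{6\eps}$ works just as well.
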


\begin{proof}
The proof is nearly identical to that of Lemma~\ref{lem.coalesce-adj}.  Here, instead of starting by running the two copies of the
process together until some time $T^*$, we make use of Theorem~\ref{thm.start-in-N-stay-in-N}, which tells us that, with
probability at most $1 - 2 (k+5) / s_0$, both $X^x_t$ and $X^y_t$ remain within $\cH^{6\eps}$ throughout the interval
$0 \le t \le s_0$.  We may thus repeat the proof of Lemma~\ref{lem.coalesce-adj} with $T^*$ and $q=q(\ell,g)$ replaced by~0, and we obtain the
result stated.
\end{proof}

Exactly as before, we can use this result to deduce the following.

\begin{theorem}
\label{thm.coalesce-general-N}
Let $(X^x_t)$ and $(X^y_t)$ be two copies of the $(n,d,\la)$-supermarket process with starting states $x$ and $y$ in $\cN^\eps$.
Then, for $t \ge 0$, we have
$$
\E \| X^x_t - X^y_t \|_1 \le n \left(e^{-\frac14 \log^2 n} + 4\exp \left( - \frac{t}{1600 k d^{k-1} n}\right) \right).
$$
\end{theorem}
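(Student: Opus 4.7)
The plan is to mimic the proof of Theorem~\ref{thm.coalesce-general}, but using a path that stays inside $\cN^\eps$, so that every adjacent pair along the path starts in $\cN^\eps$ and we can invoke the stronger per-pair bound in Lemma~\ref{lem.coalesce-adj-N} rather than Lemma~\ref{lem.coalesce-adj}. This avoids having to route through the empty queue-lengths vector and thereby saves the factor $2gn$ in the prefactor, replacing it with the (much smaller) length of the path inside $\cN^\eps$.

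Concretely, by Lemma~\ref{lem.path-connected} there is a sequence $x=z_0,z_1,\dots,z_m=y$ of queue-lengths vectors, each consecutive pair adjacent, with every $z_i \in \cN^\eps$ and
$$
m \le 4 n (1-\la)(\la d)^{k-1}.
$$
Working in the coupling of Section~\ref{sec.coupling}, Lemma~\ref{lem.coalesce-adj-N} gives, for each $i$ and each $t\ge 0$,
$$
\E \|X^{z_i}_t - X^{z_{i+1}}_t\|_1 \le e^{-\frac14 \log^2 n} + 4\exp\!\left(-\frac{t}{1600\, k d^{k-1} n}\right).
$$
By the triangle inequality,
$$
\E \|X^x_t - X^y_t\|_1 \le \sum_{i=0}^{m-1} \E \|X^{z_i}_t - X^{z_{i+1}}_t\|_1 \le m \left( e^{-\frac14 \log^2 n} + 4\exp\!\left(-\frac{t}{1600\, k d^{k-1} n}\right)\right).
$$

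To finish, I bound the prefactor $m$. By (\ref{ineq:7}) together with $\la \le 1$,
$$
4n(1-\la)(\la d)^{k-1} \le 4n(1-\la) d^{k-1} \le \frac{4\eps n}{100 k} \le n,
$$
using also $\eps \le 1/10$ and $k \ge 2$. Substituting this into the previous display yields exactly the claimed inequality
$$
\E \|X^x_t - X^y_t\|_1 \le n\left(e^{-\frac14 \log^2 n} + 4\exp\!\left(-\frac{t}{1600\, k d^{k-1} n}\right)\right).
$$

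There is no real obstacle here: the work is essentially packaged in the two ingredients, namely the per-step coalescence bound (Lemma~\ref{lem.coalesce-adj-N}, whose proof uses Theorem~\ref{thm.start-in-N-stay-in-N} to replace the ``burn-in time'' $T^\ast$ by $0$) and the path-connectedness of $\cN^\eps$ with a path of length $O(n(1-\la)(\la d)^{k-1})$ inside $\cN^\eps$ (Lemma~\ref{lem.path-connected}). The only subtlety worth checking is that the bound on $m$ is actually smaller than $n$, which is what lets us state the conclusion with the simple factor $n$ out front; this is immediate from the parameter assumptions of Theorem~\ref{thm.technical}.
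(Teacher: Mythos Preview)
Your proof is correct and follows essentially the same approach as the paper: use Lemma~\ref{lem.path-connected} to obtain a path of length $m \le 4n(1-\la)(\la d)^{k-1} \le n$ inside $\cN^\eps$, apply Lemma~\ref{lem.coalesce-adj-N} to each adjacent pair, and sum via the triangle inequality. The paper's own proof is terser (it simply cites the bound $m \le n$ and says ``as in the proof of Theorem~\ref{thm.coalesce-general}''), but you have filled in exactly those details, including the justification of $m \le n$ from~(\ref{ineq:7}).
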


Note that the conclusion is independent of $\eps$, and the hypothesis is weakest when $\eps$ is as large as possible, namely $\eps = 1/60$.

\begin{proof}
Take any two queue-lengths vectors $x$ and $y$ in $\cN^\eps$.  By Lemma~\ref{lem.path-connected}, there is a path
between $x = z_0z_1 \cdots z_m =y$ in $\cN^\eps$ of length $m \le 4n (1-\la) (\la d)^{k-1} \le n$ between $x$ and $y$.
The result now follows as in the proof of Theorem~\ref{thm.coalesce-general}.
\end{proof}


As before, since $Y_0$ lies in $\cH^\eps \subseteq \cN^\eps$ with probability at least $1-e^{-\frac14\log^2 n}$, by Corollary~\ref{cor.eqm},
we may now deduce that the total variation distance $d_{TV}(\cL(X^x_t),\cL(Y_t))$ is at most
$$
e^{-\frac14\log^2 n} + n \left(e^{-\frac14 \log^2 n} + 4\exp \left( - \frac{t}{1600 k d^{k-1} n}\right) \right)
$$
$$
\le n \left( 2 e^{-\frac14 \log^2 n} + 4\exp \left( - \frac{t}{1600 k d^{k-1} n}\right) \right)
$$
whenever $x \in \cN^\eps$.  This result, with $\eps = 1/60$, is exactly the statement of Theorem~\ref{thm.rapid-mixing}.

Theorem~\ref{thm.rapid-mixing} shows that, from states $x \in \cN^\eps$, we have mixing to equilibrium in time of order $k d^{k-1} n \log n$.
We now indicate why this bound is approximately best possible, for any value of $\eps$ such that $(n,d,\la,k,\eps)$
satisfies the conditions of Theorem~\ref{thm.technical}, where also $\eps \le 1/30$.

Note that there is a state $z$ in $\cI^{3\eps} \subseteq \cH^{3\eps} \subseteq \cN^{3\eps}$ with $Q_k(z) \le (1-9\eps) n (1-\la) (\la d)^{k-1}$.
However, we know from Corollary~\ref{cor.eqm} that $\Pr (Y_t \in \cH^\eps) \ge 1 - e^{-\frac14 \log^2 n}$, so in order for $d_{TV}(\cL(X^z_t),\Pi)$ to
be small, we need that $Q_k(X^z_t) \ge (1-5\eps) n (1-\la) (\la d)^{k-1}$ with high probability.  Set $t = n (\la d)^{k-1}$.

For $x \in \cH^{3\eps}$, we obtain from Lemma~\ref{lem.drifts}, with a calculation almost exactly as in Lemma~\ref{lem.qk}, that
\begin{eqnarray*}
(1+\la) \Delta Q_k(x) &\le& (1-\la)(1+ \eps/2) - \frac{Q_k(x)}{n (\la d)^{k-1}} + \exp \left( - dQ_k(x) / kn \right)  \\
&\le&  (1-\la) (1+ \eps/2 - (1 - \eps)) + e^{-\frac{177}{50} \log n} \\
&\le& 2 \eps (1-\la),
\end{eqnarray*}
so $\Delta Q_k(x) \le 2 \eps (1-\la)$ also.  We know from Theorem~\ref{thm.start-in-I-stay-in-H} that, with probability at least
$1-(k+5)/s_0$, $X^z_s \in \cH^{3\eps}$ for all $s = 0, \dots, t-1$, and we also have that $Q_k(x) \le kn$ for every state $x$.  It follows that
\begin{eqnarray*}
\E Q_k(X^z_t) & = & Q_k(z) + \sum_{s=0}^{t-1} \E \left( \E ( \Delta Q_k(X^z_s) \mid \cF_s ) \right) \\
&\le& (1-9\eps) n (1-\la) (\la d)^{k-1} + 2 \eps t (1-\la) + kn \frac{k+5}{s_0} \\
&\le& (1-6\eps) n (1-\la) (\la d)^{k-1}.
\end{eqnarray*}

A result from~\cite{lmcd06} (adapted for discrete time) states that, for some absolute constant $c$, for any 1-Lipschitz function $f$,
any starting state $z$, any $t>0$ and any $u \ge 0$,
$$
\Pr (|f(X^z_t) - \E f(X^z_t)| \ge u) \le n e^{-cu^2/(t+u)}.
$$
Applying this with $f = Q_k$, $t = n (\la d)^{k-1}$ and $u = \eps t (1-\la)$, we find that
\begin{eqnarray*}
\lefteqn{\Pr (Q_k(X^z_t) > (1-5\eps) n (1-\la) (\la d)^{k-1})} \\
&\le& \Pr ( Q_k(X^z_t) - \E Q_k(X^z_t) > \eps n (1-\la) (\la d)^{k-1}) \\
&\le & n e^{-c\eps^2 n (1-\la)^2 (\la d)^{k-1} /2} \\
&\le & n e^{- 270 c k^2 \log^2 n \, d^{k-2}},
\end{eqnarray*}
using (\ref{ineq:8}) and (\ref{ineq:12}).
Therefore the mixing time is at least $t = n (\la d)^{k-1}$.

\end{document}